\newif\ifconfver
\newif\ifcutshort      
\newif\ifcutshortlvltwo  
\def\VectorFont{\bf}
\newcommand{\vx}{{\VectorFont x}}
\newcommand{\vy}{{\VectorFont y}}
\newcommand{\vc}{{\VectorFont c}}
\newcommand{\vz}{{\VectorFont z}}
\newcommand{\vw}{{\VectorFont w}}
\newcommand{\vlambda}{{\boldsymbol{\lambda}}}
\newtheorem{corollary}{Corollary}
\newtheorem{definition}{Definition}%
\newtheorem{assumption}{Assumption}
\newtheorem{remark}{Remark}%
\newtheorem{theorem}{Theorem}
\newtheorem{lemma}{Lemma}
\newtheorem{proof}{Proof}
\begin{document}

\bibliographystyle{IEEEtran}

\title{ {A Unified Inexact Stochastic ADMM for Composite Nonconvex and Nonsmooth Optimization}}

\ifconfver \else {\linespread{1.1} \rm \fi

\author{\vspace{0.8cm}Yuxuan~Zeng,	~Jianchao~Bai,
~Shengjia~Wang,
~and Zhiguo~Wang\\
\thanks{
Yuxuan Zeng, Zhiguo Wang, and Shengjia Wang are with College of Mathematics, Sichuan University, Chengdu 610064, China (e-mail: 2020222010085@stu.scu.edu.cn,~wangzhiguo@scu.edu.cn,~wsjia@stu.scu.edu.cn).}
\thanks{Jianchao Bai  (corresponding author) is with Research \& Development Institute of Northwestern Polytechnical University in Shenzhen, Shenzhen 518057, China;  School of Mathematics and Statistics,  Northwestern Polytechnical
University, Xi'an  710129,      China (e-mail: jianchaobai@nwpu.edu.cn).}
}

\maketitle

\vspace{-1.5cm}
\begin{center}
\today
\end{center}\vspace{0.5cm}

\begin{abstract}
In this paper, we propose a unified framework of inexact stochastic Alternating Direction Method of Multipliers (ADMM) for solving nonconvex problems subject to linear constraints, whose objective comprises an average of finite-sum smooth functions and a nonsmooth but possibly nonconvex function. The new framework is highly versatile. Firstly, it not only covers several existing algorithms such as SADMM, SVRG-ADMM, and SPIDER-ADMM but also guides us to design a novel accelerated hybrid stochastic ADMM algorithm, which utilizes a new hybrid estimator to trade-off variance and bias. Second, it enables us to exploit a more flexible dual stepsize in the convergence analysis. Under some mild conditions, our unified framework preserves $\mathcal{O}(1/T)$ sublinear convergence. Additionally, we establish the linear convergence under error bound conditions. Finally, numerical experiments
demonstrate the efficacy of the new algorithm for some nonsmooth and nonconvex problems.
\\\
\noindent {\bfseries Keywords}$-$ Nonconvex optimization, nonsmooth optimization, stochastic ADMM, hybrid stochastic estimator, accelerated gradient method, linear convergence rate. 
\\\\
\end{abstract}


\ifconfver \else \IEEEpeerreviewmaketitle} \fi

\vspace{-0.5cm}
\section{Introduction}
\label{intro}
In the realm of machine learning (ML), exploiting the structural information of problems is crucial to enabling optimization at extreme scale. A prevalent example of such structure includes summation structures along with linear constraints,
thus giving rise to ADMM methods. 
ADMM \cite{boyd2011distributed,nishihara2015general} has gained widespread application and research attention attributed to its adeptness in fully exploiting this kind of structure and employing splitting techniques.
This work focuses on a class of nonconvex optimization problems as follows:
\begin{equation}\label{pro 1.1}
\min _{\mathbf{x}, \mathbf{y}} F(\mathbf{x}, \mathbf{y}):=f(\mathbf{x})+g(\mathbf{y}),  \quad \text { s.t. } A \mathbf{x}+B \mathbf{y}=\mathbf{b},
\end{equation}
where $f = \frac{1}{N}\sum_{1}^{N}f_{i} : \mathbb{R}^{n_x} \rightarrow \mathbb{R}$ is  nonconvex and smooth,
$N$ is the number of components which can be very large,
$g: \mathbb{R}^{n_y} \rightarrow \mathbb{R}$ is a locally Lipschitz continuous, possibly \textit{nonconvex and nonsmooth} function, frequently treated as a regularizer to prevent overfitting. Matrices 
$A \in \mathbb{R}^{m \times n_x}$ and $B \in \mathbb{R}^{m \times n_y}$
serve as linear operators encoding the model structure.
Problem (\ref{pro 1.1}) covers a broad range of applications
in statistical learning, compressive sensing, and machine learning \cite{goodfellow2016deep,zhu2020linear}, especially in neural networks, such as the graph-guided fused lasso \cite{kim2009multivariate} and the nonconvex problem with SCAD penalty \cite{gong2013general,xu2022inertial}. 

\subsection{Motivation and Related Work}
For large-scale optimization problems, deterministic ADMM is impractical due to an evaluation of the full gradient across all samples.
Consequently, stochastic versions of ADMM \cite{wang2012online,ouyang2013stochastic,suzuki2014stochastic} have been developed. 
Nevertheless, algorithms using stochastic gradients, given their signiﬁcant variance, adopt decaying step-sizes to ensure convergence, resulting in a slower convergence rate. 
Thanks to variance reduction (VR) techniques, some fast stochastic ADMM methods \cite{zheng2016fast,liu2020accelerated,BHZ22} with constant step-sizes have been developed, integrating VR and/or momentum acceleration \cite{Nesterov2004,AllenZhu2017KatyushaTF} techniques. These efficient methods have demonstrated improved convergence rates in both theoretical analyses and practical applications.

Previous discussions have primarily focused on convex problems, however, many applications in machine learning cannot be captured by convex models. Consequently, (stochastic) algorithms \cite{mai2020convergence,wang2021distributed,zhang2020global,guo2023preconditioned,cai2023cyclic} for addressing structured nonconvex optimization problems have been explored extensively. Recent works have also extended ADMM and its variations to such problems.
Guo's work \cite{GHWt18} has established the linear convergence of the nonconvex deterministic ADMM under the extra Kurdyka-Lojasiewicz (KL) condition. 
Substantial progress has been achieved in VR-type stochastic ADMMs.

\begin{table*}[t]
\caption{A comparison of stochastic ADMM for the nonconvex optimization problems.}
\label{table alg}
\vskip 0.10in
\begin{center}
\begin{small}
\begin{sc}
\begin{tabular}{lcccr}
\toprule
Algorithms & Nonsmooth term & dual step-size &Convergence rate& \\
\midrule
SVRG-ADMM \cite{zheng2016stochastic} & convex & $s=1$ & $\mathcal{O}(1/T)$\\
SAGA-ADMM \cite{huang2016stochastic} & convex & $s=1$ & $\mathcal{O}(1/T)$ \\
SPIDER-ADMM \cite{huang2019faster} & convex & $s=1$ & $\mathcal{O}(1/T)$ \\
SARAH-ADMM \cite{BLZhang21} & nonconvex & $s\in(0,1]$ & linear rate \\
AH-SADMM (ours) & nonconvex & $s\in(0,2)$ & $\mathcal{O}(1/T)$ / linear rate \\
\bottomrule
\end{tabular}
\end{sc}
\end{small}
\end{center}
\vskip -0.1in
\end{table*}

\begin{itemize}
\item The ﬁrst class of VR-type stochastic ADMM is based on SAG-estimator, including SAGA-ADMM variant. Theoretical analyses on these algorithms have been studied in \cite{huang2016stochastic}, demonstrating $\mathcal{O}(1/T)$ convergence rate. 
However, SAGA-type ADMM \cite{huang2016stochastic} requires substantial storage space of  $\mathcal{O}(Nn_{\mathbf{x}})$, posing a significant storage burden for large $N$.

\item The second one is SVRG-ADMM
\cite{huang2016stochastic,zheng2016stochastic}, accompanied by its accelerated variant ASVRG-ADMM \cite{10055828}. ASVRG-ADMM has also established its almost surely (a.s.) R-linear convergence under the KL condition.

\item The third class relies on SARAH \cite{nguyen2017sarah}, resulting in SPIDER-ADMM \cite{huang2019faster} and SARAH-ADMM \cite{BLZhang21}. 
\end{itemize}

Numerous researchers have delved into the theoretical analysis of existing algorithms.
For example, \cite{BLZhang21} has demonstrated the global linear convergence of VR-type stochastic ADMMs, including SAG, SAGA, SVRG, and SARAH. They highlighted that the recursive SARAH-ADMM method achieved relatively superior results in computed tomography (CT) reconstruction problems.
Moreover, recursive methods employing biased gradient estimators have captured increased attention in both theoretical and practical research. Their appeal lies in benefits such as low gradient storage, oracle complexity bounds, and efficient empirical performance. 
A comprehensive comparison of complexities associated with various stochastic ADMM was conducted in
\cite{huang2019faster}. 
This study revealed that the incremental ﬁrst-order oracle (IFO) complexity of the recursive SPIDER-ADMM is lower than that of SVRG-ADMM and SAGA-ADMM, and SPIDER-ADMM exhibits faster numerical convergence. 
Recently, \cite{pham2020proxsarah} introduced the ProxSARAH algorithm, integrating proximal operators with the SARAH gradient estimator to tackle composite nonconvex optimization problems.
This novel approach achieves the best-known complexity bound.

Algorithms of SVRG and SARAH types are both double-loop, with the batch size determined by the sample size $N$. 
Larger-scale problems may require a relatively larger batch size.
Additionally, the selection of snapshot points in these algorithms also matters.
To develop easily implemented methods,
\cite{tran2022hybrid} combined SARAH with an unbiased estimator, proposing the Proximal Hybrid SARAH-SGD (ProxHSGD) algorithm, which exhibits desirable advantages.
While hybrid estimators of ProxHSGD are biased, their properties of variance reduction and the ability to trade-off between variance and bias can be employed to develop new stochastic algorithms with better oracle complexity.
Moreover, this algorithm establishes convergence without setting checkpoints to compute full gradients as in SVRG and SARAH. 
Instead, it ensures convergence with both single sample and mini-batch, thereby reducing the oracle complexity bound and facilitating practical implementation.

Additional nonsmoothness alongside nonconvexity presents more theoretical challenges, preventing the use of general (sub)gradient-based methods.
Prior works \cite{GHWt18,BLZhang21,10055828} have theoretically analyzed ADMM for nonconvex and nonsmooth problems. Recently,  \cite{wang2019global} provided convergence guarantees for both exact and inexact \textit{deterministic} ADMM under specific update properties satisfied by the iteration sequence. These properties resemble a unified framework, helping the design of efficient algorithms based on distinct characteristics of subproblems.
\cite{BZZ2023} established the convergence of \textit{deterministic} ADMM with larger dual step-size under a unified framework. This work inspires the methods proposed in this paper.

\subsection{Contributions}
Developing efficient inexact ADMM with convergence for nonconvex problems has been a challenging task, especially when the objective is neither smooth nor convex. Recently, SGD with hybrid estimator \cite{tran2022hybrid} and nonconvex accelerated deterministic ADMM \cite{BZZ2023} with larger dual step-size have made some advances. Thus, it is intriguing to ask the following questions:
\begin{itemize}
\item \textit{Is it possible to analyze a class of stochastic inexact ADMM algorithms with larger dual step-size for nonconvex and nonsmooth problems under a unified framework?}

\item \textit{Is it possible to apply the hybrid technique to nonconvex stochastic ADMM, designing novel algorithms with linear convergence?}
\end{itemize}

The work in this paper aims to address these questions. To that end, we make the following contributions:

\textbf{Unified Inexact Stochastic ADMM (UI-SADMM).}\quad
To address a broad class of nonsmooth, nonconvex, and constrained problems, we develop a new and unified analysis framework for inexact stochastic ADMM and apply it to develop the innovative accelerated hybrid stochastic ADMM. This framework covers several well-known algorithms, such as SADMM, SVRG-ADMM, and SPIDER-ADMM. In addition, our analysis does not require convexity of the nonsmooth regularizer as in some existing works.

\textbf{Novel Accelerated Hybrid Stochastic ADMM.}\quad
We first adopt a ‘hybrid’ strategy, combining a biased stochastic VR-based gradient estimator with an unbiased one to form a new estimator for solving the $\mathbf{x}$-subproblem. This novel estimator can trade-off the variance and bias of the underlying estimators. Next, we integrate this hybrid estimator into accelerated (stochastic) ADMM \cite{BHZ22,BZZ2023}, developing a novel accelerated hybrid stochastic ADMM (AH-SADMM). This algorithm achieves convergence without check-points or $N\times n_{x}$-table to store gradient components.


\textbf{Linear Convergence Guarantee for Nonconvex and Nonsmooth Problems.}\quad 
When handling exact or linearized subproblems, the dual stepsize $s$, as in \cite{yang2017alternating,yashtini2022convergence}, is in the range $(0, \frac{1+\sqrt{5}}{2})$, larger than the general stepsize $s=1$. In this paper, we further extend the range of dual stepsize to $s\in(0,2)$ even with an inexact subproblem solution. 
Using a speciﬁcally constructed potential function,
we establish the best-known sublinear convergence rate of $\mathcal{O}(1/T)$, and the a.s. linear convergence for our UI-SADMM, under error bound conditions.

\subsection{Notations}
We use the symbol $\left\|\ .  \right\|$ and $\sigma_{min}\left(\cdot\right)$ to denote the Euclidean norm of a vector (or the spectral norm of a matrix) and the smallest eigenvalue of a matrix, respectively. 
Denote the $\sigma$-field generated by the random variables of the first $k$ iterations of Algorithm \ref{alg 1} as $\mathcal{F}_k$, and the expectation conditioned on $\mathcal{F}_k$ as $\mathbb{E}_k$. It's clear that  the iterate $\left(\mathbf{x}^k, \mathbf{y}^k, \mathbf{\lambda}^k \right)$
is $\mathcal{F}_k$-measurable since $\mathbf{x}^k$, $\mathbf{y}^k$ and $\mathbf{\lambda}^k$ are dependent on the random gradient information of the first $k$ iterations.
A set-valued mapping $F: \mathbb{R}^{n} \rightarrow \mathbb{R}^m$ is said to be outer semicontinuous at a point $\bar{\mathbf{x}}$, if there exist $\mathbf{x}^k \rightarrow \bar{\mathbf{x}}$ and $v^k \rightarrow v$ satisfying  $v^k \in F\left(\mathbf{x}^k\right)$, such that for any $v \in \mathbb{R}^m$, $v \in F(\bar{\mathbf{x}})$ holds. We use con $\mathcal{C}$ to denote a convex hull of  a given closed set $\mathcal{C}$, and the distance of $\mathbf{x}$ to the set $\mathcal{C}$ is denoted by dist($\mathbf{x},\mathcal{C}$)$:=\inf _{\mathbf{y} \in \mathcal{C}}\{\|\mathbf{x}-\mathbf{y}\|\}$.

\section{Prliminaries}
In the section, we recall some definitions and basic assumptions regarding the problem (\ref{pro 1.1}). 
We begin by some core definitions which are the backbone of theoretical analysis.

\begin{definition}[Clarke subgradient]\label{def clarke}
For a function $g: \mathbb{R}^n \rightarrow \mathbb{R}$ which is locally Lipschitz continuous on an open set $\mathcal{S} \subset \mathbb{R}^n$, let $\mathcal{C}$ be a subset of $\mathcal{S}$ such that $g$ is differentiable over the set $\mathcal{C}$. Then from Theorem 8.49 and Theorem 9.61 in \cite{rockafellar2009variational}, the Clarke subgradient set of function $g$ at a point $\bar{x} \in \mathcal{S}$ can be expressed as
$$
\partial g(\bar{x}):=\operatorname{con}\{v: \exists x \rightarrow \bar{x} \text { with } x \in \mathcal{C}, \nabla g(x) \rightarrow v\},
$$
which is nonempty, convex and compact for  $\forall \bar{x} \in \mathcal{S}$. In addition, $\partial g$ is outer semicontinuous and locally bounded on $\mathcal{S}$, we refer to \cite{rockafellar2009variational} for more details.
We further denote the set of critical points of $g$ by
$$
\operatorname{crit} g:=\left\{x \in \mathbb{R}^n: \operatorname{dist}(0, \partial g(x))=0\right\}.
$$
\end{definition}

\begin{definition}
Given accuracy $\epsilon\in(0,1)$,
the point $\left(\mathbf{x}^*, \mathbf{y}^*, \mathbf{\lambda}^*\right)$ is said to be an $\epsilon$-stationary point of the problem (\ref{pro 1.1}) if 
\begin{equation}
\begin{array}{r}
\mathbb{E}\left\|\nabla f\left(\mathbf{x}^*\right)-A^T \lambda^*\right\|^2 \leq \epsilon, \\
\mathbb{E}\left[\operatorname{dist}\left(B^T \lambda^*, \partial g\left(\mathbf{y}^*\right)\right)\right]^2 \leq \epsilon, \\
\mathbb{E}\left\|A \mathbf{x}^*+B \mathbf{y}^*-\mathbf{b}\right\|^2 \leq \epsilon.
\end{array}
\end{equation}
\end{definition}

Next, we give some fundamental assumptions. 

\begin{assumption}\label{assum 1}
\begin{itemize}
\item[(a)] The gradient mapping  $\nabla f$ is Lipschitz continuous, i.e. there exits a constant modulus $L>0$ such that
\begin{equation}
\left\|\nabla f\left(\mathbf{z}_1\right)-\nabla f\left(\mathbf{z}_2\right)\right\| \leq L\left\|\mathbf{z}_1-\mathbf{z}_2\right\|, \forall \mathbf{z}_1, \mathbf{z}_2 \in \mathbb{R}^{n_x}.
\end{equation}

\item[(b)] There exist $\sigma>0$ and batch size $M$ such that	
\begin{equation}
\mathbb{E}\left[\left\|\nabla f\left({\mathbf{x}},\xi_{M}\right) -
\nabla f\left({\mathbf{x}}\right)\right\|^2\right] \leq \frac{\sigma^2}{M},
\end{equation}
where the stochastic gradient estimator $\nabla f\left({\mathbf{x}},\xi_{M}\right)=\frac{1}{M} \sum_{i=1}^{M} \nabla f\left({\mathbf{x}},\xi_{i}\right),$ $\left\{\xi_{i}\right\}$
denotes a set of i.i.d. random variables which satisfy $\mathbb{E}[\nabla f\left({\mathbf{x}},\xi_{i}\right)]=\nabla f\left({\mathbf{x}}\right).$ 

\item[(c)] $($ Range $(B) \cup \mathbf{b}) \subseteq$ Range $(A)$, where Range($\cdot$) returns the image of any given matrix.
\end{itemize}
\end{assumption}

Assumption \ref{assum 1} (a) indicates the smoothness of function $f$. The bounded variance Assumption \ref{assum 1} (b) is standard for theoretical analysis. These assumptions are required for all existing 
stochastic gradient-based and VR-gradient-based methods.
Directly from the Assumption \ref{assum 1} (c), it's easy to derive  $\lambda^{k+1}-\lambda^k=-s \beta \mathbf{r}^{k+1} \in \operatorname{Range}(A)$, implying
$$
\left\|\lambda^{k+1}-\lambda^k\right\| \leq \sigma_A^{-\frac{1}{2}}\left\|A^{\top}\left(\lambda^{k+1}-\lambda^k\right)\right\|,
$$
where $\sigma_A$ is the smallest positive eigenvalue of $A^{\top} A$ (or equivalently the smallest positive eigenvalue of $A A^{\top}$ ). 
Especially, if $A$ is nonsingular or has a full column or full row rank, Assumption \ref{assum 1} (c) can be ensured.

\section{Inexact  Stochastic  ADMM and Convergence Analysis}\label{inexact sto admm}
In this section, we establish an  analysis of inexact stochastic ADMM within a unified framework for the problem (\ref{pro 1.1}). Initially,
we introduce the augmented Lagrangian (AL) $
\mathcal{L}_\beta(\mathbf{x}, \mathbf{y}, \boldsymbol{\lambda})$ corresponding to the problem (\ref{pro 1.1}),  formulated with a  penalty parameter $\beta>0$  and a dual multiplier $\boldsymbol{\lambda}$ as
\begin{equation}\label{AL function}
\begin{aligned}	
\mathcal{L}_\beta(\mathbf{x}, \mathbf{y}, \boldsymbol{\lambda})=
&f(\mathbf{x})+g(\mathbf{y})-\boldsymbol{\lambda}^{\top}(A \mathbf{x}+B \mathbf{y}-\mathbf{b})+\frac{\beta}{2}\|A \mathbf{x}+B \mathbf{y}-\mathbf{b}\|^2.
\end{aligned}
\end{equation}
Algorithms based on AL have been extensively studied. Among them, the classical ADMM algorithm optimizes in the following alternative order:
\begin{equation}
\left\{\begin{array}{l}
\mathbf{y}^{k+1} \in \arg \min _{\mathbf{y}} \mathcal{L}_\beta\left(\mathbf{x}^k, \mathbf{y}, \boldsymbol{\lambda}^k\right) \\
\mathbf{x}^{k+1} \in \arg \min _{\mathbf{x}} \mathcal{L}_\beta\left(\mathbf{x}, \mathbf{y}^{k+1}, \boldsymbol{\lambda}^k\right) \\
\boldsymbol{\lambda}^{k+1}=\boldsymbol{\lambda}^k-s \beta\left(A \mathbf{x}^{k+1}+B \mathbf{y}^{k+1}-\mathbf{b}\right),
\end{array}\right.
\end{equation}
where $s \in\left(0, 2 \right)$ denotes the stepsize of dual variable $\boldsymbol{\lambda}$.

\subsection{The Update Rule of $\mathbf{y}$}
We first proceed with the update for the variable $\mathbf{y}$ in step 2 of Algorithm \ref{alg 1}.
The proximal operator is used for updating $\mathbf{y}$,
and an appropriate matrix  $\mathcal{D}^{k}_y \succeq \mathbf{0}$ could be adaptively chosen, considering the structural characteristics of nonsmooth function $g$. 
The optimality condition for variable $\mathbf{y}$ generated at the $k$-th iteration implies:
\begin{align}\label{y update criteria}
&\frac{\beta}{2}\left\|\mathbf{y}^{k+1}-{\mathbf{y}}^{k}\right\|_{\mathcal{D}^{k}_y}^2+\mathcal{L}_\beta\left(\mathbf{x}^k, \mathbf{y}^{k+1}, \boldsymbol{\lambda}^k\right)
\leq \mathcal{L}_\beta\left(\mathbf{x}^k,{\mathbf{y}}^{k}, \boldsymbol{\lambda}^k\right),\nonumber\\
0 \in &\partial g\left(\mathbf{y}^{k+1}\right)+
\beta\left(A \mathbf{x}^{k}+B\mathbf{y}^{k+1}-\mathbf{b}-\frac{1}{\beta} {\boldsymbol{\lambda}}^k\right) + \beta \mathcal{D}^{k}_y \left(\mathbf{y}^{k+1} - \mathbf{y}^k \right),
\end{align}
where $\partial g\left(\cdot\right)$ represents the Clarke subgradient of $g(\cdot)$, as defined in Definition \ref{def clarke}.
Combining (\ref{y update criteria}) with $\xi_y^{k+1} \in \partial_{\mathbf{y}} \mathcal{L}_\beta\left(\mathbf{x}^k,{\mathbf{y}}^{k+1}, \boldsymbol{\lambda}^k\right)$ to yield
\begin{equation}\label{y-update grad}
\left\|\xi_y^{k+1}\right\| \leq c_y \beta\left\|\mathbf{y}^{k+1}-{\mathbf{y}}^{k}\right\|,
\end{equation}
where $c_y$ is a positive constant satisfying $ \left\|\  \mathcal{D}^{k}_y  \right\| \leq c_y$.

\subsection{The Inexact Update of $\mathbf{x}$}
In practice, the nonconvex function $f$ often lacks specific structure, and the substantial sample size $N$ poses a challenge in accurately solving the subproblem for updating the variable $\mathbf{x}$ using full-gradient methods.
To address this challenge, we introduce a versatile inexact stochastic ADMM Algorithm. The inexact update of $\mathbf{x}$ in the step 3 of Algorithm \ref{alg 1}  with a mini-batch size of $M$ is expressed as follows:
\begin{align}\label{x-update}
&\mathbf{x}^{k+1} \approx \arg \min _{\mathbf{x} \in \mathbb{R}^{n_x}} 
\left\langle\nabla f\left({\mathbf{x}}^k,\xi_{M}\right), \mathbf{x}-{\mathbf{x}}^k\right\rangle +\frac{\beta}{2}\left\|\mathbf{x}-{\mathbf{x}}^{k}\right\|_{\mathcal{D}_x^k}^2+
\frac{\beta}{2}\|A \mathbf{x}+B \mathbf{y}^{k+1}-\mathbf{b} - \frac{\mathbf{\lambda}^k}{\beta}\|^2,
\end{align}
where  $\mathcal{D}_x^k$ is a symmetric bounded positive definite matrix, and $\nabla f\left({\mathbf{x}}^k,\xi_{M}\right)$ denotes the stochastic gradient satisfying Assumption \ref{assum 1} (b).

To enhance the algorithm's generality, we have provided only the inexact criteria for updating $\mathbf{x}$ as in (\ref{x update criteria}),
where $\xi_x^{k+1}=\nabla_x \mathcal{L}_\beta\left(\mathbf{x}^{k+1}, \mathbf{y}^{k+1}, \boldsymbol{\lambda}^k\right)$, and $\hat{c}_x>0$, $c_x>0$.
Many inexact stochastic ADMMs adhere to the specified criterion. 
We discuss and summarize algorithms that satisfy this condition, including SADMM, SVRG-ADMM, and the recursive SPIDER-ADMM in Remark \ref{inexact cri} in Appendix \ref{app:remark 1}.  It's noticed that  
SPIDER, akin to SAGA but avoiding the storage gradient issue present in SAGA, surpasses its performance. Therefore, we exclude SAGA-type algorithms due to the limited spaces.

\subsection{The Update Rule of $\boldsymbol{\lambda}$}
The update rule for dual multiplier $\boldsymbol{\lambda}$ in step 4 of Algorithm \ref{alg 1} is as follows:
\begin{equation}
\boldsymbol{\lambda}^{k+1}=\boldsymbol{\lambda}^k-s \beta\left(A \mathbf{x}^{k+1}+B \mathbf{y}^{k+1}-\mathbf{b}\right).
\end{equation}
Research has expanded the range of dual stepsize to $(0, \frac{1+\sqrt{5}}{2})$, resulting in improved recovery capabilities \cite{yang2017alternating}. In this study, we further broaden the dual stepsize range to (0, 2). This enhanced flexibility in the dual step size not only improves numerical performance but also mitigates algorithm sensitivity while ensuring convergence.

Before establishing the following theoretical analysis, we introduce the following notations:  
$\mathbf{d}_x^k=\mathbf{x}^{k+1}-\mathbf{x}^k$,
$\mathbf{d}_y^k=\mathbf{y}^{k+1}-\mathbf{y}^k$, $ \mathbf{d}_\lambda^k=\boldsymbol{\lambda}^{k+1}-\boldsymbol{\lambda}^k$, and 
$\mathbf{r}^{k}=A {\mathbf{x}}^{k}+B \mathbf{y}^{k}-\mathbf{b}$.

\begin{empheq}[box=\fbox]{align}
\mathbb{E}_{k}\left[\mathcal{L}_\beta\left(\mathbf{x}^{k+1}
, \mathbf{y}^{k+1}, \boldsymbol{\lambda}^k\right)\right] 
&\leq
\mathcal{L}_\beta\left({\mathbf{x}}^k, \mathbf{y}^{k+1}, \boldsymbol{\lambda}^k\right)- 
\frac{\beta}{2} \mathbb{E}_{k}\left[ \left\| \mathbf{x}^{k+1}-{\mathbf{x}}^k\right\|_{\mathcal{D}^k_x}^2 \right] 
+ \frac{ (\hat{c}_x \beta)^2}{2} \frac{\sigma^2}{M},\nonumber\\
\mathbb{E}_{k}\left[\left\|\xi_x^{k+1}\right\|^2 \right]
&\leq (c_x \beta)^{2} \mathbb{E}_{k}\Big(\frac{\sigma^2}{M} + \left\|{\mathbf{x}}^k-\mathbf{x}^{k+1}\right\|^2+\left\|\mathbf{y}^{k+1}-\mathbf{y}^k\right\|^2\Big),\label{x update criteria}
\end{empheq}

\section{Convergence Analysis}
In the previous Section \ref{inexact sto admm}, we presented the framework of the generalized UI-SADMM, including its specific update rules. Subsequently, we will establish global convergence under proper conditions.

\begin{algorithm}[t]
\caption{Unified Inexact Stochastic ADMM}
\label{alg 1}
\KwIn{ $\beta >0, s \in (0,2),$
and initial values of $\mathbf{w}^0=\left(\mathbf{x}^0, \mathbf{y}^0, \boldsymbol{\lambda}^0\right);$}
\For{$k=0,1,2,...$}{
 1: Choose proper bounded matrices $\mathcal{D}_y^k \succeq \mathbf{0}$ and $\mathcal{D}_x^k \succeq \mathbf{0};$\\
 2: $
\begin{aligned}
\mathbf{y}^{k+1} = \text{argmin} _{\mathbf{y} } &\mathcal{L}_\beta\left(\mathbf{x}^k, \mathbf{y}, \boldsymbol{\lambda}^k\right)+\frac{\beta}{2}\left\|\mathbf{y}-{\mathbf{y}}^{k}\right\|_{\mathcal{D}_y^k}^2;
\end{aligned}$\\
3: Solve the following problem
$\begin{aligned}
\quad \mathbf{x}^{k+1} \approx  \arg \min _{\mathbf{x}} 
\left\langle\nabla f\left({\mathbf{x}}^k,\xi_{M}\right), \mathbf{x}-{\mathbf{x}}^k\right\rangle +
\frac{\beta}{2}\|A \mathbf{x}+B \mathbf{y}^{k+1}-\mathbf{b} - \frac{\mathbf{\boldsymbol{\lambda}}^k}{\beta}\|^2
+\frac{\beta}{2}\left\|\mathbf{x}-{\mathbf{x}}^{k}\right\|_{\mathcal{D}_x^k}^2
\end{aligned}$ inexactly such that the inexact criteria are satisfied;\\
 4: $\boldsymbol{\lambda}^{k+1}=\boldsymbol{\lambda}^k-s \beta\left(A \mathbf{x}^{k+1}+B \mathbf{y}^{k+1}-\mathbf{b}\right)$;}
5: {\bfseries Output:} Iterates $\mathbf{x}$ and $\mathbf{y}$ chosen uniformly random from $\left\{ (\mathbf{x}^k,\mathbf{y}^k)\right\}$
\end{algorithm}

\subsection{Global Convergence and Sublinear Convergence}


\begin{lemma}\label{x y w grad 0}
Let $\left\{\mathbf{w}^k:= (\mathbf{x}^k,\mathbf{y}^k,\mathbf{\lambda}^k)\right\}$ be the iterate satisfying the conditions 
(\ref{y update criteria}) and (\ref{x update criteria}).
Suppose  Assumption \ref{assum 1} (a), (b), (c) hold and AL function is bounded below, we can choose the parameters in Algorithm \ref{alg 1} such that
\begin{equation}
\left\{\begin{array}{l}
\frac{1+\tau}{s\beta\sigma_{A}}\psi_1(s)\left(2 L_{f}^{2}+4c_x^2\beta^2\right)
+\widehat{A}-\frac{\beta}{2} \sigma_{\min }\left(D_x^k\right) \leq-w \\
2 \widehat{A}-\frac{\beta}{2} \sigma_{\min }\left(D_y^k\right) \leq-w ,
\end{array}\right.
\end{equation}
where $w>0$,  
$\hat{A}= 4\frac{1+\tau}{s\beta\sigma_{A}}\psi_1(s)c_x^2\beta^2,$  $\sigma_{min}( \mathcal{D}^{k}_x )$ and
$\sigma_{min}( \mathcal{D}^{k}_y )$ are the smallest positive eigenvalue of $\mathcal{D}^{k}_x$ and $\mathcal{D}^{k}_y$, respectively.
We further denote 
\begin{equation}\label{def of potential}
\begin{aligned}
\mathcal{P}^k =& \mathcal{P}\left({\mathbf{x}}^{k}, \mathbf{y}^{k}, \boldsymbol{\lambda}^{k}\right) := \mathcal{L}_\beta\left(\mathbf{x}^k, \mathbf{y}^{k}, \boldsymbol{\lambda}^k\right)+\hat{A}\left\|\ \mathbf{d}_x^{k-1}  \right\|^2  +  \hat{A}\left\|\ \mathbf{d}_y^{k-1}  \right\|^2  + \frac{1+\tau}{s\beta\sigma_{A}}\psi_2(s) \left\|\  A\mathbf{d}_\lambda^{k-1} \right\|^2,
\end{aligned}
\end{equation}
where constant $\tau \in (0,1)$, $\psi_1(s)$ and $\psi_2(s)$ are defined in \cref{ATlambda}. Then the following statements hold:
\begin{align}
&\min_{k\in\left\{0,...,K\right\}} \Biggl\{ \mathbb{E}\left[\left\|\mathbf{d}_y^k\right\|^2\right] +\mathbb{E}\left[\left\|\mathbf{d}_x^k\right\|^2\right]+\mathbb{E}\left[\left\|\mathbf{d}_\lambda^k\right\|^2\right]\Biggr\}
\leq \frac{\Delta_{0,k}}{\mu(K+1)} + 
\frac{\mu_{\sigma,M}}{\mu}\frac{\sigma^2}{M}
,
\end{align}
where $\Delta_{0,k}=\mathbb{E}\mathcal{P}\left({\mathbf{x}}^{0}, \mathbf{y}^{0}, \boldsymbol{\lambda}^{0}\right) - \mathbb{E}\mathcal{P}\left({\mathbf{x}}^{k+1}, \mathbf{y}^{k+1}, \boldsymbol{\lambda}^{k+1}\right)$, $\mu = \min\left\{w, \frac{\tau}{s\beta}
\right\}$, $\mu_{\sigma,M} := 
\frac{ \widehat{c}_x^2 \beta^2}{2 }+\frac{8(1+\tau) \psi_1(s)c_x^2 \beta^2}{s \beta \sigma_A}$.
\end{lemma}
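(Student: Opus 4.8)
The plan is to prove the statement through a single Lyapunov (potential) descent inequality for $\mathcal{P}^k$, obtained by chaining the three block updates, and then telescoping it. First I would track how $\mathcal{L}_\beta$ changes over one sweep. The $\mathbf{y}$-step gives, directly from (\ref{y update criteria}), $\mathcal{L}_\beta(\mathbf{x}^k,\mathbf{y}^{k+1},\boldsymbol{\lambda}^k)\le\mathcal{L}_\beta(\mathbf{x}^k,\mathbf{y}^k,\boldsymbol{\lambda}^k)-\frac{\beta}{2}\|\mathbf{d}_y^k\|_{\mathcal{D}_y^k}^2$; the inexact $\mathbf{x}$-step gives, in conditional expectation, the first line of (\ref{x update criteria}); and the dual step contributes exactly $\mathcal{L}_\beta(\mathbf{x}^{k+1},\mathbf{y}^{k+1},\boldsymbol{\lambda}^{k+1})-\mathcal{L}_\beta(\mathbf{x}^{k+1},\mathbf{y}^{k+1},\boldsymbol{\lambda}^k)=\frac{1}{s\beta}\|\mathbf{d}_\lambda^k\|^2$, since $\mathbf{d}_\lambda^k=-s\beta\,\mathbf{r}^{k+1}$. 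Adding these yields a conditional one-step estimate in which $\|\mathbf{d}_x^k\|^2$ and $\|\mathbf{d}_y^k\|^2$ carry the genuinely negative coefficients $-\frac{\beta}{2}\sigma_{\min}(\mathcal{D}_x^k)$ and $-\frac{\beta}{2}\sigma_{\min}(\mathcal{D}_y^k)$, the dual ascent $+\frac{1}{s\beta}\|\mathbf{d}_\lambda^k\|^2$ is the only obstructing (positive) term, and the variance of the hybrid estimator enters only through $\frac{(\widehat c_x\beta)^2}{2}\frac{\sigma^2}{M}$.

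The crux is then to dominate $\frac{1}{s\beta}\|\mathbf{d}_\lambda^k\|^2$. By Assumption \ref{assum 1}(c), $\|\mathbf{d}_\lambda^k\|^2\le\sigma_A^{-1}\|A^\top\mathbf{d}_\lambda^k\|^2$. Subtracting the $\mathbf{x}$-optimality identity $\xi_x^{k+1}=\nabla f(\mathbf{x}^{k+1})-A^\top\boldsymbol{\lambda}^k+\beta A^\top\mathbf{r}^{k+1}$ at indices $k$ and $k-1$ and eliminating $\mathbf{r}^{k+1}$ through the dual update gives the recursion
\[
A^\top\mathbf{d}_\lambda^k=s\bigl(\nabla f(\mathbf{x}^{k+1})-\nabla f(\mathbf{x}^k)\bigr)-s\bigl(\xi_x^{k+1}-\xi_x^k\bigr)+(1-s)\,A^\top\mathbf{d}_\lambda^{k-1}.
\]
This is exactly the input I would take from \cref{ATlambda}: squaring, using $\|\nabla f(\mathbf{x}^{k+1})-\nabla f(\mathbf{x}^k)\|\le L_f\|\mathbf{d}_x^k\|$ and the inexactness bound (second line of (\ref{x update criteria})) for $\xi_x^{k+1}$ and $\xi_x^k$, yields a bound of the form $\mathbb{E}_k\|A^\top\mathbf{d}_\lambda^k\|^2\le\psi_1(s)\bigl(2L_f^2\|\mathbf{d}_x^k\|^2+4c_x^2\beta^2(\text{differences at }k\text{ and }k-1)+\sigma^2/M\bigr)+\psi_2(s)\|A^\top\mathbf{d}_\lambda^{k-1}\|^2$. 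The decisive point is that $s\in(0,2)$ forces $(1-s)^2<1$, so $\psi_2(s)$ acts as a genuine contraction factor; this is precisely what buys the enlarged dual-stepsize range.

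I would then substitute this dual estimate into the one-step Lagrangian estimate and add the three correction terms defining $\mathcal{P}^k$. The weight $\widehat A=4\frac{1+\tau}{s\beta\sigma_A}\psi_1(s)c_x^2\beta^2$ is chosen so that the lagged $\|\mathbf{d}_x^{k-1}\|^2,\|\mathbf{d}_y^{k-1}\|^2$ produced by the $\xi_x^k$-term are exactly absorbed by $-\widehat A\|\mathbf{d}_x^{k-1}\|^2,-\widehat A\|\mathbf{d}_y^{k-1}\|^2$; the dual weight $\frac{1+\tau}{s\beta\sigma_A}\psi_2(s)$ together with $\psi_2(s)<1$ makes $\|A^\top\mathbf{d}_\lambda^{k-1}\|^2$ telescope with nonpositive residual; and the free split of $\frac{1}{s\beta}$ governed by $\tau\in(0,1)$ is designed to retain a $-\frac{\tau}{s\beta}\|\mathbf{d}_\lambda^k\|^2$ term. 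The two displayed parameter conditions are exactly the requirement that, after all cancellations, the net coefficients of $\|\mathbf{d}_x^k\|^2$ and $\|\mathbf{d}_y^k\|^2$ drop below $-w$. This produces the expected descent
\[
\mathbb{E}_k[\mathcal{P}^{k+1}]\le\mathcal{P}^k-\mu\bigl(\mathbb{E}_k\|\mathbf{d}_x^k\|^2+\mathbb{E}_k\|\mathbf{d}_y^k\|^2+\mathbb{E}_k\|\mathbf{d}_\lambda^k\|^2\bigr)+\mu_{\sigma,M}\tfrac{\sigma^2}{M},
\]
with $\mu=\min\{w,\frac{\tau}{s\beta}\}$ and $\mu_{\sigma,M}$ collecting the $\frac{(\widehat c_x\beta)^2}{2}$ from the $\mathbf{x}$-descent and the two $\frac{\sigma^2}{M}$-contributions (total weight $2\widehat A$) from the $\xi_x$-bounds, matching $\mu_{\sigma,M}=\frac{\widehat c_x^2\beta^2}{2}+\frac{8(1+\tau)\psi_1(s)c_x^2\beta^2}{s\beta\sigma_A}$. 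Finally I would take total expectations by the tower property, sum over $k=0,\dots,K$ so the potential telescopes to $\Delta_{0,k}=\mathbb{E}\mathcal{P}^0-\mathbb{E}\mathcal{P}^{K+1}$, use that $\mathcal{L}_\beta$ (hence $\mathcal{P}^k$) is bounded below to keep $\Delta_{0,k}$ finite, bound the minimum over $k$ by the running average, and divide by $\mu(K+1)$.

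I expect the main obstacle to be closing the dual estimate and the potential simultaneously: one must derive the recursion for $A^\top\mathbf{d}_\lambda^k$ and then choose $\widehat A$, the dual weight, and $\tau$ so that (i) every lagged primal term cancels, (ii) the recursive dual term telescopes via $\psi_2(s)<1$, and (iii) a clean $-\frac{\tau}{s\beta}\|\mathbf{d}_\lambda^k\|^2$ survives, all at once and uniformly over $s\in(0,2)$. Carrying the biased hybrid-estimator noise consistently through these cancellations, so that it aggregates into exactly $\mu_{\sigma,M}$, is the delicate bookkeeping that the whole argument hinges on.
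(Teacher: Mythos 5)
Your proposal follows essentially the same route as the paper's proof: the same three-block descent decomposition ($\mathbf{y}$-step, inexact $\mathbf{x}$-step, dual step), the same dual recursion from \cref{ATlambda} combined with the $\tau$-split $\frac{1}{s\beta}\|\mathbf{d}_\lambda^k\|^2\le\frac{1+\tau}{s\beta\sigma_A}\|A\mathbf{d}_\lambda^k\|^2-\frac{\tau}{s\beta}\|\mathbf{d}_\lambda^k\|^2$, the same potential function with weights $\widehat A$ and $\frac{1+\tau}{s\beta\sigma_A}\psi_2(s)$, and the same telescoping to reach the minimum bound. One small correction: $\psi_2(s)=\max\{\frac{1-s}{s},\frac{s-1}{2-s}\}$ is \emph{not} in general less than $1$ (it blows up as $s\to0$ or $s\to2$); the handling of the lagged dual term works because $\psi_2(s)\left(\|A^\top\mathbf{d}_\lambda^{k-1}\|^2-\|A^\top\mathbf{d}_\lambda^k\|^2\right)$ enters as a telescoping difference, not because $\psi_2$ acts as a contraction factor.
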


\begin{theorem}\label{thm:grad}
Supposing the conditions and Assumptions in \cref{x y w grad 0} hold, we have  	
\begin{equation}
\min_{1\leq k \leq K} \mathbb{E}\left[
\operatorname{dist}\left(\mathbf{0}, \partial \mathcal{L}_\beta\left(\mathbf{w}^k\right)\right)^2\right]= \mathcal{O}(1/T).
\end{equation}	
\end{theorem}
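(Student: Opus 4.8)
The plan is to reduce the claim to \cref{x y w grad 0} by showing that the stationarity residual at the current iterate is dominated by the successive differences $\mathbf{d}_x^{k-1}$, $\mathbf{d}_y^{k-1}$, $\mathbf{d}_\lambda^{k-1}$ together with the variance floor $\sigma^2/M$. Since $\mathcal{L}_\beta$ is smooth in the blocks $\mathbf{x}$ and $\boldsymbol{\lambda}$ and only the $g(\mathbf{y})$ block is nonsmooth, the Clarke subdifferential splits blockwise, so it suffices to produce one admissible triple $(v_x,v_y,v_\lambda)\in\partial\mathcal{L}_\beta(\mathbf{w}^k)$ and use $\operatorname{dist}(\mathbf{0},\partial\mathcal{L}_\beta(\mathbf{w}^k))^2\le\|v_x\|^2+\|v_y\|^2+\|v_\lambda\|^2$. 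Bounding the residual at $\mathbf{w}^k$ in terms of the increments with index $k-1$ is exactly what matches the increment terms stored in the potential $\mathcal{P}^k$ of \cref{x y w grad 0}.

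For the $\boldsymbol{\lambda}$-block I would use $\nabla_\lambda\mathcal{L}_\beta(\mathbf{w}^k)=-\mathbf{r}^k$ and the dual update $\mathbf{r}^k=-\tfrac{1}{s\beta}\mathbf{d}_\lambda^{k-1}$ to get $\|\nabla_\lambda\mathcal{L}_\beta(\mathbf{w}^k)\|=\tfrac{1}{s\beta}\|\mathbf{d}_\lambda^{k-1}\|$. For the $\mathbf{x}$-block I would compare $\nabla_x\mathcal{L}_\beta(\mathbf{w}^k)$ with $\xi_x^k=\nabla_x\mathcal{L}_\beta(\mathbf{x}^k,\mathbf{y}^k,\boldsymbol{\lambda}^{k-1})$; the two differ only through the multiplier argument, giving $\nabla_x\mathcal{L}_\beta(\mathbf{w}^k)=\xi_x^k-A^\top\mathbf{d}_\lambda^{k-1}$, so $\|\nabla_x\mathcal{L}_\beta(\mathbf{w}^k)\|\le\|\xi_x^k\|+\|A\|\,\|\mathbf{d}_\lambda^{k-1}\|$. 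The reindexed second inequality of \eqref{x update criteria} then controls $\mathbb{E}\|\xi_x^k\|^2$ by $(c_x\beta)^2\big(\sigma^2/M+\mathbb{E}\|\mathbf{d}_x^{k-1}\|^2+\mathbb{E}\|\mathbf{d}_y^{k-1}\|^2\big)$, which is precisely where the $\sigma^2/M$ term enters.

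For the $\mathbf{y}$-block I would invoke the optimality condition \eqref{y update criteria} to select $\zeta\in\partial g(\mathbf{y}^k)$ and form the admissible element $v_y=\zeta-B^\top\boldsymbol{\lambda}^k+\beta B^\top\mathbf{r}^k\in\partial_y\mathcal{L}_\beta(\mathbf{w}^k)$. Substituting $\zeta=B^\top\boldsymbol{\lambda}^{k-1}-\beta B^\top\tilde{\mathbf{r}}^k-\beta\mathcal{D}_y^{k-1}\mathbf{d}_y^{k-1}$ with $\tilde{\mathbf{r}}^k:=A\mathbf{x}^{k-1}+B\mathbf{y}^k-\mathbf{b}$, and using $\mathbf{r}^k-\tilde{\mathbf{r}}^k=A\mathbf{d}_x^{k-1}$ and $\boldsymbol{\lambda}^k-\boldsymbol{\lambda}^{k-1}=\mathbf{d}_\lambda^{k-1}$, the residual terms telescope to $v_y=-B^\top\mathbf{d}_\lambda^{k-1}+\beta B^\top A\mathbf{d}_x^{k-1}-\beta\mathcal{D}_y^{k-1}\mathbf{d}_y^{k-1}$, bounded via $\|\mathcal{D}_y^{k-1}\|\le c_y$. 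Collecting the three blocks yields $\operatorname{dist}(\mathbf{0},\partial\mathcal{L}_\beta(\mathbf{w}^k))^2\le C_1\big(\|\mathbf{d}_x^{k-1}\|^2+\|\mathbf{d}_y^{k-1}\|^2+\|\mathbf{d}_\lambda^{k-1}\|^2\big)+C_2\,\sigma^2/M$ for explicit constants $C_1,C_2$ depending on $\|A\|,\|B\|,c_x,c_y,s,\beta$. Taking total expectation (via the tower property, since the $\xi_x$-bound is conditional) and then the minimum over $1\le k\le K$, which reindexes to a minimum over the increments with index $0\le k-1\le K-1$, I would apply the estimate of \cref{x y w grad 0}; boundedness of $\mathcal{P}^k$ (a consequence of $\mathcal{L}_\beta$ bounded below) makes $\Delta_{0,K-1}$ a constant, so the leading term is $\mathcal{O}(1/K)$ and, with the prescribed batch size making $\sigma^2/M$ of the same order, the bound is $\mathcal{O}(1/T)$.

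The hard part will be the $\mathbf{y}$-block: I must certify that the $\zeta$ extracted from the proximal/inexact optimality condition is a genuine Clarke subgradient in $\partial g(\mathbf{y}^k)$ (so that $v_y$ indeed lies in $\partial_y\mathcal{L}_\beta$) even though $g$ is nonconvex and nonsmooth, and then carry out the cancellation of the residual terms cleanly. A secondary subtlety is the stochastic bookkeeping: the control on $\|\xi_x^k\|^2$ holds only in conditional expectation, so the total expectation must be taken before minimizing over $k$, and the resulting $\sigma^2/M$ floor has to be reconciled with the target $\mathcal{O}(1/T)$ rate through the batch-size schedule rather than vanishing on its own.
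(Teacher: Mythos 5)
Your proposal is correct and follows essentially the same route as the paper: the paper's argument (see the display for $\partial_x\mathcal{L}_\beta(\mathbf{w}^k)=\xi_x^k-A^\top\mathbf{d}_\lambda^{k-1}$, $\partial_y\mathcal{L}_\beta(\mathbf{w}^k)\ni\partial_y\mathcal{L}_\beta(\mathbf{x}^{k-1},\mathbf{y}^k,\boldsymbol{\lambda}^{k-1})-B^\top(\mathbf{d}_\lambda^{k-1}-\beta A\mathbf{d}_x^{k-1})$ and $\partial_\lambda\mathcal{L}_\beta(\mathbf{w}^k)=-\mathbf{r}^k$) is exactly your blockwise decomposition, with the $\mathbf{y}$-block subgradient certified by the optimality condition \eqref{y update criteria} via the bound $\|\xi_y^k\|\le c_y\beta\|\mathbf{d}_y^{k-1}\|$, and the rate then read off from \cref{x y w grad 0} together with $M=\mathcal{O}(K+1)$. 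Your two flagged subtleties (membership of $\zeta$ in $\partial g(\mathbf{y}^k)$ and taking total expectation before minimizing over $k$) are handled the same way in the paper, so no genuine gap remains.
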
 

Proofs of  \cref{x y w grad 0} and \cref{thm:grad} are deferred to \cref{app:thm1}.

\subsection{Linear Convergence Rate.}
This subsection is dedicated to establishing the local linear convergence of the iterative sequence $\left\{\mathbf{w}^k\right\}$ and the sequence of potential functions $\left\{\mathbb{E}\left\{ \mathcal{P}^k \right\}\right\}$ under specific assumptions. Denoting by $\Omega^*$ the set of all stationary points of the problem, i.e.,
$$
\begin{aligned}
&\Omega^*=\big\{\left(\mathbf{x}^*, \mathbf{y}^*, \lambda^*\right): A^{\top} \lambda^*=\nabla f\left(\mathbf{x}^*\right), \\
&B^{\top} \lambda^* \in \partial g\left(\mathbf{y}^*\right), A \mathbf{x}^*+B \mathbf{y}^*=\mathbf{b}\big\}.
\end{aligned}$$

\begin{assumption}\label{assump error bound}
(a)(\textbf{Error bound condition} \cite{luo1993error}) For any $\xi \geq \inf _{\mathbf{w}} \mathcal{L}_\beta(\mathbf{w})$, there exist $\epsilon>0$ and $\tau>0$ such that the inequality 
\begin{equation}
\operatorname{dist}\left(\mathbf{w}, \Omega^*\right) \leq \tau \operatorname{dist}\left(\mathbf{0}, \partial \mathcal{L}_\beta(\mathbf{w})\right)
\end{equation}
holds, whenever $\operatorname{dist}\left(\mathbf{0}, \partial \mathcal{L}_\beta(\mathbf{w})\right) \leq \epsilon$ and $\mathcal{L}_\beta(\mathbf{w}) \leq \xi$.

(b) The set $\Omega^*$ is nonempty and there exists a positive constant  $\omega^*$ such that $\left\|\mathbf{w}_1-\mathbf{w}_2\right\| \geq \omega^*$, whenever $\mathbf{w}_1$, $\mathbf{w}_2 \in \Omega^*$ and $F\left(\mathbf{x}_1, \mathbf{y}_1\right) \neq F\left(\mathbf{x}_2, \mathbf{y}_2\right)$.

(c) Function $g$ exhibits local weak convexity near
$$
\Omega_y^*:=\left\{\mathbf{y}: \text { there exist } \mathbf{x} \text { and } \boldsymbol{\lambda} \text { such that }(\mathbf{x}, \mathbf{y}, \boldsymbol{\lambda}) \in \Omega^*\right\},
$$
which implies the existence of $\varepsilon, \sigma, \delta>0$, for $\forall\mathbf{y}_1, \mathbf{y}_2$ with $\operatorname{dist}\left(\mathbf{y}_1, \Omega_y^*\right) \leq \epsilon, \operatorname{dist}\left(\mathbf{y}_2, \Omega_y^*\right) \leq$ $\epsilon$ and $\left\|\mathbf{y}_1-\mathbf{y}_2\right\| \leq \delta$ and for $\forall\nu \in \partial g\left(\mathbf{y}_2\right)$, the following holds:
\begin{equation}
g\left(\mathbf{y}_1\right) \geq g\left(\mathbf{y}_2\right)+\left\langle\nu, \mathbf{y}_1-\mathbf{y}_2\right\rangle-\sigma\left\|\mathbf{y}_1-\mathbf{y}_2\right\|^2 .
\end{equation}
\end{assumption}

See the remark of \cref{assump error bound} in \cref{app:remark of error bound}.
Now, we show that the linear convergence of the sequence $\left\{\mathbb{E}\left\{ \mathcal{P}^k \right\}\right\}$.

\begin{theorem}\label{thm:linear conv}
Suppose that conditions in \cref{x y w grad 0} hold.  
Let  $\left\{\mathbf{w}^k:= (\mathbf{x}^k,\mathbf{y}^k,\mathbf{\lambda}^k)\right\}$ be the iterates generated by Algorithm \ref{alg 1}. Then the following statements hold.
\begin{itemize}
\item[(i)]  $\lim _{k \rightarrow \infty} \operatorname{dist}\left(\mathbf{w}^k, \Omega^*\right)=0$ a.s.
\item[(ii)] There exist constants $\tilde{C}\in \left(0,1\right)$, $\breve{C}>0$ such that 
\begin{equation}\label{linear conv 1}
\mathbb{E} \mathcal{P}^k-F^* \leq \left(\tilde{C}
\right)^k \left(\mathbb{E} \mathcal{P}^0-F^*
\right)+ \breve{C}\frac{\sigma^2}{M}\quad \text{a.s.}
\end{equation}
\end{itemize}
\end{theorem}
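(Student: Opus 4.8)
The plan is to upgrade the one-step sufficient-decrease estimate behind \cref{x y w grad 0} into a geometric contraction for the potential gap $\mathbb{E}\mathcal{P}^k-F^*$, where $F^*=\mathcal{L}_\beta(\mathbf{w}^*)=F(\mathbf{x}^*,\mathbf{y}^*)$ for any $\mathbf{w}^*\in\Omega^*$ (the constraint residual vanishes on $\Omega^*$). Writing $\|\mathbf{d}^k\|^2:=\|\mathbf{d}_x^k\|^2+\|\mathbf{d}_y^k\|^2+\|\mathbf{d}_\lambda^k\|^2$, the argument behind \cref{x y w grad 0} supplies the per-step inequality $\mathbb{E}_k[\mathcal{P}^{k+1}]\le\mathcal{P}^k-\mu\|\mathbf{d}^k\|^2+\mu_{\sigma,M}\frac{\sigma^2}{M}$, and the proof of \cref{thm:grad} supplies a subgradient bound of the form $\operatorname{dist}(\mathbf{0},\partial\mathcal{L}_\beta(\mathbf{w}^{k+1}))^2\le C_1\|\mathbf{d}^k\|^2+C_2\frac{\sigma^2}{M}$. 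These two estimates, together with the three parts of \cref{assump error bound}, are the only ingredients I would use.

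The crux is a pointwise upper bound of the form $\mathcal{P}^{k+1}-F^*\le C_5\|\mathbf{d}^k\|^2+C_6\frac{\sigma^2}{M}$. To obtain it I would first peel off the quadratic tail of the potential, which is $O(\|\mathbf{d}^k\|^2)$ by definition, reducing the task to bounding the Lagrangian gap $\mathcal{L}_\beta(\mathbf{w}^{k+1})-F^*$. Let $\bar{\mathbf{w}}=(\bar{\mathbf{x}},\bar{\mathbf{y}},\bar{\boldsymbol{\lambda}})$ be a projection of $\mathbf{w}^{k+1}$ onto $\Omega^*$; \cref{assump error bound}(b) guarantees the projection lies on a single level set so that $\mathcal{L}_\beta(\bar{\mathbf{w}})=F^*$ and $A\bar{\mathbf{x}}+B\bar{\mathbf{y}}=\mathbf{b}$. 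Then I would expand $\mathcal{L}_\beta(\mathbf{w}^{k+1})-\mathcal{L}_\beta(\bar{\mathbf{w}})$ and bound its three pieces: the $f$-difference by $L$-smoothness together with the stationarity identity $\nabla f(\bar{\mathbf{x}})=A^\top\bar{\boldsymbol{\lambda}}$; the $g$-difference from above by the weak-convexity inequality of \cref{assump error bound}(c) applied at $\mathbf{y}^{k+1}$, which is the only place convexity-type structure of the nonsmooth term enters and yields $\langle\nu,\mathbf{y}^{k+1}-\bar{\mathbf{y}}\rangle+\sigma\|\mathbf{y}^{k+1}-\bar{\mathbf{y}}\|^2$ with a $\nu\in\partial g(\mathbf{y}^{k+1})$ read off the $\mathbf{y}$-optimality condition \eqref{y update criteria}; and the constraint/dual terms by combining $\boldsymbol{\lambda}^{k+1}=\boldsymbol{\lambda}^k-s\beta\mathbf{r}^{k+1}$ with \cref{assum 1}(c) to control $\|\boldsymbol{\lambda}^{k+1}-\bar{\boldsymbol{\lambda}}\|$ through $\|A^\top(\cdot)\|$. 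Collecting the cross terms and using the error bound $\operatorname{dist}(\mathbf{w}^{k+1},\Omega^*)^2\le\tau^2\operatorname{dist}(\mathbf{0},\partial\mathcal{L}_\beta(\mathbf{w}^{k+1}))^2\le\tau^2 C_1\|\mathbf{d}^k\|^2+\tau^2 C_2\frac{\sigma^2}{M}$ to absorb every occurrence of $\operatorname{dist}(\mathbf{w}^{k+1},\Omega^*)$ produces the desired pointwise bound.

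With the pointwise bound in hand, statement (ii) follows by eliminating $\|\mathbf{d}^k\|^2$: taking $\mathbb{E}_k$ of the pointwise bound and substituting $\mathbb{E}_k\|\mathbf{d}^k\|^2\le\frac{1}{\mu}(\mathcal{P}^k-\mathbb{E}_k[\mathcal{P}^{k+1}])+\frac{\mu_{\sigma,M}}{\mu}\frac{\sigma^2}{M}$ from the sufficient-decrease step, I would rearrange to
\begin{equation}
(1+a)\bigl(\mathbb{E}_k[\mathcal{P}^{k+1}]-F^*\bigr)\le a\bigl(\mathcal{P}^k-F^*\bigr)+C_7\frac{\sigma^2}{M},\qquad a:=\frac{C_5}{\mu},
\end{equation}
which yields the contraction factor $\tilde{C}=\frac{a}{1+a}=\frac{C_5}{\mu+C_5}\in(0,1)$. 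Taking total expectations and iterating the resulting scalar recursion, the geometric series in the residual sums to $\breve{C}\frac{\sigma^2}{M}$, giving \eqref{linear conv 1}. For statement (i), I would telescope the sufficient-decrease inequality; when the estimator variance vanishes asymptotically, as for the variance-reduced members of the framework, this gives $\sum_k\mathbb{E}\|\mathbf{d}^k\|^2<\infty$, so that $\|\mathbf{d}^k\|\to0$ a.s. The subgradient bound then forces $\operatorname{dist}(\mathbf{0},\partial\mathcal{L}_\beta(\mathbf{w}^k))\to0$ a.s., and the error bound \cref{assump error bound}(a) converts this into $\operatorname{dist}(\mathbf{w}^k,\Omega^*)\to0$ a.s.

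The main obstacle is the pointwise bound $\mathcal{P}^{k+1}-F^*\le C_5\|\mathbf{d}^k\|^2+C_6\frac{\sigma^2}{M}$ of the second paragraph. Unlike the convex case, $\mathcal{L}_\beta$ is nonsmooth and nonconvex in $\mathbf{y}$, so the upper estimate of the $g$-gap rests entirely on the local weak convexity \cref{assump error bound}(c); making this rigorous requires first confirming that the iterates eventually enter the neighborhood in which \cref{assump error bound}(a) and (c) are valid, which is where part (i) and the sublevel-set boundedness of $\{\mathcal{P}^k\}$ must be invoked, and that the projection $\bar{\mathbf{w}}$ does not jump between disconnected components of $\Omega^*$ carrying different objective values, precisely the role of the separation constant $\omega^*$ in \cref{assump error bound}(b).
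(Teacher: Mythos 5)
Your proposal is correct and follows essentially the same route as the paper's proof: the same projection onto $\Omega^*$ with the three-way decomposition of the Lagrangian gap (bounding the $f$-part by $L$-smoothness plus stationarity, the $g$-part by the local weak convexity of \cref{assump error bound}(c) with a subgradient read off \eqref{y update criteria}, and the dual part via the update rule), the same absorption of $\operatorname{dist}(\mathbf{w},\Omega^*)$ through the error bound, the same elimination of $\|\mathbf{d}^k\|^2$ via the sufficient-decrease inequality, and the same contraction factor $\tilde{C}=\bar{C}/(\mu+\bar{C})$. The only cosmetic difference is that the paper is slightly more careful in relating $F^*=\lim_k\mathbb{E}\mathcal{P}^k$ to the common value $\bar{F}^*$ of $\mathcal{L}_\beta$ on the relevant component of $\Omega^*$ (via \cref{assump error bound}(b) and lower semicontinuity), a point you correctly flag in your final paragraph.
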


If $\frac{\sigma^2}{M}=0$ (noiseless case), (\ref{linear conv 1}) reduces to
\begin{equation} \label{linear conv 2}
\mathbb{E} \mathcal{P}^k-F^* \leq \left(\tilde{C}
\right)^k \left(\mathbb{E} \mathcal{P}^0-F^*
\right)\quad \text{a.s.},
\end{equation}
which indicates that $\mathbb{E} \mathcal{P}^k$ a.s. converges to $F^*$ at a linear rate as the number of iteration $k$ goes to infinity. 


\section{Hybrid Stochastic Estimators for Inexact Update}

\begin{algorithm}[tb]
\caption{Accelerated Hybrid Stochastic Algorithm}
\label{alg 5.1}
\KwIn{ $\Theta>\Lambda, \tau=1-\sqrt{\frac{\Theta-\mu}{\Theta+\mu}},$ and initial values of $\widehat{\mathbf{x}}_0=\breve{\mathbf{x}}_1=\mathbf{x}_1=\mathbf{x}^k$ and $v_0:=\frac{1}{M} \sum_{\hat{\xi}_i \in \mathcal{M}} \nabla f\left(\widehat{\mathbf{x}}_0 ; \hat{\xi}_i\right)$;}
\For{$t=0,1,2,...,m$}{
1: $\beta_t=\max \left\{\bar{\beta}_t, \tau\right\}, \textrm{ where } \bar{\beta}_t=2 /(t+1);$\\
2: $\widehat{\mathbf{x}}_t=\beta_t \breve{\mathbf{x}}_t+\left(1-\beta_t\right) \mathbf{x}_t;$\\
3: $\text { Generate a sample pair }\left(\xi_t, \zeta_t\right) \text{ independently};$\\
4: Compute $ \tilde{v}_t $ as defined in (\ref{def of tilde v});\\
5: $\breve{\mathbf{x}}_{t+1}=\arg \min \left\{\left\langle \tilde{v}_t , \mathbf{x}\right\rangle+\frac{\gamma_t}{2}\left\|\mathbf{x}-\breve{\mathbf{x}}_t\right\|^2+\phi(\mathbf{x})\right\}$;\\
6: $\mathbf{x}_{t+1}=\beta_t \breve{\mathbf{x}}_{t+1}+\left(1-\beta_t\right) \mathbf{x}_t$;\\}
7: {\bfseries Output:} $\left(\widehat{\mathbf{x}}_{\bar{m}},\breve{\mathbf{x}}_{\bar{m}},\mathbf{x}_{\bar{m}} \right) $ chosen uniformly random from $\left\{\left(\widehat{\mathbf{x}}_i,\breve{\mathbf{x}}_i,\mathbf{x}_i \right)
\right\}^{m+1}_{i=1}.$
\end{algorithm}

In this section, we introduce an acceleration method that employs a novel hybrid stochastic gradient estimator to approximate a solution for the $\mathbf{x}$ subproblem (step 3 in \cref{alg 1}).
Applying this method to address step 5 of UI-SADMM results in the development of the novel Accelerated Hybrid SADMM (AH-SADMM).
This innovative hybrid stochastic gradient estimator combines the SARAH estimator and any unbiased one, aiming to achieve a balance between variance and bias.

\subsection{Inexact Update with Hybrid Stochastic Estimators}

Before delving into the details of the hybrid stochastic gradient and the updating step for the $\mathbf{x}$-subproblem, we define the hybrid stochastic estimators for the gradient of a smooth function $f$.
\begin{definition}
With two independent random variables $\xi_t$ and $\zeta_t$ satisfying 
$\mathbb{E}_{\xi_t}\left[ \nabla f\left(\widehat{\mathbf{x}}_t ; \xi_t\right) \right]= \nabla f\left(\widehat{\mathbf{x}}_t \right)$, 
$\mathbb{E}_{\zeta_t}\left[ \nabla f\left(\widehat{\mathbf{x}}_t ; \zeta_t\right) \right]= \nabla f\left(\widehat{\mathbf{x}}_t \right)$, $v_t$  is denoted as a hybrid stochastic estimator of gradient $\nabla f\left(\widehat{\mathbf{x}}_t \right)$ given the following quantity:
\begin{align}\label{hy sto grad}
v_t:=& \alpha_{t-1} v_{t-1}+\alpha_{t-1}\left(\nabla f\left(\widehat{\mathbf{x}}_t ; \xi_t\right)-\nabla f\left(\widehat{\mathbf{x}}_{t-1} ; \xi_t\right)\right) +\left(1-\alpha_{t-1}\right) \nabla f\left(\widehat{\mathbf{x}}_t ; \zeta_t\right).
\end{align}

\end{definition}
This definition reveals certain special cases. Specifically, when $\alpha_t=0$, the gradient estimator coincides with that of SGD and SVRG. 
Conversely, when $\alpha_t=1$, the gradient estimator transforms into the recursive SARAH estimator, yielding more updated information based on $\widehat{\mathbf{x}}_{t-1}$ and $v_{t-1}$ compared to SVRG, which relies on older snapshot point $\tilde{\mathbf{x}}$. 
These cases are summarized in the \cref{table 2}, using notations $\hat{\nabla} f\left(\widehat{\mathbf{x}}_t ; \zeta_t\right)=\nabla f\left( \tilde{\mathbf{x}} \right) + \nabla f\left(\widehat{\mathbf{x}}_t ; \zeta_t\right) - \nabla f\left(\tilde{\mathbf{x}} ; \zeta_t\right)$,  $\Delta_{t,t-1}^f=\nabla f\left(\widehat{\mathbf{x}}_t ; \xi_t\right)-\nabla f\left(\widehat{\mathbf{x}}_{t-1} ; \xi_t\right)$.
In this paper, we concentrate on the case $\alpha_t \in \left(\underline{\alpha},1\right), \underline{\alpha}>0$, which can be treated as a hybrid recursive stochastic estimator.

To analyze the specific
update of $\mathbf{x}$-subproblem, we begin by giving the necessary  definitions:
\begin{align} \Phi^k(\mathbf{x})  &:=h^k(\mathbf{x})+\phi^k(\mathbf{x}), \label{x-sub inner update} \\
h^k(\mathbf{x})
&=f(\mathbf{x})+\frac{\beta}{2}\left\|\mathbf{x}-\mathbf{x}^k\right\|_{\mathcal{D}_x^k},\nonumber\\
\phi^k(\mathbf{x}) &=\mathbf{x}^{\top} \mathbf{p}^k+\frac{\beta}{2} \| \mathbf{x}-\mathbf{x}^k \|^2_{A^{\top} A},\nonumber\\
\mathbf{p}^k &=-A^{\top}\left[\boldsymbol{\lambda}^k-\beta\left(A \mathbf{x}^k+B \mathbf{y}^{k+1}-\mathbf{b}\right)\right].\nonumber
\end{align}

\begin{table}[t]
\caption{Extreme cases of $\alpha_t$}
\label{table 2}
\vskip 0.10in
\begin{center}
\begin{small}
\begin{sc}
\begin{tabular}{lccr}
\toprule
Parameter $\alpha_t$ & Special choose & Estimator \\
\midrule
$\alpha_t=0$ & $v_t = \nabla f\left(\widehat{\mathbf{x}}_t ; \zeta_t\right)$ & SGD \\
$\alpha_t=0$  & $v_t = \hat{\nabla} {f}\left(\widehat{\mathbf{x}}_t ; \zeta_t\right)$ & SVRG \\
$\alpha_t=1$ & $v_t - {v}_{t-1}= \Delta_{t,t-1}^f$  & SARAH \\
\bottomrule
\end{tabular}
\end{sc}
\end{small}
\end{center}
\vskip -0.1in
\end{table}

With the introduction of a hybrid stochastic estimator for $\nabla f\left(\widehat{\mathbf{x}}_t \right)$ and the function $h^k$, we proceed to define the hybrid stochastic estimator for $\nabla h^k\left(\widehat{\mathbf{x}}_t \right)$ as follows:
\begin{align}
&\tilde{v}_t := \nabla h^k\left(\widehat{\mathbf{x}}_t ;\xi_{t}; \zeta_t\right)
:=  v_t + \beta \mathcal{D}_x^k (\widehat{\mathbf{x}}_{t}-\mathbf{x}^k),\nonumber\\
& \nabla h^k\left(\mathbf{x} ; \xi_t\right) = \nabla f\left(\mathbf{x} ; \xi_t\right) + \beta \mathcal{D}_x^k (\mathbf{x}-\mathbf{x}^k),\nonumber \\
&  \nabla h^k\left(\mathbf{x} ; \zeta_t\right) = \nabla f\left(\mathbf{x} ; \zeta_t\right) + \beta \mathcal{D}_x^k (\mathbf{x}-\mathbf{x}^k),\nonumber \\
&\tilde{v}_t:=\alpha_{t-1} \tilde{v}_{t-1}+\alpha_{t-1}\left(\nabla h^k\left(\widehat{\mathbf{x}}_t ; \xi_t\right)-\nabla h^k\left(\widehat{\mathbf{x}}_{t-1} ; \xi_t\right)\right)  +\left(1-\alpha_{t-1}\right) \nabla h^k\left(\widehat{\mathbf{x}}_t ; \zeta_t\right)\nonumber\\
&\quad = v_t + \beta \mathcal{D}_x^k (\widehat{\mathbf{x}}_t -\mathbf{x}^k).\label{def of tilde v}
\end{align}
Let
$u_t=\nabla h^k\left(\widehat{\mathbf{x}}_t ; \zeta_t\right)$. The stochastic gradient $u_t$ satisfies
$\mathbb{E}_{\zeta_t}\left[u_t\right]=\nabla h^k\left(\widehat{\mathbf{x}}_t\right)$ and 
$\mathbb{E}_{\zeta_t}\left[\left\|u_t-\nabla h^k\left(\widehat{\mathbf{x}}_t\right)\right\|^2\right]\leq  \sigma^2$. It is worth noting that $h^k$ satisfies the following Assumption:
\begin{assumption}\label{assum of func h}
There exist $\mu>0$  and $\Lambda>0$ such that
\begin{align}\label{h lip}
-\frac{\mu}{2}\left\|\mathbf{z}_1-\mathbf{z}_2\right\|^2 &\leq h^k\left(\mathbf{z}_2\right)-h^k\left(\mathbf{z}_1\right)-\left\langle\nabla h^k\left(\mathbf{z}_1\right), \mathbf{z}_2-\mathbf{z}_1\right\rangle \leq \frac{\Lambda}{2}\left\|\mathbf{z}_1-\mathbf{z}_2\right\|^2.
\end{align}
\end{assumption}

Since our focus is on solving the $\mathbf{x}$-subproblem within a fixed outer iteration count $k$,
we use concise notation, denoting $\Phi^k, h^k, \phi^k$ and $\Lambda^k$ as $\Phi, h, \phi$ and $\Lambda$, respectively. Our algorithm extends the accelerated gradient method proposed in \cite{BHZ22} for nonconvex problems, integrating a hybrid stochastic gradient.

\begin{remark}

\textbf{(Variance-bias trade-off \cite{tran2022hybrid})}
When $\alpha \in (0, 1)$, the bias of $\tilde{v}_t$ can be formulated as 
\begin{equation}\label{trade-off}
\begin{aligned}
&\operatorname{Bias}\left[\tilde{v}_t \mid \mathcal{F}_t\right]=  \left\|\mathbb{E}_{\left(\xi_t, \zeta_t\right)}\left[ \tilde{v}_t-\nabla h\left(\widehat{\mathbf{x}}_t\right)
\mid \mathcal{F}_t\right]\right\| \\
&=\alpha \left\| \tilde{v}_{t-1}-\nabla h\left(\widehat{\mathbf{x}}_{t-1}\right)
\right\| <\left\|\tilde{v}_{t-1}-\nabla h\left(\widehat{\mathbf{x}}_{t-1}\right)\right\| .
\end{aligned}
\end{equation}
Notably, the bias of $\tilde{v}_t$ is  smaller than that of $\tilde{v}_{t-1}$. The SARAH estimator, defined as
$v_{t}^{\mathrm{sarah}} := v_{t-1}^{\mathrm{sarah}}+\nabla h\left(\widehat{\mathbf{x}}_t ; \xi_t\right)
-\nabla h\left(\widehat{\mathbf{x}}_{t-1} ; \xi_{t}\right)$ with $\operatorname{Bias}\left[v_t^{\text {sarah }} \mid \mathcal{F}_t\right]=\left\|v_{t-1}^{\text {sarah }}-\nabla h\left(\widehat{\mathbf{x}}_{t-1}\right)\right\|$, suggests that the bias of hybrid estimator is smaller than that of SARAH estimator. 
The parameter $\alpha$ plays a crucial role in regulating the trade-off between bias and variance.
Further discussions can be found in \cref{app:hy sto}.
\end{remark}

\subsection{Accelerated Hybrid Update for Inexact Subproblem}
AH-SADMM integrates acceleration steps (Steps 2 and 6 in \cref{alg 5.1}) from ASADMM\cite{BHZ22} and introduces a novel hybrid gradient estimate $\tilde{v}_t$ for updating x (Step 5). This hybrid gradient estimate extends the recursive SARAH gradient estimator, enabling the simultaneous utilization of both recursive and the latest gradient information. 
The theoretical proof will be provided to demonstrate that the inexact solution obtained by \cref{alg 5.1} for the x subproblem adheres to the inexact criterion \eqref{x update criteria}.

\begin{figure}[b] 
\vskip -0.1in
\centering
\subfigure{\includegraphics[width=0.434\textwidth]{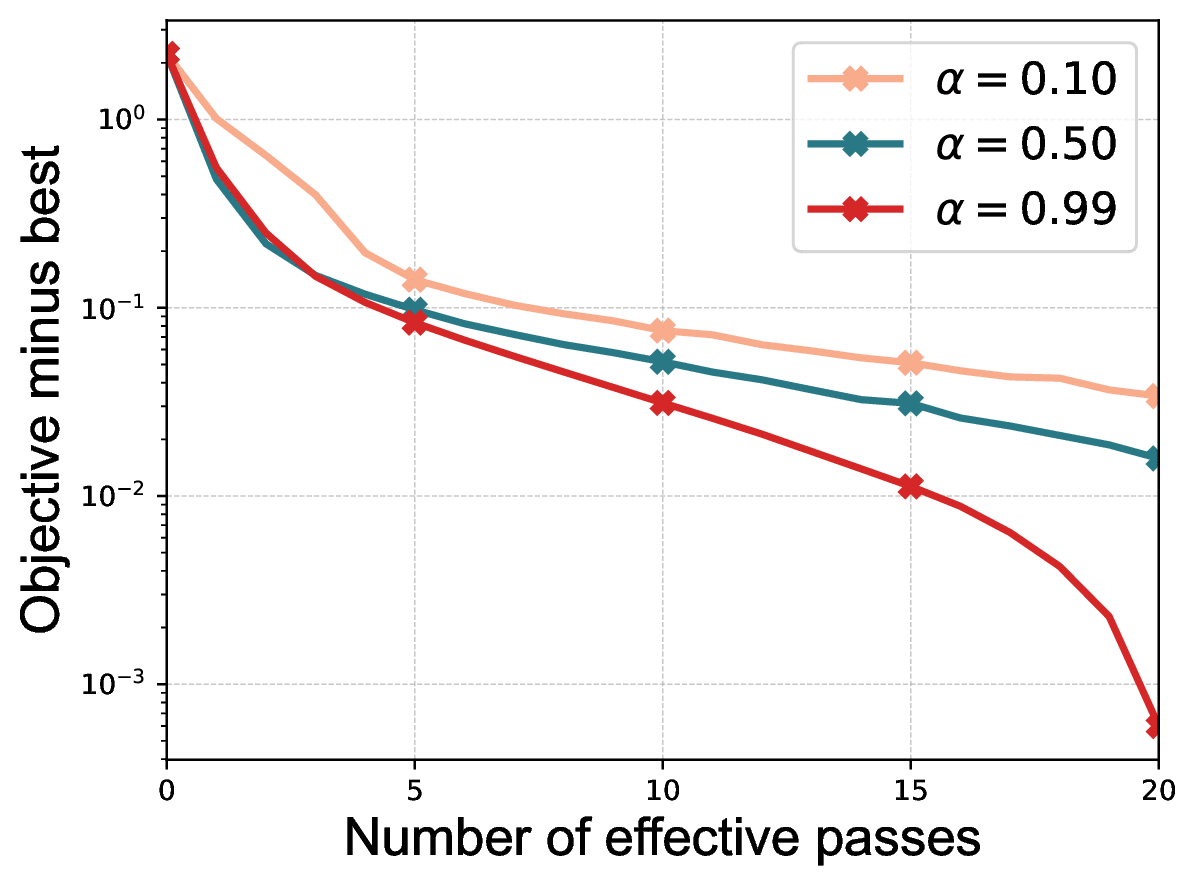}} 
\subfigure{\includegraphics[width=0.430\textwidth]{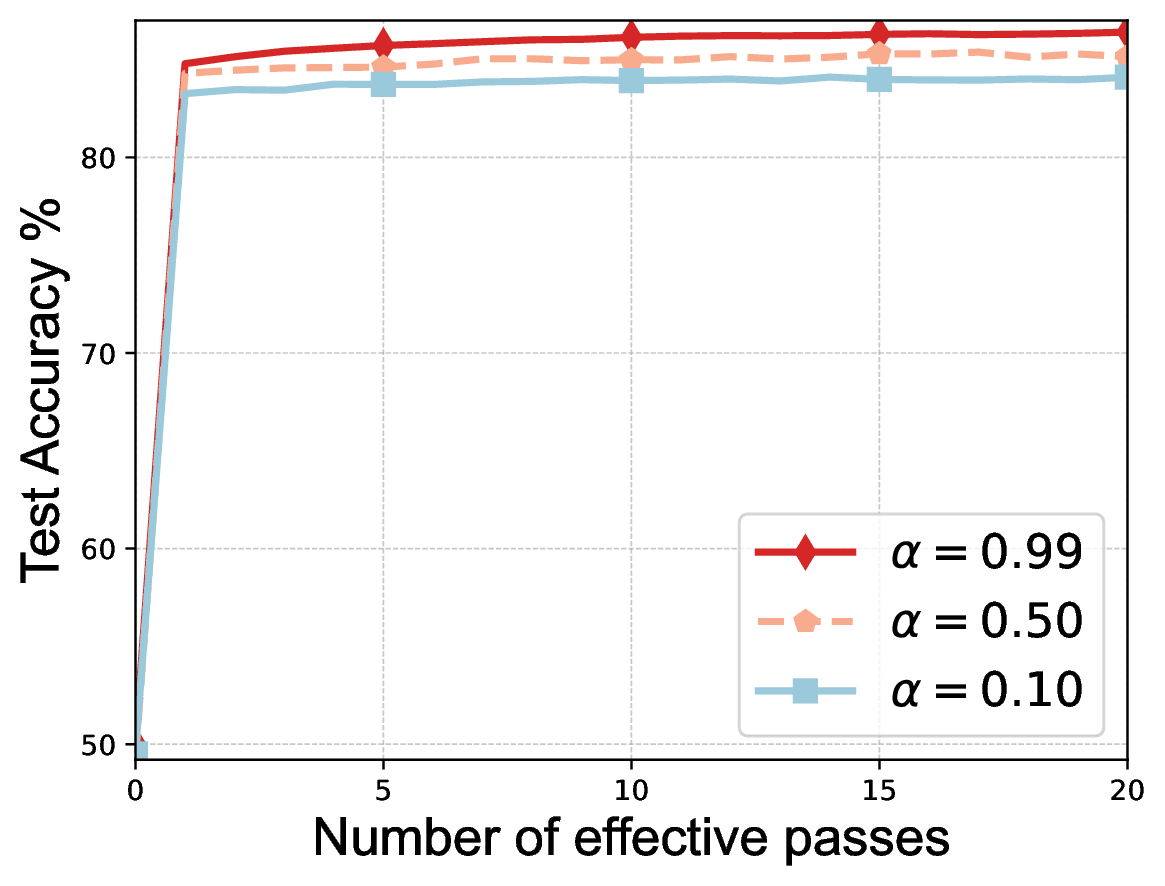}}
\caption{Comparison of different hybrid parameter $\alpha$ for solving the nonconvex problem (\ref{equ log with SCAD}).}
\label{fig:alpha}
\vskip -0.1in
\end{figure}

\begin{theorem}\label{thm 4.3}
Suppose that Assumption \ref{assum of func h} and conditions in \cref{x y w grad 0} hold. Let $\left\{\left(\widehat{\mathbf{x}}_t,\breve{\mathbf{x}}_t,\mathbf{x}_t \right) \right\}$ be generated by Algorithm \ref{alg 5.1}, we obtain
\begin{align}
&\frac{1}{m+1}\left[ \sum_{t=0}^{m}\frac{\beta_t}{2}
\mathbb{E}\left[ \left\|\breve{\mathbf{x}}_{t+1}-\mathbf{x}_t\right\|_{\mathcal{M}}^2\right]
- \sum_{t=0}^{m} \Gamma_t
\mathbb{E}\left[ \left\|\mathbf{s}_t\right\|^2\right]
\right]\nonumber\\
\leq 	&\frac{\mathbf{H}_{0}-\mathbf{H}_{m+1}}{m+1}+ \frac{\sigma^{2}}{2{r}(1+{\alpha})}(c_1+\frac{1}{c_1})\frac{1}{\sqrt{M(m+1)}},
\end{align}
where $\mathcal{M}=\beta A^{\top} A$, $\Gamma_t=\frac{\gamma_t \beta_t - \Lambda \beta_t^2}{2}$, $\mathbf{s}_t=\breve{\mathbf{x}}_{t+1}-\breve{\mathbf{x}}_t$, $r>0$ $c_1>0$, and the functions $\left\{\mathbf{H}_{i}\right\}$ are defined in \eqref{def of Hi}.
\end{theorem}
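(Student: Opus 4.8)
The plan is to run the potential-function (Lyapunov) analysis that is standard for accelerated stochastic gradient schemes, modified to accommodate the \emph{biased} hybrid estimator $\tilde{v}_t$. I would first exploit the optimality of the proximal step (Step 5 of \cref{alg 5.1}): since $\breve{\mathbf{x}}_{t+1}$ minimizes the $\gamma_t$-strongly-convex objective $\langle\tilde{v}_t,\mathbf{x}\rangle+\tfrac{\gamma_t}{2}\|\mathbf{x}-\breve{\mathbf{x}}_t\|^2+\phi(\mathbf{x})$, and $\phi$ is convex with curvature $\mathcal{M}=\beta A^\top A\succeq\mathbf{0}$, the three-point prox inequality with comparison point $\mathbf{x}_t$ gives
\[
\langle\tilde{v}_t,\breve{\mathbf{x}}_{t+1}-\mathbf{x}_t\rangle+\phi(\breve{\mathbf{x}}_{t+1})-\phi(\mathbf{x}_t)\le\tfrac{\gamma_t}{2}\big(\|\mathbf{x}_t-\breve{\mathbf{x}}_t\|^2-\|\mathbf{x}_t-\breve{\mathbf{x}}_{t+1}\|^2-\|\mathbf{s}_t\|^2\big)-\tfrac12\|\mathbf{x}_t-\breve{\mathbf{x}}_{t+1}\|_{\mathcal{M}}^2,
\]
where the final $\mathcal{M}$-seminorm term comes from the $A^\top A$ part of $\phi$ being $\mathcal{M}$-strongly convex; after scaling by $\beta_t$ this is precisely the leading $\tfrac{\beta_t}{2}\|\breve{\mathbf{x}}_{t+1}-\mathbf{x}_t\|_{\mathcal{M}}^2$ term on the left of the claim. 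In parallel, the smoothness side of \cref{assum of func h} together with the acceleration identity $\mathbf{x}_{t+1}-\widehat{\mathbf{x}}_t=\beta_t\mathbf{s}_t$ (immediate from Steps 2 and 6) yields $h(\mathbf{x}_{t+1})\le h(\widehat{\mathbf{x}}_t)+\beta_t\langle\nabla h(\widehat{\mathbf{x}}_t),\mathbf{s}_t\rangle+\tfrac{\Lambda\beta_t^2}{2}\|\mathbf{s}_t\|^2$, which is where the coefficient $\Gamma_t=\tfrac{\gamma_t\beta_t-\Lambda\beta_t^2}{2}$ originates.

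Next I would introduce the estimation error $\mathbf{e}_t:=\tilde{v}_t-\nabla h(\widehat{\mathbf{x}}_t)$ and split $\langle\nabla h(\widehat{\mathbf{x}}_t),\mathbf{s}_t\rangle=\langle\tilde{v}_t,\mathbf{s}_t\rangle-\langle\mathbf{e}_t,\mathbf{s}_t\rangle$. Combining the prox inequality (scaled by $\beta_t$) with the smoothness descent, the convexity contributions $\phi(\breve{\mathbf{x}}_{t+1})-\phi(\mathbf{x}_t)$ merge with $h$ to rebuild $\Phi=h+\phi$ at successive iterates, the quadratic in $\mathbf{s}_t$ collapses to the $-\Gamma_t\|\mathbf{s}_t\|^2$ term, and the distance terms $\|\mathbf{x}_t-\breve{\mathbf{x}}_t\|^2-\|\mathbf{x}_t-\breve{\mathbf{x}}_{t+1}\|^2$ become the telescoping part of the potential. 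The stray cross term $-\beta_t\langle\mathbf{e}_t,\mathbf{s}_t\rangle$ is handled by Young's inequality with a free parameter, which is exactly the source of the $(c_1+\tfrac{1}{c_1})$ factor, isolating $\mathbb{E}\|\mathbf{e}_t\|^2$ for separate treatment while leaving a controllable multiple of $\|\mathbf{s}_t\|^2$ to be absorbed into $\Gamma_t$.

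I would then assemble the per-step potential $\mathbf{H}_t$ of \eqref{def of Hi} from $\Phi(\mathbf{x}_t)$, a momentum/distance term carrying the $\tfrac{\gamma_t\beta_t}{2}\|\cdot\|^2$ contribution, and an error-tracking term, so that the accumulated inequalities telescope cleanly into $\mathbf{H}_0-\mathbf{H}_{m+1}$. The crux is bounding $\mathbb{E}\|\mathbf{e}_t\|^2$. Here I invoke the recursive structure of $\tilde{v}_t$ in \eqref{def of tilde v}: writing $\mathbf{e}_t=\alpha_{t-1}\mathbf{e}_{t-1}+\alpha_{t-1}\mathbf{A}_t+(1-\alpha_{t-1})\mathbf{B}_t$ with $\mathbf{A}_t,\mathbf{B}_t$ conditionally mean-zero (by $\mathbb{E}_{\xi_t}$ and $\mathbb{E}_{\zeta_t}$, respectively) and independent, taking $\mathbb{E}_t$ gives the contraction $\mathbb{E}_t\|\mathbf{e}_t\|^2\le\alpha^2\|\mathbf{e}_{t-1}\|^2+\alpha^2 L^2\|\widehat{\mathbf{x}}_t-\widehat{\mathbf{x}}_{t-1}\|^2+(1-\alpha)^2\sigma^2$, using the $L$-Lipschitz bound of \cref{assum 1}(a) on $\mathbf{A}_t$ and the bounded-variance bound on $\mathbf{B}_t$, together with the initial mini-batch error $\mathbb{E}\|\mathbf{e}_0\|^2\le\sigma^2/M$. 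Summing this recursion over $t$ and balancing it against the step schedule $\beta_t=\max\{2/(t+1),\tau\}$ produces the $(1+\alpha)$ denominator and the $\tfrac{1}{\sqrt{M(m+1)}}$ rate, exactly as in the hybrid SARAH--SGD analysis of \cite{tran2022hybrid}.

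Finally I would sum the per-iteration inequality over $t=0,\dots,m$, telescope the $\mathbf{H}_t$ terms, substitute the summed variance bound, and divide by $m+1$ to reach the stated inequality. The main obstacle will be the third step: coupling the biased hybrid-estimator recursion to the \emph{accelerated} iterate $\widehat{\mathbf{x}}_t$ rather than to a fixed snapshot (as in SVRG), since the feedback term $\|\widehat{\mathbf{x}}_t-\widehat{\mathbf{x}}_{t-1}\|^2$ must be re-expressed through $\mathbf{s}_t$ and reabsorbed into the potential without overwhelming $\Gamma_t$. In particular, the nonvanishing bias $\mathbb{E}_t[\mathbf{e}_t\mid\mathcal{F}_t]=\alpha\,\mathbf{e}_{t-1}$ from \eqref{trade-off} must be threaded through the telescoping so that it does not break the descent structure, and the Young parameters $r,c_1$ together with the $\beta_t$ schedule must be chosen consistently so that the two free constants surface precisely as in the claim.
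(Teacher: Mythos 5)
Your proposal follows essentially the same route as the paper's proof: the three-point prox inequality exploiting the $\mathcal{M}$-strong convexity of $\phi$, the $\Lambda$-smoothness descent via $\mathbf{x}_{t+1}-\widehat{\mathbf{x}}_t=\beta_t\mathbf{s}_t$, Young's inequality with a free parameter $r$ on the cross term $\beta_t\langle\nabla h(\widehat{\mathbf{x}}_t)-\tilde{v}_t,\breve{\mathbf{x}}_{t+1}-\mathbf{x}_t\rangle$, the hybrid-estimator variance recursion of \cite{tran2022hybrid} (the paper uses its unrolled form, you state the equivalent one-step contraction), and telescoping of the potential $\mathbf{H}_t$ with the choices $r=\sqrt{2l_3\Lambda^2\alpha^2/(1-\alpha^2)}$ and $\alpha=1-c_1/\sqrt{M(m+1)}$ yielding the $(c_1+\tfrac{1}{c_1})/\sqrt{M(m+1)}$ factor. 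You also correctly identify the genuine crux — re-expressing the feedback terms $\|\widehat{\mathbf{x}}_{i+1}-\widehat{\mathbf{x}}_i\|^2$ through $\theta_t(\mathbf{x}_t-\mathbf{x}_{t-1})$ and absorbing the resulting double geometric sum into the potential via the constraints on $\gamma_t$ — which is exactly how the paper closes the argument.
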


The above theorem establishes the sublinear convergence of the iterative subsequences. Based on this theorem, we will specify the settings for parameters $\alpha$, $\gamma_{t}$, $g(\alpha)$, and $M$ leading to the following corollary.

\begin{figure}[b] 
\vskip -0.1in
\centering
\subfigure{\includegraphics[width=0.440\textwidth]{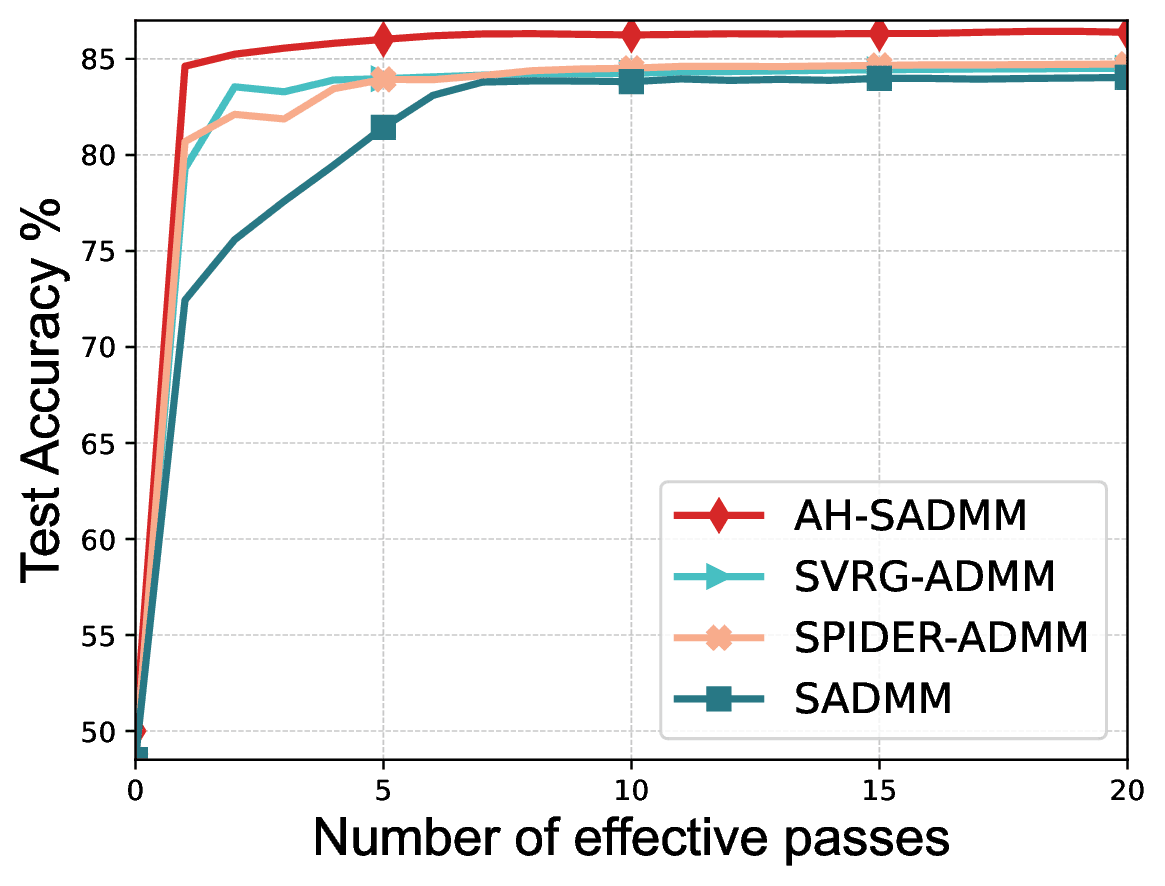}} 
\subfigure{\includegraphics[width=0.434\textwidth]{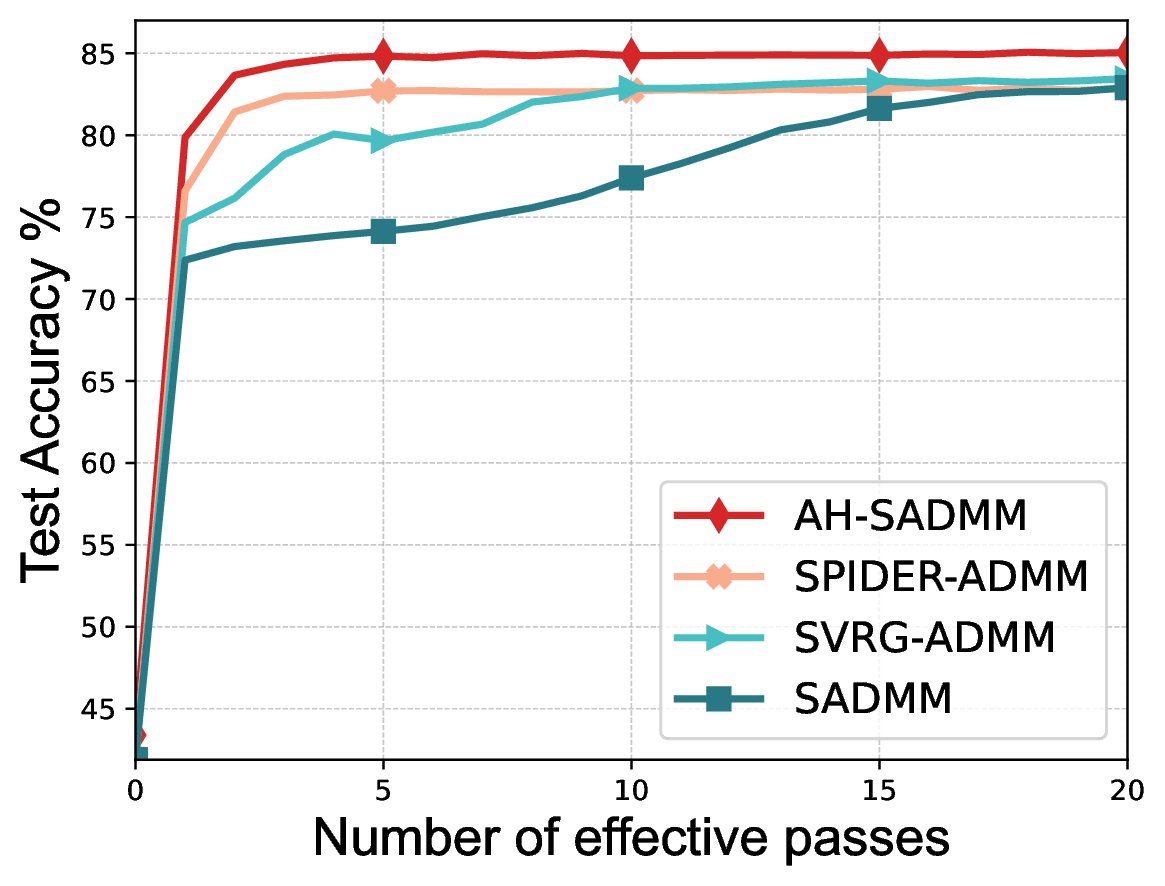}}
\caption{Test accuracy of \eqref{equ log with SCAD} on ijcnn1 (left) and a9a (right).}
\label{fig:ijcn1 acc}
\vskip -0.1in
\end{figure}

\begin{corollary}\label{cor 5.5}
Suppose that Assumption \ref{assum of func h} and conditions in \cref{thm 4.3} hold. Let $\left\{\left(\widehat{\mathbf{x}}_t,\breve{\mathbf{x}}_t,\mathbf{x}_t \right) \right\}$ be generated by Algorithm \ref{alg 5.1} using the following constant hybrid parameter $\alpha$ and step-size $\gamma_t$:
\begin{equation}\label{alpha setting}
\begin{cases}
&\alpha=1-\frac{c_1}{\sqrt{M(m+1)}},\\
&\gamma_{t} = \beta_{t}(\frac{\mu + 2g(\alpha)}{2\tau -\tau^2} -\mu) \frac{t+1}{t},\\
&g(\alpha)=\alpha \Lambda \sqrt{\frac{2l_3 }{  (1-\alpha^2)}}\\
& M=\sigma^{k_1}(m+1)^{k_2};\quad k_1>0, k_2 \in (0,1). 
\end{cases}
\end{equation}
Then, we have
\begin{align}
&\lim _{m \rightarrow \infty}	\mathbb{E} \left\|\ \nabla \Phi\left(\widehat{\mathbf{x}}_{\bar{m}}\right) \right\|^2
=0.\nonumber
\end{align}
\end{corollary}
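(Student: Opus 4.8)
The plan is to turn the averaged, stepsize-weighted estimate of \cref{thm 4.3} into a bound on the expected squared gradient at the uniformly sampled index $\bar m$, and then verify that every term on the right-hand side of \cref{thm 4.3} vanishes as $m\to\infty$ under the schedule \eqref{alpha setting}. Since the output $\widehat{\mathbf{x}}_{\bar m}$ is drawn uniformly from $\{\widehat{\mathbf{x}}_i\}_{i=1}^{m+1}$, the sampling identity $\mathbb{E}\|\nabla\Phi(\widehat{\mathbf{x}}_{\bar m})\|^2=\frac{1}{m+1}\sum_{t}\mathbb{E}\|\nabla\Phi(\widehat{\mathbf{x}}_t)\|^2$ reduces the claim to showing that the averaged true-gradient norm is controlled by the left-hand bracket of \cref{thm 4.3} up to a vanishing error.

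First I would establish a per-iteration inequality linking the full gradient $\nabla\Phi(\widehat{\mathbf{x}}_t)=\nabla h(\widehat{\mathbf{x}}_t)+\nabla\phi(\widehat{\mathbf{x}}_t)$ to the algorithmic increments. Writing the optimality condition of Step 5, $0=\tilde v_t+\gamma_t(\breve{\mathbf{x}}_{t+1}-\breve{\mathbf{x}}_t)+\nabla\phi(\breve{\mathbf{x}}_{t+1})$, and using $\tilde v_t=v_t+\beta\mathcal{D}_x^k(\widehat{\mathbf{x}}_t-\mathbf{x}^k)$ from \eqref{def of tilde v}, I would express $\nabla\Phi(\widehat{\mathbf{x}}_t)$ as a sum of three pieces: the estimator error $\tilde v_t-\nabla h(\widehat{\mathbf{x}}_t)$, the step $\gamma_t\mathbf{s}_t$, and a transport term obtained by moving the gradient from $\breve{\mathbf{x}}_{t+1}$ to $\widehat{\mathbf{x}}_t$ via the smoothness of $\nabla\phi$ and of $\nabla h$ (\cref{assum of func h}), which the averaging relations $\widehat{\mathbf{x}}_t=\beta_t\breve{\mathbf{x}}_t+(1-\beta_t)\mathbf{x}_t$ and $\mathbf{x}_{t+1}=\beta_t\breve{\mathbf{x}}_{t+1}+(1-\beta_t)\mathbf{x}_t$ reduce to $\|\breve{\mathbf{x}}_{t+1}-\mathbf{x}_t\|_{\mathcal{M}}$. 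Squaring and taking conditional expectation gives $\mathbb{E}\|\nabla\Phi(\widehat{\mathbf{x}}_t)\|^2\lesssim \frac{\beta_t}{2}\mathbb{E}\|\breve{\mathbf{x}}_{t+1}-\mathbf{x}_t\|_{\mathcal{M}}^2-\Gamma_t\mathbb{E}\|\mathbf{s}_t\|^2+\mathbb{E}\|\tilde v_t-\nabla h(\widehat{\mathbf{x}}_t)\|^2$, where the coefficients are matched by the choice of $\gamma_t$ in \eqref{alpha setting}. Averaging over $t$ and invoking \cref{thm 4.3} then bounds $\mathbb{E}\|\nabla\Phi(\widehat{\mathbf{x}}_{\bar m})\|^2$ by the right-hand side of \cref{thm 4.3} plus the averaged estimator error $\frac{1}{m+1}\sum_t\mathbb{E}\|\tilde v_t-\nabla h(\widehat{\mathbf{x}}_t)\|^2$.

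Next I would drive both contributions to zero. The averaged estimator error is handled by the variance--bias trade-off \eqref{trade-off}: the hybrid bias contracts by the factor $\alpha$ each step while the injected SGD variance is of order $(1-\alpha)^2\sigma^2$, so that after summation the error scales like $\frac{\sigma^2}{\sqrt{M(m+1)}}$ under the choice $\alpha=1-c_1/\sqrt{M(m+1)}$. For the telescoping term, since the augmented Lagrangian, and hence $\{\mathbf{H}_i\}$ defined in \eqref{def of Hi}, is bounded below, $\mathbf{H}_0-\mathbf{H}_{m+1}$ is uniformly bounded and $\frac{\mathbf{H}_0-\mathbf{H}_{m+1}}{m+1}\to0$. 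For the noise term of \cref{thm 4.3}, substituting $M=\sigma^{k_1}(m+1)^{k_2}$ gives $M(m+1)=\sigma^{k_1}(m+1)^{k_2+1}$, whence $\tfrac{1}{\sqrt{M(m+1)}}=\sigma^{-k_1/2}(m+1)^{-(k_2+1)/2}\to0$ because $k_2+1>0$; moreover $\alpha\to1$ keeps $\tfrac{1}{1+\alpha}$ bounded. Combining these limits yields $\lim_{m\to\infty}\mathbb{E}\|\nabla\Phi(\widehat{\mathbf{x}}_{\bar m})\|^2=0$.

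The main obstacle I anticipate is the coupling between the growing stepsize and the vanishing estimator error. With $\alpha\to1$ one has $1-\alpha^2\to0$, so $g(\alpha)=\alpha\Lambda\sqrt{2l_3/(1-\alpha^2)}$ and therefore $\gamma_t$ both diverge; the dangerous $\gamma_t\mathbf{s}_t$ contribution in the gradient identity must be absorbed by the $-\Gamma_t\|\mathbf{s}_t\|^2$ term rather than overwhelm it, which forces a precise accounting of how $g(\alpha)$ enters $\Gamma_t=\frac{\gamma_t\beta_t-\Lambda\beta_t^2}{2}$. Verifying that the exponents $k_1>0$ and $k_2\in(0,1)$ make $g(\alpha)$ grow slowly enough that the estimator error $\sigma^2/\sqrt{M(m+1)}$ still dominates the residual stepsize growth---so that the averaged gradient bound genuinely tends to zero---is the delicate part; the remaining steps are routine telescoping and limit arguments.
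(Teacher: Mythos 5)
Your overall route is the same as the paper's: use the optimality condition of Step 5 to write $\nabla\Phi(\widehat{\mathbf{x}}_t)=\bigl(\nabla h(\widehat{\mathbf{x}}_t)-\tilde v_t\bigr)-\gamma_t\mathbf{s}_t+\bigl(\nabla\phi(\widehat{\mathbf{x}}_t)-\nabla\phi(\breve{\mathbf{x}}_{t+1})\bigr)$, square, average over $t$, and drive each averaged term to zero via \cref{thm 4.3}. However, two essential steps are asserted rather than proved, and both are exactly where the difficulty lies. First, your per-iteration claim that $\mathbb{E}\|\nabla\Phi(\widehat{\mathbf{x}}_t)\|^2$ is bounded by the \cref{thm 4.3} bracket with ``coefficients matched by the choice of $\gamma_t$'' cannot hold with an absolute constant: squaring the decomposition produces $\gamma_t^2\mathbb{E}\|\mathbf{s}_t\|^2$, whereas \cref{thm 4.3} controls only $\Gamma_t\mathbb{E}\|\mathbf{s}_t\|^2$ with $\Gamma_t=\tfrac{\gamma_t\beta_t-\Lambda\beta_t^2}{2}$, and the ratio $\gamma_t^2/\Gamma_t\approx 2\gamma_t/\beta_t=\mathcal{O}(g(\alpha))=\mathcal{O}([M(m+1)]^{1/4})$ is unbounded as $m\to\infty$. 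You correctly flag this coupling as ``the delicate part,'' but flagging it is not resolving it: the proof must show that $g(\alpha)$ times the right-hand side of \cref{thm 4.3} still vanishes, which is precisely where the exponent restrictions on $k_1,k_2$ in \eqref{alpha setting} enter (the paper does this by dividing through by $\tilde\delta=\min_t(\gamma_t\beta_t-\Lambda\beta_t^2)/\gamma_t^2=\mathcal{O}(1/g(\alpha))$ and checking that the resulting $M^{1/4}(m+1)^{-3/4}$ and $[M(m+1)]^{-1/4}$ rates tend to zero).

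Second, your treatment of the averaged estimator error $\frac{1}{m+1}\sum_t\mathbb{E}\|\tilde v_t-\nabla h(\widehat{\mathbf{x}}_t)\|^2$ accounts only for the geometric decay of the initial bias and the injected variance of order $(1-\alpha)\sigma^2$, and omits the accumulated drift term $\Lambda^2\sum_{i=0}^{t-1}\alpha^{2(t-i)}\mathbb{E}\|\widehat{\mathbf{x}}_{i+1}-\widehat{\mathbf{x}}_i\|^2$ from the variance recursion \eqref{hybrid var up bound}. After summing over $t$, this term carries a prefactor $\frac{\alpha^2}{(m+1)(1-\alpha^2)}=\mathcal{O}\bigl(\sqrt{M(m+1)}/(m+1)\bigr)$ multiplying the total path length $\sum_i q_i$, which again diverges unless $\sum_i q_i$ is itself shown to decay via a second application of the telescoping bound of \cref{thm 4.3}. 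This is not a routine limit argument; it is the second place where the schedule \eqref{alpha setting} and the choice $r=\sqrt{2l_3\Lambda^2\alpha^2/(1-\alpha^2)}$ must be invoked quantitatively. As written, your proposal establishes the correct skeleton but leaves both load-bearing estimates unverified.
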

\begin{remark}\label{remark of alpha} 
When the inner iteration $m$ is large, $\alpha$ is very close to 1, showing that the biased term dominates the unbiased one in the hybrid estimator $\tilde{v}_t$. 
This observation is consistent with \cref{fig:alpha}, indicating that AH-SADMM achieves superior performance as the hybrid parameter 
$\alpha$ approaches 1. 
Moreover, since $\nabla \Phi(\mathbf{x})=\nabla_x \mathcal{L}_\beta\left(\mathbf{x}, \mathbf{y}^{k+1}, \lambda^k\right)+\beta \mathcal{D}_x^k\left(\mathbf{x}-\mathbf{x}^k\right)$ and $\lim _{t \rightarrow \infty} \mathbb{E}
\left[\nabla \Phi\left({\mathbf{x}}_{\bar{m}}\right)\right]=0,$ the inexact criterion (\ref{x update criteria}) is satisfied by setting $\widehat{\mathbf{x}}^k=\widehat{\mathbf{x}}_{\bar{m}}$ for all $t$ sufficiently large.
\end{remark}
The proofs for \cref{thm 4.3} and \cref{cor 5.5} are provided in \cref{app: pf of inner grad} and \cref{app:cor 5.5}.

\section{Numerical Experiments}
In this section, we provide two nonconvex examples to illustrate the performance of the proposed AH-SADMM and compare it with several state-of-the-art methods, including SADMM, Accelerated SADMM (ASADMM), SVRG-ADMM, and SPIDER-ADMM.  
Given comprehensive comparisons between SPIDER and SAGA (storage gradient-type algorithms) in \cite{huang2019faster, BLZhang21,tran2022hybrid}, with SPIDER outperforming SAGA, we leave further comparisons of SAGA-type algorithms.
Specifically, the variant of AH-SADMM without acceleration ($\beta_t =1$), relying solely on hybrid gradients, is denoted as H-SADMM. The choice of $\alpha_t$ follows \eqref{alpha setting}.

\subsection{Nonconvex Binary Classification problem}
We consider the binary classification problem with a SCAD penalty term:
\begin{equation}\label{equ log with SCAD}
\min_{\mathbf{x}}\frac{1}{n}\sum^{n}_{i=1} f_{i}(\mathbf{x})+\lambda_1
\sum_{j=1}^{n_y} p_{\kappa}\left(\left|(A\mathbf{x})_j\right|\right),
\end{equation}
where the nonconvex sigmoid loss function $f_{i}$ is as defined by $f_{i}(\mathbf{x})= \frac{1}{1+\exp \left(b_{i} a_{i}^{T} \mathbf{x}\right)}$, with the set of training samples
$\left\{\left(a_i, b_i\right)\right\}_{i=1}^{N}$. And the nonconvex SCAD penalty $p_{\kappa}\left(\cdot\right)$ is defined in \eqref{def of SCAD}. 
The given matrix $A$ decodes the sparsity pattern of the graph, obtained by sparse inverse covariance estimation {\cite{FHT08}}. To address \eqref{equ log with SCAD}, we introduce an additional primal variable $\mathbf{y}$ with the constraint $A\mathbf{x}=\mathbf{y}.$
Setting $\lambda_1=10^{-5}$, 
more details of experiments can be found in \cref{add for exps}.

\begin{figure}[h] 
\vskip -0.1in
\centering
\subfigure[ijcnn1]{\includegraphics[width=0.232\textwidth]{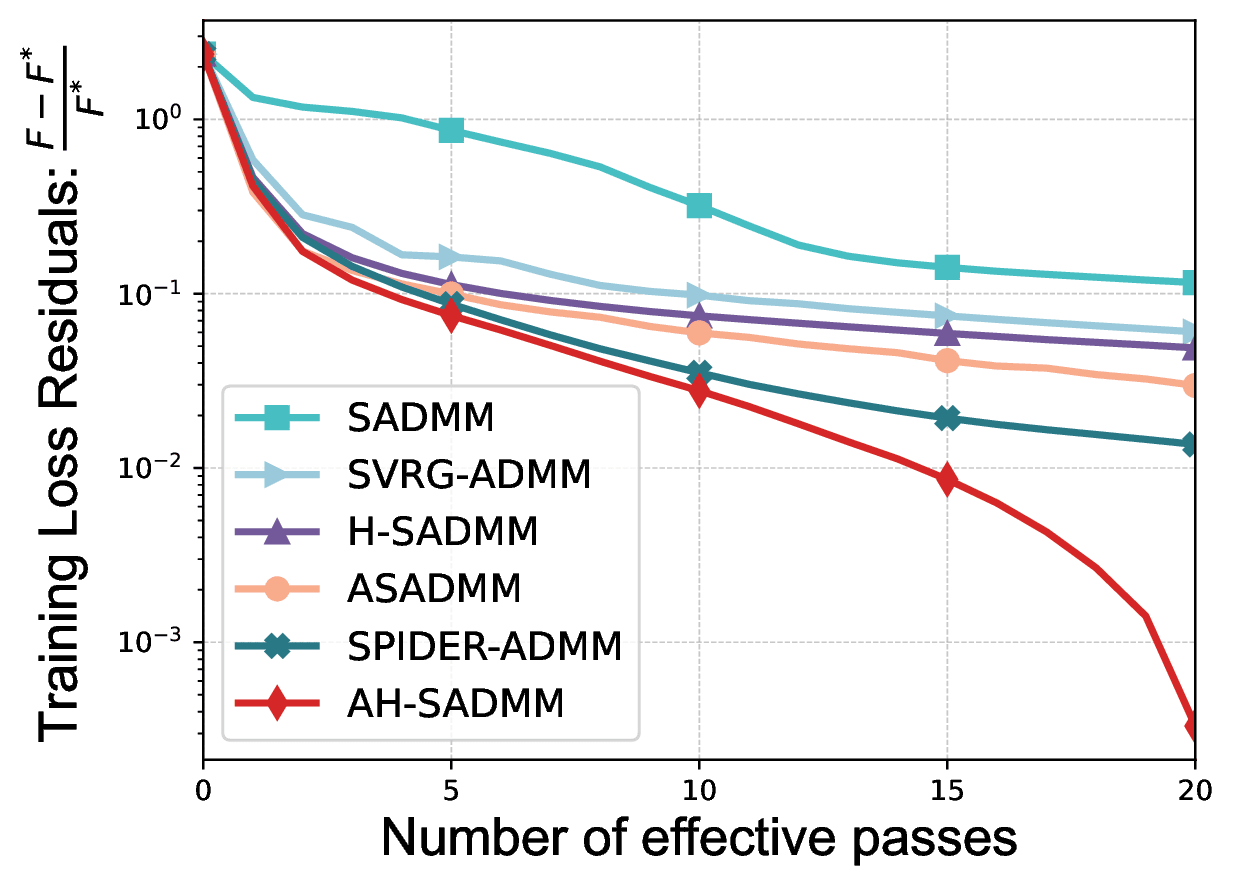}} 
\subfigure[w8a]{\includegraphics[width=0.232\textwidth]{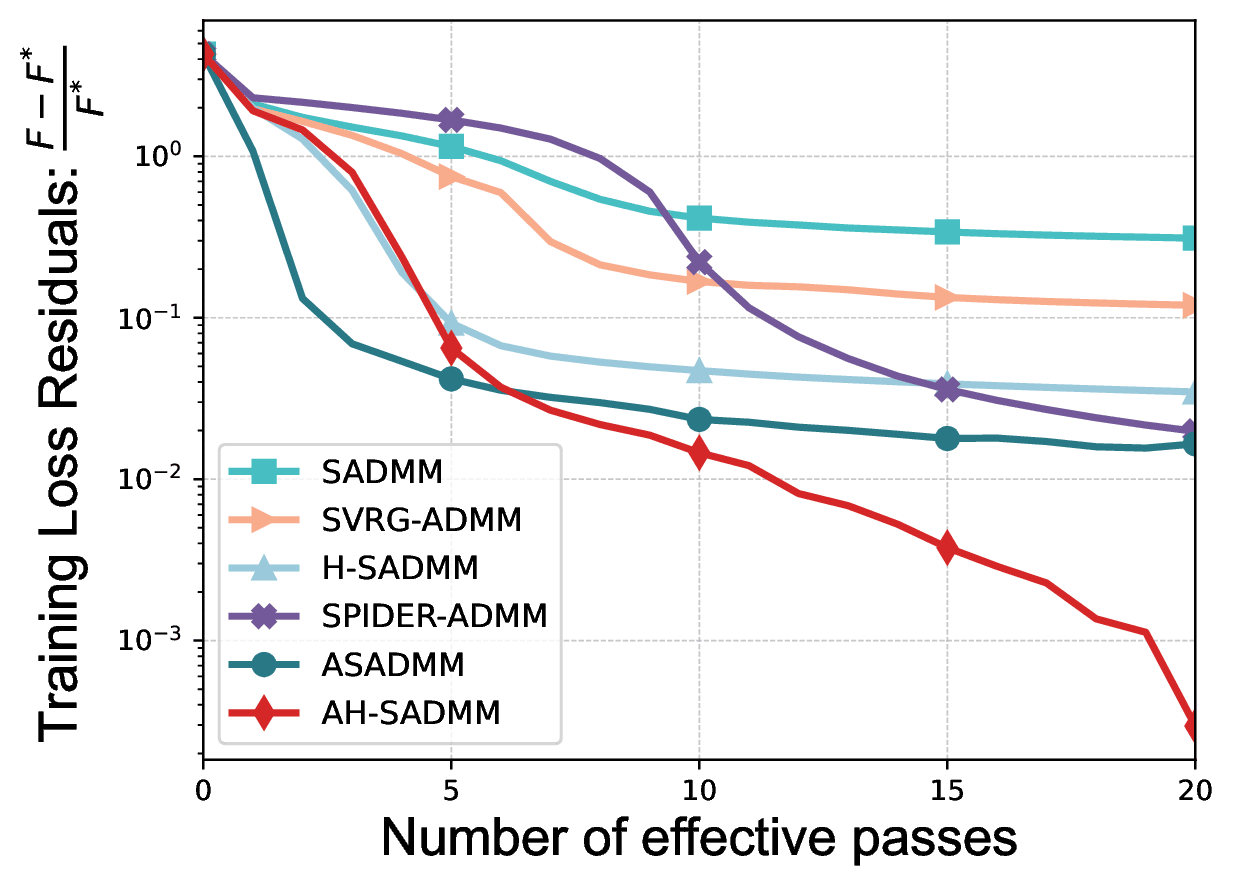}}
\subfigure[covtype.binary]{\includegraphics[width=0.242\textwidth]{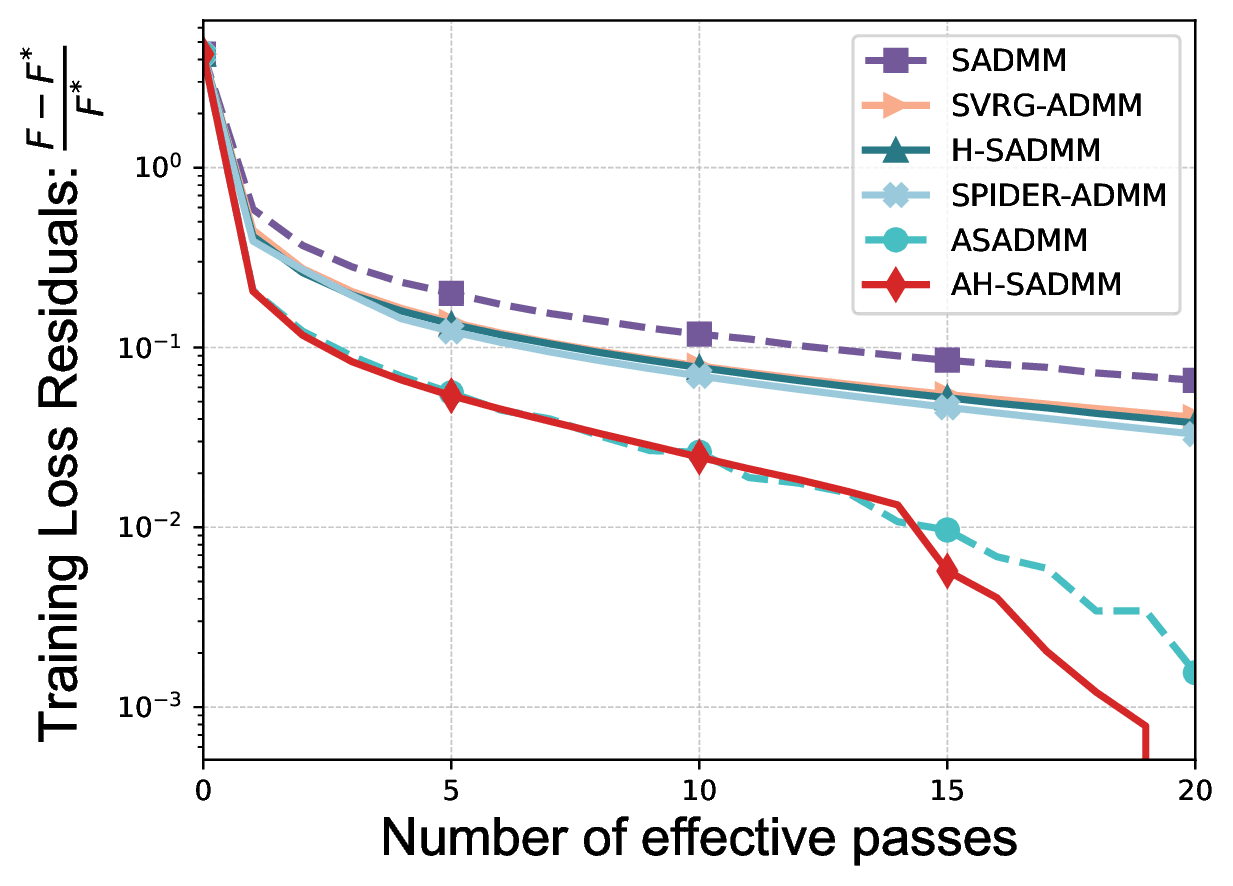}}
\subfigure[a9a]{\includegraphics[width=0.234\textwidth]{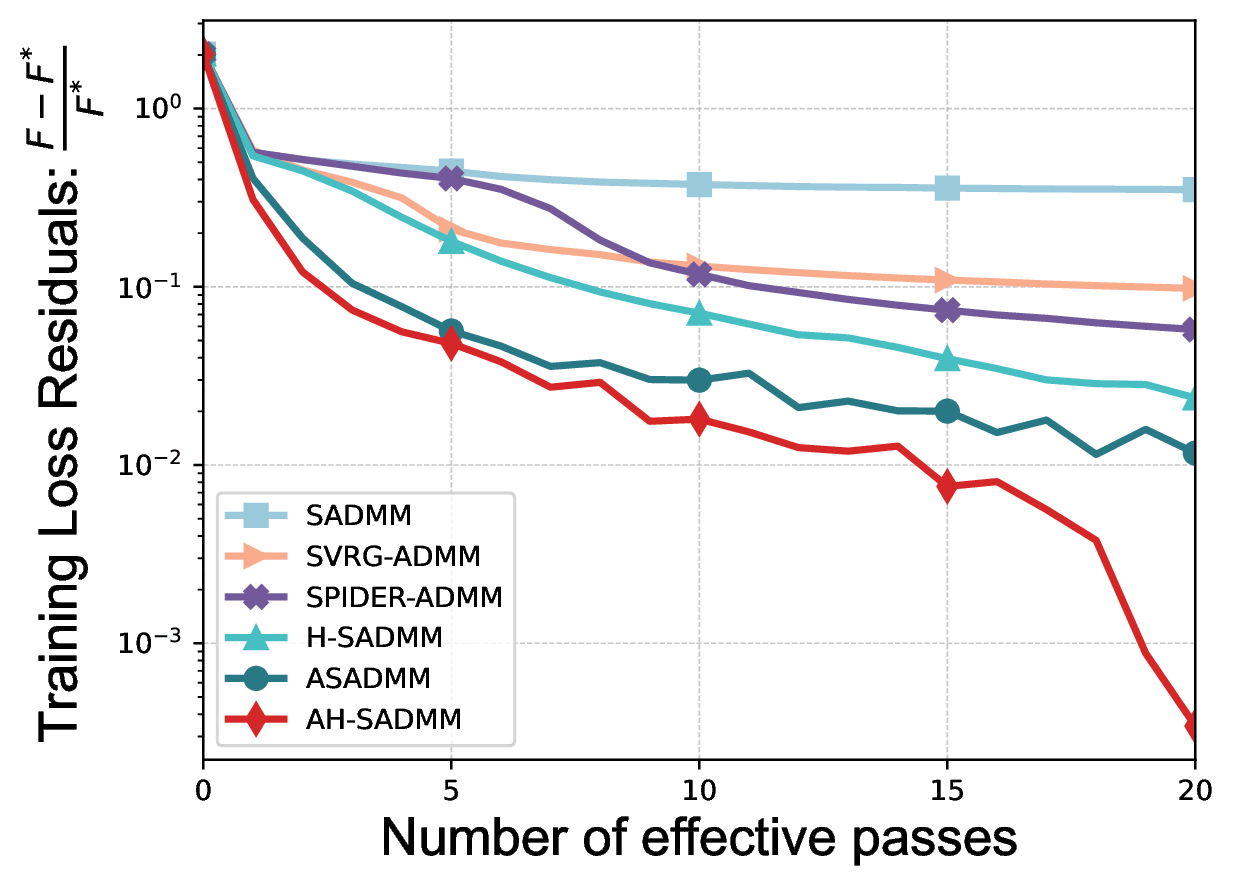}}
\caption{The training loss of \eqref{equ log with SCAD} on some real datasets.}
\label{fig:ijcn1}
\vskip -0.1in
\end{figure}

We conducted experiments on several publicly available datasets from LIBSVM. \cref{fig:ijcn1 acc} and \cref{fig:ijcn1} show that algorithms with VR techniques exhibit faster convergence with better generalization than SADMM.
This observation is consistent with \cref{thm:linear conv} ( the vanishing variance resulting in linear convergence). The results in \cref{fig:ijcn1} also indicate that algorithms employing momentum acceleration techniques, such as ASADMM and AH-SADMM, contribute to a more rapid convergence.
Moreover, our AH-SADMM, employing a hybrid gradient estimator and acceleration techniques, outperforms other algorithms in terms of descent speed. 

\subsection{Weight Pruning for Neural Network}

We now validate the effectiveness of AH-SADMM by training the LeNet-5 \cite{lecun1998gradient} neural network with weight pruning. Specifically, we address the image classification task by training the following problem:
\begin{equation}\label{pro: LeNet}
\begin{array}{ll}
\underset{W, b}{\operatorname{minimize}} & f(W, b)+\rho\|Z\|_1 \\
\text { subject to } & W=Z
\end{array}
\end{equation}
using MNIST and CIFAR-10 datasets, where $f(W, b)$ denotes the LeNet-5 model, $\rho$ is the penalty parameter. 
The MNIST and CIFAR-10 datasets are employed with batch sizes of 64 and 256, respectively.

\begin{figure}[h] 
\vskip -0.1in
\centering
\subfigure{\includegraphics[width=0.240\textwidth]{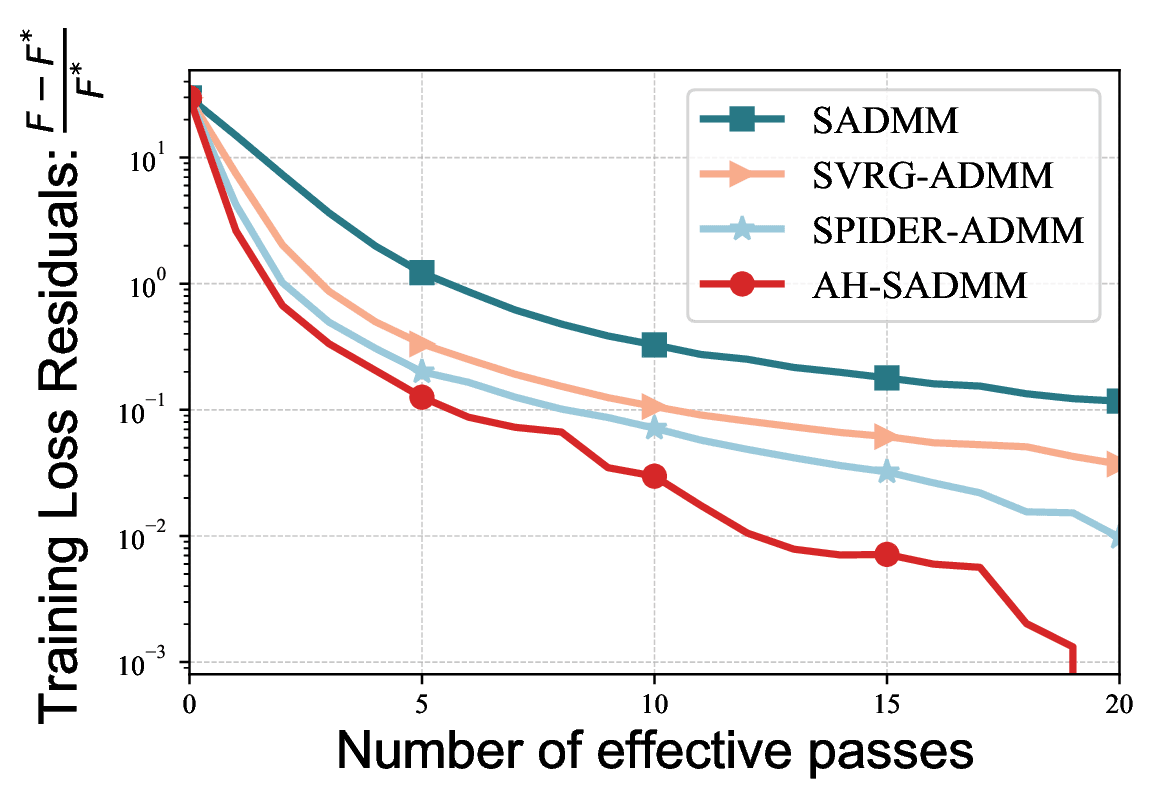}}
\subfigure{\includegraphics[width=0.234\textwidth]{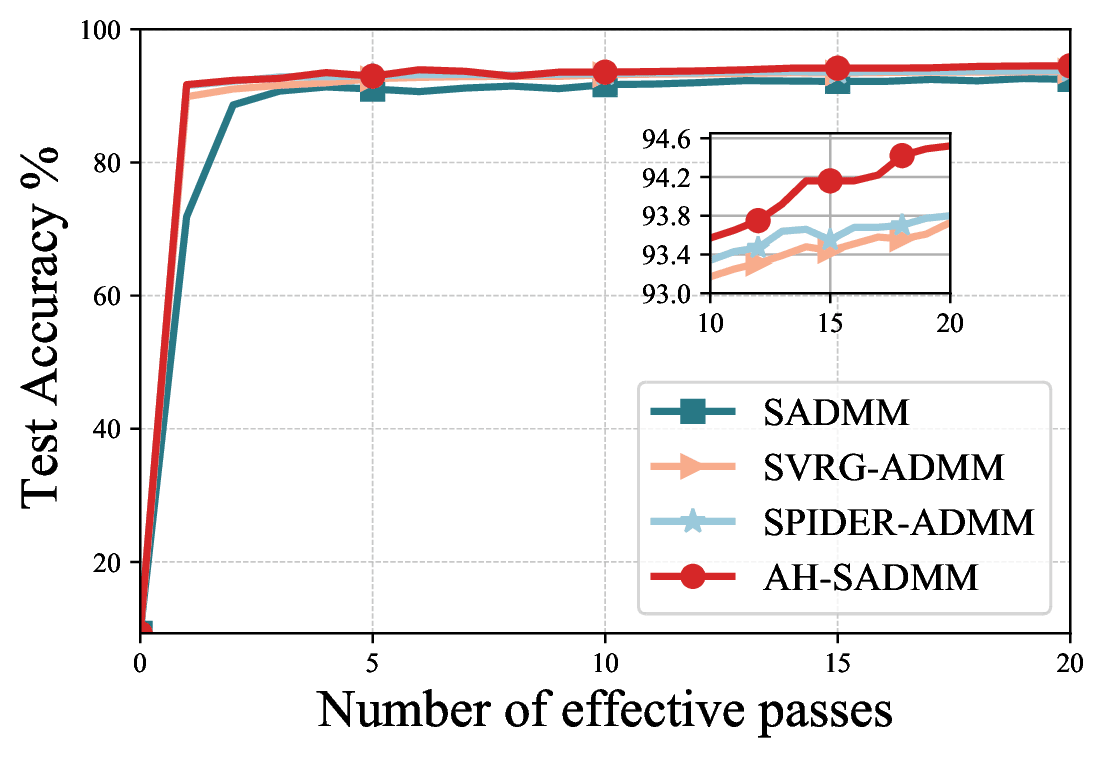}}
\subfigure{\includegraphics[width=0.240\textwidth]{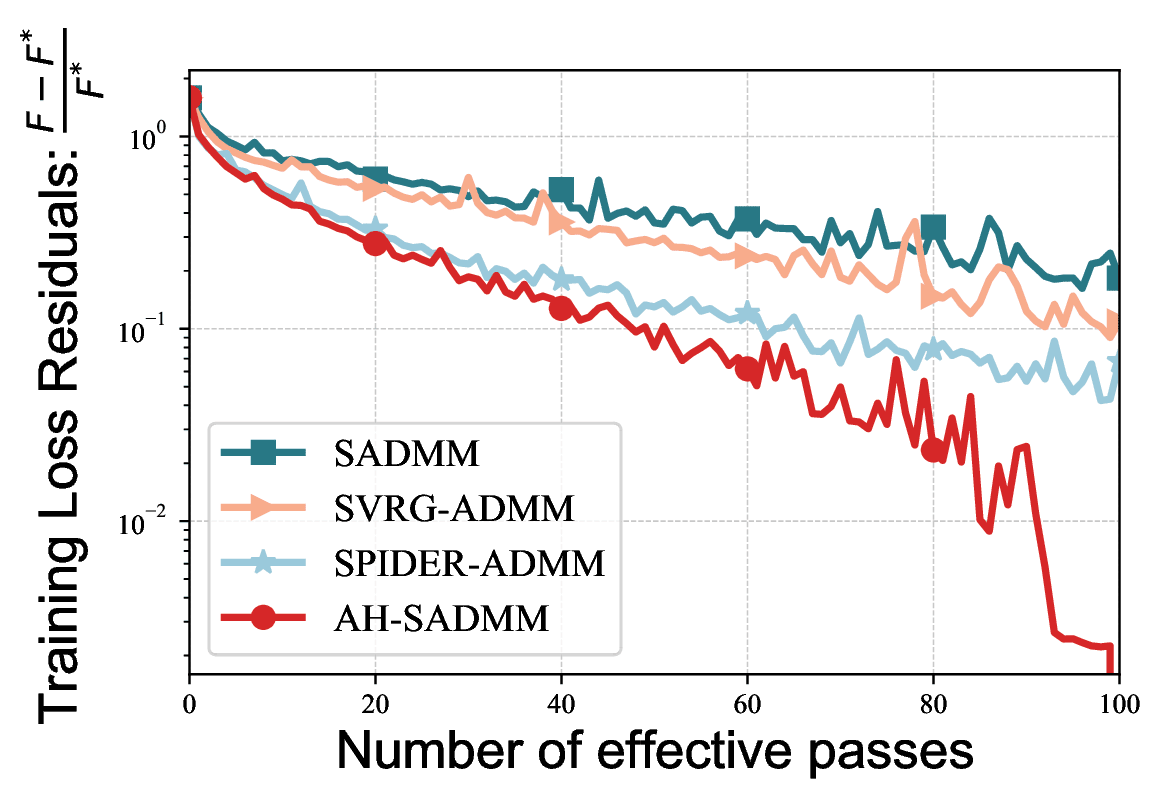}}
\subfigure{\includegraphics[width=0.234\textwidth]{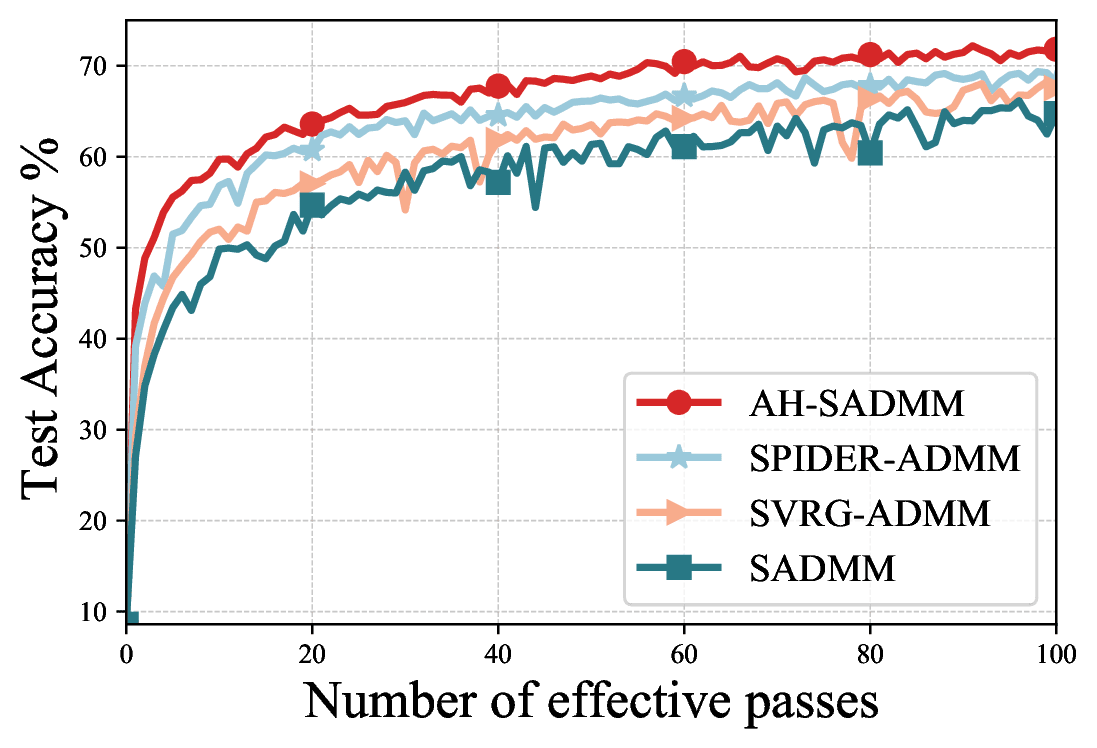}}
\caption{Comparison of algorithms on training LeNet-5 on MNIST (upper) and CIFAR-10 (below).}
\label{fig:LeNet loss and acc}
\vskip -0.1in
\end{figure}

We present and visualize the train loss and test accuracy against effective epoch numbers in \cref{fig:LeNet loss and acc}.
From \cref{fig:LeNet loss and acc}, we observe that (i) AH-SADMM exhibits faster convergence with better generalization than the other algorithms on the MNIST dataset, respectively. 
(ii) On the CIFAR-10 dataset, Our algorithm still outperforms others relatively.


\section{Conclusion}
This paper proposed an inexact stochastic ADMM algorithm with a unified framework for solving nonconvex nonsmooth problems. We have established sublinear convergence rates and linear convergence under local error bound conditions. Additionally, a fast AH-SADMM has been proposed with theoretical analysis. The effectiveness of the algorithm was demonstrated through two nonconvex numerical experiments. Future research will explore the design of stochastic ADMM algorithms for distributed problems.

\vspace{-0.1cm}

\bibliography{refs}

\newpage

\section{Supplementary }
\label{app:remark 1}

\begin{table}[t]
	\caption{Stochastic ADMM methods satisfying the inexact criteria condition (\ref{x update criteria}) for the inexact solution of $\mathbf{x}$ subproblems, under the unified framework.}
	\label{table 1}
	\vskip 0.15in
	\begin{center}
		\begin{small}
			\begin{sc}
				\begin{tabular}{lcccr}
					\toprule
					Methods & Convergence Rate & Inexact Criteria & Variance \\
					\midrule
					SADMM \cite{ouyang2013stochastic}  & $\mathcal{O}(1/T)$ & $\surd$ & Bounded \\
					SVRG-ADMM \cite{huang2016stochastic} & $\mathcal{O}(1/T)$  & $\surd$ & Diminishing \\
					SPIDER-ADMM \cite{huang2019faster} & $\mathcal{O}(1/T)$ & $\surd$  & Diminishing \\
					\bottomrule
				\end{tabular}
			\end{sc}
		\end{small}
	\end{center}
	\vskip -0.1in
\end{table}
\begin{remark}\label{inexact cri}
	SADMM, SVRG-ADMM, and recursive SPIDER-ADMM satisfy the inexact criteria in (\ref{x update criteria}) as listed in \cref{table 1}.
	Next, we will provide a detailed explanation of the reasons behind the fulfillment of this criterion.
	\begin{itemize}
		\item For the general SADMM, the update rule is
		\begin{equation}\label{sto ADMM up}
			\mathbf{x}^{k+1} = \arg \min _{\mathbf{x} \in \mathbb{R}^n x} 
			\left\langle\nabla f\left({\mathbf{x}}^k,\xi_{M}\right), \mathbf{x}-{\mathbf{x}}^k\right\rangle +
			\frac{\beta}{2}\|A \mathbf{x}+B \mathbf{y}^{k+1}-\mathbf{b} - \frac{\mathbf{\lambda}^k}{\beta}\|^2\\
			+\frac{\beta}{2}\left\|\mathbf{x}-{\mathbf{x}}^{k}\right\|_{\mathcal{D}_x^k}^2,
		\end{equation}
		and its optimality condition is
		\begin{equation}\label{sto ADMM opt cond}
			0 = \nabla f\left({\mathbf{x}}^k ,\xi_{M} \right) + \beta A^{\top}\left(A \mathbf{x}^{k+1}+B \mathbf{y}^{k+1}-\mathbf{b} - \frac{\mathbf{\lambda}^k}{\beta}
			\right) + \beta \mathcal{D}_x^k \left(\mathbf{x}^{k+1}-{\mathbf{x}}^{k}
			\right).
		\end{equation}
		
		Using (\ref{sto ADMM opt cond}), the L-smoothness of $\nabla f$, and the bounded variance condition $\mathbb{E}\left[\left\|\nabla f\left({\mathbf{x}}^k,\xi_{M}\right) -
		\nabla f\left({\mathbf{x}}^k\right)\right\|^2\right] \leq \frac{\sigma^2}{M}$, we can obtain
		\begin{equation}\label{sto ADMM update grad}
			\mathbb{E}_{k}\left[\left\|\xi_x^{k+1}\right\|^2 \right]
			\leq 3\beta^{2} \mathbb{E}_{k}\left[
			\frac{\sigma^2}{M\beta^2} + \left(\frac{L^2}{\beta^2} + \left\|\ \mathcal{D}_x^k \right\|^2 \right) \left\|{\mathbf{x}}^{k+1}-\mathbf{x}^{k}\right\|^2
			\right], 
		\end{equation}
		and 
		\begin{align} \label{sto ADMM update value}
			&\frac{\beta}{2} \mathbb{E}_{k}\left[ \left\| \mathbf{x}^{k+1}-{\mathbf{x}}^k\right\|_{\mathcal{D}^k_x}^2 \right] 
			+\mathbb{E}_{k}\left[\mathcal{L}_\beta\left(\mathbf{x}^{k+1}
			, \mathbf{y}^{k+1}, \boldsymbol{\lambda}^k\right)\right] \nonumber\\
			\leq
			&\mathcal{L}_\beta\left({\mathbf{x}}^k, \mathbf{y}^{k+1}, \boldsymbol{\lambda}^k\right) +  \frac{\sigma^2}{2M} + \frac{L+1}{2}\mathbb{E}_{k}\left[ \left\| \mathbf{x}^{k+1}-{\mathbf{x}}^k\right\|^2 \right] - \frac{\beta}{2}\mathbb{E}_{k}\left[ \left\| \mathbf{x}^{k+1}-{\mathbf{x}}^k\right\|_{\mathcal{D}^k_x}^2 \right] 
			.
		\end{align}
		We can choose suitable values for $\beta$ and $\mathcal{D}^k_x$  such that $\frac{L+1}{2}\mathbb{E}_{k}\left[ \left\| \mathbf{x}^{k+1}-{\mathbf{x}}^k\right\|^2 \right] - \frac{\beta}{2}\mathbb{E}_{k}\left[ \left\| \mathbf{x}^{k+1}-{\mathbf{x}}^k\right\|_{\mathcal{D}^k_x}^2 \right]  \leq 0$, and $\hat{c}_x>0$, $c_x>0$ ensuring (\ref{x update criteria}) hold.

		\item As for the stochastic ADMM incorporating the VR technique, for instance, SVRG estimator which is
		\begin{align}\label{f-grad-update}
			\hat{\nabla} f(\mathbf{x}^{k}) = \nabla f\left({\mathbf{x}}^k,\xi_{M}\right) - \nabla f\left(\widetilde{\mathbf{x}},\xi_{M}\right)
			+\nabla f(\widetilde{\mathbf{x}}),\nonumber
		\end{align}
		where $\nabla f\left(\cdot,\xi_{M}\right)$ also denotes the stochastic gradient with batch size $M$, and $\widetilde{\mathbf{x}}$ is a given snapshot point. Recalling Lemma 1 in \cite{huang2016stochastic}and the inexact update rule of $\mathbf{x}$ employing the SVRG estimator is
		\begin{equation}\label{SVRG up}
			\mathbf{x}^{k+1} = \arg \min _{\mathbf{x} \in \mathbb{R}^n x} 
			\left\langle\hat{\nabla} f(\mathbf{x}^{k}), \mathbf{x}-{\mathbf{x}}^k\right\rangle +
			\frac{\beta}{2}\|A \mathbf{x}+B \mathbf{y}^{k+1}-\mathbf{b} - \frac{\mathbf{\lambda}^k}{\beta}\|^2\\
			+\frac{\beta}{2}\left\|\mathbf{x}-{\mathbf{x}}^{k}\right\|_{\mathcal{D}_x^k}^2,
		\end{equation} 
		similarly from the equations (34) and (38) in \cite{huang2016stochastic}, we can get 
		\begin{equation}\label{sto SVRG update grad}
			\mathbb{E}_{k}\left[\left\|\xi_x^{k+1}\right\|^2 \right]
			\leq 3\beta^{2} \mathbb{E}_{k}\left( \frac{L^2}{M\beta^2}\left\|{\mathbf{x}}^{k}-\tilde{\mathbf{x} }\right\|^2 + \left(\frac{L^2}{\beta^2} + \left\|\ \mathcal{D}_x^k \right\|^2 \right) \left\|{\mathbf{x}}^{k+1}-\mathbf{x}^{k}\right\|^2
			\right), 
		\end{equation}
		and 
		\begin{align} \label{SVRG ADMM update value}
			&\frac{\beta}{2} \mathbb{E}_{k}\left[ \left\| \mathbf{x}^{k+1}-{\mathbf{x}}^k\right\|_{\mathcal{D}^k_x}^2 \right] 
			+\mathbb{E}_{k}\left[\mathcal{L}_\beta\left(\mathbf{x}^{k+1}
			, \mathbf{y}^{k+1}, \boldsymbol{\lambda}^k\right)\right] \nonumber\\
			\leq
			&\mathcal{L}_\beta\left({\mathbf{x}}^k, \mathbf{y}^{k+1}, \boldsymbol{\lambda}^k\right) +   \frac{L}{2}\mathbb{E}_{k}\left[ \left\| \mathbf{x}^{k+1}-{\mathbf{x}}^k\right\|^2 \right] - \frac{\beta}{2}\mathbb{E}_{k}\left[ \left\| \mathbf{x}^{k+1}-{\mathbf{x}}^k\right\|_{\mathcal{D}^k_x}^2 \right].
		\end{align}
		
		Recalling Lemma 1 and Theorem 8 in \cite{huang2016stochastic}, it indicates that $\frac{L^2}{M}\mathbb{E}\left\|{\mathbf{x}}^{k}-\tilde{\mathbf{x} }\right\|^2$ serves as the upper bound for the variance. Also, $\sum_{k=1}^{\infty}\left(\mathbb{E}\left\|\mathbf{x}^k-\mathbf{x}^{k-1}\right\|^2+\mathbb{E}\left\|\mathbf{x}^{k-1}-\tilde{\mathbf{x}}\right\|^2\right)<+\infty$, implying that $\frac{L^2}{M}\mathbb{E}\left\|{\mathbf{x}}^{k}-\tilde{\mathbf{x} }\right\|^2$
		diminishes with increasing iterations $k$. Thus
		$\frac{L^2}{M}\mathbb{E}\left\|{\mathbf{x}}^{k}-\tilde{\mathbf{x} }\right\|^2$ is smaller than
		$\mathcal{O}(\frac{\sigma^2}{M})$.
		By selecting suitable values for  $\beta$ and $\mathcal{D}^k_x$ that satisfy $\frac{L}{2}\mathbb{E}_{k}\left[ \left\| \mathbf{x}^{k+1}-{\mathbf{x}}^k\right\|^2 \right] - \frac{\beta}{2}\mathbb{E}_{k}\left[ \left\| \mathbf{x}^{k+1}-{\mathbf{x}}^k\right\|_{\mathcal{D}^k_x}^2 \right]  \leq 0$, along with $\hat{c}_x>0$, $c_x>0$, we can guarantee the satisfaction of  (\ref{x update criteria}) hold.
		
		\item Regarding the recursive method SPIDER-ADMM, the gradient estimator is denoted as
		$$v^k=\nabla f\left({\mathbf{x}}^k ,\xi_{M} \right)-\nabla f\left({\mathbf{x}}^{k-1} ,\xi_{M} \right)+v^{k-1}.$$
		Specially, when the number of iterations $k$ satisfying $\text{mod}(k,q) = 0$ ($q $ is a given integer), $v^k$ adopts the following update
		$$v^k=\nabla f\left({\mathbf{x}}^k \right).$$
		
		The inexact update rule for $\mathbf{x}$, utilizing $v^k$, is defined as
		\begin{equation}\label{SPIDER up}
			\mathbf{x}^{k+1} = \arg \min _{\mathbf{x} \in \mathbb{R}^n x} 
			\left\langle v^k, \mathbf{x}-{\mathbf{x}}^k\right\rangle +
			\frac{\beta}{2}\|A \mathbf{x}+B \mathbf{y}^{k+1}-\mathbf{b} - \frac{\mathbf{\lambda}^k}{\beta}\|^2\\
			+\frac{\beta}{2}\left\|\mathbf{x}-{\mathbf{x}}^{k}\right\|_{\mathcal{D}_x^k}^2,
		\end{equation} 
		
		By examining the aforementioned update rule alongside equations (14) and (29) in \cite{huang2019faster}, we deduce
		\begin{equation}\label{SPIDER update grad}
			\mathbb{E}_{k}\left[\left\|\xi_x^{k+1}\right\|^2 \right]
			\leq 3\beta^{2} \mathbb{E}_{k}\left( \frac{L^2}{\beta^2}\frac{\sum_{i=\left(n_k-1\right) q}^{k-1} \mathbb{E}_k\left\|\mathbf{x}^{i+1}-\mathbf{x}^i\right\|^2}{M}
			+ \left(\frac{L^2}{\beta^2} + \left\|\ \mathcal{D}_x^k \right\|^2 \right) \left\|{\mathbf{x}}^{k+1}-\mathbf{x}^{k}\right\|^2
			\right), 
		\end{equation} 
		and
		\begin{align} \label{SVRG ADMM update value}
			&\frac{\beta}{2} \mathbb{E}_{k}\left[ \left\| \mathbf{x}^{k+1}-{\mathbf{x}}^k\right\|_{\mathcal{D}^k_x}^2 \right] 
			+\mathbb{E}_{k}\left[\mathcal{L}_\beta\left(\mathbf{x}^{k+1}
			, \mathbf{y}^{k+1}, \boldsymbol{\lambda}^k\right)\right] \nonumber\\
			\leq
			&\mathcal{L}_\beta\left({\mathbf{x}}^k, \mathbf{y}^{k+1}, \boldsymbol{\lambda}^k\right) +   L\mathbb{E}_{k}\left[ \left\| \mathbf{x}^{k+1}-{\mathbf{x}}^k\right\|^2 \right] - \frac{\beta}{2}\mathbb{E}_{k}\left[ \left\| \mathbf{x}^{k+1}-{\mathbf{x}}^k\right\|_{\mathcal{D}^k_x}^2 \right]+\frac{L}{2M}\sum_{i=\left(n_k-1\right) q}^{k-1} \mathbb{E}_k\left\|\mathbf{x}^{i+1}-\mathbf{x}^i\right\|^2.
		\end{align}
		
		The result $\frac{L^2}{M}\sum_{i=\left(n_k-1\right) q}^{k-1} \mathbb{E}_k\left\|\mathbf{x}^{i+1}-\mathbf{x}^i\right\|^2$ serving as the upper bound of variance is derived from  Lemma 5 in \cite{huang2019faster}.
		Furthermore, Theorem 5 in \cite{huang2019faster} implies that $\sum_{k=0}^{K-1}\sum_{i=\left(n_k-1\right) q}^{k-1} \mathbb{E}\left\|\mathbf{x}^{i+1}-\mathbf{x}^i\right\|^2<+\infty$.  These results suggest that $\frac{L^2}{M}\sum_{i=\left(n_k-1\right) q}^{k-1} \mathbb{E}\left\|\mathbf{x}^{i+1}-\mathbf{x}^i\right\|^2$ gradually decreases towards 0, and the variance gradually decreases with increasing iterations $k$, being smaller than $\mathcal{O}(\frac{\sigma^2}{M})$. Therefore, the proper selection of $\beta$ and $\mathcal{D}^k_x$ to satisfy $L\mathbb{E}_{k}\left[ \left\| \mathbf{x}^{k+1}-{\mathbf{x}}^k\right\|^2 \right] - \frac{\beta}{2}\mathbb{E}_{k}\left[ \left\| \mathbf{x}^{k+1}-{\mathbf{x}}^k\right\|_{\mathcal{D}^k_x}^2 \right]  \leq 0$, along with $\hat{c}_x>0$, $c_x>0$, ensures the fulfillment of condition (\ref{x update criteria}).

	\end{itemize}

	The variance of SADMM is bounded, while, owing to the utilization of VR techniques, the variances of the latter two gradually diminish.
	This diminishing variance aligns with our inexact update criterion, expressed as 
	$\frac{\sigma^2}{M} \rightarrow 0$.
	Therefore, we illustrate that the proposed UI-SADMM is a general framework encompassing many classical stochastic ADMM algorithms.
	
\end{remark}

\subsection{Convergence Analysis of UI-SADMM}\label{app:dual lemma}
Before introducing the convergence properties of the proposed algorithm , we initially present a crucial lemma concerning the characteristics of the dual variables that is indispensable for the subsequent analysis.

\begin{lemma}\label{ATlambda}
	Suppose that Assumption \ref{assum 1} (a) and (b) hold. Let  $\left\{\mathbf{w}^k:= (\mathbf{x}^k,\mathbf{y}^k,\mathbf{\lambda}^k)\right\}$ be the iterates satisfying the condition (\ref{x update criteria}), then it holds that
	
	\begin{align}
		\mathbb{E}\left[ \left\|A^{\top} \mathbf{d}_\lambda^k\right\|^2 \right]
		&\leq \psi_2(s) \mathbb{E}\left(\left\|A^{\top} \mathbf{d}_\lambda^{k-1}\right\|^2-\left\|A^{\top} \mathbf{d}_\lambda^k\right\|^2\right) + \psi_1(s)\left(2 L_{f}^{2}+4c_x^2\beta^2
		\right)\mathbb{E}\left[ \left\|\mathbf{d}_x^{k} \right\|^2 \right]\nonumber\\
		&\quad +4\psi_1(s)c_x^2\beta^2\mathbb{E}\left[ \left\|\ \mathbf{d}_x^{k-1}  \right\|^2 \right]+ 	4\psi_1(s)c_x^2\beta^2\mathbb{E}\left[ \left\|\ \mathbf{d}_y^{k}  \right\|^2 \right]
		+4\psi_1(s)c_x^2\beta^2\mathbb{E}\left[ \left\|\ \mathbf{d}_y^{k-1}  \right\|^2 \right]
		\nonumber\\
		&\quad + 
		8\psi_1(s)\frac{c_x^2 \beta^{2}\sigma^2}{M},
		\label{ATlambda upper bound}
	\end{align}
	where \begin{equation}
		\psi_1(s)=\max \left\{1, \frac{s^2}{(2-s)^2}\right\} \quad \text { and } \quad \psi_2(s)=\max \left\{\frac{1-s}{s}, \frac{s-1}{2-s}\right\} \text {. }\label{def of psi}
	\end{equation}
\end{lemma}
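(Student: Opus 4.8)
The plan is to begin from the stationarity characterization of the inexact $\mathbf{x}$-subproblem together with the dual update, in order to rewrite $A^{\top}\boldsymbol{\lambda}^{k+1}$ in a telescopable form. Since $\xi_x^{k+1}=\nabla_x\mathcal{L}_\beta(\mathbf{x}^{k+1},\mathbf{y}^{k+1},\boldsymbol{\lambda}^k)=\nabla f(\mathbf{x}^{k+1})-A^{\top}\boldsymbol{\lambda}^k+\beta A^{\top}\mathbf{r}^{k+1}$, and the dual step $\boldsymbol{\lambda}^{k+1}=\boldsymbol{\lambda}^k-s\beta\mathbf{r}^{k+1}$ gives $\beta\mathbf{r}^{k+1}=\tfrac1s(\boldsymbol{\lambda}^k-\boldsymbol{\lambda}^{k+1})$, I would substitute $\beta A^{\top}\mathbf{r}^{k+1}=-\tfrac1s A^{\top}\mathbf{d}_\lambda^k$ and $A^{\top}\boldsymbol{\lambda}^k=A^{\top}\boldsymbol{\lambda}^{k+1}-A^{\top}\mathbf{d}_\lambda^k$ to obtain the key identity
\[
A^{\top}\boldsymbol{\lambda}^{k+1}=\nabla f(\mathbf{x}^{k+1})-\xi_x^{k+1}+\Big(1-\tfrac1s\Big)A^{\top}\mathbf{d}_\lambda^k.
\]

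Writing the same identity at index $k-1$ and subtracting, the memory terms $(1-\tfrac1s)A^{\top}\mathbf{d}_\lambda^k$ partially cancel, leaving the one-step recursion
\[
A^{\top}\mathbf{d}_\lambda^k=s\big[\nabla f(\mathbf{x}^{k+1})-\nabla f(\mathbf{x}^k)-(\xi_x^{k+1}-\xi_x^k)\big]-(s-1)A^{\top}\mathbf{d}_\lambda^{k-1}.
\]
To this I would apply the Young inequality $\|\mathbf{u}+\mathbf{v}\|^2\le(1+\eta)\|\mathbf{u}\|^2+(1+\eta^{-1})\|\mathbf{v}\|^2$ with $\mathbf{u}=-(s-1)A^{\top}\mathbf{d}_\lambda^{k-1}$ and $\mathbf{v}$ the bracketed residual term, choosing $\eta$ as a function of $s$ (namely $\eta=\tfrac{2-s}{s-1}$ for $s\in(1,2)$ and $\eta=\tfrac{s}{1-s}$ for $s\in(0,1)$) so that the coefficient $(1+\eta)(s-1)^2$ of $\|A^{\top}\mathbf{d}_\lambda^{k-1}\|^2$ collapses to $|s-1|$. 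Moving the appropriate multiple of $\|A^{\top}\mathbf{d}_\lambda^k\|^2$ to the left-hand side then converts the bound into the self-telescoping factor $\psi_2(s)$ acting on $\|A^{\top}\mathbf{d}_\lambda^{k-1}\|^2-\|A^{\top}\mathbf{d}_\lambda^k\|^2$, while the residual acquires exactly the factor $\psi_1(s)$; the boundary case $s=1$ is immediate since there $\psi_2=0$ and the recursion has no memory term.

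Finally I would bound the residual $\|\nabla f(\mathbf{x}^{k+1})-\nabla f(\mathbf{x}^k)-(\xi_x^{k+1}-\xi_x^k)\|^2$. Splitting it as $\le 2\|\nabla f(\mathbf{x}^{k+1})-\nabla f(\mathbf{x}^k)\|^2+4\|\xi_x^{k+1}\|^2+4\|\xi_x^k\|^2$, the first piece is controlled by the $L$-smoothness of Assumption~\ref{assum 1}(a) as $2L^2\|\mathbf{d}_x^k\|^2$ (with $L_f=L$), and each of $\mathbb{E}\|\xi_x^{k+1}\|^2$, $\mathbb{E}\|\xi_x^k\|^2$ is estimated through the second inexact criterion in \eqref{x update criteria} (applied at $k$ and at $k-1$), using the tower property on the conditional bounds. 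Collecting the $\|\mathbf{d}_x^k\|^2$ contributions into $\psi_1(s)(2L_f^2+4c_x^2\beta^2)$, assigning coefficient $4\psi_1(s)c_x^2\beta^2$ to each of $\|\mathbf{d}_x^{k-1}\|^2,\|\mathbf{d}_y^k\|^2,\|\mathbf{d}_y^{k-1}\|^2$, and combining the two $\sigma^2/M$ terms into $8\psi_1(s)c_x^2\beta^2\sigma^2/M$, yields precisely the claimed inequality \eqref{ATlambda upper bound}.

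The main obstacle is the calibration of the Young parameter $\eta$: it must be matched to $s$ so that both the telescoping coefficient and the residual coefficient emerge exactly as $\psi_2(s)$ and $\psi_1(s)$ uniformly over $s\in(0,2)$. This is what forces the separate treatment of the sub-ranges $(0,1)$ and $(1,2)$, whose outputs must then be recognized as the single $\max$-expressions defining $\psi_1,\psi_2$ in \eqref{def of psi}; the remaining smoothness and variance bookkeeping is routine.
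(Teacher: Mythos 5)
Your proposal is correct and follows essentially the same route as the paper: the same one-step recursion $A^{\top}\mathbf{d}_\lambda^k = s\,\boldsymbol{\delta}^k + (1-s)A^{\top}\mathbf{d}_\lambda^{k-1}$ with $\boldsymbol{\delta}^k=\nabla f(\mathbf{x}^{k+1})-\nabla f(\mathbf{x}^k)+\xi_x^k-\xi_x^{k+1}$, the same case split over $s\in(0,1]$ and $s\in(1,2)$ with a Young/Cauchy--Schwarz parameter calibrated so that the coefficients collapse to $\psi_1(s)$ and $\psi_2(s)$ (your $\eta=\tfrac{2-s}{s-1}$ is an equivalent reparametrization of the paper's $\nu=(2-s)/s$, and for $s\le 1$ your Young choice reproduces the paper's convexity bound), and the same $2$--$4$--$4$ splitting of $\|\boldsymbol{\delta}^k\|^2$ combined with the inexact criterion \eqref{x update criteria} at indices $k$ and $k-1$. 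No gaps.
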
 

\begin{proof}
	Applying a similar proof strategy as presented in \cite{BZZ2023}, we can obtain the following results.
	
	From the definition of $\xi_x^{k+1}=\nabla_x \mathcal{L}_\beta\left({\mathbf{x}}^{k+1}, \mathbf{y}^{k+1}, \boldsymbol{\lambda}^k\right)$, we have
	$$
	\xi_x^{k+1}=\nabla f\left( \mathbf{x}^{k+1}\right)+A^{\top}\left[-\boldsymbol{\lambda}^k+\beta {\mathbf{r}}^{k+1}\right],
	$$
	where ${\mathbf{r}}^{k+1}=A {\mathbf{x}}^{k+1}+B \mathbf{y}^{k+1}-\mathbf{b}$. Then, we further have
	$$
	A^{\top} \boldsymbol{\lambda}^k=\nabla f\left({\mathbf{x}}^{k+1}\right)-\xi_x^{k+1}+\beta A^{\top} {\mathbf{r}}^{k+1},
	$$
	by $\boldsymbol{\lambda}^{k+1}=\boldsymbol{\lambda}^k-s \beta {\mathbf{r}}^{k+1}$ that
	\begin{align}
		s A^{\top} \boldsymbol{\lambda}^k=s\left(\nabla f\left({\mathbf{x}}^{k+1}\right)-\xi_x^{k+1}\right)+A^{\top}\left(\boldsymbol{\lambda}^k-\boldsymbol{\lambda}^{k+1}\right).
	\end{align}
	This yields that 
	\begin{align}
		A^{\top} \boldsymbol{\lambda}^{k+1} & = s\left(\nabla f\left({\mathbf{x}}^{k+1}\right)-\xi_x^{k+1}\right) + (1-s)A^{\top} \boldsymbol{\lambda}^{k}\nonumber\\
		A^{\top} \boldsymbol{\lambda}^{k} & = s\left(\nabla f\left({\mathbf{x}}^{k}\right)-\xi_x^{k}\right) + (1-s)A^{\top} \boldsymbol{\lambda}^{k-1}\nonumber\\
		A^{\top} \mathbf{d}_\lambda^{k } & = s\left(\nabla f\left({\mathbf{x}}^{k+1}\right)-
		\nabla f\left({\mathbf{x}}^{k}\right)+\xi_x^{k}-\xi_x^{k+1}\right)
		+ (1-s)A^{\top}\mathbf{d}_\lambda^{k-1}\nonumber\\
		A^{\top} \mathbf{d}_\lambda^{k } & = s\mathbf{\delta}^{k} + (1-s)A^{\top}\mathbf{d}_\lambda^{k-1}, \label{ATlambda 1}  		
	\end{align}
	where $\mathbf{\delta}^{k}=\nabla f\left({\mathbf{x}}^{k+1}\right)-
	\nabla f\left({\mathbf{x}}^{k}\right)+\xi_x^{k}-\xi_x^{k+1}$.
	

	Now, we consider two different scenarios of $s \in(0,1]$ and $s \in(1,2)$.
	\begin{itemize}
		\item \textbf{Case 1:} $s \in(0,1]$. 
		
		Then, combining (\ref{ATlambda 1}) and the convexity of $\|\cdot\|^2$, one has
		$$
		\left\|A^{\top} \mathbf{d}_\lambda^k\right\|^2 \leq s\left\|\boldsymbol{\delta}^k\right\|^2+(1-s)\left\|A^{\top} \mathbf{d}_\lambda^{k-1}\right\|^2.
		$$
		
		Subtracting $(1-s)\left\|A^{\top} \mathbf{d}_\lambda^k\right\|^2$ and dividing both sides by $s$ from the above inequality, we obtain
		\begin{equation}
			\left\|A^{\top} \mathbf{d}_\lambda^k\right\|^2 \leq\left\|\boldsymbol{\delta}^k\right\|^2+\frac{1-s}{s}\left(\left\|A^{\top} \mathbf{d}_\lambda^{k-1}\right\|^2-\left\|A^{\top} \mathbf{d}_\lambda^k\right\|^2\right). \label{equ 4.7}
		\end{equation}

		\item \textbf{Case 2:} $s \in(1,2)$. 
		
		It follows from (4.5) that
		$$
		\left\|A^{\top} \mathbf{d}_\lambda^k\right\|^2=(1-s)^2\left\|A^{\top} \mathbf{d}_\lambda^{k-1}\right\|^2+s^2\left\|\boldsymbol{\delta}^k\right\|^2+2 s(1-s)\left\langle A^{\top} \mathbf{d}_\lambda^{k-1}, \boldsymbol{\delta}^k\right\rangle .
		$$
		
		Combining the above result with Cauchy-Schwartz inequality, for an $\nu>0$ it follows that
		\begin{equation}\label{ATd lam}
			\begin{aligned}
				\left\|A^{\top} \mathbf{d}_\lambda^k\right\|^2 & \leq(1-s)^2\left\|A^{\top} \mathbf{d}_\lambda^{k-1}\right\|^2+s^2\left\|\boldsymbol{\delta}^k\right\|^2+s(s-1)\left(\nu\left\|A^{\top} \mathbf{d}_\lambda^{k-1}\right\|^2+\frac{1}{\nu}\left\|\boldsymbol{\delta}^k\right\|^2\right) \\
				& =\left((1-s)^2+s(s-1) \nu\right)\left\|A^{\top} \mathbf{d}_\lambda^{k-1}\right\|^2+\left(s^2+\frac{s(s-1)}{\nu}\right)\left\|\boldsymbol{\delta}^k\right\|^2.
			\end{aligned}
		\end{equation}
		
		Selecting $\nu=(2-s) / s$ and reusing (\ref{ATd lam}), one has
		$$
		(1-s)^2+s(s-1) \nu=s-1, \quad  \quad s^2+\frac{s(s-1)}{\nu}=\frac{s^2}{2-s},
		$$
		and
		$$
		\left\|A^{\top} \mathbf{d}_\lambda^k\right\|^2 \leq(s-1)\left\|A^{\top} \mathbf{d}_\lambda^{k-1}\right\|^2+\frac{s^2}{2-s}\left\|\boldsymbol{\delta}^k\right\|^2 .
		$$

		Subtracting $(s-1)\left\|A^{\top} \mathbf{d}_\lambda^k\right\|^2$ and dividing both sides by $2-s$ from the above inequality, we obtain
		\begin{equation}
			\left\|A^{\top} \mathbf{d}_\lambda^k\right\|^2 \leq \frac{s^2}{(2-s)^2}\left\|\boldsymbol{\delta}^k\right\|^2+\frac{s-1}{2-s}\left(\left\|A^{\top} \mathbf{d}_\lambda^{k-1}\right\|^2-\left\|A^{\top} \mathbf{d}_\lambda^k\right\|^2\right). \label{equ4.10}
		\end{equation}

	\end{itemize}
	
	Combining \eqref{equ 4.7} and \eqref{equ4.10} and considering the definition of $\psi_1$ and $\psi_2$ in \eqref{def of psi}, 
	we can further derive
	\begin{equation}
		\left\|A^{\top} \mathbf{d}_\lambda^k\right\|^2 \leq \psi_1(s)\left\|\boldsymbol{\delta}^k\right\|^2+\psi_2(s)\left(\left\|A^{\top} \mathbf{d}_\lambda^{k-1}\right\|^2-\left\|A^{\top} \mathbf{d}_\lambda^k\right\|^2\right).\label{ATlambda 2}
	\end{equation}
	
	By  (\ref{x update criteria}) and the property $\mathbb{E}[\mathbb{E}[\cdot \mid \mathcal{F}_{k}]]=\mathbb{E}[\cdot]$, for the term 
	$\mathbb{E}\left[ \left\|\boldsymbol{\delta}^k\right\|^2 \right]$ we can drive
	\begin{align}
		\mathbb{E}\left[ \left\|\boldsymbol{\delta}^k\right\|^2 \right] &= \mathbb{E}\left[ \left\|\ \nabla f\left({\mathbf{x}}^{k+1}\right)-
		\nabla f\left({\mathbf{x}}^{k}\right)+\xi_x^{k}-\xi_x^{k+1} \right\|^2 \right]	\nonumber\\
		& \leq \mathbb{E}\left[ \left\|\ \nabla f\left({\mathbf{x}}^{k+1}\right)-
		\nabla f\left({\mathbf{x}}^{k}\right) \right\| +
		\left\|\ \xi_x^{k} \right\| + \left\|\ \xi_x^{k+1} \right\|  \right]^2 \nonumber\\
		& \mathop{\leq}^{(i)} 2 L_{f}^{2}  \mathbb{E}\left[ \left\|\mathbf{d}_x^{k} \right\|^2 \right]  + 4\mathbb{E}\left[ \left\|\ \xi_x^{k} \right\|^2 \right]+ 4\mathbb{E}\left[ \left\|\ \xi_x^{k+1} \right\|^2 \right]\nonumber\\
		& \mathop{\leq}^{(ii)} 2 L_{f}^{2}  \mathbb{E}\left[ \left\|\mathbf{d}_x^{k} \right\|^2 \right]  +  4(c_x \beta)^{2} \left(\frac{2\sigma^2}{M} +
		\mathbb{E}\left[ \left\| \mathbf{d}_x^{k} \right\|^2 \right] +
		\mathbb{E}\left[ \left\| \mathbf{d}_y^{k} \right\|^2 \right] +
		\mathbb{E}\left[ \left\| \mathbf{d}_x^{k-1} \right\|^2 \right] +
		\mathbb{E}\left[ \left\| \mathbf{d}_y^{k-1} \right\|^2 \right] 
		\right)\nonumber\\
		& \leq
		(2 L_{f}^{2} + 4c_x^2 \beta^{2})\mathbb{E}\left[ \left\| \mathbf{d}_x^{k} \right\|^2 \right] + 4c_x^2 \beta^{2}\mathbb{E}\left[ \left\| \mathbf{d}_x^{k-1} \right\|^2 \right]+
		4c_x^2 \beta^{2}\mathbb{E}\left[ \left\| \mathbf{d}_y^{k} \right\|^2 \right]+
		4c_x^2 \beta^{2}\mathbb{E}\left[ \left\| \mathbf{d}_y^{k-1} \right\|^2 \right]\nonumber\\
		&\quad+ \frac{8c_x^2 \beta^{2}\sigma^2}{M}
		\label{exp of delta k},
	\end{align}
	where (i) is due to $
	\left\|\ \nabla f\left(  \mathbf{x}\right) - \nabla f\left( \mathbf{y}\right) \right\| \leq L_f \left\|\ \mathbf{x} - \mathbf{y} \right\|
	$ and Cauchy-Schwartz inequality, and (ii) is from the optimal condition (\ref{x update criteria}).

	Combining (\ref{ATlambda 2}) and (\ref{exp of delta k}), we have completed the proof 
	\begin{align}
		\mathbb{E}\left[ \left\|A^{\top} \mathbf{d}_\lambda^k\right\|^2 \right]
		&\leq \psi_2(s) \mathbb{E}\left(\left\|A^{\top} \mathbf{d}_\lambda^{k-1}\right\|^2-\left\|A^{\top} \mathbf{d}_\lambda^k\right\|^2\right) + \psi_1(s)\left(2 L_{f}^{2}+4c_x^2\beta^2
		\right)\mathbb{E}\left[ \left\|\mathbf{d}_x^{k} \right\|^2 \right]\nonumber\\
		&\quad +4\psi_1(s)c_x^2\beta^2\mathbb{E}\left[ \left\|\ \mathbf{d}_x^{k-1}  \right\|^2 \right]+ 	4\psi_1(s)c_x^2\beta^2\mathbb{E}\left[ \left\|\ \mathbf{d}_y^{k}  \right\|^2 \right]
		+4\psi_1(s)c_x^2\beta^2\mathbb{E}\left[ \left\|\ \mathbf{d}_y^{k-1}  \right\|^2 \right]
		\nonumber\\
		&\quad + 
		8\psi_1(s)\frac{c_x^2 \beta^{2}\sigma^2}{M}.
		\nonumber 
	\end{align}
\end{proof}

\begin{remark}
	For the nonconvex and nonsmooth problems, this paper extends the range of the dual variable's stepsize to $(0, 2)$, in contrast to $(0, \frac{1+\sqrt{5}}{2})$ as in \cite{yang2017alternating}. 
	This extension is primarily attributed to a novel technique applied in the inequality of  \eqref{ATd lam}, specifically employing the Cauchy-Schwartz inequality to introduce the free parameter $\nu$. We detail the distinctions in the proofs concerning the selection of dual stepsize between our work and \cite{yang2017alternating} as follows.
	\begin{itemize}
		\item For $s>1$ in \cite{yang2017alternating}, the convexity inequality was employed for  
		$$\begin{aligned} \frac{1}{s}\left(\Lambda^{k+1}-\Lambda^k\right) & =\frac{1}{s} s \mathcal{A}^* \mathcal{A}\left(Z^k-Z^{k+1}\right)+\left(1-\frac{1}{s}\right)\left(\Lambda^{k-1}-\Lambda^k\right)
		\end{aligned}$$
		to derive 
		$$
		\left\|\Lambda^{k+1}-\Lambda^k\right\|_F^2 \leq s^3 \lambda_{\max }^2\left\|Z^{k+1}-Z^k\right\|_F^2+\left(s^2-s\right)\left\|\Lambda^k-\Lambda^{k-1}\right\|_F^2.
		$$
		
		Then, subtracting $\left(s^2-s\right)\left\|\Lambda^{k+1}-\Lambda^k\right\|_F^2$ from both sides of the above inequality, one has that
		\begin{equation}
			(1+s-s^2)\left\|\Lambda^{k+1}-\Lambda^k\right\|_F^2 \leq s^3 \lambda_{\max }^2\left\|Z^{k+1}-Z^k\right\|_F^2+\left(s^2-s\right)(\left\|\Lambda^k-\Lambda^{k-1}\right\|_F^2 - \left\|\Lambda^{k+1}-\Lambda^k\right\|_F^2).\nonumber
		\end{equation}
		
		It can be observed that the coefficient $1+s-s^2$ on the left side must be greater than 0 for convergence analysis. This leads to the classical result: $s \in (0, \frac{1+\sqrt{5}}{2})$.
		
		\item For $s>1$, unlike \cite{yang2017alternating}, our proof utilizes the Cauchy-Schwartz inequality, with a crucial technique to introduce a new free variable $\nu$. Selecting a specific $\nu$ results in the derivation of \eqref{equ4.10}. Consequently, the dual stepsize is further relaxed, falling within the range of $(0 ,2)$.

	\end{itemize}

\end{remark}

\subsection{Proof of Theorem \ref{x y w grad 0}}\label{app:thm1}
Applying  (\ref{y update criteria}), (\ref{x update criteria}), and the update rule of $\boldsymbol{\lambda}$: $\boldsymbol{\lambda}^{k+1}=\boldsymbol{\lambda}^k-s \beta\left(A \mathbf{x}^{k+1}+B \mathbf{y}^{k+1}-\mathbf{b}\right)$, we have
\begin{equation}
	\begin{cases}
		\mathcal{L}_\beta\left(\mathbf{x}^k, \mathbf{y}^{k+1}, \boldsymbol{\lambda}^k\right)- \mathcal{L}_\beta\left(\mathbf{x}^k, \mathbf{y}^{k}, \boldsymbol{\lambda}^k\right) &\leq -\frac{\beta}{2}\left\|\mathbf{y}^{k+1}-{\mathbf{y}}^{k}\right\|_{\mathcal{D}^{k}_y}^2,
		\\ 
		\mathbb{E}_{k}\mathcal{L}_\beta\left(\mathbf{x}^{k+1}, \mathbf{y}^{k+1}, \boldsymbol{\lambda}^k\right)- \mathcal{L}_\beta\left(\mathbf{x}^{k}, \mathbf{y}^{k+1}, \boldsymbol{\lambda}^k\right)&\leq \frac{ (\hat{c}_x \beta)^2}{2} \frac{\sigma^2}{M} - \frac{\beta}{2}\mathbb{E}_{k}\left\|\mathbf{x}^{k+1}-{\mathbf{x}}^{k}\right\|_{\mathcal{D}^{k}_x}^2,
		\\
		\mathcal{L}_\beta\left(\mathbf{x}^{k+1}, \mathbf{y}^{k+1}, \boldsymbol{\lambda}^{k+1}\right)
		- \mathcal{L}_\beta\left(\mathbf{x}^{k+1}, \mathbf{y}^{k+1}, \boldsymbol{\lambda}^{k}\right)&= \frac{1}{s\beta} \left\|\ \mathbf{d}_\lambda^{k }  \right\|^2,
	\end{cases}\label{des inequs}
\end{equation}
then 
taking expectation on both sides in (\ref{des inequs})  implies that
\begin{equation}\label{descent 1}
	\begin{aligned}
		\mathbb{E}\mathcal{L}_\beta\left(\mathbf{x}^{k+1}, \mathbf{y}^{k+1}, \boldsymbol{\lambda}^{k+1}\right) \leq&
		\mathbb{E}\mathcal{L}_\beta\left(\mathbf{x}^{k}, \mathbf{y}^{k}, \boldsymbol{\lambda}^k\right)
		+\frac{1}{s\beta} \mathbb{E}\left[\left\|\  \mathbf{d}_\lambda^{k }  \right\|^2\right] -\frac{\beta}{2} \mathbb{E}\left[\left\|\mathbf{d}_y^{k}\right\|_{\mathcal{D}^{k}_y}^2\right] - \frac{\beta}{2} \mathbb{E}\left[\left\|
		\mathbf{d}_x^{k}
		\right\|_{\mathcal{D}^{k}_x}^2\right]+\frac{ (\hat{c}_x \beta)^2}{2}\frac{\sigma^2}{M}\\
		\mathop{\leq}^{(i)}&
		\mathbb{E}\mathcal{L}_\beta\left(\mathbf{x}^{k}, \mathbf{y}^{k}, \boldsymbol{\lambda}^k\right)
		+
		\frac{1+\tau}{s\beta\sigma_{A}} \mathbb{E}\left[\left\|\  A\mathbf{d}_\lambda^{k }  \right\|^2\right]
		-
		\frac{\tau}{s\beta} \mathbb{E}\left[\left\|\  \mathbf{d}_\lambda^{k }  \right\|^2\right]
		-\frac{\beta}{2} \mathbb{E}\left[\left\|\mathbf{d}_y^{k}\right\|_{\mathcal{D}^{k}_y}^2\right] \\
		&- \frac{\beta}{2} \mathbb{E}\left[\left\|
		\mathbf{d}_x^{k}
		\right\|_{\mathcal{D}^{k}_x}^2\right]+\frac{ (\hat{c}_x \beta)^2}{2}\frac{\sigma^2}{M}
	\end{aligned}
\end{equation}
where (i) is from $\frac{1}{s\beta} \mathbb{E}\left[\left\|\  \mathbf{d}_\lambda^{k } \right\|^2\right]\leq \frac{1+\tau}{s\beta\sigma_{A}} \mathbb{E}\left[\left\|\  A\mathbf{d}_\lambda^{k } \right\|^2\right]- \frac{\tau}{s\beta} \mathbb{E}\left[\left\|\  \mathbf{d}_\lambda^{k } \right\|^2\right]$, $\tau \in \left(0,1\right)$, and $\sigma_A$ is square root of the smallest positive eigenvalue of $A^{\top} A$ (or the smallest positive eigenvalue of $A A^{\top}$).

Applying Lemma \ref{ATlambda}
to (\ref{descent 1}), we have
\begin{equation}
	\begin{aligned}
		&\mathbb{E}\mathcal{L}_\beta\left(\mathbf{x}^{k+1}, \mathbf{y}^{k+1}, \boldsymbol{\lambda}^{k+1}\right)\\
		\leq &
		\mathbb{E}\mathcal{L}_\beta\left(\mathbf{x}^{k}, \mathbf{y}^{k}, \boldsymbol{\lambda}^k\right)
		-
		\frac{\tau}{s\beta} \mathbb{E}\left[\left\|\  \mathbf{d}_\lambda^{k }  \right\|^2\right]
		-\frac{\beta}{2} \mathbb{E}\left[\left\|\mathbf{d}_y^{k}\right\|_{\mathcal{D}^{k}_y}^2\right] - \frac{\beta}{2} \mathbb{E}\left[\left\|
		\mathbf{d}_x^{k}
		\right\|_{\mathcal{D}^{k}_x}^2\right]+\frac{ (\hat{c}_x \beta)^2}{2}\frac{\sigma^2}{M}\\
		& +
		4\frac{1+\tau}{s\beta\sigma_{A}}\psi_1(s)c_x^2\beta^2\Big(
		\mathbb{E}\left[ \left\|\ \mathbf{d}_x^{k-1}  \right\|^2 \right]
		+
		\mathbb{E}\left[ \left\|\ \mathbf{d}_y^{k}  \right\|^2 \right] +
		\mathbb{E}\left[ \left\|\ \mathbf{d}_y^{k-1}  \right\|^2 \right]
		\Big)\\
		& +  \frac{1+\tau}{s\beta\sigma_{A}}\psi_1(s)\left(2 L_{f}^{2}+4c_x^2\beta^2
		\right)\mathbb{E}\left[ \left\|\mathbf{d}_x^{k} \right\|^2 \right] + \frac{1+\tau}{s\beta\sigma_{A}}\psi_2(s)\mathbb{E}\left(\left\|A^{\top} \mathbf{d}_\lambda^{k-1}\right\|^2-\left\|A^{\top} \mathbf{d}_\lambda^k\right\|^2\right)\\
		&+8\frac{1+\tau}{s\beta\sigma_{A}}\psi_1(s)\frac{c_x^2 \beta^{2}\sigma^2}{M}.
	\end{aligned}
\end{equation}

Now, we define
\begin{equation}
	\begin{cases}
		\mathcal{L}_\beta\left(k\right) := 
		\mathbb{E}\mathcal{L}_\beta\left(\mathbf{x}^{k}, \mathbf{y}^{k}, \boldsymbol{\lambda}^{k}\right),\\ 	
		\hat{B} := \frac{1+\tau}{s\beta\sigma_{A}}\psi_1(s)\left(2 L_{f}^{2}+4c_x^2\beta^2
		\right),
	\end{cases}
\end{equation}
to drive
\begin{equation}\label{lag des 1}
	\begin{aligned}
		&\mathcal{L}_\beta\left(k+1\right)\\
		\leq 
		& \mathcal{L}_\beta\left(k\right) + \hat{A}(\mathbb{E}\left[ \left\|\ \mathbf{d}_x^{k-1}  \right\|^2 \right] - \mathbb{E}\left[ \left\|\ \mathbf{d}_x^{k}  \right\|^2 \right]) + \left(\hat{B}+\hat{A}-\frac{\beta}{2}\sigma_{min}( \mathcal{D}^{k}_x )\right)\mathbb{E}\left[ \left\|\ \mathbf{d}_x^{k}  \right\|^2 \right]\\
		&+ \hat{A}\left(\mathbb{E}\left[ \left\|\ \mathbf{d}_y^{k-1}\right\|^2\right]- \mathbb{E}\left[ \left\|\ \mathbf{d}_y^{k}  \right\|^2\right]  
		\right)+\left(2\hat{A}-\frac{\beta}{2}\sigma_{min}( \mathcal{D}^{k}_y )
		\right)\mathbb{E}\left[ \left\|\ \mathbf{d}_y^{k}\right\|^2\right] -\frac{\tau}{s\beta} \mathbb{E}\left[\left\|\  \mathbf{d}_\lambda^{k }  \right\|^2\right]\\
		&+ \frac{1+\tau}{s\beta\sigma_{A}}\psi_2(s)\mathbb{E}\left(\left\|A^{\top} \mathbf{d}_\lambda^{k-1}\right\|^2-\left\|A^{\top} \mathbf{d}_\lambda^k\right\|^2\right)+\frac{ (\hat{c}_x \beta)^2}{2}\frac{\sigma^2}{M}
		+8\frac{1+\tau}{s\beta\sigma_{A}}\psi_1(s)\frac{c_x^2 \beta^{2}\sigma^2}{M}.
	\end{aligned}
\end{equation}

Recalling the definition of potential function $\mathcal{P}^k$ in (\ref{def of potential}) and substituting it into (\ref{lag des 1}), one has
\begin{equation}\label{poten inequ1}
	\begin{aligned}
		\mathbb{E}\mathcal{P}^{k+1}\leq &\mathbb{E}\mathcal{P}^{k}+ \left(\hat{B}+\hat{A}-\frac{\beta}{2}\sigma_{min}( \mathcal{D}^{k}_x )\right)\mathbb{E}\left[ \left\|\ \mathbf{d}_x^{k}  \right\|^2 \right]
		+ \left(2\hat{A}-\frac{\beta}{2}\sigma_{min}( \mathcal{D}^{k}_y )
		\right)\mathbb{E}\left[ \left\|\ \mathbf{d}_y^{k}\right\|^2\right]\\
		&-\frac{\tau}{s\beta} \mathbb{E}\left[\left\|\  \mathbf{d}_\lambda^{k }  \right\|^2\right]
		+\frac{ (\hat{c}_x \beta)^2}{2}\frac{\sigma^2}{M}
		+8\frac{1+\tau}{s\beta\sigma_{A}}\psi_1(s)\frac{c_x^2 \beta^{2}\sigma^2}{M}.
	\end{aligned}
\end{equation}

Choosing $\mathcal{D}^{k}_x=w_1 I_{n_x}$, $\mathcal{D}^{k}_y=w_2 I_{n_y}$ to satisfy
\begin{equation}\label{params constraint}
	\left\{\begin{array} { l } 
		{ \widehat { B } + \widehat { A } - \frac { \beta } { 2 } \sigma _ { \operatorname { m i n } } ( D _ { x } ^ { k } ) \leq - w } \\
		{ 2 \widehat { A } - \frac { \beta } { 2 } \sigma _ { \operatorname { m i n } } ( D _ { y } ^ { k } ) \leq - w }
	\end{array} \Rightarrow \left\{\begin{array}{l}
		w_1 \geq \frac{\widehat{B}+\widehat{A}+w}{\frac{\beta}{2}} \\
		w_2 \geq \frac{2 \widehat{A}+w}{\frac{\beta}{2}}, 
	\end{array}\right.\right.
\end{equation}
where $w>0$ is a constant.

Denote $\mu = \min\left\{w, \frac{\tau}{s\beta}
\right\}$, we see further that
\begin{equation}\label{x+y+lam}
	\mu \Biggl\{ \mathbb{E}\left[\left\|\mathbf{d}_y^k\right\|^2\right] +\mathbb{E}\left[\left\|\mathbf{d}_x^k\right\|^2\right]+\mathbb{E}\left[\left\|\mathbf{d}_\lambda^k\right\|^2\right]\Biggr\} \leq
	\mathbb{E}\mathcal{P}^{k} - \mathbb{E}\mathcal{P}^{k+1}
	+\left(\frac{ (\hat{c}_x \beta)^2}{2}
	+8\frac{(1+\tau)c_x^2 \beta^{2}}{s\beta\sigma_{A}}\psi_1(s)\right)\frac{\sigma^2}{M},
\end{equation}
and
\begin{align}\label{sum of poten}
	&\mu \sum_{k=0}^{K} \Biggl\{ \mathbb{E}\left[\left\|\mathbf{d}_y^k\right\|^2\right] +\mathbb{E}\left[\left\|\mathbf{d}_x^k\right\|^2\right]+\mathbb{E}\left[\left\|\mathbf{d}_\lambda^k\right\|^2\right]\Biggr\} \nonumber\\
	\leq
	&\mathbb{E}\mathcal{P}\left({\mathbf{x}}^{0}, \mathbf{y}^{0}, \boldsymbol{\lambda}^{0}\right) - \mathbb{E}\mathcal{P}\left({\mathbf{x}}^{k+1}, \mathbf{y}^{k+1}, \boldsymbol{\lambda}^{k+1}\right)+(K+1)\frac{\sigma^2}{M}\mu_{\sigma,M},
\end{align}
where $\mu_{\sigma,M} := 
\frac{ \widehat{c}_x^2 \beta^2}{2 }+\frac{8(1+\tau) \psi_1(s)c_x^2 \beta^2}{s \beta \sigma_A}$. 

By setting $M = \mathcal{O}(K+1)$, we have from (\ref{sum of poten}) that
\begin{align}
	&\mu \sum_{k=0}^{K} \Biggl\{ \mathbb{E}\left[\left\|\mathbf{d}_y^k\right\|^2\right] +\mathbb{E}\left[\left\|\mathbf{d}_x^k\right\|^2\right]+\mathbb{E}\left[\left\|\mathbf{d}_\lambda^k\right\|^2\right]\Biggr\} \nonumber\\
	\leq
	&\mathbb{E}\mathcal{P}\left({\mathbf{x}}^{0}, \mathbf{y}^{0}, \boldsymbol{\lambda}^{0}\right) - \mathbb{E}\mathcal{P}\left({\mathbf{x}}^{k+1}, \mathbf{y}^{k+1}, \boldsymbol{\lambda}^{k+1}\right)+
	\mathcal{O}(\sigma^2\mu_{\sigma,M})\nonumber\\
	\leq&\mathbb{E}\mathcal{P}\left({\mathbf{x}}^{0}, \mathbf{y}^{0}, \boldsymbol{\lambda}^{0}\right) - \bar{\mathcal{P}}+
	\mathcal{O}(\sigma^2\mu_{\sigma,M}),
\end{align}	

\begin{equation}\label{sum is finite}
	\sum_{k=0}^{K} \Biggl\{ \mathbb{E}\left[\left\|\mathbf{d}_y^k\right\|^2\right] +\mathbb{E}\left[\left\|\mathbf{d}_x^k\right\|^2\right]+\mathbb{E}\left[\left\|\mathbf{d}_\lambda^k\right\|^2\right]\Biggr\} < +\infty,
\end{equation}

\begin{align}
	&\min_{k\in\left\{0,...,K\right\}} \Biggl\{ \mathbb{E}\left[\left\|\mathbf{d}_y^k\right\|^2\right] +\mathbb{E}\left[\left\|\mathbf{d}_x^k\right\|^2\right]+\mathbb{E}\left[\left\|\mathbf{d}_\lambda^k\right\|^2\right]\Biggr\} \nonumber\\
	\leq &\frac{\mathbb{E}\mathcal{P}\left({\mathbf{x}}^{0}, \mathbf{y}^{0}, \boldsymbol{\lambda}^{0}\right) - \mathbb{E}\mathcal{P}\left({\mathbf{x}}^{k+1}, \mathbf{y}^{k+1}, \boldsymbol{\lambda}^{k+1}\right)}{\mu(K+1)} + 
	\frac{\mu_{\sigma,M}}{\mu}
	\mathcal{O}(\frac{\sigma^2}{M})
	\nonumber\\
	&\min_{k\in\left\{0,...,K\right\}} \Biggl\{ \mathbb{E}\left[\left\|\mathbf{d}_y^k\right\|^2\right] +\mathbb{E}\left[\left\|\mathbf{d}_x^k\right\|^2\right]+\mathbb{E}\left[\left\|\mathbf{d}_\lambda^k\right\|^2\right]\Biggr\} = \mathcal{O}(\frac{1}{K}),
\end{align}
and
\begin{align}\label{x,y 0}
	\lim _{k \rightarrow \infty}\mathbb{E}\left\|{\mathbf{d}}_x^k\right\|=0, \quad \lim _{k \rightarrow \infty}\mathbb{E}\left\|\mathbf{d}_y^k\right\|=0 \quad \text{and} \quad \lim _{k \rightarrow \infty}\mathbb{E}\left\|\mathbf{d}_\lambda^k\right\|=0,
\end{align}	
where $\bar{\mathcal{P}}$ is the lower bound of $\mathcal{P}\left({\mathbf{x}}^{k}, \mathbf{y}^{k}, \boldsymbol{\lambda}^{k}\right)$. In addition, by the update rule of $ \boldsymbol{\lambda}^k$ which is $\boldsymbol{r}^{k+1}=-\frac{\mathbf{d}_\lambda^k}{s\beta}$, we have 
\begin{align}
	\lim _{k \rightarrow \infty}\mathbb{E}\left\|\mathbf{r}^k\right\|=0.\label{r limit}
\end{align}	

Now, with the denotation of the iterates generated by Alg.\ref{alg 1} as $\left\{\mathbf{w}^k:= (\mathbf{x}^k,\mathbf{y}^k,\mathbf{\lambda}^k)\right\}$ and some direct calculations, we obtain		
\begin{align}
	\label{partial grad of Lag}
	\partial_x \mathcal{L}_\beta\left(\mathbf{w}^k\right) & =\nabla f\left(\mathbf{x}^k\right)-A^{\top} \lambda^k+\beta A^{\top} \mathbf{r}^k\nonumber \\
	& =\nabla_x \mathcal{L}_\beta\left({\mathbf{x}}^{k}, \mathbf{y}^k, \lambda^{k-1}\right)-A^{\top} \mathbf{d}_\lambda^{k-1} = \xi_x^{k} -A^{\top} \mathbf{d}_\lambda^{k-1}
	, \nonumber\\
	\partial_y \mathcal{L}_\beta\left(\mathbf{w}^k\right) & =\partial_y g\left(\mathbf{y}^k\right)-B^{\top} \lambda^k+\beta B^{\top} \mathbf{r}^k \nonumber\\
	& = \partial_y \mathcal{L}_\beta\left(\mathbf{x}^{k-1}, \mathbf{y}^k, \lambda^{k-1}\right)-B^{\top}\left(\mathbf{d}_\lambda^{k-1}-\beta A \mathbf{d}_x^{k-1}\right), \quad \text { and } \nonumber\\
	\partial_\lambda \mathcal{L}_\beta\left(\mathbf{w}^k\right)&
	=\partial_\lambda \mathcal{L}\left(\mathbf{w}^k\right)=-\mathbf{r}^k.
\end{align}

Using the optimal conditions (\ref{y update criteria}) and (\ref{x update criteria}),  (\ref{x,y 0}), (\ref{r limit}), $M = \mathcal{O}(K+1)$, sufficiently large $K$, we conclude that
\begin{equation} \label{grad conv 1}
	\lim _{k \rightarrow \infty} \mathbb{E}\left[
	\operatorname{dist}\left(\mathbf{0}, \partial \mathcal{L}_\beta\left(\mathbf{w}^k\right)\right)\right]=\mathbb{E}\left(\begin{aligned}
		&\partial_x \mathcal{L}_\beta\left(\mathbf{w}^k\right)\\
		&\partial_y \mathcal{L}_\beta\left(\mathbf{w}^k\right)\\
		&\partial_\lambda \mathcal{L}_\beta\left(\mathbf{w}^k\right)
	\end{aligned}
	\right)=0.
\end{equation}	

\begin{lemma}\label{lemma of cluster}                       	
	Let $S\left(\mathbf{w}_0\right)$ denote the set of limit points of iterates $\left\{\mathbf{w}^k:= (\mathbf{x}^k,\mathbf{y}^k,\mathbf{\lambda}^k)\right\}$ and  Assumptions \ref{assum 1} and conditions in Theorem \ref{x y w grad 0}  hold. Then there exists a $\mathcal{F}^*$ such that
	\begin{itemize}
		\item [(i)]
		When $M$ is sufficiently large or $\sigma^2 \rightarrow 0$ (VR-gradient estimator), the sequence $\left\{ \mathbb{E}\left[\mathcal{P}^{k}\right] \right\}$ is nonincreasing, and 
		\begin{align}\label{obj conv1}
			\lim _{k \rightarrow \infty}\mathbb{E}\left[
			\mathcal{P}\left({\mathbf{x}}^{k}, \mathbf{y}^{k}, \boldsymbol{\lambda}^{k}\right)\right]
			= \lim _{k \rightarrow \infty} \mathbb{E}\left[\mathcal{L}_\beta\left(\mathbf{x}^k,{\mathbf{y}}^{k}, \boldsymbol{\lambda}^k\right)\right]
			=\mathcal{F}^{*}.
		\end{align}
		
		\item[(ii)] $S\left(\mathbf{w}_0\right) \subset \operatorname{crit} \mathcal{L}_\beta$ a.s.
		
	\end{itemize}
\end{lemma}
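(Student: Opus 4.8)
The plan is to handle the two parts separately, building directly on the one-step descent inequality already established in the proof of \cref{x y w grad 0}.

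For part (i), I would start from the potential decrease (\ref{poten inequ1}); combined with the parameter constraints (\ref{params constraint}) it yields
\begin{equation}
\mathbb{E}\mathcal{P}^{k+1} \leq \mathbb{E}\mathcal{P}^{k} - w\,\mathbb{E}\|\mathbf{d}_x^k\|^2 - w\,\mathbb{E}\|\mathbf{d}_y^k\|^2 - \tfrac{\tau}{s\beta}\mathbb{E}\|\mathbf{d}_\lambda^k\|^2 + \mu_{\sigma,M}\tfrac{\sigma^2}{M}. \nonumber
\end{equation}
In the regime where the noise term vanishes (either $M$ sufficiently large or the VR estimator driving $\sigma^2/M\to 0$), the right-hand side is no larger than $\mathbb{E}\mathcal{P}^k$, so $\{\mathbb{E}\mathcal{P}^k\}$ is nonincreasing. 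Since $\psi_2(s)\ge 0$, each of the three extra quadratic terms in the definition (\ref{def of potential}) of $\mathcal{P}^k$ is nonnegative, so $\mathcal{P}^k \ge \inf_{\mathbf{w}}\mathcal{L}_\beta(\mathbf{w}) > -\infty$ by the boundedness-below assumption; hence $\{\mathbb{E}\mathcal{P}^k\}$ is bounded below and converges to some limit $\mathcal{F}^*$. Finally, because (\ref{x,y 0}) and (\ref{sum is finite}) force $\mathbb{E}\|\mathbf{d}_x^{k-1}\|^2$, $\mathbb{E}\|\mathbf{d}_y^{k-1}\|^2$ and $\mathbb{E}\|A\mathbf{d}_\lambda^{k-1}\|^2$ all to zero, the three extra terms in $\mathbb{E}\mathcal{P}^k$ vanish in the limit, giving $\lim_k\mathbb{E}\mathcal{P}^k = \lim_k\mathbb{E}\mathcal{L}_\beta(\mathbf{x}^k,\mathbf{y}^k,\boldsymbol{\lambda}^k) = \mathcal{F}^*$.

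For part (ii), the starting point is the summability (\ref{sum is finite}): by Tonelli's theorem $\mathbb{E}\sum_k(\|\mathbf{d}_x^k\|^2+\|\mathbf{d}_y^k\|^2+\|\mathbf{d}_\lambda^k\|^2) < \infty$, so the series converges a.s. and thus $\mathbf{d}_x^k,\mathbf{d}_y^k,\mathbf{d}_\lambda^k\to\mathbf{0}$ a.s.; in particular $A\mathbf{d}_\lambda^{k-1}\to\mathbf{0}$ and $\mathbf{r}^k\to\mathbf{0}$ a.s. Moreover, the inexact criterion (\ref{x update criteria}) gives $\mathbb{E}_k\|\xi_x^{k+1}\|^2 \le (c_x\beta)^2\mathbb{E}_k(\sigma^2/M + \|\mathbf{d}_x^k\|^2 + \|\mathbf{d}_y^k\|^2)$, whose right side tends to $0$, so along a suitable subsequence $\xi_x^k\to\mathbf{0}$ a.s. Substituting these into the explicit subgradient expressions (\ref{partial grad of Lag}) shows $\operatorname{dist}(\mathbf{0},\partial\mathcal{L}_\beta(\mathbf{w}^k))\to 0$ a.s. along that subsequence.

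To finish, I would take any $\mathbf{w}^*\in S(\mathbf{w}_0)$, so that $\mathbf{w}^{k_j}\to\mathbf{w}^*$ for some subsequence; passing to a further subsequence on which all the a.s. convergences above hold simultaneously, I select $v^{k_j}\in\partial\mathcal{L}_\beta(\mathbf{w}^{k_j})$ with $\|v^{k_j}\|\to 0$. Because $\nabla f$ is continuous and the Clarke subgradient $\partial g$ is outer semicontinuous (\cref{def clarke}), the full subdifferential $\partial\mathcal{L}_\beta$ is outer semicontinuous; hence $\mathbf{0}\in\partial\mathcal{L}_\beta(\mathbf{w}^*)$, i.e. $\mathbf{w}^*\in\operatorname{crit}\mathcal{L}_\beta$ a.s. The main obstacle I anticipate is precisely this last step: routing the convergence carefully through expectations and conditional expectations into genuine almost-sure statements along a single common subsequence, and then invoking outer semicontinuity of the Clarke subdifferential to pass to the limit in the inclusion $v^{k_j}\in\partial\mathcal{L}_\beta(\mathbf{w}^{k_j})$ while preserving measurability of the chosen subgradient selections.
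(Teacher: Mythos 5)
Your proposal is correct and follows essentially the same route as the paper: part (i) via the one-step potential descent from (\ref{poten inequ1}) together with monotonicity and boundedness below, and part (ii) via vanishing successive differences plugged into (\ref{partial grad of Lag}) and outer semicontinuity of the Clarke subdifferential. You are in fact slightly more careful than the paper at the one delicate point—converting the summability in expectation (\ref{sum is finite}) into genuine almost-sure convergence of $\mathbf{d}_x^k,\mathbf{d}_y^k,\mathbf{d}_\lambda^k$ via Tonelli before invoking outer semicontinuity—whereas the paper simply asserts $d^{k_q}\to 0$ a.s. from \cref{x y w grad 0}.
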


\begin{proof}
	Combining sufficiently large $M$ or $\sigma^2 \rightarrow 0$ with (\ref{poten inequ1}), (\ref{params constraint}), and $\mu = \min\left\{w, \frac{\tau}{s\beta}
	\right\}$, it yields that
	\begin{equation}
		\begin{aligned}
			\mathbb{E}\mathcal{P}^{k+1}\leq &\mathbb{E}\mathcal{P}^{k}
			-\mu \mathbb{E}\left[ \left\|\ \mathbf{d}_x^{k}  \right\|^2 \right]
			-\mu \mathbb{E}\left[ \left\|\ \mathbf{d}_y^{k}\right\|^2\right]-\mu \mathbb{E}\left[\left\|\  \mathbf{d}_\lambda^{k }  \right\|^2\right].
		\end{aligned}
	\end{equation}
	So, we have the sequence $\mathbb{E}\mathcal{P}^{k}$ is monotonically nonincreasing, which together
	with $\left\{\mathcal{P}^{k} \right\}$ being bounded from below gives $\lim _{k \rightarrow \infty}\mathbb{E}\left[
	\mathcal{P}\left({\mathbf{x}}^{k}, \mathbf{y}^{k}, \boldsymbol{\lambda}^{k}\right)\right]=\mathcal{F}^{*}$ for some $\mathcal{F}^{*}.$ 
	
	Then, it follows from the definition of $\mathcal{P}^{k}$ in (\ref{def of potential}), (\ref{x,y 0}) and (\ref{r limit}) that (\ref{obj conv1}) holds. So, item (i) is derived.

	\quad \quad For item (ii), it's sufficient to prove $\bar{\mathbf{w}} \in \operatorname{crit} \mathcal{L}_\beta$ for any $\bar{\mathbf{w}} \in S\left(\mathbf{w}_0\right)$. From $\bar{\mathbf{w}} \in S\left(\mathbf{w}_0\right)$, we have $\mathbf{w}^{k_q} \rightarrow \bar{\mathbf{w}}$, $d^{k_q} \in \partial \mathcal{L}_\beta\left(\mathbf{w}^{k_q} \right)$ and $d^{k_q} \rightarrow 0 \quad \text{a.s.}$ by Lemma \ref{x y w grad 0}. Noting that the {{outer semicontinuity of the Clarke subgradient}} $\partial \mathcal{L}_\beta\left(\mathbf{w}^{k_q} \right)$, it implies that $0 \in \partial \mathcal{L}_\beta\left(\bar{\mathbf{w}} \right)$.
	Therefore, we prove that $\bar{\mathbf{w}} \in \operatorname{crit} \mathcal{L}_\beta$ for any $\bar{\mathbf{w}} \in S\left(\mathbf{w}_0\right)$ which is equivalent to $S\left(\mathbf{w}_0\right) \subset \operatorname{crit} \mathcal{L}_\beta$ a.s.
\end{proof}

\subsection{Remark of Error Bound Conditions}\label{app:remark of error bound}
\begin{remark}
	The Error Bound assumption has been employed in the convergence analysis of various algorithms, including block coordinate gradient descent and ADMM methods, as referenced in works like  \cite{zhou2017unified,wen2017linear}. This assumption consists of three parts: the first part is a condition on the local error bound, the second assumption indicates that when restricted to the set $\Omega^*$, the isocost surfaces of the function $F$ can be suitably separated. Many functions can satisfy assumptions (a) and (b), such as nonconvex quadratic function and polyhedral function. We recommend that readers explore \cite{tseng2009coordinate,zhou2017unified} and the references therein for more examples and in-depth discussions on error bound condition. The last part of assumption highlights that  function g should exhibit local weak convexity near the projection of the stationary point set $\Omega^*$ onto the $\mathbf{y}$-coordinates.
\end{remark}

\subsection{Proof of \cref{thm:linear conv}}
\begin{proof}	
	For item (i), recalling from (\ref{grad conv 1}) and (\ref{obj conv1}) that there exists a constant $\zeta \geq \inf _{\mathbf{w}} \mathcal{L}_\beta(\mathbf{w})$ such that $ {{\mathbb{E}}}\mathcal{L}_\beta\left(\mathbf{w}^k\right) \leq \zeta$ for all $k$ and $\lim _{k \rightarrow \infty} \operatorname{dist}\left(\mathbf{0}, \partial \mathcal{L}_\beta\left(\mathbf{w}^k\right)\right)=0$ a.s. Thus, conclusion (i) follows from Assumption \ref{assump error bound} (a) with $\xi=\zeta$.

	As for the item (ii),  let us define $\overline{\mathbf{w}}^k \in \Omega^*$ such that, for any iterate $\mathbf{w}^k$,  $\operatorname{dist}\left(\mathbf{w}^k, \Omega^*\right)=\left\|\mathbf{w}^k-\overline{\mathbf{w}}^k\right\|$.Given the closness of the set  $\Omega^*$, such a $\overline{\mathbf{w}}^k$ must exist. Then, by virtue of conclusion (i), we have
	\begin{equation}\label{w and barw}
		\lim _{k \rightarrow \infty}\left\|\mathbf{w}^k-\overline{\mathbf{w}}^k\right\|=0 \quad \text{a.s.}  
	\end{equation}

	In addition, we obtain from (\ref{x,y 0}) and $\left\|\mathbf{w}^k-\mathbf{w}^{k-1}\right\| \leq\left\|\mathbf{d}_x^{k-1}\right\|+\left\|\mathbf{d}_y^{k-1}\right\|+\left\|\mathbf{d}_\lambda^{k-1}\right\|$ that
	\begin{equation} \label{diff of wk}
		\lim _{k \rightarrow \infty}\left\|\mathbf{w}^k-\mathbf{w}^{k-1}\right\|=0 \quad \text{a.s.}
	\end{equation}

	Therefore, from $\left\|\overline{\mathbf{w}}^k-\overline{\mathbf{w}}^{k-1}\right\| \leq\left\|\overline{\mathbf{w}}^k-\mathbf{w}^k\right\|+\left\|\mathbf{w}^k-\mathbf{w}^{k-1}\right\|+\left\|\mathbf{w}^{k-1}-\overline{\mathbf{w}}^{k-1}\right\|$, (\ref{w and barw}) and (\ref{diff of wk}), it further yields that
	\begin{align}
		\lim _{k \rightarrow \infty}\left\|\overline{\mathbf{w}}^k-\overline{\mathbf{w}}^{k-1}\right\| &=0 \quad \text{a.s.}
	\end{align}
	
	Together with Assumption \ref{assump error bound} (b) and 
	$\overline{\mathbf{w}}^k \in \Omega$, there exists a constant $\bar{F}^*$ such that for all sufficiently large $k$ 
	\begin{equation}\label{obj of bar w}
		\mathcal{L}_\beta\left(\overline{\mathbf{w}}^k\right)=\mathcal{L}_\beta\left(\overline{\mathbf{x}}^k, \overline{\mathbf{y}}^k, \bar{\lambda}^k\right)=F\left(\overline{\mathbf{x}}^k, \overline{\mathbf{y}}^k\right)=\bar{F}^* \quad \text{a.s.}
	\end{equation}

	Utilizing our assumption, the sequence $\left\{\mathbf{w}^k\right\}$possesses a cluster point denoted as $\mathbf{w}^*$. In other words, there exists a subsequence $\left\{\mathbf{w}^{k_i}\right\}$ converging to $\mathbf{w}^*$. Consequently, we deduce from Lemma \ref{lemma of cluster} that $\mathbf{w}^* \in \Omega$. Moreover, by (\ref{w and barw}), it follows that
	\begin{equation}
		\lim _{i \rightarrow \infty}\left\|\overline{\mathbf{w}}^{k_i}-\mathbf{w}^*\right\| \leq \lim _{i \rightarrow \infty}\left(\left\|\overline{\mathbf{w}}^{k_i}-\mathbf{w}^{k_i}\right\|+\left\|\mathbf{w}^{k_i}-\mathbf{w}^*\right\|\right)=0 \quad \text{a.s.}
	\end{equation}
	
	From (\ref{obj of bar w}), $\mathbf{w}^* \in \Omega^*$ and  Assumption \ref{assump error bound} (b) again, we have 
	\begin{equation}
		\mathcal{L}_\beta\left(\mathbf{w}^*\right) =\bar{F}^* \quad \text{a.s.}
	\end{equation}
	Then, by the lower semicontinuity of the function $	\mathcal{L}_\beta\left(\cdot\right)$, one has
	\begin{equation}\label{F*}
		\bar{F}^*=\mathcal{L}_\beta\left(\mathbf{w}^*\right) \leq \lim _{i \rightarrow \infty} \mathcal{L}_\beta\left(\mathbf{w}^{k_i}\right)=F^*  \quad \text{a.s.},
	\end{equation}
	where $F^*=\lim _{k \rightarrow \infty} \mathcal{P}^k=\lim _{k \rightarrow \infty} \mathcal{L}_\beta\left(\mathbf{w}^k\right) \quad \text{a.s.}$ given in (\ref{obj conv1}).

	From the definition of $\mathcal{L}_\beta\left({\mathbf{x}}, \mathbf{y}, \mathbf{\lambda} \right)$, the update rules of $\mathbf{x}$, $ \mathbf{y}$ and $\mathbf{\lambda}$, and some calculations, it gives
	\begin{equation}\label{diff 1}
		\mathcal{L}_\beta\left({\mathbf{x}}^{k}, \mathbf{y}^k, \mathbf{\lambda}^k\right)-\mathcal{L}_\beta\left({\mathbf{x}}^{k}, \mathbf{y}^k, \mathbf{\lambda}\right)=\frac{1}{s \beta}\left(\mathbf{\lambda}-\mathbf{\lambda}^k\right)^{\top}\left(\mathbf{\lambda}^{k-1}-\mathbf{\lambda}^k\right),
	\end{equation}
	
	\begin{align}\label{diff 2}
		&\mathcal{L}_\beta\left({\mathbf{x}}^{k}, \mathbf{y}^k, \lambda\right)-\mathcal{L}_\beta\left({\mathbf{x}}^{k}, \mathbf{y}, \lambda\right)\nonumber\\
		=&g\left(\mathbf{y}^k\right)-g(\mathbf{y})+\lambda^{\top} B\left(\mathbf{y}-\mathbf{y}^k\right) 
		+\frac{\beta}{2}\left(\left\|A {\mathbf{x}}^{k}+B \mathbf{y}^k-\mathbf{b}\right\|^2-\left\|A {\mathbf{x}}^{k}+B \mathbf{y}-\mathbf{b}\right\|^2\right),
	\end{align}
	
	and
	\begin{equation}\label{diff 3}
		\begin{aligned}
			&\mathcal{L}_\beta\left({\mathbf{x}}^{k}, \mathbf{y}, \boldsymbol{\lambda}\right)-\mathcal{L}_\beta(\mathbf{x}, \mathbf{y}, \boldsymbol{\lambda})\\
			=&f\left({\mathbf{x}}^{k}\right)-f(\mathbf{x})+\boldsymbol{\lambda}^{\top} A\left(\mathbf{x}-{\mathbf{x}}^{k}\right) 
			+\frac{\beta}{2}\left(\left\|A {\mathbf{x}}^{k}+B \mathbf{y}-\mathbf{b}\right\|^2-\|A \mathbf{x}+B \mathbf{y}-\mathbf{b}\|^2\right).
		\end{aligned}
	\end{equation}
	
	Summing (\ref{diff 1}), (\ref{diff 2}) and (\ref{diff 3}), and setting $(\mathbf{x}, \mathbf{y}, \boldsymbol{\lambda})=\overline{\mathbf{w}}^k$, for all sufficiently large $k$, we have from (\ref{obj of bar w}) and (\ref{F*}) that
	
	\begin{align}
		\label{Lag - F*}
		& \mathcal{L}_\beta\left({\mathbf{x}}^{k}, \mathbf{y}^k, \lambda^k\right)-F^* \nonumber\\
		\leq & \mathcal{L}_\beta\left({\mathbf{x}}^{k}, \mathbf{y}^k, \lambda^k\right)-\bar{F}^*=\mathcal{L}_\beta\left({\mathbf{x}}^{k}, \mathbf{y}^k, \lambda^k\right)-\mathcal{L}_\beta\left(\overline{\mathbf{x}}^k, \overline{\mathbf{y}}^k, \bar{\lambda}^k\right) \nonumber\\
		= & \frac{1}{s \beta}\left(\bar{\lambda}^k-\lambda^k\right)^{\top}\left(\lambda^{k-1}-\lambda^k\right)
		+ f\left(\mathbf{x}^k\right)-f\left(\overline{\mathbf{x}}^k\right)+\left\langle A^{\top} \bar{\lambda}^k, \overline{\mathbf{x}}^k-\mathbf{x}^k\right\rangle
		\nonumber\\
		& +g\left(\mathbf{y}^k\right)-g\left(\overline{\mathbf{y}}^k\right)+\left\langle B^{\top} \bar{\lambda}^k, \overline{\mathbf{y}}^k-\mathbf{y}^k\right\rangle
		+\frac{\beta}{2}\left\|A {\mathbf{x}}^{k}+B \mathbf{y}^k-\mathbf{b}\right\|^2,\nonumber\\
		= &  \frac{1}{s \beta}\left(\bar{\lambda}^k-\lambda^k\right)^{\top}\left(\lambda^{k-1}-\lambda^k\right)
		+ f\left(\mathbf{x}^k\right)-f\left(\overline{\mathbf{x}}^k\right)+\left\langle A^{\top} \bar{\lambda}^k, \overline{\mathbf{x}}^k-\mathbf{x}^k\right\rangle\nonumber\\
		&  +g\left(\mathbf{y}^k\right)-g\left(\overline{\mathbf{y}}^k\right)+\left\langle B^{\top} \bar{\lambda}^k, \overline{\mathbf{y}}^k-\mathbf{y}^k\right\rangle
		+\frac{\beta}{2}\left\|\frac{\mathbf{d}_\lambda^{k -1}}{s\beta}\right\|^2 \quad \text{a.s.},
	\end{align}
	where the first equation comes from $\overline{\mathbf{w}}^k \in \Omega^*: A \overline{\mathrm{x}}^k+B \overline{\mathbf{y}}^k=\mathrm{b}$, and the last equation comes from $\mathbf{d}_\lambda^{k-1}=-s \beta \mathbf{r}^k$.
	
	From Lipschitz continuity of $\nabla f$ and $A^{\top} \bar{\lambda}^k=\nabla f\left(\overline{\mathrm{x}}^k\right)$, we can drive 
	\begin{equation}\label{lip of f 1}
		f\left(\mathbf{x}^k\right)-f\left(\overline{\mathbf{x}}^k\right)+\left\langle A^{\top} \bar{\lambda}^k, \overline{\mathbf{x}}^k-\mathbf{x}^k\right\rangle 
		\leq
		\frac{L_f}{2} \left\|\ \overline{\mathrm{x}}^k - \mathrm{x}^k \right\|^2.
	\end{equation} 
	
	Recalling (\ref{y-update grad}), there exists a $\xi_y^k \in \partial_y \mathcal{L}_\beta\left(\mathbf{x}^{k-1}, \mathbf{y}^k, \boldsymbol{\lambda}^{k-1}\right)$, i.e.,
	$$
	\boldsymbol{\nu}^k:=\xi_y^k+B^{\top} \boldsymbol{\lambda}^{k-1}-\beta B^{\top}\left(A \mathbf{x}^{k-1}+B \mathbf{y}^k-\mathbf{b}\right) \in \partial g\left(\mathbf{y}^k\right)
	$$
	with $\left\|\xi_y^k\right\| \leq c_y \beta\left\|\mathbf{d}_y^{k-1}\right\|$. So, we have
	$$
	\begin{aligned}
		\left\|\boldsymbol{\nu}^k-B^{\top} \overline{\boldsymbol{\lambda}}^k\right\| & \leq\left\|\xi_y^k\right\|+\left\|B^{\top}\left(\boldsymbol{\lambda}^{k-1}-\overline{\boldsymbol{\lambda}}^k\right)\right\|+\beta\left\|B^{\top}\left(A \mathbf{x}^{k-1}+B \mathbf{y}^k-\mathbf{b}\right)\right\| \\
		& \leq c_y \beta\left\|\mathbf{d}_y^{k-1}\right\|+\|B\|\left(\left\|\mathbf{d}_\lambda^{k-1}\right\|+\left\|\boldsymbol{\lambda}^k-\overline{\boldsymbol{\lambda}}^k\right\|\right)+\beta\|B\|\left(\left\|{\mathbf{r}}^k\right\|+\left\|A {\mathbf{d}}_x^{k-1}\right\|\right).
	\end{aligned}
	$$
	
	Now, using Assumption \ref{assump error bound} (c), it follows that 
	\begin{equation}
		g\left(\mathbf{y}^k\right)-g\left(\overline{\mathbf{y}}^k\right) + \left\langle\boldsymbol{\nu}^k, \overline{\mathbf{y}}^k-\mathbf{y}^k\right\rangle
		\leq \sigma\left\|\overline{\mathbf{y}}^k-\mathbf{y}^k\right\|^2.  \nonumber
	\end{equation}
	
	Then inserting the above inequality and (\ref{lip of f 1}) into (\ref{Lag - F*}) and the inequality $ab\leq \frac{a^2+b^2}{2}$, we obtain
	\begin{align}\label{lag diff 2}
		& \mathcal{L}_\beta\left({\mathbf{x}}^{k}, \mathbf{y}^k, \lambda^k\right)-F^* \nonumber\\
		\leq 
		& \frac{1}{s \beta}\left(\bar{\lambda}^k-\lambda^k\right)^{\top}\left(\lambda^{k-1}-\lambda^k\right)
		+ \frac{L_f}{2} \left\|\ \overline{\mathrm{x}}^k - \mathrm{x}^k \right\|^2 + 
		\frac{1}{2\beta s^2}\left\|
		\mathbf{d}_\lambda^{k -1}
		\right\|^2 + \sigma\left\|\overline{\mathbf{y}}^k-\mathbf{y}^k\right\|^2
		\nonumber\\
		& +  \left\|\ \boldsymbol{\nu}^k-B^{\top} \overline{\boldsymbol{\lambda}}^k  \right\| \left\|\overline{\mathbf{y}}^k-\mathbf{y}^k\right\| \nonumber\\
		\leq &\left(\frac{1}{2s \beta} + \frac{ \left\|\ B \right\|^2}{2} \right)\left\|\ \bar{\lambda}^k-\lambda^k\right\|^{2} + \frac{L_f}{2} \left\|\ \overline{\mathrm{x}}^k - \mathrm{x}^k \right\|^2 +
		\left( \sigma + \frac{5}{2} \right)\left\|\overline{\mathbf{y}}^k-\mathbf{y}^k\right\|^2\nonumber\\
		&+\left(\frac{1}{2\beta s} + \frac{1}{2\beta s^2} +
		\frac{ \left\|\ B \right\|^2}{2} +
		\frac{ \left\|\ B \right\|^2}{2s^2}
		\right)\left\|
		\mathbf{d}_\lambda^{k -1}
		\right\|^2  + \frac{c_y^2 \beta^2}{2}
		\left\|\mathbf{d}_y^{k-1}\right\|^2 + \frac{\beta^2 \left\|\ A \right\|^2 \left\|\ B \right\|^2}{2}
		\left\|\mathbf{d}_x^{k-1}\right\|^2\quad \text{a.s.}
	\end{align}
	
	Denote
	$
	\begin{cases}
		C_1 = &
		\max \left\{\frac{1}{2\beta s} + \frac{1}{2\beta s^2} +
		\frac{ \left\|\ B \right\|^2}{2} +
		\frac{ \left\|\ B \right\|^2}{2s^2},
		\frac{c_y^2 \beta^2}{2},
		\frac{\beta^2 \left\|\ A \right\|^2 \left\|\ B \right\|^2}{2}
		\right\}
		\\
		C_2 = &\max \left\{
		\frac{1}{2s \beta} + \frac{ \left\|\ B \right\|^2}{2},
		\frac{L_f}{2},
		\sigma + \frac{5}{2}
		\right\} 
	\end{cases}
	$, we combine these denotations and (\ref{lag diff 2}) to get
	\begin{equation}\label{Lag - F*2}
		\mathcal{L}_\beta\left({\mathbf{x}}^{k}, \mathbf{y}^k, \lambda^k\right)-F^* \leq C_1 \left(
		\left\|\mathbf{d}_x^{k-1}\right\|^2 + \left\|\mathbf{d}_y^{k-1}\right\|^2 +
		\left\|\mathbf{d}_\lambda^{k-1}\right\|^2
		\right) + C_2 \left\|\overline{\mathbf{w}}^k-{\mathbf{w}}^{k}\right\|^2\quad \text{a.s.}
	\end{equation}
	
	Using the results in (\ref{partial grad of Lag}) and Assumption \ref{assump error bound} (a) again, it gives
	\begin{equation}
		\mathbb{E}\operatorname{dist}\left(\mathbf{0}, \partial \mathcal{L}_\beta\left(\mathbf{w}^k\right)\right) \leq
		\mathbb{E}\left\{
		\left\|\ \xi_x^{k} \right\|
		+ \left(\left\|\ A \right\|+ \left\|\ B \right\| +\frac{1}{s\beta}
		\right)\left\|\  \mathbf{d}_\lambda^{k-1} \right\| + \beta c_y \left\|\  \mathbf{d}_y^{k-1} \right\|
		+\beta \left\|\ B^{\top}A
		\right\|\left\|\  \mathbf{d}_x^{k-1} \right\|
		\right\},\nonumber
	\end{equation}
	
	\begin{align}
		&\mathbb{E}\left\|\mathbf{w}^k-\overline{\mathbf{w}}^k\right\|=\mathbb{E}
		\operatorname{dist}\left(\mathbf{w}^k, \Omega\right) \leq \tau \mathbb{E}\operatorname{dist}\left(\mathbf{0}, \partial \mathcal{L}_\beta\left(\mathbf{w}^k\right)\right) \nonumber\\
		\leq 
		&\tau \mathbb{E}\left\{
		\left\|\ \xi_x^{k} \right\| + \left(\left\|\ A \right\|+ \left\|\ B \right\| +\frac{1}{s\beta}
		\right)\left\|\  \mathbf{d}_\lambda^{k-1} \right\| + \beta c_y \left\|\  \mathbf{d}_y^{k-1} \right\|
		+\beta \left\|\ B^{\top}A
		\right\|\left\|\  \mathbf{d}_x^{k-1} \right\|
		\right\}.
		\label{diff of w}
	\end{align}
	
	Combining (\ref{Lag - F*2}) with (\ref{diff of w}) to drive
	\begin{equation}\label{Lag-F*3}
		\begin{aligned}
			& \mathbb{E} \mathcal{L}_\beta\left(\mathbf{x}^k, \mathbf{y}^k, \lambda^k\right)-F^* \\
			\leq & C_1
			\mathbb{E}\left(\left\|\mathbf{d}_x^{k-1}\right\|^2+\left\|\mathbf{d}_y^{k-1}\right\|^2+\left\|\mathbf{d}_\lambda^{k-1}\right\|^2\right)+4 C_2 \tau^2
			\mathbb{E}
			\left[\left\|\xi_x^k\right\|^2+\left(\|A\|+\|B\|+\frac{1}{s \beta}\right)^2\left\|\mathbf{d}_\lambda^{k-1}\right\|\right. \\
			& \left.+\beta^2 c_y^2\left\|\mathbf{d}_y^{k-1}\right\|^2+\beta^2\left\|B^{\top} A\right\|^2\left\|\mathbf{d}_x^{k-1}\right\|^2\right]\\
			\leq& C_3  \mathbb{E}\left(\left\|\mathbf{d}_x^{k-1}\right\|^2
			+
			\left\|\mathbf{d}_y^{k-1}\right\|^2+
			\left\|\mathbf{d}_\lambda^{k-1}\right\|^2
			\right) + 4C_2\tau^2c_x^2\beta^2 \frac{\sigma^2}{M}\quad \text{a.s.},
		\end{aligned}
	\end{equation}
	where the positive constant
	$$\begin{aligned}
		C_3 = \max \Biggl\{ 
		&C_1 +4C_2\tau^2\beta^2\beta^2 \left\|\ B^{\top}A \right\|^2+4C_2\tau^2c_x^2\beta^2, C_1+4C_2\tau^2c_y^2\beta^2+
		4C_2\tau^2c_x^2\beta^2,\nonumber\\
		&C_1+4C_2\tau^2\left(\left\|A \right\| + \left\|B \right\| + \frac{1}{s\beta}
		\right)^2
		\Biggr\}.
	\end{aligned}
	$$
	Hence, defining
	$\mathbf{d}^k=\left\|{\mathbf{d}}_x^k\right\|^2+\left\|\mathbf{d}_y^k\right\|^2+\left\|\mathbf{d}_\lambda^k\right\|^2$, adding
	$\hat{A} \mathbb{E}\left\|\ \mathbf{d}_x^{k-1}  \right\|^2  +  \hat{A} \mathbb{E}\left\|\ \mathbf{d}_y^{k-1}  \right\|^2  + \frac{1+\tau}{s\beta\sigma_{A}}\psi_2(s)  \mathbb{E}\left\|\  A\mathbf{d}_\lambda^{k-1} \right\|^2
	$ on both sides of (\ref{Lag-F*3}) and recalling the definition of $\mathcal{P}^{k}$, we obtain
	\begin{align}
		&\mathbb{E} \mathcal{P}^k-F^* \nonumber\\
		\leq& \mathbb{E} \mathcal{L}_\beta\left(\mathbf{x}^k, \mathbf{y}^k, \lambda^k\right)-F^* + 
		\hat{A} \mathbb{E}\left\|\ \mathbf{d}_x^{k-1}  \right\|^2  +  \hat{A} \mathbb{E}\left\|\ \mathbf{d}_y^{k-1}  \right\|^2  + \frac{1+\tau}{s\beta\sigma_{A}}\psi_2(s)  \mathbb{E}\left\|\  A\mathbf{d}_\lambda^{k-1 } \right\|^2 \nonumber\\
		\leq & 
		C_3  \mathbb{E}\left(\left\|\mathbf{d}_x^{k-1}\right\|^2
		+
		\left\|\mathbf{d}_y^{k-1}\right\|^2+
		\left\|\mathbf{d}_\lambda^{k-1}\right\|^2
		\right) + 4C_2\tau^2c_x^2\beta^2 \frac{\sigma^2}{M}+
		\hat{A} \mathbb{E}\left\|\ \mathbf{d}_x^{k-1}  \right\|^2  \nonumber\\
		&+  \hat{A} \mathbb{E}\left\|\ \mathbf{d}_y^{k-1}  \right\|^2  + \frac{1+\tau}{s\beta\sigma_{A}}\psi_2(s)  \mathbb{E}\left\|\  A\mathbf{d}_\lambda^{k-1} \right\|^2 \nonumber\\
		\leq & \bar{C} \mathbf{d}^{k-1}+
		4C_2\tau^2c_x^2\beta^2 \frac{\sigma^2}{M}\quad \text{a.s.},
	\end{align}
	where $\bar{C}=\max \left\{
	C_3+\hat{A},C_3+\frac{1+\tau}{s\beta\sigma_{A}}\psi_2(s) \left\|\ A \right\|^2
	\right\}.$
	
	Reusing (\ref{x+y+lam}), it follows from the above result that 
	\begin{align}
		\mathbb{E} \mathcal{P}^k-F^* \leq\frac{\bar{C}}{\mu}
		\left[\mathbb{E}\mathcal{P}^{k-1}-\mathbb{E}\mathcal{P}^k +\left(\frac{ (\hat{c}_x \beta)^2}{2}
		+8\frac{(1+\tau)c_x^2 \beta^{2}}{s\beta\sigma_{A}}\psi_1(s)\right)\frac{\sigma^2}{M}
		\right] + 4C_2\tau^2c_x^2\beta^2 \frac{\sigma^2}{M},\nonumber
	\end{align}
	
	\begin{align}
		\left(1+\frac{\bar{C}}{\mu}
		\right)\left(\mathbb{E} \mathcal{P}^k-F^*
		\right)\leq \frac{\bar{C}}{\mu}\left(\mathbb{E} \mathcal{P}^{k-1}-F^*
		\right)+\left[\frac{\bar{C}}{\mu}\left(\frac{ (\hat{c}_x \beta)^2}{2}
		+8\frac{(1+\tau)c_x^2 \beta^{2}}{s\beta\sigma_{A}}\psi_1(s)\right)+
		4C_2\tau^2c_x^2\beta^2 
		\right]\frac{\sigma^2}{M},\nonumber
	\end{align}
	and
	\begin{align}
		\mathbb{E} \mathcal{P}^k-F^*
		\leq \frac{\bar{C}}{\mu+\bar{C}}\left(\mathbb{E} \mathcal{P}^{k-1}-F^*
		\right)+\hat{C}\frac{\sigma^2}{M}\quad \text{a.s.},
	\end{align}
	where $\hat{C}:=
	\frac{\left[\frac{\bar{C}}{\mu}\left(\frac{ (\hat{c}_x \beta)^2}{2}
		+8\frac{(1+\tau)c_x^2 \beta^{2}}{s\beta\sigma_{A}}\psi_1(s)\right)+
		4C_2\tau^2c_x^2\beta^2 
		\right]}{\left(1+\frac{\bar{C}}{\mu}\right)}.
	$
	
	Then, we can rearrange this equation and further obtain
	\begin{align}
		&\left[\mathbb{E} \mathcal{P}^k-F^* - \hat{C}\frac{\sigma^2}{M}\left(1+\frac{\bar{C}}{\mu}
		\right)
		\right]\leq 
		\frac{\bar{C}}{\mu+\bar{C}}
		\left[\mathbb{E} \mathcal{P}^{k-1}-F^*-
		\hat{C}\frac{\sigma^2}{M}\left(1+\frac{\bar{C}}{\mu}
		\right)
		\right]\nonumber\\
		& \mathbb{E} \mathcal{P}^k-F^* \leq \left(\frac{\bar{C}}{\mu+\bar{C}}
		\right)^k \left(\mathbb{E} \mathcal{P}^0-F^*
		\right)+ \left(1-\left(\frac{\bar{C}}{\mu+\bar{C}}
		\right)^k
		\right)\hat{C}\frac{\sigma^2}{M}\left(1+\frac{\bar{C}}{\mu}\right)\quad \text{a.s.}
	\end{align}
	With the notations $\tilde{C}=\frac{\bar{C}}{\mu+\bar{C}}$ and $\breve{C}=\hat{C}\left(1+\frac{\bar{C}}{\mu}
	\right)$, we complete the proof.
	
\end{proof}

\subsection{Hybrid SARAH with Bounded Variance}\label{app:hy sto}
We begin by revisiting the property of the hybrid stochastic gradient estimator, as stated in \cite{tran2022hybrid}:
\begin{lemma}[Lemma 2.2 in \cite{tran2022hybrid}]
	Assume that $f(\cdot, \cdot)$ is $L$-smooth and $u_t$ represents an unbiased SGD estimator. Consequently, we derive the upper bound on the ``variance" $\mathbb{E}\left[\left\|v_t-\nabla f\left(x_t\right)\right\|^2\right]$ for $v_t$ :
	$$
	\mathbb{E}\left[\left\|v_t-\nabla f\left(x_t\right)\right\|^2\right] \leq \omega_t \mathbb{E}\left[\left\|v_0-\nabla f\left(x^0\right)\right\|^2\right]+L^2 \sum_{i=0}^{t-1} \omega_{i, t} \mathbb{E}\left[\left\|x_{i+1}-x_i\right\|^2\right]+S_t,
	$$
	where the expectation is taking over all the randomness $\mathcal{F}_t:=\sigma\left(v_0, v_1, \cdots, v_t\right), \omega_t:=\prod_{i=1}^t \alpha_{i-1}^2$, $\omega_{i, t}:=\prod_{j=i+1}^t \alpha_{j-1}^2$ for $i=0, \cdots, t$, and $S_t:=\sum_{i=0}^{t-1}\left(\prod_{j=i+2}^t \alpha_{j-1}^2\right)\left(1-\alpha_i\right)^2 
	{{\sigma^2}}
	$ for $t \geq 0$.
\end{lemma}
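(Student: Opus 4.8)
The plan is to track the estimation error $e_t := v_t - \nabla f(x_t)$ and to turn the defining recursion \eqref{hy sto grad} into a one-step contraction for its conditional second moment; here $x_t$ plays the role of $\widehat{\mathbf{x}}_t$. First I would rewrite \eqref{hy sto grad} by adding and subtracting $\alpha_{t-1}\nabla f(x_{t-1})$ and $\nabla f(x_t)$ to expose a telescoping structure, yielding
\[
e_t = \alpha_{t-1}\bigl(e_{t-1} + \Delta_t\bigr) + (1-\alpha_{t-1})\,\Theta_t,
\]
where $\Delta_t := \bigl(\nabla f(x_t;\xi_t)-\nabla f(x_{t-1};\xi_t)\bigr) - \bigl(\nabla f(x_t)-\nabla f(x_{t-1})\bigr)$ and $\Theta_t := \nabla f(x_t;\zeta_t)-\nabla f(x_t)$. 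The structural fact driving the whole argument is that both $\Delta_t$ and $\Theta_t$ are conditionally mean-zero given the history $\mathbb{E}_t[\cdot]$ taken before the fresh samples $(\xi_t,\zeta_t)$ are drawn: the first by the unbiasedness $\mathbb{E}_{\xi_t}[\nabla f(\cdot\,;\xi_t)]=\nabla f(\cdot)$ evaluated at both $x_t$ and $x_{t-1}$, the second by unbiasedness at $x_t$. Crucially, $x_t,x_{t-1},v_{t-1}$ (hence $e_{t-1}$) are all measurable with respect to this conditioning field.

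Next I would square and take $\mathbb{E}_t[\cdot]$. Because $e_{t-1}$ is measurable and $\mathbb{E}_t[\Delta_t]=\mathbb{E}_t[\Theta_t]=0$, the two linear cross terms vanish; moreover, since $\xi_t$ and $\zeta_t$ are drawn independently, $\Delta_t$ and $\Theta_t$ are conditionally independent, so $\mathbb{E}_t\langle\Delta_t,\Theta_t\rangle=\langle\mathbb{E}_t\Delta_t,\mathbb{E}_t\Theta_t\rangle=0$. This leaves the clean identity
\[
\mathbb{E}_t\bigl[\|e_t\|^2\bigr] = \alpha_{t-1}^2\|e_{t-1}\|^2 + \alpha_{t-1}^2\,\mathbb{E}_t\bigl[\|\Delta_t\|^2\bigr] + (1-\alpha_{t-1})^2\,\mathbb{E}_t\bigl[\|\Theta_t\|^2\bigr].
\]
I would then bound the two residual variances. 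Since $\Delta_t$ is precisely the centering of $\nabla f(x_t;\xi_t)-\nabla f(x_{t-1};\xi_t)$, its conditional second moment is at most $\mathbb{E}_t\|\nabla f(x_t;\xi_t)-\nabla f(x_{t-1};\xi_t)\|^2$, which the $L$-smoothness of each realization bounds by $L^2\|x_t-x_{t-1}\|^2$; and $\mathbb{E}_t[\|\Theta_t\|^2]\le\sigma^2$ is exactly the bounded-variance property of the unbiased estimator $u_t$. Taking total expectation produces the scalar recursion
\[
a_t \le \alpha_{t-1}^2 a_{t-1} + \alpha_{t-1}^2 L^2\,\mathbb{E}\|x_t-x_{t-1}\|^2 + (1-\alpha_{t-1})^2\sigma^2, \qquad a_t:=\mathbb{E}\|e_t\|^2.
\]

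Finally I would unroll this linear recursion from $t$ down to $0$. The homogeneous part accumulates the factor $\prod_{i=1}^t\alpha_{i-1}^2=\omega_t$ in front of $a_0=\mathbb{E}\|v_0-\nabla f(x^0)\|^2$, while the forcing term generated at index $j$ is propagated by $\prod_{i=j+1}^t\alpha_{i-1}^2$. Re-indexing $j=i+1$ then converts the $\Delta$-contributions into $L^2\sum_{i=0}^{t-1}\omega_{i,t}\,\mathbb{E}\|x_{i+1}-x_i\|^2$ and the $\Theta$-contributions into exactly $S_t=\sum_{i=0}^{t-1}\bigl(\prod_{j=i+2}^t\alpha_{j-1}^2\bigr)(1-\alpha_i)^2\sigma^2$, which matches the claimed bound term by term. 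The only delicate point is the cancellation of the cross terms in the squared expansion: it hinges on choosing the conditioning field so that $x_t,x_{t-1},v_{t-1}$ are measurable while $(\xi_t,\zeta_t)$ remain fresh, and on the independence of $\xi_t$ and $\zeta_t$ to factorize $\mathbb{E}_t\langle\Delta_t,\Theta_t\rangle$; everything after that is bookkeeping on the telescoping products $\omega_t$ and $\omega_{i,t}$.
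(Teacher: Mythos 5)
Your proof is correct: the error recursion $e_t=\alpha_{t-1}(e_{t-1}+\Delta_t)+(1-\alpha_{t-1})\Theta_t$, the cancellation of all three cross terms via measurability of $e_{t-1}$ and the conditional independence of $\xi_t$ and $\zeta_t$, the bounds $\mathbb{E}_t\|\Delta_t\|^2\le L^2\|x_t-x_{t-1}\|^2$ and $\mathbb{E}_t\|\Theta_t\|^2\le\sigma^2$, and the unrolling with the re-indexing $i=k-1$ reproduce the claimed bound term by term, including the exact forms of $\omega_t$, $\omega_{i,t}$, and $S_t$. The paper itself gives no proof, importing the result from \cite{tran2022hybrid}, and your argument is essentially the standard one in that reference, so there is nothing to amend beyond noting that the cited source states smoothness in the mean-squared sense $\mathbb{E}_\xi\|\nabla f(x;\xi)-\nabla f(y;\xi)\|^2\le L^2\|x-y\|^2$, which suffices for (and is slightly weaker than) the per-realization Lipschitz bound you invoke.
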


By combining the findings from Lemma 2 in \cite{tran2022hybrid} with $\alpha_{t} = \alpha \in \left[0,1\right]$, we have the subsequent outcome related to the hybrid stochasitc estimator $\tilde{v}_t$ in our Algorithm \ref{alg 5.1}:
\begin{equation}\label{hybrid var up bound}
	\mathbb{E}\left[\left\|\tilde{v}_t-\nabla h\left(\widehat{\mathbf{x}}_t\right)\right\|^2\right] \leq \alpha^{2t} \mathbb{E}\left[\left\|\tilde{v}_0-\nabla h\left(\widehat{\mathbf{x}}_0\right)\right\|^2\right]+\Lambda^2 \sum_{i=0}^{t-1} \alpha^{2(t-i)} \mathbb{E}\left[\left\|\widehat{\mathbf{x}}_{i+1}-\widehat{\mathbf{x}}_i\right\|^2\right]+ \frac{1-\alpha}{1+\alpha}{{ \sigma^2}}.
\end{equation}

\begin{remark}
	\begin{itemize}
		\item[1.]  
		From equation (\ref{def of tilde v}), it's evident that $\tilde{v}_t$ remains a biased estimator when $\alpha \in \left(0,1\right]$. Although $\tilde{v}_t$ is a biased estimator, we still use the term ``variance" to denote $\mathbb{E}\left[\left\|\tilde{v}_t-\nabla h\left(\widehat{\mathbf{x}}_t\right)\right\|^2\right]$.

		\item[2.]\textbf{(Variance-bias trade-off \cite{tran2022hybrid})}
		When $\alpha \in (0, 1)$, the bias of $\tilde{v}_t$ can be formulated as 
		\begin{equation}\label{trade-off}
			\begin{aligned}
				\operatorname{Bias}\left[\tilde{v}_t \mid \mathcal{F}_t\right]= & \left\|\mathbb{E}_{\left(\xi_t, \zeta_t\right)}\left[ \tilde{v}_t-\nabla h\left(\widehat{\mathbf{x}}_t\right)
				\mid \mathcal{F}_t\right]\right\|=\alpha \left\| \tilde{v}_{t-1}-\nabla h\left(\widehat{\mathbf{x}}_{t-1}\right)
				\right\| \\
				& <\left\|\tilde{v}_{t-1}-\nabla h\left(\widehat{\mathbf{x}}_{t-1}\right)\right\| .
			\end{aligned}
		\end{equation}
		Notably, the bias of $\tilde{v}_t$ is  smaller than that of $\tilde{v}_{t-1}$. Regarding the SARAH estimator, defined as
		$v_{t}^{\mathrm{sarah}} := v_{t-1}^{\mathrm{sarah}}+\nabla h\left(\widehat{\mathbf{x}}_t ; \xi_t\right)
		-\nabla h\left(\widehat{\mathbf{x}}_{t-1} ; \xi_{t}\right)$ with $\operatorname{Bias}\left[v_t^{\text {sarah }} \mid \mathcal{F}_t\right]=\left\|v_{t-1}^{\text {sarah }}-\nabla h\left(\widehat{\mathbf{x}}_{t-1}\right)\right\|$.
		This leads to the conclusion that the bias of hybrid estimator is smaller than that of SARAH estimator. Furthermore, by analyzing (\ref{hybrid var up bound}) and (\ref{trade-off}), it indicates that the parameter $\alpha$ plays a crucial role in regulating the trade-off between bias and variance.

	\end{itemize}
\end{remark}

\subsection{Proof of \cref{thm 4.3}} \label{app: pf of inner grad}

\textbf{Theorem 5.4}
Assume that Assumption \ref{assum of func h} and conditions in \cref{x y w grad 0} hold. Let $\left\{\left(\widehat{\mathbf{x}}_t,\breve{\mathbf{x}}_t,\mathbf{x}_t \right) \right\}$ be generated by Algorithm \ref{alg 5.1}, we have
\begin{align}
	&\frac{1}{m+1}\left[ \sum_{t=0}^{m}\frac{\beta_t}{2}
	\mathbb{E}\left[ \left\|\breve{\mathbf{x}}_{t+1}-\mathbf{x}_t\right\|_{\mathcal{M}}^2\right]
	- \sum_{t=0}^{m} \Gamma_t
	\mathbb{E}\left[ \left\|\mathbf{s}_t\right\|^2\right]
	\right]\nonumber\\
	\leq &	\frac{\mathbf{H}_{0}-\mathbf{H}_{m+1}}{m+1} + \frac{\sigma^{2-\frac{k_1}{2}}}{2{r}(1+{\alpha})}(c_1+\frac{1}{c_1})\frac{1}{M(m+1)},\nonumber
\end{align}

where $\mathcal{M}=\beta A^{\top} A$, $\Gamma_t=\frac{\gamma_t \beta_t - \Lambda \beta_t^2}{2}$, $\mathbf{s}_t=\breve{\mathbf{x}}_{t+1}-\breve{\mathbf{x}}_t$, $k_1$ and $c_1$ are positive constants, and $\gamma_{t}$ and $\alpha$ are given by
\begin{align}
	\gamma_{t} &: = \beta_{t}(\frac{\mu + 2g(\alpha)}{2\tau -\tau^2} +\mu) \frac{t+1}{t} \leq \beta_t\left[\frac{\mu +2\sqrt{ \frac{2l_3}{c_1}} \Lambda \left[ M(m+1)   \right]^{\frac{1}{4} }}{\tau(2-\tau)}+\mu   \right] \frac{t+1}{t},\nonumber\\
	\alpha &:=1-\frac{c_1}{\sqrt{M(m+1)}} \in \left(\underline{\alpha}, 1\right), \quad \quad
	\underline{r}=\sqrt{\frac{2l_3 \Lambda^2 \underline{\alpha}^2}{  (1-^2)}}
\end{align}

\begin{remark}
	We make the following remarks:
	\begin{itemize}
		\item The parameter $1/\gamma_t$ serves as the step size.
		
		\item  The selection of the hybrid parameter, $\alpha$, is influenced by the batch size $M$ used for the initial $\tilde{v}_0$ and the number of inner iterations $m$. It is noticeable that as the number of inner iterations increases, the choice of $\alpha$ tends to converge towards 1.
	\end{itemize}
\end{remark}

\begin{proof}(Proof of Theorem \ref{thm 4.3})
	
	From the updates of ${\mathbf{x}}_{t+1}$ and $\widehat{\mathbf{x}}_t$, we have 
	$$\beta_t\left(\breve{\mathbf{x}}_{t+1}-\widehat{\mathbf{x}}_t\right)+\left(1-\beta_t\right)\left(\mathbf{x}_t-\widehat{\mathbf{x}}_t\right)=\mathbf{x}_{t+1}-\widehat{\mathbf{x}}_t=\beta_t(\breve{\mathbf{x}}_{t+1}-\breve{\mathbf{x}}_t).$$
	
	Then, using the \eqref{h lip} and the above relation, it gives
	\begin{equation}\label{h descent}
		\begin{aligned}
			h\left(\mathbf{x}_{t+1}\right) & \leq h\left(\widehat{\mathbf{x}}_t\right)+\left\langle\nabla h\left(\widehat{\mathbf{x}}_t\right), \mathbf{x}_{t+1}-\widehat{\mathbf{x}}_t\right\rangle+\frac{\Lambda}{2}\left\|\mathbf{x}_{t+1}-\widehat{\mathbf{x}}_t\right\|^2 \\
			& =h\left(\widehat{\mathbf{x}}_t\right)+\left\langle\nabla h\left(\widehat{\mathbf{x}}_t\right), \mathbf{x}_t-\widehat{\mathbf{x}}_t\right\rangle+\left\langle\nabla h\left(\widehat{\mathbf{x}}_t\right), \mathbf{x}_{t+1}-\mathbf{x}_t\right\rangle+\frac{\Lambda \beta_t^2}{2}\left\|\mathbf{s}_t\right\|^2 \\
			& \leq h\left(\mathbf{x}_t\right)+\frac{\mu}{2}\left\|\mathbf{x}_t-\widehat{\mathbf{x}}_t\right\|^2+\left\langle\nabla h\left(\widehat{\mathbf{x}}_t\right), \mathbf{x}_{t+1}-\mathbf{x}_t\right\rangle+\frac{\Lambda \beta_t^2}{2}\left\|\mathbf{s}_t\right\|^2,
		\end{aligned}
	\end{equation}
	where $\mathbf{s}_t=\breve{\mathbf{x}}_{t+1}-\breve{\mathbf{x}}_t$.

	Combining \eqref{h descent}, $\mathbf{x}_{t+1}=\beta_t \breve{\mathbf{x}}_{t+1}+\left(1-\beta_t\right) \mathbf{x}_t$, and the convexity of function $\phi$, it's derived that
	\begin{equation}\label{equ 5.8}
		\begin{aligned}
			& \Phi\left(\mathbf{x}_{t+1}\right)=h\left(\mathbf{x}_{t+1}\right)+\phi\left(\mathbf{x}_{t+1}\right) \\
			\leq & \beta_t\left[h\left(\mathbf{x}_t\right)+\left\langle\nabla h\left(\widehat{\mathbf{x}}_t\right), \breve{\mathbf{x}}_{t+1}-\mathbf{x}_t\right\rangle+\phi\left(\breve{\mathbf{x}}_{t+1}\right)\right]+\left(1-\beta_t\right)\left[h\left(\mathbf{x}_t\right)+\phi\left(\mathbf{x}_t\right)\right] \\
			& +\frac{\mu}{2}\left\|\mathbf{x}_t-\widehat{\mathbf{x}}_t\right\|^2+\frac{\Lambda \beta_t^2}{2}\left\|\mathbf{s}_t\right\|^2 \\
			= & \beta_t\left[h\left(\mathbf{x}_t\right)+\left\langle\nabla h\left(\widehat{\mathbf{x}}_t\right), \breve{\mathbf{x}}_{t+1}-\mathbf{x}_t\right\rangle+\frac{\gamma t}{2}\left\|\mathbf{s}_t\right\|^2+\phi\left(\breve{\mathbf{x}}_{t+1}\right)\right] \\
			& +\left(1-\beta_t\right) \Phi\left(\mathbf{x}_t\right)+\frac{\mu}{2}\left\|\mathbf{x}_t-\widehat{\mathbf{x}}_t\right\|^2+\frac{\Lambda \beta_t^2-\gamma_t \beta_t}{2}\left\|\mathbf{s}_t\right\|^2 .
		\end{aligned}
	\end{equation}
	

	Then, it follows from the update rule of $\breve{\mathbf{x}}_{t+1}$
	\begin{equation} 
		\breve{\mathbf{x}}_{t+1}=\arg \min \left\{\left\langle \tilde{v}_t , \mathbf{x}\right\rangle+\frac{\gamma_t}{2}\left\|\mathbf{x}-\breve{\mathbf{x}}_t\right\|^2+\phi(\mathbf{x})\right\},
	\end{equation}
	that 
	\begin{equation}
		\begin{aligned}
			& \left\langle\tilde{v}_t, \breve{\mathbf{x}}_{t+1}-\mathbf{x}_t\right\rangle+\frac{\gamma_t}{2}\left\|\mathbf{s}_t\right\|^2+\phi\left(\breve{\mathbf{x}}_{t+1}\right) \\
			\leq & \frac{\gamma_t}{2}\left(\left\|\mathbf{x}_t-\breve{\mathbf{x}}_t\right\|^2-\left\|\mathbf{x}_t-\breve{\mathbf{x}}_{t+1}\right\|^2\right)+\phi\left(\mathbf{x}_t\right)-\frac{1}{2}\left\|\mathbf{x}_t-\breve{\mathbf{x}}_{t+1}\right\|_{\mathcal{M}}^2,
		\end{aligned} \label{equ 5.10}
	\end{equation}
	where $\mathcal{M}=\beta A^{\top} A$, and \begin{equation}
		\tilde{v}_t+\gamma_t \mathbf{s}_t+\nabla \phi\left(\breve{\mathbf{x}}_{t+1}\right)=0.\label{x-opti condi}
	\end{equation}
	
	Combining (\ref{equ 5.8}) and (\ref{equ 5.10}), we have
	\begin{align}
		\Phi\left(\mathbf{x}_{t+1}\right)	\leq & \Phi\left(\mathbf{x}_t\right)+\frac{\mu}{2}\left\|\mathbf{x}_t-\widehat{\mathbf{x}}_t\right\|^2+\frac{\beta_t \gamma_t}{2}\left(\left\|\mathbf{x}_t-\breve{\mathbf{x}}_t\right\|^2-\left\|\mathbf{x}_t-\breve{\mathbf{x}}_{t+1}\right\|^2\right)\nonumber \\
		& -\frac{\beta_t}{2}\left\|\mathbf{x}_t-\breve{\mathbf{x}}_{t+1}\right\|_{\mathcal{M}}^2-\frac{\gamma_t \beta_t - \Lambda \beta_t^2}{2}\left\|\mathbf{s}_t\right\|^2 + 
		+\beta_t\left\langle\nabla h\left(\widehat{\mathbf{x}}_t\right) - \tilde{v}_t, \breve{\mathbf{x}}_{t+1}-\mathbf{x}_t\right\rangle, \label{psi value1}
	\end{align}
	Now, it's noted that 
	\begin{equation} \label{x-rela 1}
		\breve{\mathbf{x}}_t-\mathbf{x}_t=\frac{1}{\beta_t}\left(\widehat{\mathbf{x}}_t-\mathbf{x}_t\right) \quad \text { and } \quad \breve{\mathbf{x}}_{t+1}-\mathbf{x}_t=\frac{1}{\beta_t}\left(\mathbf{x}_{t+1}-\mathbf{x}_t\right),
	\end{equation}
	
	\begin{align} \label{x-rela 2}
		\widehat{\mathbf{x}}_t-\mathbf{x}_t & =\beta_t\left(\breve{\mathbf{x}}_t-\mathbf{x}_t\right)=\beta_t\left(\breve{\mathbf{x}}_t-\mathbf{x}_{t-1}+\mathbf{x}_{t-1}-\mathbf{x}_t\right)\nonumber \\
		& =\beta_t\left(\frac{1}{\beta_{t-1}}\left(\mathbf{x}_t-\mathbf{x}_{t-1}\right)+\mathbf{x}_{t-1}-\mathbf{x}_t\right)\nonumber \\
		& =\theta_t\left(\mathbf{x}_t-\mathbf{x}_{t-1}\right),
	\end{align}
	where $\theta_t=\frac{\beta_t}{\beta_{t-1}}\left(1-\beta_{t-1}\right)$. 
	
	Following from (\ref{x-rela 1}) and (\ref{x-rela 2}), (\ref{psi value1}) becomes
	\begin{align}
		\Phi\left(\mathbf{x}_{t+1}\right) \leq & \Phi\left(\mathbf{x}_t\right)+\frac{\left(\gamma_t / \beta_t+\mu\right) \theta_t^2}{2}\left\|\mathbf{x}_t-\mathbf{x}_{t-1}\right\|^2-\frac{\gamma_t / \beta_t}{2}\left\|\mathbf{x}_{t+1}-\mathbf{x}_t\right\|^2 \nonumber\\
		& -\frac{\beta_t}{2}\left\|\breve{\mathbf{x}}_{t+1}-\mathbf{x}_t\right\|_{\mathcal{M}}^2-\frac{\gamma_t \beta_t - \Lambda \beta_t^2}{2}\left\|\mathbf{s}_t\right\|^2 +  \beta_t \left\langle
		\nabla h\left(\widehat{\mathbf{x}}_t\right)-\tilde{v}_t, \breve{\mathbf{x}}_{t+1}-\mathbf{x}_t\right\rangle.
	\end{align}
	
	Taking the expectation over the entire history random variables up to the  t-th iteration, and using the Cauchy-Schwartz inequality over the term $\beta_t\left\langle
	\nabla h\left(\widehat{\mathbf{x}}_t\right)-\tilde{v}_t, \breve{\mathbf{x}}_{t+1}-\mathbf{x}_t\right\rangle$, one has
	\begin{align}
		\mathbb{E}\left[\Phi\left(\mathbf{x}_{t+1}\right)\right] \leq & \mathbb{E}\left[\Phi\left(\mathbf{x}_t\right)\right]
		+\frac{\left(\gamma_t / \beta_t+\mu\right) \theta_t^2}{2} \mathbb{E}\left[\left\|\mathbf{x}_t-\mathbf{x}_{t-1}\right\|^2 \right]-\frac{\gamma_t / \beta_t}{2} \mathbb{E}\left[\left\|\mathbf{x}_{t+1}-\mathbf{x}_t\right\|^2 \right] \nonumber \\
		& -\frac{\beta_t}{2}\mathbb{E}\left[\left\|\breve{\mathbf{x}}_{t+1}-\mathbf{x}_t\right\|_{\mathcal{M}}^2 \right]-\frac{\gamma_t \beta_t - \Lambda \beta_t^2}{2}\mathbb{E}\left[\left\|\mathbf{s}_t\right\|^2\right] \nonumber\\
		&+  \mathbb{E}\left[\beta_t \left\langle
		\nabla h\left(\widehat{\mathbf{x}}_t\right)-\tilde{v}_t, \breve{\mathbf{x}}_{t+1}-\mathbf{x}_t\right\rangle\right], \label{exp phi0}
	\end{align}
	and 
	\begin{align}
		\mathbb{E}\left[ \beta_t\left\langle
		\nabla h\left(\widehat{\mathbf{x}}_t\right)-\tilde{v}_t, \breve{\mathbf{x}}_{t+1}-\mathbf{x}_t\right\rangle \right] 
		= &  \mathbb{E}\left[ \left\langle
		\nabla h\left(\widehat{\mathbf{x}}_t\right)-\tilde{v}_t, \mathbf{x}_{t+1}-\mathbf{x}_t\right\rangle \right]\nonumber\\
		\leq& \frac{1}{2r_t}  \mathbb{E}\left[\left\|\ \nabla h\left(\widehat{\mathbf{x}}_t\right)-\tilde{v}_t  \right\|^2 \right] + \frac{r_t}{2}  \mathbb{E}\left[\left\|\ \mathbf{x}_{t+1}-\mathbf{x}_t  \right\|^2 \right],\label{exp inner pro}
	\end{align} 
	where the equation is from (\ref{x-rela 1}), $r_t$ is any positive real number. 
	
	Combining (\ref{hybrid var up bound}) and (\ref{exp inner pro}),
	we obtain
	\begin{align}
		&\mathbb{E}\left[ \beta_t\left\langle
		\nabla h\left(\widehat{\mathbf{x}}_t\right)-\tilde{v}_t, \breve{\mathbf{x}}_{t+1}-\mathbf{x}_t\right\rangle \right] \nonumber\\
		\leq & \frac{1}{2r_t} \left\{ \alpha^{2t} \mathbb{E}\left[\left\|\tilde{v}_0-\nabla h\left(\widehat{\mathbf{x}}_0\right)\right\|^2\right]+\Lambda^2 \sum_{i=0}^{t-1} \alpha^{2(t-i)} \mathbb{E}\left[\left\|\widehat{\mathbf{x}}_{i+1}-\widehat{\mathbf{x}}_i\right\|^2\right]+ \frac{1-\alpha}{1+\alpha}{{\sigma^2}}
		\right\}\nonumber\\
		&+ \frac{r_t}{2}  \mathbb{E}\left[ \left\|\ \mathbf{x}_{t+1} - \mathbf{x}_{t}  \right\|^2 \right]. \label{inner hybrid up bound}
	\end{align}
	From the above inequality (\ref{exp phi0}) and (\ref{inner hybrid up bound}), we have 
	\begin{align}
		\mathbb{E}\left[\Phi\left(\mathbf{x}_{t+1}\right)\right] \leq & \mathbb{E}\left[\Phi\left(\mathbf{x}_t\right)\right]
		+\frac{\left(\gamma_t / \beta_t+\mu\right) \theta_t^2}{2} \mathbb{E}\left[\left\|\mathbf{x}_t-\mathbf{x}_{t-1}\right\|^2 \right]-\frac{\gamma_t / \beta_t}{2} \mathbb{E}\left[\left\|\mathbf{x}_{t+1}-\mathbf{x}_t\right\|^2 \right] \nonumber \\
		& -\frac{\beta_t}{2}\mathbb{E}\left[\left\|\breve{\mathbf{x}}_{t+1}-\mathbf{x}_t\right\|_{\mathcal{M}}^2 \right]-\frac{\gamma_t \beta_t - \Lambda \beta_t^2}{2}\mathbb{E}\left[\left\|\mathbf{s}_t\right\|^2\right]+ \frac{r_t}{2}  \mathbb{E}\left[ \left\|\ \mathbf{x}_{t+1} - \mathbf{x}_{t}  \right\|^2 \right]\nonumber\\
		&+ \frac{1}{2r_t} \left\{ \alpha^{2t} \mathbb{E}\left[\left\|\tilde{v}_0-\nabla h\left(\widehat{\mathbf{x}}_0\right)\right\|^2\right]+\Lambda^2 \sum_{i=0}^{t-1} \alpha^{2(t-i)} \mathbb{E}\left[\left\|\widehat{\mathbf{x}}_{i+1}-\widehat{\mathbf{x}}_i\right\|^2\right]+ \frac{1-\alpha}{1+\alpha}{{\sigma^2}}\right\}. \label{exp phi and var}
	\end{align} 
	
	Denoting $\eta_t=\left(\gamma_t / \beta_t+\mu\right) \theta_t^2$, $\gamma_t / \beta_t-\gamma_{t+1} / \beta_{t+1}>0$, and from (\ref{exp phi and var}) it
	further indicates
	\begin{align}
		& \mathbb{E}\left[\Phi\left(\mathbf{x}_{t+1}\right)\right]+\frac{\eta_{t+1}}{2} \mathbb{E}\left[\left\|\mathbf{x}_{t+1}-\mathbf{x}_t\right\|^2\right] \nonumber\\
		\leq & \mathbb{E}\left[\Phi\left(\mathbf{x}_{t}\right)\right]+\frac{\eta_t}{2}\mathbb{E}\left[\left\|\mathbf{x}_t-\mathbf{x}_{t-1}\right\|^2\right]-\frac{\gamma_{t+1} / \beta_{t+1}-\eta_{t+1}}{2}\mathbb{E}\left[\left\|\mathbf{x}_{t+1}-\mathbf{x}_t\right\|^2 \right] \nonumber\\
		& -\frac{\beta_t}{2} \mathbb{E}\left[\left\|\breve{\mathbf{x}}_{t+1}-\mathbf{x}_t\right\|_{\mathcal{M}}^2\right]
		-\frac{\gamma_t \beta_t - \Lambda \beta_t^2}{2} \mathbb{E}\left[\left\|\mathbf{s}_t\right\|^2\right] +\frac{r_t}{2}  \mathbb{E}\left[ \left\|\ \mathbf{x}_{t+1} - \mathbf{x}_{t}  \right\|^2 \right]\nonumber\\
		&+ \frac{1}{2r_t} \left\{ \alpha^{2t} \mathbb{E}\left[\left\|\tilde{v}_0-\nabla h\left(\widehat{\mathbf{x}}_0\right)\right\|^2\right]+\Lambda^2 \sum_{i=0}^{t-1} \alpha^{2(t-i)} \mathbb{E}\left[\left\|\widehat{\mathbf{x}}_{i+1}-\widehat{\mathbf{x}}_i\right\|^2\right]+\frac{1-\alpha}{1+\alpha}\sigma^2\right\}. \label{exp phi 3}
	\end{align}
	For the simplification of symbols, we define
	\begin{equation}\label{def of Hi}
		\mathbf{H}_i = \mathbb{E}\left[\Phi\left(\mathbf{x}_{i}\right)\right]+\frac{\eta_i}{2}\mathbb{E}\left[\left\|\mathbf{x}_i-\mathbf{x}_{i-1}\right\|^2\right].
	\end{equation}
	Summing up (\ref{exp phi 3}) from $t=0$ to $t=m$, we obtain 
	\begin{align}
		\mathbf{H}_{m+1} \leq & \mathbf{H}_{0} -  \sum_{t=0}^{m}\frac{\beta_t}{2}
		\mathbb{E}\left[\left\|\breve{\mathbf{x}}_{t+1}-\mathbf{x}_t\right\|_{\mathcal{M}}^{2}\right]- \sum_{t=0}^{m} \frac{\gamma_t \beta_t - \Lambda \beta_t^2}{2} \mathbb{E}\left[\left\|\mathbf{s}_t\right\|^2\right] + (\sum_{t=0}^{m} \frac{\alpha^{2t}}{2r_t})\mathbb{E}\left[\left\|\tilde{v}_0-\nabla
		h\left(\widehat{\mathbf{x}}_0\right)\right\|^2\right]\nonumber\\
		&+\sum_{t=0}^{m} \frac{1-\alpha}{2r_t(1+\alpha)} \sigma^2 - \sum_{t=0}^{m}\frac{\gamma_{t+1} / \beta_{t+1}-\eta_{t+1} -r_t}{2}\mathbb{E}\left[\left\|\mathbf{x}_{t+1}-\mathbf{x}_t\right\|^2 \right] \nonumber\\
		& + \sum_{t=0}^{m} \frac{\Lambda^2}{2r_t} \sum_{i=0}^{t-1} \alpha^{2(t-i)} \mathbb{E}\left[\left\|\widehat{\mathbf{x}}_{i+1}-\widehat{\mathbf{x}}_i\right\|^2\right]. \label{potential H inequ}
	\end{align}
	

	We have from the above equations (\ref{x-rela 1}) and (\ref{x-rela 2}) that
	\begin{align}
		\widehat{\mathbf{x}}_{t+1} -\widehat{\mathbf{x}}_t & = \widehat{\mathbf{x}}_{t+1} -\mathbf{x}_{t+1} + \mathbf{x}_{t+1} -\mathbf{x}_{t} +\mathbf{x}_{t} -\widehat{\mathbf{x}}_t \nonumber\\
		&=	\theta_{t+1}\left(\mathbf{x}_{t+1}-\mathbf{x}_{t}\right) + \left(\mathbf{x}_{t+1}-\mathbf{x}_{t}\right) -
		\theta_{t}\left(\mathbf{x}_{t}-\mathbf{x}_{t-1}\right)\nonumber\\
		& = (\theta_{t+1}+1)\left(\mathbf{x}_{t+1}-\mathbf{x}_{t}\right)  -
		\theta_{t}\left(\mathbf{x}_{t}-\mathbf{x}_{t-1}\right),\nonumber\\
		\mathbb{E}\left[\left\|\widehat{\mathbf{x}}_{i+1}-\widehat{\mathbf{x}}_i\right\|^2\right] &\leq 2(\theta_{i+1}+1)^2 \mathbb{E}\left[ \left\|\ \mathbf{x}_{i+1}-\mathbf{x}_{i} \right\|^2\right] + 2\theta_{i}^2\mathbb{E}\left[ \left\|\ \mathbf{x}_{i}-\mathbf{x}_{i-1} \right\|^2\right],
	\end{align}
	where the inequality is due to the Cauchy-Schwartz inequality.
	
	Denote $r_t = r, q_i = \mathbb{E}\left[ \left\|\ \mathbf{x}_{i+1}-\mathbf{x}_{i} \right\|^2\right]$, and
	$l_3 = \max_{0\leq i \leq m} \left\{ 2(\theta_{i+1}+1)^2, 2\theta_{i}^2\right\}<8$,
	we first upper bound the term $\sum_{t=0}^{m} \frac{\Lambda^2}{2r_t} \sum_{i=0}^{t-1} \alpha^{2(t-i)} \mathbb{E}\left[\left\|\widehat{\mathbf{x}}_{i+1}-\widehat{\mathbf{x}}_i\right\|^2\right]$ in  (\ref{exp phi 3})
	\begin{align}
		&\sum_{t=0}^{m} \frac{\Lambda^2}{2r_t} \sum_{i=0}^{t-1} \alpha^{2(t-i)} \mathbb{E}\left[\left\|\widehat{\mathbf{x}}_{i+1}-\widehat{\mathbf{x}}_i\right\|^2\right]\nonumber\\
		\leq & \frac{l_3 \Lambda^2}{2 r}\sum_{t=0}^{m}\sum_{i=0}^{t-1} \alpha^{2(t-i)} (q_i + q_{i-1})\nonumber\\
		= & \frac{l_3 \Lambda^2}{2 r} \left\{
		\sum_{i=0}^{0} \alpha^{2(1-i)} (q_i + q_{i-1}) +
		\sum_{i=0}^{1} \alpha^{2(2-i)} (q_i + q_{i-1})+ \cdots + 
		\sum_{i=0}^{m-1} \alpha^{2(m-i)} (q_i + q_{i-1})
		\right\} \nonumber\\
		= &\frac{l_3 \Lambda^2}{2 r} \Big\{ \alpha^{2}q_{0} + 
		\left[ \alpha^{4}q_{0} +\alpha^{2}(q_{1}+q_{0}) \right] +  \left[ \alpha^{6}q_{0} +\alpha^{4}(q_{1}+q_{0})+\alpha^{2}(q_{2}+q_{1}) \right] 
		+\cdots \nonumber\\
		&+ \left[ \alpha^{2m}q_{0} +\alpha^{2(m-1)}(q_{1}+q_{0}) +
		\alpha^{2(m-2)}(q_{2}+q_{1})
		+\alpha^{2}(q_{m-1}+q_{m-2}) \right]
		\Big\}\nonumber\\
		= & \frac{l_3 \Lambda^2}{2 r} \Big\{ (\alpha^{2} + \alpha^{4} + \cdots +\alpha^{2m}) q_0 + (\alpha^{2} + \alpha^{4} + \cdots +\alpha^{2(m-1)})(q_{1}+q_{0}) + \cdots \nonumber\\
		&+ (\alpha^{2} + \alpha^{4})(q_{m-2}+q_{m-3}) + \alpha^{2}(q_{m-1}+q_{m-2})
		\Big\}\nonumber\\
		=&  \frac{l_3 \Lambda^2 \alpha^2}{2 r (1-\alpha^2)}\Big\{
		(2-\alpha^{2m}-\alpha^{2(m-1)})q_{0} + (2-\alpha^{2(m-1)}-\alpha^{2(m-2)})q_{1}+ \cdots \nonumber\\
		&+ (2-\alpha^{4}-\alpha^{2})q_{m-2}+(1-\alpha^2)q_{m-1}
		\Big\}. \label{sum inequ}
	\end{align}
	
	Now,  using these definitions and inequality (\ref{sum inequ}) to deal with  the term 
	\begin{equation}
		\mathcal{T}_m:=	- \sum_{t=0}^{m}\frac{\gamma_{t+1} / \beta_{t+1}-\eta_{t+1} -r}{2}\mathbb{E}\left[\left\|\mathbf{x}_{t+1}-\mathbf{x}_t\right\|^2 \right] + \sum_{t=0}^{m} \frac{\Lambda^2}{2r_t} \sum_{i=0}^{t-1} \alpha^{2(t-i)} \mathbb{E}\left[\left\|\widehat{\mathbf{x}}_{i+1}-\widehat{\mathbf{x}}_i\right\|^2\right]\nonumber
	\end{equation}
	in (\ref{potential H inequ}), we get 
	\begin{align}
		\mathcal{T}_m	\leq & 
		\left[ \frac{l_3 \Lambda^2 \alpha^2}{2 r (1-\alpha^2)}
		(2-\alpha^{2m}-\alpha^{2(m-1)}) - \frac{\gamma_{1} / \beta_{1}-\eta_{1} -r}{2}\right]q_{0} + \cdots\nonumber\\
		&+ \left[ \frac{l_3 \Lambda^2 \alpha^2}{2 r } - \frac{\gamma_{m} / \beta_{m}-\eta_{m} -r}{2} \right]q_{m-1} - \frac{\gamma_{m+1} / \beta_{m+1}-\eta_{m+1} -r}{2} q_{m}
	\end{align}
	To guarantee $\mathcal{T}_m\leq 0$, we can choose 
	\begin{equation}
		\begin{cases}\frac{l_3 \Lambda^2 \alpha^2}{2 r (1-\alpha^2)}(2-\alpha^{2m}-\alpha^{2(m-1)}) - \frac{\gamma_{1} / \beta_{1}-\eta_{1} -r}{2} & \leq 0 \\ 
			\frac{l_3 \Lambda^2 \alpha^2}{2 r (1-\alpha^2)}(2-\alpha^{2(m-1)}-\alpha^{2(m-2)}) - \frac{\gamma_{2} / \beta_{2}-\eta_{2} -r}{2}
			& \leq 0 \\
			\cdots 
			& \cdots \\ 
			\frac{l_3 \Lambda^2 \alpha^2}{2 r } - \frac{\gamma_{m} / \beta_{m}-\eta_{m} -r}{2} & \leq 0 \\
			0- \frac{\gamma_{m+1} / \beta_{m+1}-\eta_{m+1} -r}{2} & \leq 0 .\end{cases}\label{inequ of alpha}
	\end{equation}
	
	We can show that (\ref{inequ of alpha}) holds, if we have
	\begin{align}
		\frac{l_3 \Lambda^2 \alpha^2}{2 r (1-\alpha^2)}(2-\alpha^{2m}-\alpha^{2(m-1)}) - \frac{\gamma_{i} / \beta_{i}-\eta_{i} -r}{2} \leq 0,\forall 0\leq i \leq m,
	\end{align}
	which is satisfied if the following inequality holds
	\begin{align}
		\frac{l_3 \Lambda^2 \alpha^2}{ r (1-\alpha^2)}-g + \frac{r}{2}\leq 0,\label{g inequ 1}
	\end{align}
	where $g=\min_{0\leq i \leq m} \left\{ \frac{\gamma_{i} / \beta_{i}-\eta_{i}}{2} \right\}$.
	
	We choose $r=\sqrt{\frac{2l_3 \Lambda^2 \alpha^2}{  (1-\alpha^2)}}$, then (\ref{g inequ 1}) turns to 
	\begin{align}
		g(\alpha):=\alpha \Lambda \sqrt{\frac{2l_3 }{  (1-\alpha^2)}} = r
		\leq g,\label{g inequ 2}
	\end{align}
	to satisfy (\ref{g inequ 2}), we choose $\gamma_t$ as 
	\begin{align}
		\gamma_t / \beta_t-\eta_t & =\gamma_t / \beta_t-\left(\gamma_t / \beta_t+\mu\right) \theta_t^2\nonumber\\
		&=\gamma_t / \beta_t\left(1-\theta_t^2\right)-\mu \theta_t^2 \nonumber\\
		&\geq 2 g=\min_{0\leq i \leq m} \left\{ \gamma_{i} / \beta_{i}-\eta_{i} \right\}\nonumber\\
		&\geq 2g(\alpha) \qquad \qquad \qquad \forall t,
		\label{gammat inequ 1}
	\end{align}
	then the last inequality $\frac{\gamma_{m+1} / \beta_{m+1}-\eta_{m+1} -r}{2}\geq 0$ in (\ref{inequ of alpha}) satisfies as $2g(\alpha)\geq r$.

	Using $\beta_t / \beta_{t-1} \leq 1$ and $\theta_{t}\leq 1- \beta_{t-1}\leq 1-\tau$, $2\tau -\tau^2\leq 1-\theta_{t}^2$, we choose $\gamma_{t}$ as follows to ensure (\ref{gammat inequ 1}) hold:
	\begin{equation}
		\gamma_{t} = \beta_{t}(\frac{\mu + 2g(\alpha)}{2\tau -\tau^2} -\mu) \frac{t+1}{t}.\label{choice of gammat} 
	\end{equation}
	
	Hence, it follows from (\ref{potential H inequ}) and the property of $\mathcal{T}_m \leq 0$ that
	\begin{align}
		\mathbf{H}_{m+1} \leq & \mathbf{H}_{0} -  \sum_{t=0}^{m}\frac{\beta_t}{2}\mathbb{E}\left[ \left\|\breve{\mathbf{x}}_{t+1}-\mathbf{x}_t\right\|_{\mathcal{M}}^2\right] 
		- \sum_{t=0}^{m} \frac{\gamma_t \beta_t - \Lambda \beta_t^2}{2}
		\mathbb{E}\left[ \left\|\mathbf{s}_t\right\|^2\right] + (\sum_{t=0}^{m} \frac{\alpha^{2t}}{2r})\mathbb{E}\left[\left\|\tilde{v}_0-\nabla h\left(\widehat{\mathbf{x}}_0\right)\right\|^2\right]\nonumber\\
		&+\sum_{t=0}^{m} \frac{1-\alpha}{2r(1+\alpha)} {{\sigma^2}} \nonumber\\
		& \mathop{\leq}^{(i)}
		\mathbf{H}_{0} -  \sum_{t=0}^{m}\frac{\beta_t}{2}
		\mathbb{E}\left[ \left\|\breve{\mathbf{x}}_{t+1}-\mathbf{x}_t\right\|_{\mathcal{M}}^2\right]
		- \sum_{t=0}^{m} \frac{\gamma_t \beta_t - \Lambda \beta_t^2}{2}
		\mathbb{E}\left[ \left\|\mathbf{s}_t\right\|^2 \right]
		+ 
		\frac{(1-\alpha^{2m+2})\sigma^2}{2r (1-\alpha^2)M}
		\nonumber\\
		&\quad  + \frac{(m+1)(1-\alpha){{\sigma^2}}}{2r(1+\alpha)}\nonumber\\
		&\leq
		\mathbf{H}_{0} -  \sum_{t=0}^{m}\frac{\beta_t}{2}
		\mathbb{E}\left[ \left\|\breve{\mathbf{x}}_{t+1}-\mathbf{x}_t\right\|_{\mathcal{M}}^2\right]- \sum_{t=0}^{m} \frac{\gamma_t \beta_t - \Lambda \beta_t^2}{2} \mathbb{E}\left[\left\|\mathbf{s}_t\right\|^2\right] + 
		\frac{\sigma^2}{2r (1-\alpha^2)M} \nonumber\\
		&\quad + \frac{(m+1)(1-\alpha){{\sigma^2}}}{2r(1+\alpha)}
		, \label{potential H inequ 2}
	\end{align}
	where the inequality (i) is due to $\mathbb{E}\left[\left\|\tilde{v}_0-\nabla h\left(\widehat{\mathbf{x}}_0\right)\right\|^2\right]\leq \frac{\sigma^2}{M}$.

	Dividing both side of the equation (\ref{potential H inequ 2}) by $m+1$, and we get
	\begin{align}
		&\frac{1}{m+1}\left[ \sum_{t=0}^{m}\frac{\beta_t}{2} \mathbb{E}\left[\left\|\breve{\mathbf{x}}_{t+1}-\mathbf{x}_t\right\|_{\mathcal{M}}^2\right]+ \sum_{t=0}^{m} \frac{\gamma_t \beta_t - \Lambda \beta_t^2}{2} \mathbb{E}\left[\left\|\mathbf{s}_t\right\|^2\right] \right]\nonumber\\
		\leq& \frac{\mathbf{H}_{0}-\mathbf{H}_{m+1}}{m+1}  + 
		\frac{\sigma^2}{2r (1-\alpha^2)M(m+1)} + \frac{(1-\alpha)\sigma^2}{2r(1+\alpha)}\nonumber\\
		=& \frac{\mathbf{H}_{0}-\mathbf{H}_{m+1}}{m+1} + \frac{\sigma^2}{2r(1+\alpha)}
		\left[ \frac{1}{M(m+1)(1-\alpha)} + (1-\alpha) \right] \label{divide H}
	\end{align}
	
	With the choice of $\alpha:=1-\frac{c_1}{\sqrt{M(m+1)}}$ for some $0<c_1<\sqrt{M(m+1)}$, and from the inequality $1-\alpha^2 > \frac{c_1}{\sqrt{M(m+1)}}$,  we have $\gamma_t \leq \left[\frac{\mu +2\sqrt{ \frac{2l_3}{c_1}} \Lambda \left[ M(m+1)   \right]^{\frac{1}{4} }}{\tau(2-\tau)}   \right]\beta_t \frac{t+1}{t}$ for $\gamma_t$ defined in  (\ref{choice of gammat}). Then the last two terms of the right-hand side of (\ref{divide H}) become
	\begin{align}\label{1-alpha}
		\frac{1}{(1-\alpha) M(m+1)}+(1-\alpha)=\left(c_1+\frac{1}{c_1}\right) \frac{1}{\sqrt{M(m+1)}}.
	\end{align}
	With this setting of $\alpha =1-\frac{c_1}{\sqrt{M(m+1)}}$, (\ref{divide H}) leads to
	\begin{align}
		&\frac{1}{m+1}\left[ \sum_{t=0}^{m}\frac{\beta_t}{2} \mathbb{E}\left[\left\|\breve{\mathbf{x}}_{t+1}-\mathbf{x}_t\right\|_{\mathcal{M}}^2\right]
		+ \sum_{t=0}^{m} \frac{\gamma_t \beta_t - \Lambda \beta_t^2}{2} \mathbb{E}\left[\left\|\mathbf{s}_t\right\|^2\right] \right]\nonumber\\
		\leq &	\frac{\mathbf{H}_{0}-\mathbf{H}_{m+1}}{m+1} + \frac{\sigma^2}{2r(1+\alpha)}(c_1+\frac{1}{c_1})\frac{1}{\sqrt{M(m+1)}}. \label{final potential H 1}
	\end{align}
	
	Taking $ M=\sigma^{k_1}(m+1)^{k_2}$, $k_1>0, k_2>0$, 
	$\alpha \geq \underline{\alpha}$, $r\geq \sqrt{\frac{2l_3 \Lambda^2 \underline{\alpha}^2}{  (1-\underline{\alpha}^2)}}:= \underline{r}$, then from 
	(\ref{final potential H 1}), it further gives that
	\begin{align}
		&\frac{1}{m+1}\left[ \sum_{t=0}^{m}\frac{\beta_t}{2}
		\mathbb{E}\left[ \left\|\breve{\mathbf{x}}_{t+1}-\mathbf{x}_t\right\|_{\mathcal{M}}^2\right]
		- \sum_{t=0}^{m} \frac{\gamma_t \beta_t - \Lambda \beta_t^2}{2}
		\mathbb{E}\left[ \left\|\mathbf{s}_t\right\|^2\right]
		\right]\nonumber\\
		\leq &	\frac{\mathbf{H}_{0}-\mathbf{H}_{m+1}}{m+1} + \frac{\sigma^{2}}{2\underline{r}(1+\underline{\alpha})}(c_1+\frac{1}{c_1})\frac{1}{M(m+1)},\label{final potential H 2}
	\end{align}
	and
	\begin{align}
		&\frac{1}{m+1}\left[ \sum_{t=0}^{m}\frac{\beta_t}{2} \mathbb{E}\left[\left\|\breve{\mathbf{x}}_{t+1}-\mathbf{x}_t\right\|_{\mathcal{M}}^2\right]+ \sum_{t=0}^{m} \frac{\gamma_t \beta_t - \Lambda \beta_t^2}{2} \mathbb{E}\left[\left\|\mathbf{s}_t\right\|^2\right] \right]\nonumber\\
		\leq &	\frac{\mathbf{H}_{0}-\mathbf{H}_{m+1}}{m+1} + \frac{\sigma^{2-\frac{k_1}{2}}}{2\underline{r}(1+\underline{\alpha})}(c_1+\frac{1}{c_1})\frac{1}{(m+1)^{\frac{1+k_2}{2}}}. \nonumber
	\end{align}
	
\end{proof}

\subsection{Proof of \cref{cor 5.5}}\label{app:cor 5.5}

\textbf{Corollary 5.5}
Suppose that Assumption \ref{assum of func h} and conditions in \cref{x y w grad 0} hold. Let $\left\{\left(\widehat{\mathbf{x}}_t,\breve{\mathbf{x}}_t,\mathbf{x}_t \right) \right\}$ be generated by Algorithm \ref{alg 5.1} using the following weight $\alpha$ and $\gamma_t$:
\begin{equation}
	\begin{cases}
		&\alpha=1-\frac{c_1}{\sqrt{M(m+1)}},\\
		&\gamma_{t} = \beta_{t}(\frac{\mu + 2g(\alpha)}{2\tau -\tau^2} -\mu) \frac{t+1}{t},
	\end{cases}
\end{equation}
where $g(\alpha)=\alpha \Lambda \sqrt{\frac{2l_3 }{  (1-\alpha^2)}}$.

Then, we have
\begin{align}
	\lim _{m \rightarrow \infty}	\mathbb{E} \left\|\ \nabla \Phi\left(\widehat{\mathbf{x}}_{\bar{m}}\right) \right\|^2
	=	
	\lim _{m \rightarrow \infty} \frac{1}{m+1}\sum_{t=0}^{m}\mathbb{E}\left[ \left\|\ \nabla \Phi\left(\widehat{\mathbf{x}}_t\right) \right\|^2 \right]
	=0,\nonumber
\end{align}
and \begin{equation}
	\lim _{m \rightarrow \infty}\mathbb{E}\left[ \left\|\breve{\mathbf{x}}_{\bar{m}}-\mathbf{x}_{\bar{m}}\right\|_{\mathcal{M}}^2 \right]=0 \quad \text { and } \quad \lim _{m \rightarrow \infty} \mathbb{E}
	\left\|s_{\bar{m}}\right\|^2= \lim _{m \rightarrow \infty}\mathbb{E}
	\left\|\breve{\mathbf{x}}_{\bar{m}+1}-\breve{\mathbf{x}}_{\bar{m}}\right\|^2=0.\nonumber
\end{equation}

\begin{proof}
	First,with the denotation of $c_2 = \min_{0\leq t \leq m} \left\{ \frac{\beta_t}{2}, \frac{\gamma_t \beta_t - \Lambda \beta_t^2}{2}\right\}$, the output $\left(\widehat{\mathbf{x}}_{\bar{m}},\breve{\mathbf{x}}_{\bar{m}},\mathbf{x}_{\bar{m}} \right) $ of Algorithm \ref{alg 5.1} and from 
	(\ref{final potential H 2}), we have 
	\begin{align}
		c_2 \mathbb{E}\left[ \left\|\breve{\mathbf{x}}_{\bar{m}}-\mathbf{x}_{\bar{m}}\right\|_{\mathcal{M}}^2 \right]
		+ c_2 \mathbb{E}\left[ \left\|\mathbf{s}_{\bar{m}}\right\|^2   \right]\leq \frac{\mathbf{H}_{0}-\mathbf{H}_{m+1}}{m+1} + \frac{\sigma^{2-\frac{k_1}{2}}}{2\underline{r}(1+\underline{\alpha})}(c_1+\frac{1}{c_1})\frac{1}{(m+1)^{\frac{1+k_2}{2}}}. \label{iter inequ 1}
	\end{align}
	
	We can obtain
	from (\ref{iter inequ 1}) that
	\begin{equation}
		\lim _{m \rightarrow \infty}\mathbb{E}\left[ \left\|\breve{\mathbf{x}}_{\bar{m}}-\mathbf{x}_{\bar{m}}\right\|_{\mathcal{M}}^2 \right]=0, \quad \text { and } \quad \lim _{m \rightarrow \infty}\mathbb{E} 
		\left\|s_{\bar{m}}\right\|^2=
		\lim _{m \rightarrow \infty}\mathbb{E} \left\|\breve{\mathbf{x}}_{\bar{m}+1}-\breve{\mathbf{x}}_{\bar{m}}\right\|^2=0. \label{equ s-barm}
	\end{equation}
	
	Then, from (\ref{x-opti condi}), we have 
	\begin{align}
		\tilde{v}_t-\nabla h\left(\widehat{\mathbf{x}}_t\right) + \nabla h\left(\widehat{\mathbf{x}}_t\right)
		+\gamma_t \mathbf{s}_t+
		\nabla \phi\left(\breve{\mathbf{x}}_{t+1}\right)-
		\nabla\phi\left(\widehat{\mathbf{x}}_t \right) + \nabla\phi\left(\widehat{\mathbf{x}}_t \right)
		=0,
	\end{align}
	and from the definition of $\Phi(\mathbf{x})=h(\mathbf{x})+\phi(\mathbf{x})$, we obtain 
	\begin{align}
		\nabla \Phi\left(\widehat{\mathbf{x}}_t\right) = \nabla h\left(\widehat{\mathbf{x}}_t\right) -\tilde{v}_t - \gamma_t \mathbf{s}_t + \nabla\phi\left(\widehat{\mathbf{x}}_t \right) - \nabla \phi\left(\breve{\mathbf{x}}_{t+1}\right). \label{grad of Psi 1} 
	\end{align}
	
	Using the result of (\ref{grad of Psi 1}), it gives that
	\begin{align}
		\mathbb{E}\left[ \left\|\ \nabla \Phi\left(\widehat{\mathbf{x}}_t\right) \right\|^2 \right]\leq 3  \mathbb{E}\left[ \left\|\ \nabla h\left(\widehat{\mathbf{x}}_t\right) -\tilde{v}_t \right\|^2 \right] +
		3 \gamma_t^2  \mathbb{E}\left[ \left\|\  \mathbf{s}_t  \right\|^2 \right] +
		3    \mathbb{E}\left[ \left\| \nabla\phi\left(\widehat{\mathbf{x}}_t \right) - \nabla \phi\left(\breve{\mathbf{x}}_{t+1}\right)  \right\|^2 \right]. \label{grad of Psi 2}  
	\end{align}
	
	Summing up (\ref{grad of Psi 2}) from $t=0$ to $t=m$ and dividing $m+1$, we get 
	\begin{align}
		\frac{1}{m+1}\sum_{t=0}^{m}\mathbb{E}\left[ \left\|\ \nabla \Phi\left(\widehat{\mathbf{x}}_t\right) \right\|^2 \right]\leq& \frac{3}{m+1}\sum_{t=0}^{m}  \mathbb{E}\left[ \left\|\ \nabla h\left(\widehat{\mathbf{x}}_t\right) -\tilde{v}_t \right\|^2 \right] +
		\frac{3 }{m+1}\sum_{t=0}^{m} \mathbb{E}\left[\gamma_t^2 \left\|\  \mathbf{s}_t  \right\|^2 \right]\nonumber\\
		& +
		\frac{3}{m+1}\sum_{t=0}^{m}    \mathbb{E}\left[ \left\| \nabla\phi\left(\widehat{\mathbf{x}}_t \right) - \nabla \phi\left(\breve{\mathbf{x}}_{t+1}\right)  \right\|^2 \right] \label{grad of phi 3}
	\end{align}
	
	We first analyze the middle term $\frac{3 }{m+1}\sum_{t=0}^{m} \mathbb{E}\left[\gamma_t^2 \left\|\  \mathbf{s}_t  \right\|^2 \right]$ in (\ref{grad of phi 3}). From (\ref{final potential H 2}), we drive 
	\begin{align}
		\frac{1 }{m+1}\sum_{t=0}^{m} \frac{\gamma_t \beta_t - \Lambda \beta_t^2}{2} \mathbb{E}\left[ \left\|\  \mathbf{s}_t  \right\|^2 \right] 
		\leq \frac{\mathbf{H}_{0}-\mathbf{H}_{m+1}}{m+1} + \frac{\sigma^2}{2r(1+\alpha)}(c_1+\frac{1}{c_1})\frac{1}{\sqrt{M(m+1)}},\nonumber 
	\end{align}
	which is equivalent to 
	\begin{align}
		\frac{1 }{2(m+1)}\sum_{t=0}^{m} \frac{\gamma_t \beta_t - \Lambda \beta_t^2}{\gamma_t^2}\gamma_t^2 \mathbb{E}\left[ \left\|\  \mathbf{s}_t  \right\|^2 \right] 
		\leq \frac{\mathbf{H}_{0}-\mathbf{H}_{m+1}}{m+1} + \frac{\sigma^2}{2r(1+\alpha)}(c_1+\frac{1}{c_1})\frac{1}{\sqrt{M(m+1)}}.\label{st}
	\end{align}
	Denoting $\tilde{\delta} = \min_{0\leq t \leq m} \left\{ \frac{\gamma_t \beta_t - \Lambda \beta_t^2}{\gamma_t^2}
	\right\}=\min_{0\leq t \leq m} \left\{ \frac{ \beta_t }{\gamma_t}-\Lambda(\frac{\beta_t }{\gamma_t})^2
	\right\}$, and from the setting of $\gamma_t$ in (\ref{choice of gammat}), we can obtain
	\begin{align}
		2g(\alpha) \leq\frac{\gamma_t}{\beta_t}\leq 
		\frac{2\mu+4g(\alpha)}{2\tau - \tau^2},
	\end{align}
	which further implies $\frac{\gamma_t}{\beta_t}=\mathcal{O}(g(\alpha))$, $\frac{\beta_t}{\gamma_t}=\mathcal{O}(1/g(\alpha))$,  and $\tilde{\delta}=\mathcal{O}(1/g(\alpha))$.
	
	Dividing $\tilde{\delta}$ in the both sides of (\ref{st}) to drive
	\begin{align}
		&\frac{1 }{2(m+1)}\sum_{t=0}^{m} \gamma_t^2 \mathbb{E}\left[ \left\|\  \mathbf{s}_t  \right\|^2 \right]\nonumber\\ 
		\leq &\frac{\mathbf{H}_{0}-\mathbf{H}_{m+1}}{\tilde{\delta}(m+1)} + \frac{\sigma^2}{2r\tilde{\delta}(1+\alpha)}(c_1+\frac{1}{c_1})\frac{1}{\sqrt{M(m+1)}}\nonumber\\
		\mathop{\leq}^{(i)}
		&\frac{1}{l_4}\Biggl\{\frac{g(\alpha)(\mathbf{H}_{0}-\mathbf{H}_{m+1})}{m+1} + \frac{\sigma^2g(\alpha)}{2r(1+\alpha)}(c_1+\frac{1}{c_1})\frac{1}{\sqrt{M(m+1)}}
		\Biggr\}\nonumber\\
		\mathop{\leq}^{(ii)}
		&\frac{ \Lambda\sqrt{2l_3}}{l_4}
		\Biggl\{\frac{M^{\frac{1}{4}}(\mathbf{H}_{0}-\mathbf{H}_{m+1})}{\sqrt{c_1}(m+1)^{\frac{3}{4}}} + \frac{\sigma^2(c_1+\frac{1}{c_1})}{2r(1+\alpha)\sqrt{c_1}}\frac{1}{\left[M(m+1)\right]^{\frac{1}{4}}}
		\Biggr\},
	\end{align}
	where the first inequality (i) is due to $\tilde{\delta}=\mathcal{O}(1/g(\alpha))= \frac{l_4}{g(\alpha)}$, $l_4>0$, and the last inequality (ii) is due to $g(\alpha):=\alpha \Lambda \sqrt{\frac{2l_3 }{  (1-\alpha^2)}}$, $\alpha:=1-\frac{c_1}{\sqrt{M(m+1)}}$, and $1-\alpha^2 \geq  \frac{c_1}{\sqrt{M(m+1)}}$ in (\ref{equ of alpha}).
	
	It follows from choosing $M=\mathcal{O}((m+1)^{k_2})$, $k_2\in (0,3)$, and letting $m \rightarrow \infty$ that
	\begin{align}\label{st 2.47}
		\lim _{m \rightarrow \infty}
		\frac{3 }{m+1}\sum_{t=0}^{m} \gamma_t^2\mathbb{E}\left[ \left\|\  \mathbf{s}_t  \right\|^2 \right]=0.
	\end{align}

	Then we focus on
	\begin{align}
		&\lim _{m \rightarrow \infty} \left[\frac{3}{m+1}\sum_{t=0}^{m}    \mathbb{E}\left[ \left\| \nabla\phi\left(\widehat{\mathbf{x}}_t \right) - \nabla \phi\left(\breve{\mathbf{x}}_{t+1}\right)  \right\|^2 \right]\right]\nonumber\\
		\leq & \lim _{m \rightarrow \infty} \left[\frac{3L_{\phi}^2}{m+1}\sum_{t=0}^{m}    \mathbb{E}\left[ \left\| \widehat{\mathbf{x}}_t - \breve{\mathbf{x}}_{t+1}  \right\|^2 \right]\right]\nonumber\\
		= &	\lim _{m \rightarrow \infty} \left[\frac{3L_{\phi}^2}{m+1}\sum_{t=0}^{m}    \mathbb{E}\left[ \left\| (1-\beta_{t})({\mathbf{x}}_t - \breve{\mathbf{x}}_{t+1} ) +\beta_{t}(\breve{\mathbf{x}}_{t}-\breve{\mathbf{x}}_{t+1}) \right\|^2 \right]\right]\nonumber\\
		\mathop{\leq}^{(i)}& \lim _{m \rightarrow \infty} \left[\frac{3l_5 L_{\phi}^2}{m+1}\sum_{t=0}^{m}\left[    
		\mathbb{E}\left[ \left\| {\mathbf{x}}_t - \breve{\mathbf{x}}_{t+1}  \right\|^2 \right] 
		+
		\mathbb{E}\left[\left\|\breve{\mathbf{x}}_{t}-\breve{\mathbf{x}}_{t+1}\right\|^2  \right]
		\right] \right]\nonumber\\
		\mathop{=}^{(ii)} &0, \label{exp phi}
	\end{align}
	where the first equation is from $\widehat{\mathbf{x}}_t - \breve{\mathbf{x}}_{t+1} = (1-\beta_{t})({\mathbf{x}}_t - \breve{\mathbf{x}}_{t+1} ) +\beta_{t}(\breve{\mathbf{x}}_{t}-\breve{\mathbf{x}}_{t+1}) $,
	$l_5 = \max_{0\leq i \leq m} \left\{ 2(1-\beta_{i})^2, 2\beta_{i}^2\right\}$, the inequality (i) is due to $\left\| \nabla\phi\left(\widehat{\mathbf{x}}_t \right) - \nabla \phi\left(\breve{\mathbf{x}}_{t+1}\right)  \right\| \leq L_{\phi}
	\left\| \widehat{\mathbf{x}}_t  - \breve{\mathbf{x}}_{t+1} \right\|$, and the equation (ii) is due to (\ref{final potential H 2}).
	
	We end with dealing the first term $\frac{3}{m+1}\sum_{t=0}^{m}  \mathbb{E}\left[ \left\|\ \nabla h\left(\widehat{\mathbf{x}}_t\right) -\tilde{v}_t \right\|^2 \right]$ in (\ref{grad of phi 3}). Similarly, using the notations $q_i = \mathbb{E}\left[ \left\|\ \mathbf{x}_{i+1}-\mathbf{x}_{i} \right\|^2\right]$ and $l_3 = \max_{0\leq i \leq m} \left\{ 2(\theta_{i+1}+1)^2, 2\theta_{i}^2\right\}$, we give an upper bound of  $\frac{3}{m+1}\sum_{t=0}^{m}  \mathbb{E}\left[ \left\|\ \nabla h\left(\widehat{\mathbf{x}}_t\right) -\tilde{v}_t \right\|^2 \right]$ as
	\begin{align}
		&\frac{3}{m+1}\sum_{t=0}^{m}  \mathbb{E}\left[ \left\|\ \nabla h\left(\widehat{\mathbf{x}}_t\right) -\tilde{v}_t \right\|^2 \right]\nonumber\\
		\leq & \frac{3}{m+1}\left\{\sum_{t=0}^{m} \alpha^{2t} \mathbb{E}\left[\left\|\tilde{v}_0-\nabla h\left(\widehat{\mathbf{x}}_0\right)\right\|^2\right]+l_3\Lambda^2 \sum_{t=0}^{m}\sum_{i=0}^{t-1} \alpha^{2(t-i)}(q_i + q_{i-1}) \right\} + \frac{3(1-\alpha)}{1+\alpha}\sigma^2\nonumber\\
		\leq & \frac{3}{(m+1)(1-\alpha^2)} \mathbb{E}\left[\left\|\tilde{v}_0-\nabla h\left(\widehat{\mathbf{x}}_0\right)\right\|^2\right]+
		\frac{3l_3\Lambda^2}{m+1} \sum_{t=0}^{m}\sum_{i=0}^{t-1} \alpha^{2(t-i)}(q_i + q_{i-1}) + \frac{3(1-\alpha)}{1+\alpha}\sigma^2\nonumber\\
		= &\frac{3}{(m+1)(1-\alpha^2)} \mathbb{E}\left[\left\|\tilde{v}_0-\nabla h\left(\widehat{\mathbf{x}}_0\right)\right\|^2\right]+
		\frac{3l_3\Lambda^2}{m+1} 
		\big\{ \alpha^{2}q_{0} + 
		\left[ \alpha^{4}q_{0} +\alpha^{2}(q_{1}+q_{0}) \right] +   \nonumber\\
		&  \left[ \alpha^{6}q_{0} +\alpha^{4}(q_{1}+q_{0})+\alpha^{2}(q_{2}+q_{1}) \right] 
		+\cdots+ \left[ \alpha^{2m}q_{0} + \alpha^{2}(q_{m-1}+q_{m-2}) \right] \big\}  + \frac{3(1-\alpha)}{1+\alpha}\sigma^2 \nonumber\\
		= &\frac{3}{(m+1)(1-\alpha^2)} \mathbb{E}\left[\left\|\tilde{v}_0-\nabla h\left(\widehat{\mathbf{x}}_0\right)\right\|^2\right]+
		\frac{3l_3\Lambda^2 \alpha^2}{(m+1)(1-\alpha^2)} 
		\big[  
		(2-\alpha^{2m}-\alpha^{2(m-1)})q_{0} +  \nonumber\\
		& (2-\alpha^{2(m-1)}-\alpha^{2(m-2)})q_{1}+ \cdots + (2-\alpha^{4}-\alpha^{2})q_{m-2}+(1-\alpha^2)q_{m-1}
		\big]
		+ \frac{3(1-\alpha)}{1+\alpha}\sigma^2\nonumber\\
		\mathop{\leq}^{(i)} &  \frac{3}{(m+1)(1-\alpha^2)} \mathbb{E}\left[\left\|\tilde{v}_0-\nabla h\left(\widehat{\mathbf{x}}_0\right)\right\|^2\right]+
		\frac{6l_3\Lambda^2 \alpha^2}{(m+1)(1-\alpha^2)} \sum_{i=0}^{m-1}q_i
		+ \frac{3(1-\alpha)}{1+\alpha}\sigma^2 \nonumber\\
		\mathop{\leq}^{(ii)} & \frac{6l_3\Lambda^2 \alpha^2}{(m+1)(1-\alpha^2)} \sum_{i=0}^{m-1}q_i
		+ \frac{3\sigma^2}{1+\alpha}(c_1+\frac{1}{c_1})\frac{1}{\sqrt{M(m+1)}},
		\label{grad inequ 4}
	\end{align}
	where the first inequality is from (\ref{hybrid var up bound}) and the definition of $l_3$, the inequality (i) is due to $(2-\alpha^{2i}-\alpha^{2(i-1)})\leq 2, 1-\alpha^2\leq 2$, and the inequality (ii) is due to $\mathbb{E}\left[\left\|\tilde{v}_0-\nabla h\left(\widehat{\mathbf{x}}_0\right)\right\|^2\right]\leq \frac{\sigma^2}{M}$ and (\ref{1-alpha}). By the choice of $r=\sqrt{\frac{2l_3 \Lambda^2 \alpha^2}{  (1-\alpha^2)}}$,  $\alpha=1-\frac{c_1}{\sqrt{M(m+1)}}, M=\sigma^{k_1}(m+1)^{k_2}, k_2 \in (0,1)$, we have 
	\begin{equation}
		\begin{cases}
			\frac{1-\alpha}{1+\alpha}&\leq 1- \alpha= \frac{c_1}{\sqrt{M(m+1)}}\\
			1-\alpha^2 & \geq  \frac{c_1}{\sqrt{M(m+1)}} \label{equ of alpha}.
		\end{cases}
	\end{equation}


	Using (\ref{final potential H 1}), definitions of $r,\alpha$ and the inequality (\ref{inequ of alpha}), there exists $c_3>0$ to drive 
	\begin{align}
		&\frac{6l_3\Lambda^2 \alpha^2}{(m+1)(1-\alpha^2)} \sum_{i=0}^{m}q_i = \frac{6l_3\Lambda^2 }{m+1}(-1+\frac{1}{1-\alpha^2}) \sum_{i=0}^{m}q_i \nonumber\\
		\leq& 6c_3 l_3\Lambda^2(\frac{1}{1-\alpha^2})  
		\left[  \frac{\mathbf{H}_{0}-\mathbf{H}_{m+1}}{m+1} + \frac{\sigma^2}{2r(1+\alpha)}(c_1+\frac{1}{c_1})\frac{1}{\sqrt{M(m+1)}} \right] \nonumber
		\\
		= &6c_3 l_3\Lambda^2(c_1+\frac{1}{c_1})\frac{1}{1-\alpha^2}\frac{\sigma^2}{2r(1+\alpha)}\frac{1}{\sqrt{M(m+1)}}  +6c_3l_3\Lambda^2 \frac{1}{1-\alpha^2}\frac{\mathbf{H}_0-\mathbf{H}_{m+1}}{m+1}\nonumber\\
		\mathop{\leq}^{(i)} &6c_3 l_3\Lambda^2(c_1+\frac{1}{c_1})\frac{1}{1-\alpha^2}\frac{\sigma^2}{2r(1+\alpha)}\frac{1}{\sqrt{M(m+1)}}   +6c_3l_3\Lambda^2 \frac{\sqrt{\frac{M}{m+1}}}{c_1}\left[\mathbf{H}_0-\mathbf{H}_{m+1}\right],\nonumber\\
		\mathop{\leq}^{(ii)}& 6c_3l_3\Lambda^2 \frac{\sqrt{\frac{M}{m+1}}}{c_1}\left[\mathbf{H}_0-\mathbf{H}_{m+1}\right]+
		\frac{6c_3 l_3\Lambda^2(c_1+\frac{1}{c_1})\sigma^2}{2c_1\Lambda\sqrt{2l_3}} \frac{\sqrt{1-\alpha^2}}{\alpha}\nonumber\\
		\mathop{\leq}^{(iii)} & 6c_3l_3\Lambda^2 \frac{\sqrt{\frac{M}{m+1}}}{c_1}\left[\mathbf{H}_0-\mathbf{H}_{m+1}\right]+\frac{6c_3 l_3\Lambda^2(c_1+\frac{1}{c_1})\sigma^2}{2\Lambda\sqrt{2l_3c_1}}\sqrt{\frac{2\big(\sqrt{M(m+1)}-c_1\big)}{\big(\sqrt{M(m+1)}-c_1\big)^2}}
		\label{middle term}
	\end{align}
	for the middle term $\frac{6l_3\Lambda^2 \alpha^2}{(m+1)(1-\alpha^2)} \sum_{i=0}^{m}q_i$ in (\ref{grad inequ 4}), where (i) is due to $1-\alpha^2 \geq  \frac{c_1}{\sqrt{M(m+1)}}$ in (\ref{equ of alpha}), (ii) is due to  $\frac{1}{1+\alpha}\leq1$, $1-\alpha^2 \geq  \frac{c_1}{\sqrt{M(m+1)}}$ in in (\ref{grad inequ 4}), and the definition of $r :=\sqrt{\frac{2l_3 \Lambda^2 \alpha^2}{  (1-\alpha^2)}}$.  The last inequality (iii) is due to  $\alpha=1-\frac{c_1}{\sqrt{M(m+1)}}$, and $\frac{\sqrt{1-\alpha^2}}{\alpha}=\sqrt{\frac{1}{\alpha^2}-1}=\sqrt{\frac{c_1\big(2\sqrt{M(m+1)}-c_1\big)}{(\big(\sqrt{M(m+1)}-c_1\big))^2}}$.
	
	
	Using (\ref{st 2.47}), (\ref{exp phi}), (\ref{middle term}), and $k_2 \in (0,1)$, we drive
	\begin{align}
		\lim _{m \rightarrow \infty} \left[ \frac{3 }{m+1}\sum_{t=0}^{m} \mathbb{E}\left[ \left\|\  
		\nabla h\left(\widehat{\mathbf{x}}_t\right) -\tilde{v}_t 
		\right\|^2 \right]\right]=0. \label{grad of h}
	\end{align}
	
	Inserting (\ref{st}), (\ref{exp phi}) and (\ref{grad of h}) into (\ref{grad of phi 3}), we finally obtain 
	\begin{align}
		\lim _{m \rightarrow \infty} \frac{1}{m+1}\sum_{t=0}^{m}\mathbb{E}\left[ \left\|\ \nabla \Phi\left(\widehat{\mathbf{x}}_t\right) \right\|^2 \right]=0,	
	\end{align}
	which implies 
	\begin{align}
		\lim _{m \rightarrow \infty}	\mathbb{E} \left\|\ \nabla \Phi\left(\widehat{\mathbf{x}}_{\bar{m}}\right) \right\|^2=0.
	\end{align}
	
	Since $\nabla \Phi(\mathbf{x})=\nabla_x \mathcal{L}_\beta\left(\mathbf{x}, \mathbf{y}^{k+1}, \lambda^k\right)+\beta \mathcal{D}_x^k\left(\mathbf{x}-\mathbf{x}^k\right)$ and $\lim _{t \rightarrow \infty} \mathbb{E}
	\left[\nabla \Phi\left({\mathbf{x}}_{\bar{m}}\right)\right]=0,$ the inexact optimal condition (\ref{x update criteria}) will be satisfied by setting $\widehat{\mathbf{x}}^k=\widehat{\mathbf{x}}_{\bar{m}}$ for all $t$ sufficiently large.
\end{proof}

\subsection{Additional Experiments and Discussions}\label{add for exps}

This section provides specific details and additional information regarding the experiments. 
All experiments utilize a dual variable step size of $s=1.2$, AL function penalty parameter $\beta=1.01$, and the same initial $\mathbf{x}^{0}$ sampled from the standard normal distribution. 
All algorithms are implemented in Python, and the experiments are conducted on a PC equipped with an Intel i7-12700F CPU and 16GB of memory.

For comparison, we implement the following algorithms:
\begin{itemize}
	\item
	SADMM with contant or scheduled decaying step sizes as $\eta_k:=\frac{\eta_0}{1+\eta^{\prime}\lceil k / N\rceil}$ is implemented. 
	$\eta^{\prime}=0$ degenerates into the constant-type step-size. 
	Based on experimental results, we set the parameter to $\eta_0=0.05$, $\eta^{\prime}=1.0$, gradient batch $b=\lceil\sqrt{N}\rceil$.
	
	\item We also implemented SPIDER-ADMM, based on SARAH recursive gradient estimation, and SVRG-ADMM. It is worth noting that \cite{huang2019faster} has already compared SPIDER-ADMM with various other variance reduction algorithms; thus, we skip comparing it with SAGA-type ADMM algorithms here. For SPIDER-ADMM, we use a constant step size $\eta=\frac{1}{2L}$ with a batch size $b=\lceil\sqrt{N}\rceil$. 
	SVRG-ADMM utilizes a step size of $\eta=\frac{1}{3L}$ with a batch size $b=\lceil{N}^{\frac{2}{3}}\rceil$.
	
	\item
	The proposed AH-SADMM simultaneously incorporates momentum acceleration techniques and hybrid gradient techniques. The algorithm uses a step size of $\eta=\frac{1}{2L}$,with a specified batch size $b=\lceil{N}^{\frac{1}{3}}\rceil$. The selection of hybrid gradient parameter $\alpha$ follows \eqref{alpha setting}. The momentum parameter $\tau$ is chosen as $\tau=0.8$.

	\item
	The variant algorithm of AH-SADMM, named H-SADMM, omits the use of momentum techniques and solely employs hybrid gradient techniques. The algorithm uses a step size of 
	$\eta=\frac{1}{2L}$, with a batch size $b=\lceil{N}^{\frac{1}{3}}\rceil$. The selection of hybrid gradient parameter $\alpha$ still follows \eqref{alpha setting}.
	
	\item
	Another variant of AH-SADMM, named ASADMM, exclusively utilizes momentum acceleration techniques without employing hybrid gradient techniques. ASADMM shares the same step size and batch as H-SADMM. The momentum parameters are set identical to AH-SADMM.

\end{itemize}

\subsubsection{Nonconvex Binary Classification problem}

The nonconvex SCAD penalty $p_\kappa(\cdot)$ is defined as
\begin{equation}\label{def of SCAD}
	p_\kappa(\theta):= \begin{cases}\kappa \theta, & \theta \leq \kappa, \\ \frac{-\theta^2+2 c \kappa \theta-\kappa^2}{2(c-1)}, & \kappa<\theta \leq c \kappa, \\ \frac{(c+1) \kappa^2}{2}, & \theta>c \kappa,\end{cases}
\end{equation}
where $c>2$ and $\kappa>0$ are the knots of the the quadratic spline function.

In particular, it is well-known that the minimization problem of the form $\min _{\mathbf{y}} \sum_{i=1}^n p_\kappa\left(\left|\mathbf{y}_i\right|\right)+\frac{1}{2 v}\|\mathbf{y}-\mathbf{q}\|^2$ with $1+v \leq c$ has a closed form solution
\begin{equation}\label{closed form of SCAD}
	y_i:= \begin{cases}\operatorname{sign}\left(q_i\right) \max \left\{\left|q_i\right|-\kappa v, 0\right\}, & \left|q_i\right| \leq(1+v) \kappa \\ \frac{(c-1) q_i-\operatorname{sign}\left(q_i\right) c \kappa v}{c-1-v}, & (1+v) \kappa<\left|q_i\right| \leq c \kappa \\ q_i, & \left|q_i\right|>c \kappa .
	\end{cases}
\end{equation}

In the experiment for the nonconvex binary classification problem, we set the parameters in the SCAD function to $(c, \kappa)=(3.7,0.1)$.

\subsubsection{Graph-Guided Binary Classification}
The famous graph-guided fused Lasso model \cite{kim2009multivariate} can be represented as
\begin{equation}\label{equ fused Lasso}
	\min_{\mathbf{x}}\frac{1}{n}\sum^{n}_{i=1} f_{i}(\mathbf{x})+\lambda_1 \|A\mathbf{x}\|_{1},
\end{equation}
where the nonconvex sigmoid loss function $f_{i}$, the set of training samples $\left\{\left(a_i, b_i\right)\right\}_{i=1}^{N}$, matrix $A$, and the introduction of linear constraints follow the same approach as the first problem \eqref{equ log with SCAD}.
The difference lies in the regularizer for this problem, which is the $l_1$-norm.

\begin{figure}[h] 
	\vskip -0.1in
	\centering
	\subfigure[ijcnn1]{\includegraphics[width=0.232\textwidth]{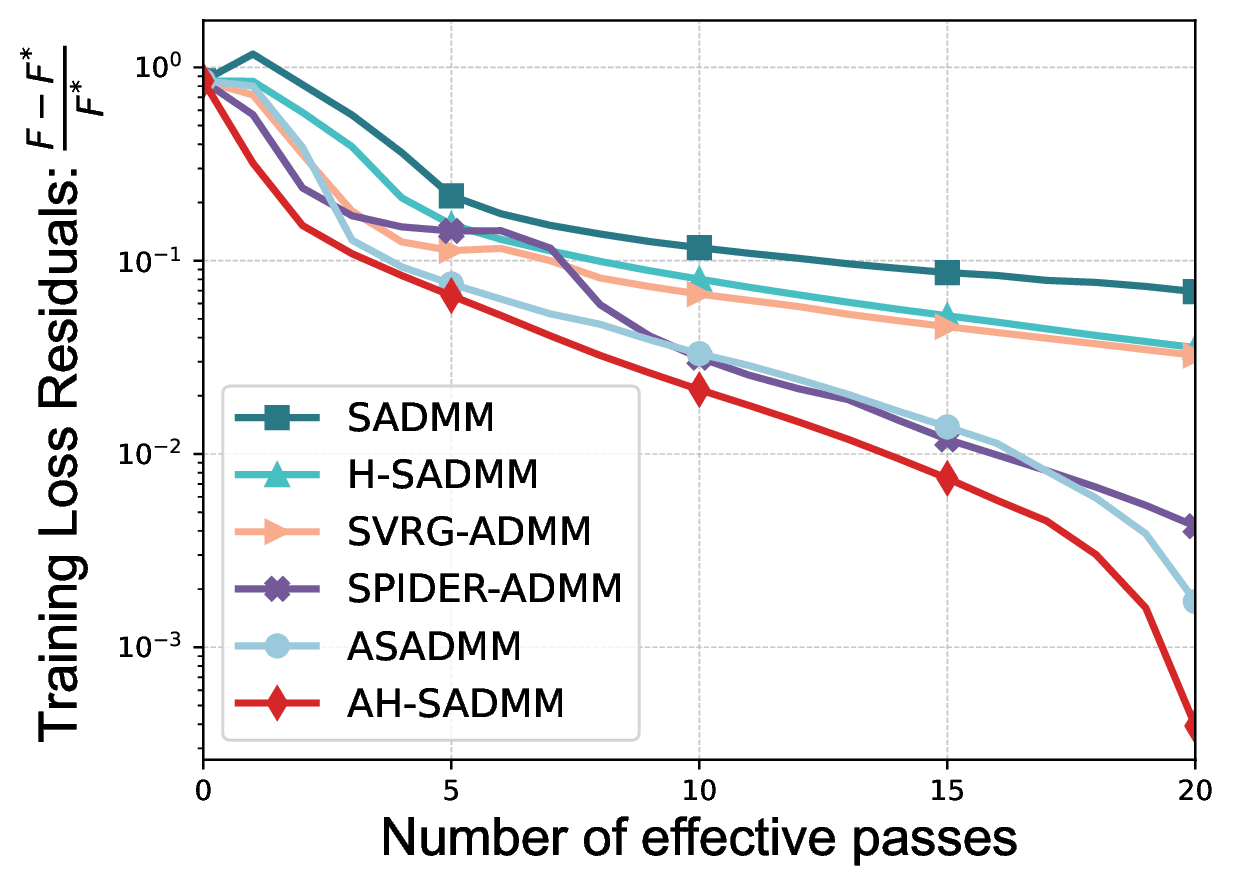}} 
	\subfigure[w8a]{\includegraphics[width=0.232\textwidth]{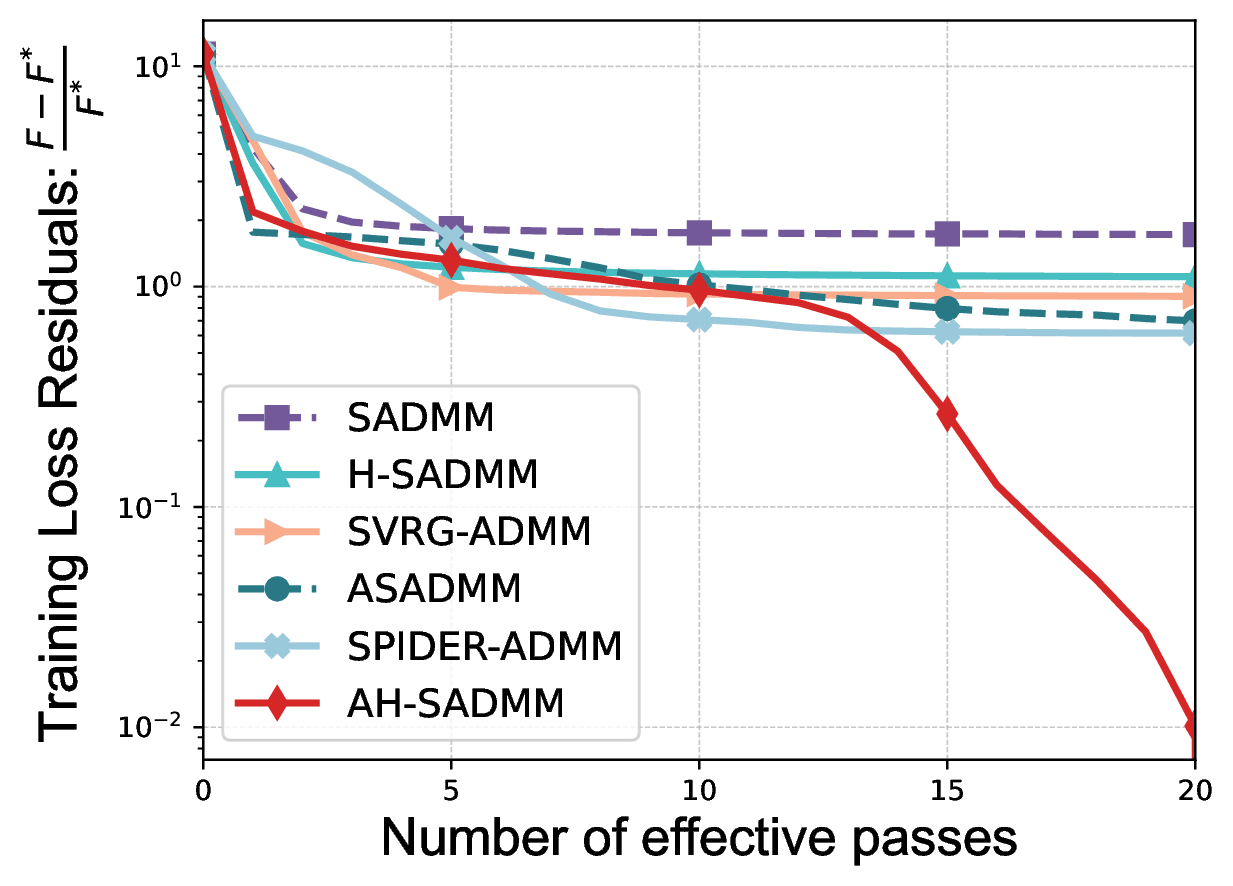}}
	\subfigure[covtype.binary]{\includegraphics[width=0.232\textwidth]{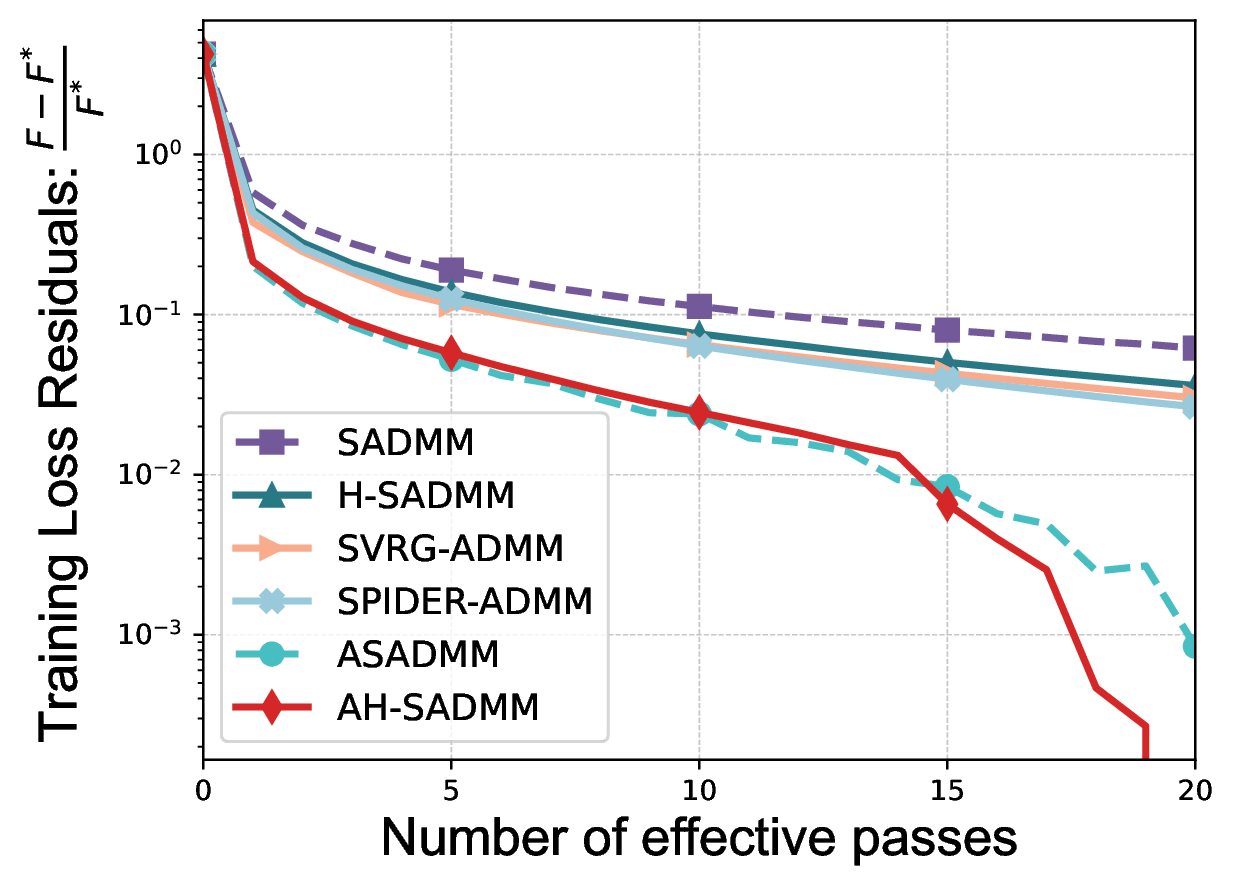}}
	\subfigure[a9a]{\includegraphics[width=0.232\textwidth]{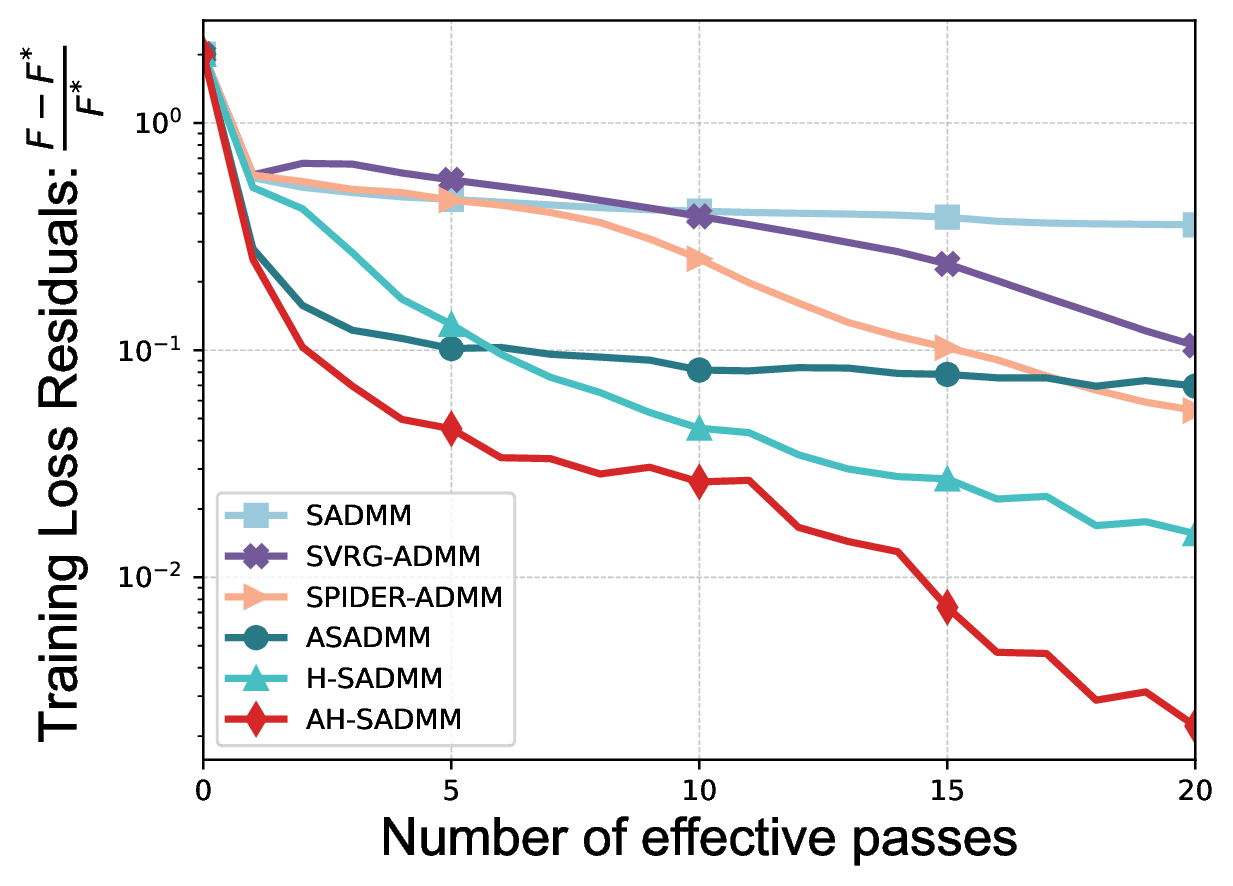}}
	\caption{The training loss of \eqref{equ fused Lasso} on some real datasets.}
	\label{fig_l1:ijcn1}
	\vskip -0.1in
\end{figure}

We continue to validate the effectiveness of our algorithm by addressing the graph-guided fused Lasso problem on several publicly available datasets from LIBSVM. \cref{fig_l1:ijcn1} illustrates that variance reduction algorithms still outperform SADMM without variance reduction techniques. Additionally, algorithms utilizing acceleration techniques, such as ASADMM and AH-SADMM, exhibit improved convergence. Moreover, our algorithm combines the advantages of both variance reduction and acceleration algorithms, achieving faster and better convergence.



\end{document}


%
%
%



\setcounter{equation}{0}
\renewcommand{\theequation}{S.\arabic{equation}}

\appendices
\section{ Proof of the  Lemma  \ref{lemma of lambda}}\label{app:lemma of h}



\emph{Proof:} \quad	For notational simplicity, we denote
the stochastic gradient $\Delta^{s+1}_t:=\hat{\nabla}f(\vx^{s+1}_t)-\nabla f(\vx^{s+1}_t)$,
where   $\hat{\nabla}f(\vx^{s+1}_t)=\nabla f_{i_t}(\vx^{s+1}_t)-\nabla f_{i_t}(\widetilde{\vx}^{s})+\nabla f(\widetilde{\vx}^{s})$. We also  denote the variance of stochastic gradient $\hat{\nabla}f(\vx^{s+1}_t)$ as $\mathbb{E}_{t}\big[\|\Delta^{s+1}_t\|^2\big]$, 	
and	omit the label $s$ as
$\|\Delta^{s+1}_t\|^2 := \|\Delta_t\|^2$ , $\vx^{s+1}_t := \vx_t$, $\vy^{s+1}_t := \vy_t$, $\vlambda^{s+1}_t := \vlambda_t$, $\widetilde{\vx}^s:=\widetilde{\vx}$, and the conditional expectation operator $\mathbb{E}^s_t:=\mathbb{E}_t$.
By the optimal condition of $\vz$-update (\ref{z-update}) in Algorithm \ref{alg1}, we have
{	
\begin{align}
0  =& \hat{\nabla} f\left(\vx_{t}\right)+\rho A^{\top}\left(A \vz_{t+1}+B \vy_{t+1}-\vc\right)  -A^{\top} \vlambda_{t} + \frac{\theta}{\eta} Q\left(\vz_{t+1}-{ \vz_t}\right) \nonumber \\
= & \hat{\nabla}f(\vx_t)- A^T\vlambda_{t+1} + \frac{\theta}{\eta} Q\left(\vz_{t+1}-{ \vz_t}\right),  \nonumber
\end{align}}
where the second equality follows from the update of $\vlambda$ (\ref{dual-update}) in Algorithm \ref{alg1}. Thus, we have
$A^T\vlambda_{t+1} = \hat{\nabla}f(\vx_t) + \frac{\theta}{\eta} Q\left(\vz_{t+1}-{ \vz_t}\right).$
Recall that
$\vx_{t+1}= \theta \vz_{t+1} + (1-\theta)\widetilde{\vx}$, then it yields $\vx_{t+1}-\vx_t=\theta (\vz_{t+1}-\vz_t)$.
{	
\begin{align} \label{equ 23}
A^T\vlambda_{t+1} = \hat{\nabla}f(\vx_t)- \frac{1}{\eta} Q (\vx_t - \vx_{t+1}).
\end{align}}
Combining (\ref{equ 23}) and the properties of $\|\cdot\|$ 
\begin{align} \label{lambda 24}
&\|\vlambda_{t+1}-\vlambda_{t}\|^2\nonumber\\
\leq & (\sigma^{A}_{min})^{-1} \|A^T\vlambda_{t+1}-A^T\vlambda_t\|^2  \nonumber \\
=& (\sigma^{A}_{min})^{-1}  \|\hat{\nabla}f(\vx_{t})-\hat{\nabla}f(\vx_{t-1}) -\frac{1}{\eta} Q (\vx_t - \vx_{t+1}) -\frac{1}{\eta} Q (\vx_t - \vx_{t-1}) \|^2 \nonumber \\
=& (\sigma^{A}_{min})^{-1} \|\hat{\nabla}f(\vx_{t})-\nabla f(\vx_{t}) + \nabla f(\vx_{t}) - \nabla f(\vx_{t-1}) + \nabla f(\vx_{t-1}) - \hat{\nabla}f(\vx_{t-1})
+\frac{1}{\eta} Q (\vx_{t+1} - \vx_{t}) \nonumber\\
&-\frac{1}{\eta} Q (\vx_t - \vx_{t-1}) \|^2 \nonumber \\
\mathop{\leq}^{(i)}& \frac{5}{\sigma^{A}_{min}} \|\Delta_t\|^2 +\frac{5 }{\sigma^{A}_{min}} \|\Delta_{t-1}\|^2 +\frac{5 \phi_{\max }^{2}}{\sigma^{A}_{min} \eta^{2}} \left\|\vx_{t}-\vx_{t-1}\right\|^{2}+ \frac{5 \phi_{\max }^{2}}{\sigma^{A}_{min} \eta^{2}} \left\|\vx_{t+1}-\vx_{t}\right\|^{2} \nonumber\\
&  +\frac{5 }{\sigma^{A}_{min}} \left\| \nabla f(\vx_{t}) - \nabla f(\vx_{t-1})\right\|^{2},
\end{align}
and applying the conditional expectation operator $\mathbb{E}_t$, we obtain
\begin{align}
&\mathbb{E}_{t} \|\vlambda_{t+1}-\vlambda_{t}\|^2\nonumber\\
\mathop{\leq}& \frac{5}{\sigma^{A}_{min}} \|\Delta_t\|^2 +\frac{5 }{\sigma^{A}_{min}} \|\Delta_{t-1}\|^2 +\frac{5 \phi_{\max }^{2}}{\sigma^{A}_{min} \eta^{2}} \left\|\vx_{t}-\vx_{t-1}\right\|^{2}+ \frac{5 \phi_{\max }^{2}}{\sigma^{A}_{min} \eta^{2}}\mathbb{E}_{t} \left\|\vx_{t+1}-\vx_{t}\right\|^{2} \nonumber\\
&  +\frac{5 }{\sigma^{A}_{min}} \left\| \nabla f(\vx_{t}) - \nabla f(\vx_{t-1})\right\|^{2} ,\nonumber
\end{align}
where the first inequality is based on  Assumption \ref{assum4} and the update rule of dual variable given by $\vlambda_{t}^{s+1} - \vlambda_{t+1}^{s+1} = \rho(A \vz_{t+1}^{s+1}+B\vy_{t+1}^{s+1}-c)\in \operatorname{Im}(A)$,
the inequality (i) holds by the Cauchy-Schwartz inequality,  $\phi_{\max}$ denotes the largest eigenvalue of positive matrix $Q$, and $\|Q(x-\vy)\|^2 \leq \phi^2_{\max}\|x-\vy\|^2$.

We  recall the  \cite[Lemma 1]{huang2016stochastic} as follows:
{	\setlength{\abovedisplayskip}{6pt}
\setlength{\belowdisplayskip}{6pt}
\begin{equation}\label{bjc deta}
\mathbb{E}_t \big[ \|\Delta^{s+1}_t\|^2 \big]\leq L^2\|\vx^{s+1}_t-\widetilde{\vx}^{s}\|^2.
\end{equation}}
By inserting (\ref{bjc deta}) and  Assumption \ref{assum grad f} to the above inequality (\ref{lambda 24}), we can estimate the upper bound of $\mathbb{E}_t\big[ \|\vlambda_{t+1}-\vlambda_{t}\|^2\big]$ in \eqref{upp1}.
This completes the proof of Lemma \ref{lemma of lambda}.
\hfill $\blacksquare$

\begin{remark}		
Huang and Chen \cite{huang2018mini} employed a general unbiased stochastic gradient estimator with bounded variance: $$\mathbb{E}\left[\left\|G\left(\vx, \xi_{\mathcal{I}}\right)-\nabla f(\vx)\right\|^2\right] \leq \sigma^2 / M,$$ where $G(\vx, \xi_{\mathcal{I}})=\frac{1}{M} \sum_{i \in \mathcal{I}} G(\vx, \xi_i)$ represents the stochastic gradient estimator, $M$ denotes the mini-batch size, and $\xi_{\mathcal{I}}=\left\{\xi_1, \xi_2, \cdots, \xi_M\right\}$ represents a set of i.i.d. random variables.

Compared to the stochastic gradient estimator with bounded variance in \cite{huang2018mini}, the variance-reduced stochastic gradient defined in Equation (\ref{f-grad-update}) exhibits a decrease in variance with an increasing iteration number, as shown in Equation (\ref{bjc deta}). In contrast, the variance of the general stochastic gradient does not decrease. Our paper leverages the property of variance reduction in the gradient to achieve a linear convergence rate superior to sublinear using the KL property in subsequent analysis.


\end{remark}

\vspace{-0.2cm}
\section{Proof of Lemma \ref{detail lemma of h}} \label{app:lemma 2}
We first define the positive sequences  $\{(h_t^s)_{t=1}^m\}_{s=1}^S$, $\{(\Gamma^{s}_t)_{t=1}^m\}_{s=1}^S$ and the tuple $(\beta_1,\beta_2,\beta_3,\beta_4,\beta_5,\beta_6)$ to be used in constructing the potential energy function and proving the sufficient descent inequality of the potential energy function:
{	\setlength{\abovedisplayskip}{10pt}
\setlength{\belowdisplayskip}{10pt}
\begin{equation}\label{def of h}
h^{s}_t= \left\{
\begin{aligned}
& (2+\alpha_1)h^{s}_{t+1} + \left[ \beta_3+(1+\alpha_1) \beta_1 \right] ,  1 \leq t \leq m-1, \\
& \frac{5L^2}{\sigma^{A}_{min}}\big(\frac{2}{\rho} + \frac{1}{2l_{1}}\big), \quad t=m,
\end{aligned}
\right.\end{equation}}	
{	\setlength{\abovedisplayskip}{1pt}
\setlength{\belowdisplayskip}{1pt}
\begin{equation} \label{def of gamma}
\Gamma^{s}_t =\left\{
\begin{aligned}
&\beta_2-\beta_5-(h_{t+1}^{s}+\beta_1)\Big(1+\frac{1}{\alpha_1}\Big), \ 1 \leq t \leq m-1,  \\
&\beta_6 - \beta_5 - h_{1}^{s}, \quad t=m.
\end{aligned}
\right.\end{equation}}

{	\setlength{\abovedisplayskip}{1pt}
\setlength{\belowdisplayskip}{1pt}
\begin{align} \label{equ defg}
\nonumber \beta_1 &= \left(\frac{\rho+l_{1}}{2}\right) \left(\frac{1-\theta}{\theta}\right)^{2}\sigma^{A}_{max},\\
\nonumber\beta_2 &= \frac{\phi_{\min}}{\eta} -\frac{L}{2} - \frac{5\phi^2_{\max}  }{\sigma^{A}_{min}\eta^2}(\frac{1}{\rho} + \frac{1}{2l_{1}})  + \frac{\rho\sigma^{A}_{min}(l_{2}-(1-\theta))}{2\theta^2 l_{2}},\nonumber\\
\nonumber\beta_3 &= -\frac{\rho\sigma^{A}_{min} }{2}\left[\left(\frac{1-\theta}{\theta}\right)^{2}-\frac{(1-\theta) l_{2}}{\theta^{2}}\right] +\left(\frac{1}{\rho}+\frac{1}{2 l_{1}}\right) \frac{5 L^{2}}{\sigma^{A}_{min} },\nonumber\\
\beta_4 &=  \left(\frac{1}{\rho}+\frac{1}{2 l_{1}}\right) \frac{5 L^{2}}{\sigma^{A}_{min} },\\
\nonumber \beta_{5}&=\left(\frac{1}{\rho}+\frac{1}{2 l_{1}}\right) \frac{5 L^{2} \eta^{2}+5 \phi_{\max }^2}{\sigma^{A}_{min} \eta^{2}},\\
\nonumber\beta_6 &= \frac{\phi_{\min }}{\eta }-\frac{L}{2}-\left(\frac{1}{\rho}+\frac{1}{2 l_{1}}\right) \frac{5 \phi_{\max }^{2}}{\sigma^{A}_{min} \eta^{2}}  +\frac{\sigma^{A}_{min}\rho}{2 \theta^{2}} - \sigma^{A}_{max}\left(\frac{\rho +l_{1}}{2}\right)\left(\frac{1-\theta}{\theta}\right)^{2},
\end{align}}

\emph{Proof:}
This proof is structured by two main parts which contain:
\begin{itemize}
\item
First, we will prove that   $\{(\Psi^{s}_{t})_{t=1}^m\}_{s=1}^S$
is sufficiently and  monotonically {decreasing} over $t\in \{1,2,\cdots,m\}$ in each iteration $s\in \{1,2,\cdots,S\}$.
\item  Second, we will prove that $\Psi^{s}_{m} \geq \mathbb{E}_{0}\Psi^{s+1}_{1}$ for any $s\in \{1,2,\cdots,S\}$.
\end{itemize}

For notational simplicity, we   omit the { superscript} $s$  in the first part, i.e.,
let
$
\vx^{s+1}_t = \vx_t, ~\vy^{s+1}_t = \vy_t, ~
\vlambda^{s+1}_t = \vlambda_t, ~
\widetilde{\vx}^{s}=\widetilde{\vx}.
$
By the  $\vy$-update (\ref{y-update}) in Algorithm \ref{alg1}, we have
{	
\begin{align}\label{equ 26}
\mathcal {L}_\rho (\vx_t, \vy_{t+1},\vlambda_t) \leq \mathcal {L}_\rho (\vx_t, \vy_{t},\vlambda_t).
\end{align}}
The optimal condition of $\vz$-update (\ref{z-update}) in Algorithm \ref{alg1} implies
{	
\begin{align}\label{equ 2222}
0 =&({ \vx_t-\vx_{t+1}})^T\big[\hat{\nabla}f(\vx_t)-A^T\vlambda_t +\rho A^T(A \vz_{t+1}+B\vy_{t+1}-\vc) + \frac{\theta}{\eta} Q\left(\vz_{t+1}-\vz_{t}\right)\big] \nonumber \\
=& (\vx_t-\vx_{t+1})^T\big[\hat{\nabla}f(\vx_t) - \nabla f(\vx_t) + \nabla f(\vx_t) -A^T\vlambda_{t}  \big]  +\rho A^T(A \vz_{t+1}+B\vy_{t+1}-\vc)\nonumber\\ &-(\vx_t-\vx_{t+1})^T{ \frac1\eta} Q(\vx_t-\vx_{t+1})\nonumber \\
\mathop{\leq}& f(\vx_t) - f(\vx_{t+1}) + (\vx_t -\vx_{t+1})^T(\hat{\nabla}f(\vx_t) - \nabla f(\vx_t))  + \frac{L}{2} \|\vx_{t+1}-\vx_t\|^2 - \vlambda_t^T(A \vx_t-A\vx_{t+1}) \nonumber \\
& - \frac{1}{\eta}\|\vx_{t+1} -\vx_t\|^2_Q  + \rho (A\vx_t -A\vx_{t+1})^T(A\vz_{t+1}+B\vy_{t+1}-\vc), \nonumber \\
\end{align}}
where the inequality holds by the Assumption \ref{assum grad f}.  Inserting the equality
$(a-b)^T(c-d) = \frac{1}{2}(\|a-d\|^2-\|b-d\|^2 +\|b-c\|^2 -\|c-a\|^2)$ on the term $ \rho (A\vx_t -A\vx_{t+1})^T(A\vz_{t+1}+B\vy_{t+1}-c)$ into the above inequality (\ref{equ 2222}), we have the following trivial inequality
{	
\begin{align}\label{equ 2333}
0   \mathop{\leq}& f(\vx_t) - f(\vx_{t+1}) + (\vx_t -\vx_{t+1})^T(\hat{\nabla}f(\vx_t) - \nabla f(\vx_t))  +\frac{L}{2} \|\vx_{t+1}-\vx_t\|^2  - \frac{1}{\eta}\|\vx_{t+1}-\vx_t\|^2_Q \nonumber \\
&  - \vlambda_t^T(A \vx_t+B\vy_{t+1}-\vc) + \vlambda_t^T(A\vx_{t+1}+B\vy_{t+1}-\vc)\nonumber\\
&+ \frac{\rho}{2} \Big( \left\|A \vx_{t}+B \vy_{t+1}-\vc\right\|^{2} +\left\|A \vz_{t+1}-A \vx_{t+1}\right\|^{2} -\left\|A \vx_{t+1}+B \vy_{t+1}-\vc\right\|^{2} -\left\|A \vz_{t+1}-A \vx_{t}\right\|^{2} \Big) \nonumber \\
= &  \mathcal {L}_\rho (\vx_t, \vy_{t+1},\vlambda_t)- \mathcal {L}_\rho (\vx_{t+1}, \vy_{t+1},\vlambda_t) +(\vx_t -\vx_{t+1})^T(\hat{\nabla}f(\vx_t) - \nabla f(\vx_t)) +\frac{L}{2} \|\vx_{t+1}-\vx_t\|^2 \nonumber \\
&   + \frac{\rho}{2}\left(\left\|A \vz_{t+1}-A \vx_{t+1}\right\|^{2}-\left\|A \vz_{t+1}-A \vx_{t}\right\|^{2}\right) - \frac{1}{\eta}\|\vx_{t+1}-\vx_t\|^2_Q.
\end{align}}

By inserting $-\phi_{\min}\|\vx_{t+1}-\vx_t\|^2 \geq -\|\vx_{t+1}-\vx_t\|^2_Q$ into the above inequality (\ref{equ 2333}), we have
{	
\begin{align}
0 &\mathop{\leq} \mathcal {L}_\rho (\vx_t, \vy_{t+1},\vlambda_t)- \mathcal {L}_\rho (\vx_{t+1}, \vy_{t+1},\vlambda_t) + (\vx_t -\vx_{t+1})^T(\hat{\nabla}f(\vx_t) - \nabla f(\vx_t)) \nonumber \\
&\quad+ \frac{\rho}{2}\left(\left\|A \vz_{t+1}-A \vx_{t+1}\right\|^{2}-\left\|A \vz_{t+1}-A \vx_{t}\right\|^{2}\right)+\frac{L}{2}\left\|\vx_{t+1}-\vx_{t}\right\|^{2} - { \frac{\phi_{\min}}{\eta}\|\vx_{t+1}-\vx_t\|^2}.\label{equ 302}
\end{align}}

Then, applying  conditioned expectation $\mathbb{E}_t$ on information $i_t$ to (\ref{equ 302}), and using $\mathbb{E}_t[\hat{\nabla} f(\vx_{t})]=\nabla f(\vx_{t})$, we have
{	
\begin{align}\label{equ 28}
\mathbb{E}_t &[\mathcal {L}_\rho (\vx_{t+1}, \vy_{t+1},\vlambda_t)] \leq \mathcal {L}_\rho (\vx_t, \vy_{t+1},\vlambda_t) +{ \Big(\frac{L}{2}- \frac{\phi_{\min}}{\eta}\Big)}
\mathbb{E}_t\left\|\vx_{t+1}-\vx_{t}\right\|^2 \nonumber\\
&+\frac{\rho}{2}\left(\mathbb{E}_t\left\|A \vz_{t+1}-A \vx_{t+1}\right\|^{2}-\mathbb{E}_t\left\|A \vz_{t+1}-A \vx_{t}\right\|^{2} \right).
\end{align}}
By the $\vlambda$-update (\ref{dual-update}) in Algorithm \ref{alg1}, and applying  conditioned expectation $\mathbb{E}_t$ on information $i_t$ again, we have
{	
\begin{align}
&\mathbb{E}_t [\mathcal {L}_\rho (\vx_{t+1}, \vy_{t+1},\vlambda_{t+1})-\mathcal {L}_\rho (\vx_{t+1}, \vy_{t+1},\vlambda_t)] \nonumber\\
=&\frac{1}{\rho}\mathbb{E}_t \|\vlambda_{t+1}-\vlambda_t\|^2 + \mathbb{E}_t\left\langle\vlambda_{t}-\vlambda_{t+1}, A \vx_{t+1}-A \vz_{t+1}\right\rangle. \label{equ 292}
\end{align}}

Combine (\ref{equ 26}), (\ref{equ 28}) and   (\ref{equ 292}) to get
{	
\begin{align}\label{equ 30}
\mathbb{E}_t [\mathcal {L}_\rho (\vx_{t+1}, \vy_{t+1},\vlambda_{t+1})] \leq & \mathcal {L}_\rho (\vx_t, \vy_{t},\vlambda_t) +{ \Big(\frac{L}{2}- \frac{\phi_{\min}}{\eta}\Big)}
\mathbb{E}_t\left\|\vx_{t+1}-\vx_{t}\right\|^2\nonumber\\
&+\frac{\rho}{2}\left(\mathbb{E}_t \left\|A \vz_{t+1}-A \vx_{t+1}\right\|^{2}-\mathbb{E}_t\left\|A \vz_{t+1}-A \vx_{t}\right\|^{2}\right) \nonumber\\
&+\frac{1}{\rho}\mathbb{E}_t\left\|\vlambda_{t}-\vlambda_{t+1}\right\| ^{2}+\mathbb{E}_t\left\langle\vlambda_{t}-\vlambda_{t+1}, A \vx_{t+1}-A \vz_{t+1}\right\rangle.
\end{align}}

Apply the Cauchy-Schwartz inequality  to the term
$
\left\langle\vlambda_{t}-\vlambda_{t+1}, A \vx_{t+1}-A z_{t+1}\right\rangle$ to drive
{	\setlength{\abovedisplayskip}{5pt}
\setlength{\belowdisplayskip}{5pt}
\begin{align*}
\left\langle\vlambda_{t}-\vlambda_{t+1}, A \vx_{t+1}-A z_{t+1}\right\rangle 
\leq &\frac{1}{2 l_{1}}\|\vlambda_{ t}-\vlambda_{t+1}\|^{2}
+\frac{l_{1}}{2}\left\|A \vx_{t+1}-A z_{t+1}\right \|^{2}, \forall l_{1} > 0.
\end{align*}}

Combining the above inequality with (\ref{equ 30}), we have
{
\begin{align}\label{equ 31}
&\mathbb{E}_t [\mathcal {L}_\rho (\vx_{t+1}, \vy_{t+1},\vlambda_{t+1})]\nonumber\\
\leq & \mathcal {L}_\rho (\vx_t, \vy_{t},\vlambda_t) +{ \Big(\frac{L}{2}- \frac{\phi_{\min}}{\eta}\Big)}
\mathbb{E}_t\left\|\vx_{t+1}-\vx_{t}\right\|^2  +\frac{\rho}{2} \mathbb{E}_t\left(\left\|A \vz_{t+1}-A \vx_{t+1}\right\|^{2}-\left\|A \vz_{t+1}-A \vx_{t}\right\|^{2}\right) \nonumber\\
&  +\left(\frac{1}{\rho}+\frac{1}{2 l_{1}}\right)\mathbb{E}_t \left\|\vlambda_{t}-\vlambda_{t+1}\right\|^{2}+\frac{l_{1}}{2}\mathbb{E}_t\left\| A \vz_{t+1}-A \vx_{t+1} \right\|^{2} \nonumber\\
\mathop{\leq}^{(i)}  &\mathcal {L}_\rho (\vx_t, \vy_{t},\vlambda_t) +\frac{\rho+l_{1}}{2}\mathbb{E}_t\left\|A \vz_{t+1}-A \vx_{t+1}\right\|^{2}-\left[\frac{\phi_{\min }}{\eta}-\frac{L}{2}-\Big(\frac{1}{\rho}+\frac{1}{2 l_{1}}\Big) \frac{5 \phi_{\max }^{2}}{\sigma^{A}_{min} \eta^{2}}\right] \mathbb{E}_t\left\|\vx_{t+1}-\vx_{t}\right\|^{2} \nonumber\\
&   -\frac{\rho}{2}\mathbb{E}_t\left\|A \vz_{t+1}-A \vx_{t}\right\|^{2}
+
\Big(\frac{1}{\rho}+\frac{1}{2 l_{1}}\Big) \big(\frac{5 L^{2}}{\sigma^{A}_{min}}\left\|\vx_{t}-\widetilde{\vx}\right\|^{2}+\frac{5 L^{2}}{\sigma^{A}_{min}}\left\|\vx_{t-1}-\widetilde{\vx}^{2}\right\|^{2}\nonumber\\
&+ \frac{5 L^{2} \eta^{2} +5 \phi_{\max }^{2} }{\sigma^{A}_{min} \eta^{2}}\left\|\vx_{t}-\vx_{t-1}\right\|^{2}
\big),
\end{align}}
where the inequality $(i)$ holds by Lemma \ref{lemma of lambda}.

Notice by the update of $\vx_{t+1}$ in Algorithm \ref{alg1} that $\vz_{t+1}-\vx_{t}=\frac1\theta(\vx_{t+1}-\vx_{t}) +\frac{1-\theta}{\theta}(\widetilde{\vx}-\vx_{t})$ and $\vz_{t+1}-\vx_{t+1}= \frac{1-\theta}{\theta}(\vx_{t+1}-\widetilde{\vx})$.
{Substitute  these two equations into
$\left\|A \vz_{t+1}-A \vx_{t+1}\right\|^{2}$ and $\left\|A \vz_{t+1}-A \vx_{t}\right\|^{2}$ to have
\begin{align}
&\frac{\rho+l_{1}}{2}\left\|A \vz_{t+1}-A \vx_{t+1}\right\|^{2}-\frac{\rho}{2}\left\|A \vz_{t+1}-A \vx_{t}\right\|^{2} \nonumber\\
\leq& \frac{\rho+l_{1}}{2}  \sigma^{A}_{max} \left\|\vz_{t+1}-\vx_{t+1}\right\|^{2}-\frac{\rho}{2} \sigma^{A}_{min}\left\|\vz_{t+1}-\vx_{t}\right\|^{2} \nonumber\\
=&-\frac{\rho \sigma_{\min }^{A}}{2}\left[\frac{1}{\theta^{2}}\left\|\vx_{t+1}-\vx_{t}\right\|^{2}+\left(\frac{1-\theta}{\theta}\right)^{2}\left\|\vx_{t}-\widetilde{\vx}\right\|^{2}\right] - \frac{\rho \sigma^{A}_{min} }{\theta}  \left(1-\frac{1}{\theta}\right) <\vx_{t+1}-\vx_{t}, \vx_{t}-\widetilde{\vx}> \nonumber \nonumber\\
&+ \frac{\rho+l_{1}}{2} \sigma_{\max }^{A}\left(\frac{1-\theta}{\theta}\right)^{2}\left\|\vx_{t+1}-\widetilde{\vx}\right\|^{2}, \nonumber\\
\end{align} }	
then we can drive
\begin{align} \label{euq 32}
&\frac{\rho+l_{1}}{2}\left\|A \vz_{t+1}-A \vx_{t+1}\right\|^{2}-\frac{\rho}{2}\left\|A \vz_{t+1}-A \vx_{t}\right\|^{2} \nonumber\\
\mathop{\leq}^{(i)}& (\frac{\rho+l_{1}}{2}) \left(\frac{1 -\theta}{\theta}\right)^{2} \sigma^{A}_{max}\left\|\vx_{t+1}-\widetilde{\vx}\right\|^{2} -  \frac{\rho \sigma^{A}_{min}}{2} \left[\frac{1}{\theta^{2}}\left\|\vx_{t+1}-\vx_{t}\right\|^{2}+\left(\frac{1-\theta}{\theta}\right)^{2}\left\|\vx_{t}-\widetilde{\vx}\right\|^{2} \right]\nonumber\\
&+\frac{\rho \sigma^{A}_{min}}{2}\frac{(1-\theta)}{\theta^{2} l_{2}}\left\|\vx_{t+1}-\vx_{t}\right\|^{2}+\frac{\rho  \sigma^{A}_{min}}{2} \frac{(1-\theta) l_{2}}{\theta^{2}}\left\|\vx_{t}-\widetilde{\vx}\right\|^{2},
\end{align}
where the first equality holds by the equality $\vx_{t+1}= \theta \vz_{t+1} + (1-\theta)\widetilde{\vx}$ and the inequality (i) holds by using the   Cauchy-Schwartz on the term $<\vx_{t+1}-\vx_{t}, \vx_{t}-\widetilde{\vx}>$.

Thus, by the inequalities (\ref{equ 30}), (\ref{equ 31}) and (\ref{euq 32}), we have
{	
\begin{align}
&\mathbb{E}_t [\mathcal {L}_\rho (\vx_{t+1}, \vy_{t+1},\vlambda_{t+1})] \nonumber\\
\leq  &\mathcal {L}_\rho (\vx_{t}, \vy_{t},\vlambda_{t})
- \big(\frac{\phi_{\min}}{\eta} -\frac{L}{2} - \frac{5\phi^2_{\max}  }{\sigma^{A}_{min}\eta^2}(\frac{1}{\rho} + \frac{1}{2l_{1}})  + \frac{\rho\sigma^{A}_{min}(l_{2}-(1-\theta))}{2\theta^2 l_{2}} \big) \mathbb{E}_t\left\|\vx_{t+1}-\vx_{t}\right\|^{2} \nonumber\\
&+\Bigg[\left(\frac{1}{\rho}+\frac{1}{2 l_{1}}\right) \frac{5 L^{2}}{\sigma^{A}_{min} }- \frac{\rho\sigma^{A}_{min}}{2}\big(\left(\frac{1-\theta}{\theta}\right)^{2} -\frac{(1-\theta) l_{2}}{\theta^{2}}\big) \Bigg] \left\|\vx_{t}-\widetilde{\vx}\right\|^{2} \nonumber\\
&+ \left(\frac{1}{\rho}+\frac{1}{2 l_{1}}\right) \frac{5 L^{2}}{\sigma^{A}_{min} } \left\|\vx_{t-1}-\widetilde{\vx}\right\|^{2} +\left(\frac{1}{\rho}+\frac{1}{2 l_{1}}\right) \frac{5 L^{2} \eta^{2}+5 \phi_{\max }^2}{\sigma^{A}_{min} \eta^{2}}\left\|\vx_{t}-\vx_{t-1}\right\|^{2}\nonumber\\
&+\left(\frac{\rho+l_{1}}{2}\right) \left(\frac{1-\theta}{\theta}\right)^{2}\sigma^{A}_{max}\mathbb{E}_t\left\|\vx_{t+1}-\widetilde{\vx}\right\|^{2}. \label{equ 382}
\end{align}}
The definitions of $\beta_1, \beta_2, \beta_3, \beta_4,$ \text{and}  $\beta_5$ given in  Lemma \ref{detail lemma of h},  (\ref{equ 382}) can be rewritten as
{	
\begin{align}
&\mathbb{E}_t [\mathcal {L}_\rho (\vx_{t+1}, \vy_{t+1},\vlambda_{t+1})] \nonumber\\
\leq &\mathcal {L}_\rho (\vx_{t}, \vy_{t},\vlambda_{t})  - \beta_2 \mathbb{E}_t\left\|\vx_{t+1}-\vx_{t}\right\|^{2}+\beta_3\left\|\vx_{t}-\widetilde{\vx}\right\|^{2}
+\beta_4 \left\|\vx_{t-1}-\widetilde{\vx}\right\|^{2} \nonumber\\
&+ \beta_5\left\|\vx_{t}-\vx_{t-1}\right\|^{2}+\beta_1\mathbb{E}_t\left\|\vx_{t+1}-\widetilde{\vx}\right\|^{2}.\nonumber
\end{align}}

For notational simplicity, we let $\mathcal {L}_\rho(t) := \mathcal {L}_\rho (\vx_{t}, \vy_{t},\vlambda_{t})$, $\mathcal {L}_\rho(t+1) := \mathbb{E}_t\mathcal {L}_\rho (\vx_{t+1}, \vy_{t+1},\vlambda_{t+1})$ then we have
{	
\begin{align}
&\mathcal{L}_{\rho}(t+1)+\beta_{5}\mathbb{E}_t\left\|\vx_{t+1}-\vx_{t}\right\|^{2}+h\left(\mathbb{E}_t\left\|\vx_{t+1}-\widetilde{\vx}\right\|^{2}+\left\|\vx_{t}-\widetilde{\vx}\right\|^{2}\right) \nonumber\\
\leq& \mathcal {L}_\rho (t)-(\beta_2-\beta_5)\mathbb{E}_t\left\|\vx_{t+1}-\vx_{t}\right\|^{2}+(h+\beta_1)\mathbb{E}_t\left\|\vx_{t+1}-\widetilde{\vx}\right\|^{2} +(h+\beta_3) \left\|\vx_{t}-\widetilde{\vx}^{2}\right\|^{2}\nonumber\\
&+ \beta_4\left\|\vx_{t-1}-\widetilde{\vx}^{2}\right\|^{2}+\beta_5\left\|\vx_{t}-\vx_{t-1}\right\|^{2}.  \label{equ 399}
\end{align}}
By the Cauchy-Schwartz inequality again, for any $\alpha_1 > 0$ we have
{	
\begin{align}
(h+\beta_1)\left\|\vx_{t+1}-\widetilde{\vx}\right\|^{2} 
\leq &(h+\beta_1 )\left\{\Big(1+\frac{1}{\alpha_1}\Big)\left\|\vx_{t+1}-\vx_{t}\right\|^{2}+\big(1+\alpha_1 \big)\left\|\vx_{t}-\widetilde{\vx}\right\|^{2}\right\}. \nonumber
\end{align}}

Plugging it into (\ref{equ 399}) is to derive
{	
\begin{align}\label{equ 366}
&
\mathcal {L}_\rho (t+1)+\beta_5 \mathbb{E}_t\left\|\vx_{t+1}-\vx_{t}\right\|^{2}+ h\big( \mathbb{E}_t\left\|\vx_{t+1}-\widetilde{\vx}\right\|^{2}+\left\| \vx_{t}-\widetilde{\vx} \right\|^{2} \big) \nonumber\\
\leq & \mathcal {L}_\rho (t) + \beta_5\left\|\vx_{t}-\vx_{t-1}\right\|^{2} - \big[\beta_2-\beta_5-(h+\beta_1) \big(1+\frac{1}{\alpha_1}\big)\big]\mathbb{E}_{t}\left\|\vx_{t+1}-\vx_{t}\right\|^{2} \nonumber\\
&-\left[h+\beta_3-\beta_4+\left(1+\alpha_1\right)(h+\beta_1)\right]\left\|\vx_{t-1}-\widetilde{\vx}\right\|^{2}\nonumber\\
&+ \left[h+ \beta_3+\left(1+\alpha_1 \right)(h+\beta_1)\right]\Big(\left\|\vx_{t}-\widetilde{\vx}\right\|^{2}+\left\|\vx_{t-1}-\widetilde{\vx}\right\|^{2}\Big) .
\end{align}}

Using the definition of  $\{(\Psi_t^s)_{t=1}^m\}_{s=1}^S$, $h = h^s_{t+1}$ and inserting both (\ref{def of h}) and (\ref{def of gamma}) into (\ref{equ 366}), we have
{	
\begin{align}\label{equ 35}
\mathbb{E}_{t}\Psi^{s+1}_{t+1} \leq \Psi^{s+1}_{t} - \big[&h^{s+1}_{t+1} +\beta_3-\beta_4 +\left(1+\alpha_1\right) (h^{s+1}_{t+1}+\beta_1)\big] \|\vx^{s+1}_{t-1}-\widetilde{\vx}^{s}\|^2  -\Gamma^{s+1}_t \mathbb{E}_{t}\|\vx^{s+1}_{t+1}-\vx^{s+1}_t\|^2,
\end{align}}
for any $s\in \{0,1,\cdots,S-1\}$.
Since $\Gamma^s_t >0, \ \forall t\in \{1,2,\cdots, m\}$, we have proved the first part.

In the following, we will prove the second part. We begin by estimating the upper bound of $\mathbb{E}_{0}\|\vlambda^{s+1}_0-\vlambda^{s+1}_1\|^2$.

Since $\vlambda^{s+1}_0 = \vlambda^s_m$, $\vy^{s+1}_0=\vy^s_m$ and $\vx^{s+1}_0=\vx^s_m=\widetilde{\vx}^s$, we have
{	
\begin{align}\label{equ 36}
&\mathbb{E}_{0}\|\vlambda^{s+1}_0-\vlambda^{s+1}_1\|^2  = \mathbb{E}_{0}\|\vlambda^{s}_m-\vlambda^{s+1}_1\|^2 \nonumber\\
\leq &(\sigma^{A}_{min})^{-1} \mathbb{E}_{0}\|A^T\vlambda^{s}_m-A^T\vlambda^{s+1}_1\|^2 \nonumber \\
\mathop{=}^{(i)}& (\sigma^{A}_{min})^{-1} \mathbb{E}_{0}\|\hat{\nabla}f(\vx^s_{m-1})-\hat{\nabla}f(\vx^{s+1}_{0}) -\frac{1}{\eta} Q(\vx^s_{m-1}-\vx^s_m)
-\frac{1}{\eta} Q (\vx^{s+1}_{0}-\vx^{s+1}_1)\|^2  \nonumber \\
\mathop{=}^{(ii)} &(\sigma^{A}_{min})^{-1} \mathbb{E}_{0}\|\hat{\nabla}f(\vx^s_{m-1})
- \nabla f(\vx^s_{m-1}) + \nabla f(\vx^s_{m-1})-\nabla f(\vx^{s}_{m})-\frac{1}{\eta} Q(\vx^s_{m-1}-\vx^s_m)\nonumber\\
& - \frac{1}{\eta} Q(\vx^{s+1}_{0}-\vx^{s+1}_1)\|^2  \nonumber \\
\mathop{\leq}^{(iii)} &\frac{5L^2}{\sigma^{A}_{min}} \|\vx^s_{m-1}-\widetilde{\vx}^{s-1}\|^2 + \frac{5 L^{2} \eta^{2}+5 \phi_{\text {max }}^{2}}{\sigma^{A}_{min} \eta^{2}} \|\vx^s_{m-1}-\vx^s_{m}\|^2  + \frac{5\phi^2_{\max}}{\sigma^{A}_{min}\eta^2} \mathbb{E}_{0}\|\vx^{s+1}_{0}-\vx^{s+1}_1\|^2,
\end{align}}
where the equality $(i)$ holds by the equality (\ref{equ 23}), the inequality $(iii)$  holds by  Assumption \ref{assum grad f} and the (\ref{bjc deta}), and the equality $(ii)$ holds by the following result:
{	\setlength{\abovedisplayskip}{2pt}
\setlength{\belowdisplayskip}{2pt}
\begin{align}
\hat{\nabla}f(\vx^{s+1}_{0}) & = \nabla f_{i_t}(\vx^{s+1}_{0}) - \nabla f_{i_t}(\widetilde{\vx}^{s}) + \nabla f(\widetilde{\vx}^{s})  \nonumber\\
& = \nabla f_{i_t}(\vx^{s}_{m}) - \nabla f_{i_t}(\vx^{s}_m) + \nabla f(\vx^{s}_m)  \nonumber\\
& = \nabla f(\vx^{s}_m). \nonumber
\end{align}}

By (\ref{equ 26}), we have
{	
\begin{align}\label{equ 37}
\mathcal {L}_\rho (\vx^{s+1}_0, \vy^{s+1}_{1},\vlambda^{s+1}_0)
\leq &\mathcal {L}_\rho (\vx^{s+1}_0, \vy^{s+1}_{0},\vlambda^{s+1}_0)\nonumber\\
= &\mathcal {L}_\rho (\vx^{s}_m, \vy^{s}_{m},\vlambda^{s}_m).
\end{align}}
Similarly, using (\ref{equ 28}), we have
{	
\begin{align}\label{equ 38}
&\mathbb{E}_{0} [\mathcal {L}_\rho (\vx^{s+1}_{1}, \vy^{s+1}_{1},\vlambda^{s+1}_0)] \leq \mathcal {L}_\rho (\vx^{s+1}_0, \vy^{s+1}_{1},\vlambda^{s+1}_0)+{ \Big(\frac{L}{2}- \frac{\phi_{\min}}{\eta}\Big)} \mathbb{E}_{0}\left\|\vx^{s+1}_{1}-\vx^{s+1}_{0}\right\|^{2}\nonumber\\
&  +\frac{\rho}{2}\left(\mathbb{E}_{0}\left\|A \vz^{s+1}_{1}-A \vx^{s+1}_{1}\right\|^{2}-\mathbb{E}_{0}\left\|A \vz^{s+1}_{1}-A \vx^{s+1}_{0}\right\|^{2}\right).
\end{align}

For the update of $\vlambda$ (\ref{dual-update}):
\begin{align}\label{equ 39}
&\mathbb{E}_{0} [\mathcal {L}_\rho\left(\vx_{1}^{s+1}, \vy_{1}^{s+1}, \vlambda_1^{s+1}\right)] \nonumber\\
\leq& \mathbb{E}_{0} [\mathcal {L}_\rho\left(\vx_{1}^{s+1}, \vy_{1}^{s+1}, \vlambda_{0}^{s+1}\right)] +\frac{l_{1}}{2} \mathbb{E}_{0}\left\|A \vz^{s+1}_{1}-A \vx^{s+1}_{1}\right\|^{2}+\left(\frac{1}{\rho}+\frac{1}{2 l_{1}}\right)\mathbb{E}_{0}\left\|\vlambda^{s+1}_{1}-\vlambda^{s+1}_{ 0}\right\|^{2},
\end{align}
where the inequality holds by the Cauchy-Schwartz inequality just like in (\ref{equ 30}).
Combine (\ref{equ 37}), (\ref{equ 38}) with (\ref{equ 39}) and use  similar tricks, we have}
{	
\begin{align}\label{equ 47}
& \mathbb{E}_{0} [\mathcal {L}_\rho\left(\vx_{1}^{s+1}, \vy_{1}^{s+1}, \vlambda_1^{s+1}\right)] \nonumber\\
\leq & \mathbb{E}_{0} \Bigg[\mathcal {L}_\rho\left(\vx_{0}^{s+1}, \vy_{0}^{s+1}, \vlambda_{0}^{s+1}\right) +{ \Big(\frac{L}{2}- \frac{\phi_{\min}}{\eta}\Big)}\left\|\vx^{s+1}_{1}-\vx^{s+1}_{0}\right\|^{2}  + \left(\frac{1}{\rho}+\frac{1}{2 l_{1}}\right)\left\|\vlambda^{s+1}_{0}-\vlambda^{s+1}_{1}\right\|^{2} \nonumber\\
&+ \frac{\rho+l_{1}}{2}\left\|A \vz^{s+1}_{1}-A  \vx^{s+1}_{1}\right\|^{2}-\frac{\rho }{2}\left\|A \vz^{s+1}_{1}-A \vx^{s+1}_{0}\right\|^{2} \Bigg] \nonumber\\
\mathop{\leq }^{(i)}& - \big[ \frac{\phi_{\min }}{\eta }-\frac{L}{2}-\left(\frac{1}{\rho}+\frac{1}{2 l_{1}}\right) \frac{5 \phi_{\max }^{2}}{\sigma^{A}_{min} \eta^{2}} +\frac{\sigma^{A}_{min}\rho}{2 \theta^{2}} - \left(\frac{\rho +l_{1}}{2}\right)\left(\frac{1-\theta}{\theta} \right)^{2}\sigma^{A}_{max} \big]\mathbb{E}_{0} \left\|\vx^{s+1}_{1}-\vx^{s+1}_{0}\right\|^{2} \nonumber\\
& +\mathcal {L}_\rho\left(\vx_{0}^{s+1}, \vy_{0}^{s+1}, \vlambda_{0}^{s+1}\right)+ \left(\frac{1}{\rho}+\frac{1}{2 l_{1}}\right) \frac{5 L^{2}}{\sigma^{A}_{min} }\left\|\vx_{m-1}^{s}-\widetilde{\vx}^{s-1}\right\|^{2}\nonumber\\ &+\left(\frac{1}{\rho}+\frac{1}{2 l_{1}}\right) \frac{5 L^{2} \eta^{2}+5 \phi_{\max }^2}{\sigma^{A}_{min} \eta^{2}}\left\|\vx_{m-1}^{s}-{\vx}_{m}^{s}\right\|^{2} \nonumber\\
\leq & \mathcal {L}_\rho\left(\vx_{0}^{s+1}, \vy_{0}^{s+1}, \vlambda_{0}^{s+1}\right) - \beta_6\mathbb{E}_{0} \left\|\vx^{s+1}_{1}-\vx^{s+1}_{0}\right\|^{2} + \beta_4\left\|\vx_{m-1}^{s}-\widetilde{\vx}^{s-1}\right\|^{2} +\beta_5 \left\|\vx_{m-1}^{s}-{\vx}_{m}^{s}\right\|^{2},
\end{align}}
where $\beta_6$ is given in  Lemma \ref{detail lemma of h}, the equality $(i)$ holds by  using the (\ref{equ 36}), the equality $\vx^{s+1}_{t+1}= \theta \vz^{s+1}_{t+1} + (1-\theta)\widetilde{\vx}^s$ and the following  Cauchy-Schwartz  inequality for the last two terms of the first inequality:
\begin{align}
\frac{1-\theta}{\theta^2}\langle \vx_1^{s+1}-\vx_0^{s+1}, \widetilde{\vx}^s-\vx_0^{s+1}\rangle 
\leq &\frac{1}{2\theta^2} \Big(\|\vx_1^{s+1}-\vx_0^{s+1}\|^2+(1-\theta)^2\|\widetilde{\vx}^s-\vx_0^{s+1}\|^2\Big). \nonumber
\end{align}
Also, for  notational simplicity, let $\mathcal {L}_\rho(0) := \mathcal {L}_\rho (\vx_{0}, \vy_{0},\vlambda_{0})$, $\mathcal {L}_\rho(1) := \mathbb{E}_{0}\mathcal {L}_\rho (\vx_{1}, \vy_{1},\vlambda_{1})$, and sum
\begin{equation}
\beta_5\left\|\mathbf{x}_1^{s+1}-\mathbf{x}_0^{s+1}\right\|^2+h_1^{s+1}\left[\mathbb{E}_{0}\left\|\mathbf{x}_1^{s+1}-\widetilde{\mathbf{x}}^s\right\|^2+\left\|\mathbf{x}_0^{s+1}-\widetilde{\mathbf{x}}^s\right\|^2\right]\nonumber
\end{equation}
to both sides of the (\ref{equ 47}) then we have
\begin{equation}
\begin{aligned}
& \mathcal{L}_\rho(1)+\beta_5\mathbb{E}_{0}\left\|\mathbf{x}_1^{s+1}-\mathbf{x}_0^{s+1}\right\|^2+h_1^{s+1}[\mathbb{E}_{0}\left\|\mathbf{x}_1^{s+1}-\widetilde{\mathbf{x}}^s\right\|^2+\left\|\mathbf{x}_0^{s+1}-\widetilde{\mathbf{x}}^s\right\|^2] \\
\leq & \mathcal{L}_\rho(0)-\beta_6\mathbb{E}_{0}\left\|\mathbf{x}_1^{s+1}-\mathbf{x}_0^{s+1}\right\|^2+\beta_4\left\|\mathbf{x}_{m-1}^s-\widetilde{\mathbf{x}}^{s-1}\right\|^2+\beta_5\left\|\mathbf{x}_{m-1}^s-\mathbf{x}_m^s\right\|^2+\beta_5\mathbb{E}_{0}\left\|\mathbf{x}_1^{s+1}-\mathbf{x}_0^{s+1}\right\|^2 \\
& +h_1^{s+1}\left[\mathbb{E}_{0}\left\|\mathbf{x}_1^{s+1}-\widetilde{\mathbf{x}}^s\right\|^2+\left\|\mathbf{x}_0^{s+1}-\widetilde{\mathbf{x}}^s\right\|^2\right] \\
= & \mathcal{L}_\rho(0)-\left(\beta_6-\beta_5-h_1^{s+1}\right)\mathbb{E}_{0}\left\|\mathbf{x}_1^{s+1}-\mathbf{x}_0^{s+1}\right\|^2+h_1^{s+1}\left[\left\|\mathbf{x}_m^s-\widetilde{\mathbf{x}}^{s-1}\right\|^2+\left\|\mathbf{x}_{m-1}^s-\widetilde{\mathbf{x}}^{s-1}\right\|^2\right] \\
& +\beta_5\left\|\mathbf{x}_{m-1}^s-\mathbf{x}_m^s\right\|^2-\left(h_1^{s+1}-\beta_4\right)\left\|\mathbf{x}_{m-1}^s-\widetilde{\mathbf{x}}^{s-1}\right\|^2-h_1^{s+1}\left\|\mathbf{x}_m^s-\widetilde{\mathbf{x}}^{s-1}\right\|^2.
\end{aligned}\nonumber
\end{equation}
By the notation $h_{1}^{s+1} = \left(\frac{2}{\rho}+\frac{1}{2 l_{1}}\right) \frac{5 L^{2}}{\sigma^{A}_{min}}$ in (\ref{def of h}) and the definition of the sequence $\{(\Psi_t^s)_{t=1}^m\}_{s=1}^S$, we have
{	
\begin{align}\label{equ 42}
\mathbb{E}_{0}\Psi^{s+1}_{1} \leq \Psi^{s}_{m} &- \Gamma^{s}_m\mathbb{E}_{0}\|\vx^{s+1}_1 - \vx^{s+1}_{0}\|^2 -\frac{5L^2}{\sigma^{A}_{min}\rho} \|\vx^s_{m-1}-\widetilde{\vx}^{s-1}\|^2.
\end{align}}
Since $\Gamma^{s}_m>0,\ \forall s \geq 1$, we can obtain the above result of the second part. 

Thus, we prove the above conclusion.
\hfill $\blacksquare$

\section{Proof of  Theorem \ref{thm 1}}
\label{app:theorem 1}
\emph{Proof:} \quad Using the above proofs, inequalities (\ref{equ 35}) and (\ref{equ 42}), we have
{	\setlength{\abovedisplayskip}{10pt}
\setlength{\belowdisplayskip}{10pt}
\begin{align}\label{equ 44}
\mathbb{E}_{t}^{s+1}[\Psi^{s+1}_{t+1}]\leq &\Psi^{s+1}_{t}-\Gamma^{s+1}_t \mathbb{E}_{t}^{s+1}[\|\vx^{s+1}_{t+1}-\vx^{s+1}_t\|^2] - \left[h^{s+1}_{t+1}+\left(1+\alpha_1\right)(h^{s+1}_{t+1}+\beta_1)\right] \|\vx^{s+1}_{t-1}-\widetilde{\vx}^{s}\|^2.
\end{align}}
{The above inequality requires $\beta_3-\beta_4\geq 0$} which can be ensured if  we take $l_2\geq 1-\theta$,
and
{	\setlength{\abovedisplayskip}{10pt}
\setlength{\belowdisplayskip}{10pt}
\begin{align}\label{equ 455}
\mathbb{E}_{0}^{s+1}[\Psi^{s+1}_{1}]  \leq \Psi^{s}_{m} - \Gamma^{s}_m
\mathbb{E}_{0}^{s+1}[\|\vx^{s+1}_0 - \vx^{s+1}_{1}\|^2]
-\frac{5L^2}{\sigma^{A}_{min}\rho} \|\vx^s_{m}-\widetilde{\vx}^{s-1}\|^2.
\end{align}}
for any $s\in \{1,2,\cdots, S\}$ and $t\in\{1,2,\cdots,m\}$.
To establish the convergence of the sequence defined in equation (\ref{equ 21}), we calculate the above conditional expectations in expressions (\ref{equ 44}) and (\ref{equ 455}). By leveraging the property $\mathbb{E}[\mathbb{E}[\cdot \mid \mathcal{F}^{s}_{t}]]=\mathbb{E}[\cdot]$, we then evaluate the full expectation of (\ref{equ 44}) and (\ref{equ 455}). Additionally, we sum up the resulting expressions for (\ref{equ 44}) and (\ref{equ 455}) over the ranges $t=1,2,\ldots,m$ and $s=1,2,\ldots,S$ to obtain
{	
\begin{align}\label{equ 45}
\mathbb{E}\Psi_{m}^{S}- \mathbb{E}\Psi_{1}^{1}
\leq &-\gamma\sum_{s=1}^{S}\sum_{t=1}^{m}\mathbb{E}\|\vx_{t}^{s}-\vx_{t-1}^{s}\|^2  - \omega \sum_{s=1}^{S}\sum_{t=1}^{m} \mathbb{E}\|\vx_{t-1}^{s}-\widetilde{\vx}^{s-1}\|^2,
\end{align}}
where the parameter $\gamma = \min_{s,t} \Gamma^s_t$,
and
{	\setlength{\abovedisplayskip}{10pt}
\setlength{\belowdisplayskip}{10pt}
\begin{align}
\omega &= \min_{s,t}\{\left[h^{s+1}_{t+1}+\left(1+\alpha_1\right)(h^{s+1}_{t+1}+\beta_1)\right],\frac{5L^2}{\sigma^{A}_{min} \rho} \} \nonumber\\
&= \frac{5L^2}{\sigma^{A}_{min} \rho}.\nonumber
\end{align}}
From Assumption \ref{assum2}, there exists a low bound $\Psi^*$ of the sequence $\{\Psi^s_t\}$, i.e., $ \Psi^{s}_{t} \geq \Psi^*$.
Using the definition of $ R^{s}_t$, we have
{	\setlength{\abovedisplayskip}{10pt}
\setlength{\belowdisplayskip}{10pt}
\begin{align}
R^{\hat{s}}_{\hat{t}} = \min_{s,t} R^{s}_t \leq \frac{1}{\tau T} \mathbb{E}(\Psi^{1}_{1} - \Psi^*),
\end{align}}
where $\tau = \min(\gamma,\omega)$  and $T=mS$. This completes the whole proof.
\hfill $\blacksquare$

\section{Proof of the Property of $\left\|\vw_{t}^{s}-\vw_{t+1}^{s}\right\|^2$}\label{app:lemma 4}

In this section, we establish the linear convergence rate of our ASVRG-ADMM under the so-called
{KL} condition. We first draw the following Lemma of
the property of $\left\|\vw_{t+1}^{s}-\vw_{t}^{s}\right\|^2$,  where $\vw= \left(\vx, \vy, \vlambda \right)$ represents the sequence generated by our algorithm.

\begin{lemma}\label{lemma 4} {Let $\left\{\vw^{s}_{t}=\left(\vx_{t}^{s}, \vy_{t}^{s}, \vlambda_{t}^{s}\right)\right\}$ be the sequence generated by ADMM Algorithm \ref{alg1} under  Assumptions \ref{assum grad f}-\ref{Lip sub path}. Then}
{	
\begin{align}
\sum_{s=1}^{+\infty} \sum_{t=1}^{+\infty}\mathbb{E}\left\|\vw_{t+1}^{s}-\vw_{t}^{s}\right\|^{2}<+\infty. \nonumber
\end{align}}
\end{lemma}

\emph{Proof:} \quad  From the inequality (\ref{equ 45}), it follows that
{	
\begin{align}
&\tau \sum_{s=1}^{S} \sum_{t=1}^{m} R_{t}^{s} \leq { \mathbb{E}\Psi_{1}^{1}-\mathbb{E}\Psi_{m}^{s}} \leq +\infty , \nonumber\\
&\quad \sum_{s=1}^{S}\sum_{t=1}^{m} R_{t}^{s} \leq +\infty, \nonumber
\end{align}}
where $\tau = \min(\gamma,\omega)$ { with $\gamma$ and $\omega $   mentioned in the proof of Lemma \ref{thm 1}.}
By the definitions of $R_{t}^{s} $ in (\ref{equ 21}),  we can have that 
$$ \sum_{s=1}^{+\infty} \sum_{t=1}^{+\infty} \mathbb{E}\left\|\vx_{t+1}^{s}-\vx_{t}^{s}\right\|^{2}<+\infty$$.

It follows from  Lemma \ref{lemma of lambda} that $\vlambda$ can be bounded by $R_{t}^{s}$. Thus,
{	
\begin{align}
\sum_{s=1}^{+\infty} \sum_{t=1}^{+\infty} \mathbb{E}\left\|\vlambda_{t+1}^{s}-\vlambda_{t}^{s}\right\|^{2}<+\infty.\nonumber
\end{align}}

Next, we give an upper bound  of
$$ \sum_{s=1}^{+\infty} \sum_{t=1}^{+\infty} \mathbb{E}\left\|\vy_{t+1}^{s}-\vy_{t}^{s}\right\|^{2}$$
based on the following equations
\[
\left\{ \begin{array}{l}
\vlambda_{t+1}^{s}=\vlambda_{t}^{s}-\rho\left(A \vz_{t+1}^{s}+B \vy_{t+1}^{s}-c\right), \\
\vlambda_{t}^{s}=\vlambda_{t-1}^{s}-\rho\left(A \vz_{t}^{s}+B \vy_{t}^{s}-c\right)\\
\vlambda_{t+1}^{s}-\vlambda_{t}^{s}=\left(\vlambda _t^{s}-\vlambda_{t-1}^{s}\right)+\rho\left(A \vz_{t}^{s}-A \vz_{t+1}^{s}\right) +\rho\left(B \vy_{t}^{s}-B \vy_{t+1}^{s}\right).
\end{array}\right.
\]

Together with $\vx_{t+1}= \theta \vz_{t+1} + (1-\theta)\widetilde{\vx}$, simple algebra shows that
{	\setlength{\abovedisplayskip}{10pt}
\setlength{\belowdisplayskip}{10pt}
\begin{align}
&\mathbb{E}\left\|\rho\left(B \mathbf{y}_{t}^{s}-B \mathbf{y}_{t+1}^{s}\right)\right\|^{2}\nonumber\\
\leq & \mathbb{E}\left\|\left(\boldsymbol{\lambda}_{t+1}^{s}-\boldsymbol{\lambda}_{t}^{s}\right)-\left(\boldsymbol{\lambda}_{t}^{s}-\boldsymbol{\lambda}_{t-1}^{s}\right)-\rho\left(A \mathbf{z}_{t}^{s}-A \mathbf{z}_{t+1}^{s}\right) \right\|^{2} \nonumber\\
\leq & 3\mathbb{E}\left\|\boldsymbol{\lambda}_{t+1}^{s}-\boldsymbol{\lambda}_{t}^{s}\right\|^{2}+3 \mathbb{E}\left\|\boldsymbol{\lambda}_{t}^{s}-\boldsymbol{\lambda}_{t-1}^{s}\right\|^{2}+3 \rho^{2} \frac{\sigma_{\max }^{A}}{\theta^{2}} \mathbb{E}\left\|\mathbf{x}_{t+1}^{s}-\mathbf{x}_{t}^{s}\right\|^{2} \label{D.1}
\end{align}}

Then  we can use (\ref{D.1}) drive
{	\setlength{\abovedisplayskip}{10pt}
\setlength{\belowdisplayskip}{10pt}
\begin{align}
\mathbb{E}\left\|\vy_{t}^{s}-\vy_{t+1}^{s}\right\|^{2} &\leq \frac{3\bar{M}^2}{\rho^{2}} \mathbb{E}\left\|\vlambda_{t+1}^{s}-\vlambda_{t}^{s}\right\|^{2}+\frac{3\bar{M}^2}{\rho^{2}} \mathbb{E}\left\|\vlambda_{t}^{s}-\vlambda_{t-1}^{s}\right\|^{2}+\frac{3\bar{M}^2 \sigma_{\max }^{A}}{\theta^{2}} \mathbb{E}\left\|\vx_{t+1}^{s}-\vx_{t}^{s}\right\|^{2},\nonumber
\end{align}}
where the inequality is derived from  \cite[Lemma 1]{wang2019global} with the Assumption \ref{Lip sub path}.

We set $\zeta_{11} = \frac{3 \bar{M}^2}{\rho^2 }$ and $\zeta_{12} = \frac{3 \bar{M}^2 \sigma^A_{max}}{\theta^2}$, such that
{	\setlength{\abovedisplayskip}{10pt}
\setlength{\belowdisplayskip}{10pt}
\begin{align}\label{y-bound}
\sum_{s=1}^{+\infty} \sum_{t=1}^{+\infty} \mathbb{E}\left\|\vy_{t}^{s}-\vy_{t+1}^{s}\right\|^{2} &\leq \zeta_{11}\sum_{s=1}^{+\infty} \sum_{t=1}^{+\infty} \mathbb{E}\left\|\vlambda_{t+1}^{s}-\vlambda_{t}^{s}
\right\|^{2} + \zeta_{11}\sum_{s=1}^{+\infty} \sum_{t=1}^{+\infty} \mathbb{E}\left\|\vlambda_{t-1}^{s}-\vlambda_{t}^{s}\right\|^{2}\nonumber\\
&+ \zeta_{12}\sum_{s=1}^{+\infty} \sum_{t=1}^{+\infty} \mathbb{E}\left\|\vx_{t}^{s}-\vx_{t+1}^{s}\right\|^{2}.
\end{align}}
Recall that $R_{t}^{s}$ defined in (\ref{equ 21}) is
{	
\begin{align*}
R^{s}_t :=&\mathbb{E}\big[ \|\vx^{s}_{t}-\widetilde{\vx}^{s-1}\|^2 + \|\vx^{s}_{t-1}-\widetilde{\vx}^{s-1}\|^2+ \|\vx^{s}_{t+1}-\vx^{s}_t\|^2 + \|\vx^{s}_{t}-\vx^{s}_{t-1}\|^2\big].
\end{align*}}
By Lemma \ref{lemma of lambda}, the first term
{	
$$
\zeta_{11}\sum_{s=1}^{+\infty} \sum_{t=1}^{+\infty} \mathbb{E}\left\|\vlambda_{t+1}^{s}-\vlambda_{t}^{s}
\right\|^{2} + \zeta_{11}\sum_{s=1}^{+\infty} \sum_{t=1}^{+\infty} \mathbb{E}\left\|\vlambda_{t-1}^{s}-\vlambda_{t}^{s}\right\|^{2}
$$}
can be bounded as follows:
{	\setlength{\abovedisplayskip}{10pt}
\setlength{\belowdisplayskip}{10pt}
\begin{align}
&\zeta_{11}\sum_{s=1}^{+\infty} \sum_{t=1}^{+\infty} \mathbb{E}\left\|\vlambda_{t+1}^{s}-\vlambda_{t}^{s}
\right\|^{2} + \zeta_{11}\sum_{s=1}^{+\infty} \sum_{t=1}^{+\infty} \mathbb{E}\left\|\vlambda_{t-1}^{s}-\vlambda_{t}^{s}\right\|^{2}\nonumber\\
&\leq  \zeta_{13} R_{t}^{s},
\end{align}}
where $\zeta_{13}= 2 \zeta_{11} \text{max}\left\{\frac{5L^2}{\sigma^{A}_{min}}, \frac{5\phi_{\max}^2}{\sigma^{A}_{min}\eta^2}, \frac{5(\eta^2L^2+\phi_{\max}^2)}{\sigma^{A}_{min}\eta^2} \right\}$.
The second term $\zeta_{12}\sum_{s=1}^{+\infty} \sum_{t=1}^{+\infty} \mathbb{E}\left\|\vx_{t}^{s}-\vx_{t+1}^{s}\right\|^{2}$ in (\ref{y-bound}) can be bounded by
{	\setlength{\abovedisplayskip}{10pt}
\setlength{\belowdisplayskip}{10pt}
\begin{align}
\zeta_{12}\sum_{s=1}^{+\infty} \sum_{t=1}^{+\infty} \mathbb{E}\left\|\vx_{t}^{s}-\vx_{t+1}^{s}\right\|^{2} \leq \zeta_{12} R^{s}_t.
\end{align}}
Every term on the right-hand side of (\ref{y-bound}) can be bounded by $R_{t}^{s}$. Thus, there exists $\zeta_{14}>0 $ such that the upper bound of all above terms on the right-hand side can be limited by
{	
\begin{align}
\sum_{s=1}^{+\infty} \sum_{t=1}^{+\infty} \mathbb{E}\left\|\vy_{t}^{s}-\vy_{t+1}^{s}\right\|^{2} \leq \zeta_{14}\sum_{s=1}^{+\infty}\sum_{t=1}^{+\infty} R_{t}^{s} <+\infty,
\end{align}}
where $\zeta_{14} = \zeta_{12}+ \zeta_{13}$.
As a result, we obtain
{	\setlength{\abovedisplayskip}{10pt}
\setlength{\belowdisplayskip}{10pt}
\begin{align}
&\sum_{s=1}^{+\infty} \sum_{t=1}^{+\infty} \mathbb{E}\left\|\vy_{t}^{s}-\vy_{t+1}^{s}\right\|^{2}<+\infty,  \nonumber\\
& \sum_{s=1}^{+\infty} \sum_{t=1}^{+\infty} \mathbb{E}\left\|\vw_{t}^{s}-\vw_{t+1}^{s}\right\|^{2}
<+\infty. \nonumber
\end{align}}
This completes the whole proof.
\hfill $\blacksquare$

\setlength{\textfloatsep}{5pt}

\section{Proof of Lemma \ref{partial of Lagra}}\label{app:partial of Lagra}
Now, {based on Lemma \ref{lemma 4}  we} can demonstrate the upper bound of $\mathbb{E} \left\| \partial \mathcal {L}_\rho\left(\vw_{t+1}\right) \right\|$ which is important for the linear convergence {of ASVRG-ADMM}.

\begin{lemma} \label{partial of Lagra}
Let $\left\{\vw_t^{s}=\left(\vx_{t}^{s}, \vy_{t}^{s}, \vlambda_{t}^{s}\right)\right\}$ be the sequence generated by    Algorithm \ref{alg1} under  Assumptions \ref{assum grad f}-\ref{Lip sub path}. For notational simplicity, we omit the upper script $s$ with setting $\left(\vx_{j}, \vy_{j}, \vlambda_{j}\right):= \left(\vx_{t}^s, \vy_{t}^s, \vlambda_{t}^s \right)$, where $j= s*m +t$.
Then, there exists $\xi_{1}>0$ such that
{	\setlength{\abovedisplayskip}{10pt}
\setlength{\belowdisplayskip}{10pt}
\begin{align}
&\mathbb{E}_{t} \left\| \partial \mathcal {L}_\rho\left(\vw_{t+1}\right) \right\| \nonumber\\
\leq &\xi_{1} \big(\mathbb{E}_{t}[\left\|\vx_{t+1}-\widetilde{\vx}\right\|] +\left\|\vx_{t-1}-\widetilde{\vx}\right\| +\left\|\vx_{t}-\widetilde{\vx}\right\| +\left\|\vx_{t}-\vx_{t-1}\right\| +\mathbb{E}_{t}[\left\|\vx_{t+1}-\vx_{t}\right\|] \big).\nonumber
\end{align}}
\end{lemma}

Lemma \ref{partial of Lagra} shows that $\left\| \partial \mathcal {L}_\rho\left(\vw_{t+1}\right) \right\| $ in the Definition \ref{KŁ1} deducing the linear convergence with the  {KL} property, is upper bounded by some primal/iterative residuals. Based on this Lemma, we will show that the sequence $\{\vw_t^s\}$ converges to a critical point of the problem (\ref{equ1}).

\emph{Proof:} \quad From the definition of   $\mathcal {L}_\rho(.)$ in (\ref{equa2}), it follows
{	
\begin{align}
\frac{\partial \mathcal {L}_\rho\left(\vw_{t+1}\right)}{\partial \vx }=&\nabla f\left(\vx_{t+1}\right)-A^{\top} \vlambda_{t+1}+\rho A^{\top}\left(A \vx_{t+1}+B \vy_{t+1}-\vc\right), \nonumber\\
\frac{\partial\mathcal {L}_\rho\left(\vw_{t+1}\right)}{\partial \vy}=&\partial g\left(\vy_{t+1}\right)-B^{\top} \vlambda_{t+1}+ \rho B^{\top}\left(A \vx_{t+1}+B \vy_{t+1}-\vc\right), \nonumber\\
\frac{\partial \mathcal {L}_\rho\left(\vw_{t+1}\right)}{\partial \vlambda}=&-\left(A \vx_{t+1}+B \vy_{t+1}-\vc\right).\nonumber
\end{align}}

Recalling the first-order optimality conditions of the subproblems in Algorithm\ref{alg1} together with the update of $\vlambda_{t+1}$, we have
{	
\begin{align}\label{opt cond}
& \vx:\quad \nabla \hat{f}\left(\vx_{t}\right)=A^{\top} \vlambda_{t+1}-\frac{\theta}{\eta} Q\left(\vz_{t+1}-\vz_{t}\right), \nonumber\\
& \vy: \quad B^{\top} \vlambda_{t+1}-\rho B^{\top} A\left(\vx_{t}-\vz_{t+1}\right) \in \partial g\left(\vy_{t+1}\right),\nonumber\\
& \vlambda: \quad\vlambda_{t+1}=\vlambda_{t}- \rho\left(A \vz_{t+1}+B \vy_{t+1}-\vc\right).
\end{align}}
Invoking the above optimality conditions of the Algorithm\ref{alg1} yields
{	
\begin{align}\label{equ 49}
&
\frac{\vlambda_{t+1}-\vlambda_{t}}{\rho}+A \frac{1-\theta}{\theta}\left(\vx_{t+1}-\widetilde{\vx}\right)\in \partial_{\vlambda} \mathcal{L}_{\rho}\left(\vw_{t+1}\right) , \\
&B^{\top}\left(\vlambda_{t}-\vlambda_{t+1}\right)-\rho B^{\top} A \left( \vz_{t} - \vx_{t+1}\right) \in \partial_{\vy} \mathcal{L}_{\rho}\left(\vw_{t+1}\right), \\
&\nabla f\left(\vx_{t+1}\right)-\nabla \hat{f}\left(\vx_{t}\right)+\frac{1}{\eta} Q \left(\vx_{t}-\vx_{t+1}\right) +A^{\top}\left(\vlambda_{t}-\vlambda_{t+1}\right)\nonumber\\
&+\rho \frac{1-\theta}{\theta} A^{\top} A\left(\widetilde{\vx}-\vx_{t+1}\right) \in \partial_{\vx} \mathcal{L}_{\rho}\left(\vw_{t+1}\right).
\end{align}}
Thus,  we can obtain
{	
\begin{align}
& \mathbb{E}_{t} { dist\left(0, \partial \mathcal {L}_\rho\left(\vw_{t+1}\right)\right)}  \nonumber\\
= &  \mathbb{E}_{t}\left\|\partial_{\vlambda}\mathcal{L}_{\rho}\left(\vw_{t+1}\right)\right\|
+  \mathbb{E}_{t}\left\|\partial_{\vy} \mathcal{L}_{\rho}\left(\vw_{t+1}\right)\right\|
+ \mathbb{E}_{t}\left\|\partial_{\vx} \mathcal{L}_{\rho}\left(\vw_{t+1}\right)\right\| \nonumber\\
\leq &\mathbb{E}_{t}\Big[
\left\| \frac{\vlambda_{t+1}-\vlambda_{t}}{\rho}  \right\| +  \left\| A \frac{1-\theta}{\theta}\left(\vx_{t+1}-\widetilde{\vx}\right) \right\|  +  \left\| B^{\top}\left(\vlambda_{t}-\vlambda_{t+1}\right) \right\| +  \left\| \rho B^{\top} A \left( \vz_{t} - \vx_{t+1}\right) \right\| \nonumber\\
& + \left\| \nabla f\left(\vx_{t+1}\right)-\nabla \hat{f}\left(\vx_{t}\right)\right\|  +  \left\|\rho \frac{1-\theta}{\theta} A^{\top} A\left(\widetilde{\vx}-\vx_{t+1}\right)\right\|  +  \left\|\frac{1}{\eta} Q \left(\vx_{t}-\vx_{t+1}\right)\right\| + \left\|A^{\top}\left(\vlambda_{t}-\vlambda_{t+1}\right)\right\|\Big]
\nonumber\\
\leq &  \mathbb{E}_{t}\big[ \zeta_{21}\left\|\vlambda_{t+1}-\vlambda_{t}\right\| + \zeta_{22}\left\|\vx_{t+1}-\widetilde{\vx}\right\|+ \zeta_{23}\left\|\vx_{t+1}-\vx_{t}\right\| + \left\|\hat{\nabla}f(\vx_t)-\nabla f(\vx_{t+1}) \right\| \big]\nonumber \\
\mathop{\leq } &\zeta_{24}\mathbb{E}_{t}\Big( \left\|\vx_{t+1}-\widetilde{\vx}\right\| + \left\|\vx_{t}-\widetilde{\vx}\right\| + \left\|\vx_{t+1}-\vx_{t}\right\|  + \left\|\vlambda_{t+1}-\vlambda_{t}\right\| +  \left\|\hat{\nabla}f(\vx_t)-\nabla f(\vx_{t+1}) \right\| \Big)  \nonumber\\
\mathop{\leq }^{(i)}  
&
\zeta_{24}\Big( \mathbb{E}_{t}\left\|\vx_{t+1}-\widetilde{\vx}\right\| + \left\|\vx_{t}-\widetilde{\vx}\right\| + \mathbb{E}_{t}\left\|\vx_{t+1}-\vx_{t}\right\| +\mathbb{E}_{t}\left\|\hat{\nabla}f(\vx_{t+1})-\nabla f(\vx_{t}) \right\| \Big) \nonumber\\
&+\zeta_{24}\zeta_{25}\mathbb{E}_{t}\left\|\vx_{t-1}-\vx_{t} \right\| +\zeta_{24}\zeta_{25} \big(\left\|\vx_{t}-\widetilde{\vx}\right\| + \left\|\vx_{t-1}-\widetilde{\vx}\right\| + \mathbb{E}_{t}\left\|\vx_{t+1}-\vx_{t} \right\| \big),
\end{align}}
where $\sigma_{max}^{B}$ is the largest positive eigenvalue of $B^{\top} B$ (or equivalently the smallest positive eigenvalue of $B B^{\top}$ ), $\phi_{\max}$ is the largest positive eigenvalue of the matrix $Q$, and the inequality (i) is due to  Lemma \ref{lemma of lambda} and and the inequality
$\sqrt{a^{2}+b^{2}+c^{2}+d^{2}} \leq a+b+c+d, \text{for any}\quad a,b,c,d \geq 0$.

We can further obtain
{
\begin{align}
&\mathbb{E}_{t} \big[dist\left(0, \partial \mathcal {L}_\rho\left(\vw^{t+1}\right)\right)\big] \nonumber\\
\mathop{\leq }^{(i)}& \zeta_{24}L[\left\|\vx_{t}-\widetilde{\vx}\right\|] +\zeta_{24}L\mathbb{E}_{t}[\left\|\vx_{t+1}-\vx_{t} \right\|]
+\zeta_{24}\mathbb{E}_{t}\Big( \left\|\vx_{t+1}-\widetilde{\vx}\right\| + \left\|\vx_{t}-\widetilde{\vx}\right\| + \left\|\vx_{t+1}-\vx_{t}\right\|   \nonumber\\
\quad &+\zeta_{25}(\left\|\vx_{t}-\widetilde{\vx}\right\| + \left\|\vx_{t-1}-\widetilde{\vx}\right\| + \left\|\vx_{t+1}-\vx_{t} \right\| +\left\|\vx_{t-1}-\vx_{t} \right\|)  \Big)  \nonumber\\
\leq & (\zeta_{24}L+ \zeta_{24} +  \zeta_{24}\zeta_{25} )[\left\|\vx_{t}-\widetilde{\vx}\right\|]+ \zeta_{24}\mathbb{E}_{t}[\left\|\vx_{t+1}-\widetilde{\vx}\right\|] + \zeta_{24}\zeta_{25} [\left\|\vx_{t-1}-\widetilde{\vx}\right\|] \nonumber\\
\quad &+ (\zeta_{24}L +\zeta_{24} +\zeta_{24}\zeta_{25} )\mathbb{E}_{t}[\left\|\vx_{t+1}-\vx_{t}\right\|]+ \zeta_{24}\zeta_{25}[\left\|\vx_{t}-\vx_{t-1}\right\|], \nonumber\\
\mathop{\leq }& \xi_{1} \big( \mathbb{E}_{t}\left\|\vx_{t+1}-\widetilde{\vx}\right\|+\left\|\vx_{t-1}-\widetilde{\vx}\right\|+ \mathbb{E}_{t}\left\|\vx_{t+1}-\vx_{t}\right\| +\left\|\vx_{t}-\widetilde{\vx}\right\|+\left\|\vx_{t}-\vx_{t-1}\right\|\big),\nonumber
\end{align}
where the parameters $\zeta_{21}, \zeta_{22}$ and $\zeta_{23}$ are
\begin{align} \label{equ def zeta}
\left\{\begin{array}{l}
\zeta_{21}=\frac{1}{\rho}+\sqrt{\sigma_{\max }^{B}}+\sqrt{\sigma_{\max }^{A}}, \\
\zeta_{22}=\rho \frac{1-\theta}{\theta} \sigma_{\max }^{A}, \\
\zeta_{23}=\frac{\phi_{\max }}{\eta}+ \rho \sqrt{\sigma_{\max }^{B} \sigma_{\max }^{A}} \\
\zeta_{24}=\max \left\{\zeta_{21}, \zeta_{22}, \zeta_{23}, 1\right\}, \\
\zeta_{25}=\max \left\{\sqrt{\frac{5 L^{2}}{\sigma_{\min }^{A}}}, \sqrt{\frac{5 \phi_{\max }^{2}}{\sigma_{\min }^{A} \eta^{2}}}, \sqrt{\frac{5\left(\eta^{2} L^{2}+\phi_{\max }^{2}\right)}{\sigma_{\min }^{A} \eta^{2}}}\right\}, \\
\xi_{1}\ =\zeta_{24} L+\zeta_{24}+\zeta_{24} \zeta_{25},
\end{array}\right.
\end{align}}
where the inequality (i) is due to the triangle inequality
$\| a+b \| \leq  \|a\|+\|b\|$, Assumption \ref{assum grad f}, the inequality $(\mathbb{E}\|\vx\|)^{2} \leq \mathbb{E}(\|\vx\|^{2})$ and the inequality (\ref{bjc deta}).

This completes the  proof.
\hfill $\blacksquare$

\section{Proof of Lemma \ref{thm 2}}\label{app:thm 2}
The convergence properties of the stochastic sequence $\left\{\vw_{t}=\left(\vx_{t}, \vy_{t}, \vlambda_{t}\right)\right\}$ under the Kurdyka-Lojasiewicz (KL) inequality condition will be investigated in the following sections.
It is important to acknowledge that the implementation of the KL technique displays slight differences between stochastic and deterministic algorithms.
For further details, readers are encouraged to refer to \cite{milzarek2023convergence,chouzenoux2023kurdyka}.			

Before proving the key Lemma \ref{thm 2}, we first prove the Lemma \ref{lemma 6}.

\begin{lemma}\label{lemma 6}
Let $\left\{\vw_{t}=\left(\vx_{t}, \vy_{t}, \vlambda_{t}\right)\right\}$ (for notational simplicity, we omit the label s) be the stochastic sequence generated by ADMM procedure. Let $S\left(\vw_{0}\right)$ denote the set of its limit points. With Definition \ref{kkt}, then we have

\begin{itemize}

\item[i)]$S\left(\vw_{0}\right)$ is a.s. a nonempty compact set, and $ dist\left(\vw_{t}, S\left(\vw_{0}\right)\right) \text{converges a.s. to 0};$

\item[ii)] $S\left(\vw_{0}\right) \subset {  \operatorname{crit} \mathcal{L}_{\rho}}$ a.s.;

\item[iii)] $\mathcal{L}_{\rho}(\cdot)$ is a.s. finite and constant on $S\left(\vw_{0}\right)$, equal to $\inf _{t \in N} \mathcal{L}_{\rho}\left(\vw_{t}\right)=\lim _{t \rightarrow+\infty} \mathcal{L}_{\rho}\left(\vw_{t}\right)$ a.s.

\end{itemize}
\end{lemma}


\emph{Proof:} \quad	We prove the results item by item.

i) 
By applying the descent inequalities (\ref{equ 44}) and (\ref{equ 455}) along with the supermartingale convergence theorem, we can establish that 
\begin{align}\label{a.s. w}
\left\{\begin{array}{l}\sum_{s=1}^{+\infty} \sum_{t=1}^{+\infty}\left\|\mathbf{x}_{t+1}^s-\mathbf{x}_t^s\right\|^2<+\infty \quad \text { a.s.},\\ \sum_{s=1}^{+\infty} \sum_{t=1}^{+\infty}\left\|\mathbf{y}_{t+1}^s-\mathbf{y}_t^s\right\|^2<+\infty \quad \text { a.s.},\\ \sum_{s=1}^{+\infty} \sum_{t=1}^{+\infty}\left\|\vlambda_{t+1}^s-\vlambda_t^s\right\|^2<+\infty \quad \text { a.s.},\\
\sum_{s=1}^{+\infty} \sum_{t=1}^{+\infty}\left\|\vw_{t+1}^s-\vw_t^s\right\|^2<+\infty \quad \text { a.s.},	
\end{array}\right.
\end{align}
then we can further have $\left\|\vw_{t+1}-\vw_{t}\right\| \rightarrow 0 \quad \text { a.s.}$

Consequently,  for any sequence satisfying $\left\|\vw_{t+1}-\vw_{t}\right\| \rightarrow 0 \quad \text { a.s.}$, claim i) holds.
And we refer to \cite[propisition 2.3]{chouzenoux2023kurdyka}, \cite[Lemma A.14]{BLZhang21} for more details.

ii) Let $\left(\vx^{*}, \vy^{*}, \vlambda^{*}\right) \in S\left(\vw_{0}\right)$,  then there exists a subsequence $\left\{\left(\vx_{t_{j}}, \vy_{t_{j}}, \vlambda_{t_{j}}\right)\right\}$ of $\left\{\left(\vx_{t}, \vy_{t}, \vlambda_{t}\right)\right\}$ converging a.s. to $\left(\vx^{*}, \vy^{*}, \vlambda^{*}\right)$. Note that (\ref{a.s. w}) implies	
\begin{align} \label{equ 50}
\left\|\vw_{t+1}-\vw_{t}\right\| \rightarrow 0\quad \text{a.s},
\end{align}
which means $\vw_{t+1}$ converges a.s. to $\vw_{t}$ and $\left\{\left(\vx_{t_{j}+1}, \vy_{t_{j}+1}, \vlambda_{t_{j}+1}\right)\right\}$ also converges a.s. to $\left(\vx^{*}, \vy^{*}, \vlambda^{*}\right)$.
It is important to highlight that the convergence of random variables in an almost surely manner is observed in $\vw_{t+1}  \rightarrow \vw_{t}\quad \text{a.s}$. As a result, any newly derived conclusions based on this convergence will also hold with an almost surely guarantee.  As a result, many of the conclusions presented in the subsequent chapters differ from those in the previous chapters but can be considered almost surely valid.
Since $\vy_{t_{j}+1}$ is a minimizer of $\mathcal{L}_{\rho}\left(\vx_{t_{j}}, \vy, \vlambda_{t_{j}}\right)$ for the variable $\vy$, it holds that
{	
\begin{align}\label{equ 51}
\mathcal{L}_{\rho}\left(\vx_{t_{j}}, \vy_{t_{j}+1}, \vlambda_{t_{j}}\right) \leq \mathcal{L}_{\rho}\left(\vx_{t_{j}}, \vy^{*}, \vlambda_{t_{j}}\right).
\end{align}}

Then, it follows from Equations $(\ref{equ 50}), (\ref{equ 51})$ and the continuity of $\mathcal{L}_{\rho}(\cdot)$ with respect to $\vx$ and $\vlambda$ that
{	
\begin{align}\label{equ 52}
&\limsup _{j \rightarrow+\infty} \mathcal{L}_{\rho}\left(\vx_{t_{j}}, \vy_{t_{j}+1}, \vlambda_{t_{j}}\right) \nonumber\\
=&\limsup _{j \rightarrow+\infty} \mathcal{L}_{\rho}\left(\vx_{t_{j}+1}, \vy_{t_{j}+1}, \vlambda_{t_{j}+1}\right)\leq \mathcal{L}_{\rho}\left(\vx^{*}, \vy^{*}, \vlambda^{*}\right) a.s.
\end{align}}
On the other hand, by the lower semicontinuity of $\mathcal{L}_{\rho}(\cdot)$, we know
{	
\begin{align}\label{equ 53}
\liminf _{j \rightarrow+\infty} \mathcal{L}_{\rho}\left(\vx_{t_{j}+1}, \vy_{t_{j}+1}, \vlambda_{t_{j}+1}\right) \geq \mathcal{L}_{\rho}\left(\vx^{*}, \vy^{*}, \vlambda^{*}\right) a.s.
\end{align}}
The above two relations (\ref{equ 52}) and (\ref{equ 53}) show that $\lim _{j \rightarrow+\infty} g\left(\vy_{t_{j}+1}\right)=g\left(\vy^{*}\right)$ a.s. Because of the continuity of $\nabla f$ and the closeness of $\partial g$, taking limits in {  Equation (\ref{opt cond})} along the subsequence $\left\{\left(\vx_{t_{j}+1}, \vy_{t_{j}+1}, \vlambda_{t_{j}+1}\right)\right\}$ and using Equation (\ref{equ 50}) again, we almost surely have
$$
\begin{aligned}
&A^{\mathrm{T}} \vlambda^{*} = \partial f\left(\vx^{*}\right), \\
& B^{T}\vlambda^{*}\in \nabla g\left(\vy^{*}\right),  \\
&A \vx^{*}+B\vy^{*}-\vc=0.
\end{aligned}
$$
Then, $\left(\vx^{*}, \vy^{*}, \vlambda^{*}\right)$ is a.s. a critical point of the problem $(\ref{equ1})$, hence $w^{*} \in$ crit $\mathcal{L}_{\beta}$ a.s.

iii) For any point $\left(\vx^{*}, \vy^{*}, \vlambda^{*}\right) \in S\left(w^{0}\right)$, there exists a subsequence $\left\{\left(\vx_{t_{j}}, \vy_{t_{j}}, \vlambda_{t_{j}}\right)\right\}$ of $\left\{\left(\vx_{t}, \vy_{t}, \vlambda_{t}\right)\right\}$ converging to $\left(\vx^{*}, \vy^{*}, \vlambda^{*}\right)$. Combining Equations $(\ref{equ 52})$ and $(\ref{equ 53})$, we obtain
$$
\lim _{j \rightarrow+\infty} \mathcal{L}_{\rho}\left(\vx_{t_{j}}, \vy_{t_{j}}, \vlambda_{t_{j}}\right)=\mathcal{L}_{\rho}\left(\vx^{*}, \vy^{*}, \vlambda^{*}\right) \quad \text{a.s.}
$$

Therefore, $\mathcal{L}_{\rho}(\cdot)$ is a.s. constant on $S\left(\vw^{0}\right)$. Moreover, $\inf _{t \in N} \mathcal{L}_{\rho}\left(\vw_{t}\right)=\lim _{t \rightarrow+\infty} \mathcal{L}_{\rho}\left(\vw_{t}\right)$ a.s.
\hfill $\blacksquare$

~\\

With the established conclusions, we are now prepared to give the proof of Lemma \ref{thm 2}.

\emph{Proof:} \quad  From the proof of Lemma \ref{lemma 6}, it follows that $\mathcal{L}_{\rho}\left(\vw_{t}\right) \xrightarrow{a.s.} \mathcal{L}_{\rho}\left(\vw^{*}\right)$ for all $\vw^{*} \in S\left(\vw^{0}\right)$ . We consider two cases.

(i)We first consider this case: there exists an finite positive discrete random variable $t_{0}$ for which $\Psi_{t_{0}}=\mathcal{L}_{\rho}\left(\vw^{*}\right)$ a.s.

Taking full expectation operator to the inequality (\ref{equ 44}), we can have 
\begin{align}
&\mathbb{E}\Psi_{t+1} \mathop{\leq }^{(i)} \mathbb{E}\Psi_{t}-\Gamma_{t}\mathbb{E}\left\|\vx_{t+1}-\vx_{t}\right\|^{2}- w\mathbb{E}\left\|\vx_{t-1}-\widetilde{\vx}\right\|^{2} ,\nonumber\\
&\Gamma_{t} \mathbb{E}\left\|\vx_{t+1}-\vx_{t}\right\|^{2} + w\mathbb{E}\left\|\vx_{t-1}-\widetilde{\vx}\right\|^{2} 
\leqslant  \mathbb{E}\Psi_{t}-\mathbb{E}\Psi_{t+1} \leq \mathbb{E}\Psi_{t_{0}}-\mathbb{E}\mathcal{L}_{\rho}\left(\vw^{*}\right)=0\quad \text{a.s.}, 
\end{align}
where  the parameters $\Gamma_{t}$, $w$ are defined in (\ref{equ 45}).
Then we can drive
{	
\begin{align}\label{xt x}
\Gamma_{t}\mathbb{E}\left\|\vx_{t+1}-\vx_{t}\right\|^{2} + w\mathbb{E}\left\|\vx_{t-1}-\widetilde{\vx}\right\|^{2} &= 0, \nonumber\\
\vx_{t+1}=\vx_{t} , \vx_{t-1}&=\widetilde{\vx} \quad \text{a.s.}, \nonumber\\
\vlambda_{t+1}=\vlambda_{t},  \vy_{t+1} &=\vy_{t} \quad \text{a.s.}  \tag{S.48}
\end{align}

Thus, for any $t \geq t_{0}$, we have
$\vx_{t+1}=\vx_{t}$ ,  $\vx_{t-1}=\widetilde{\vx}$, $\vlambda_{t+1}=\vlambda_{t}$ and $\vy_{t+1}=\vy_{t}  \quad \text{a.s.}$ Then the assertion holds.

(ii) We define  $\Pi=\liminf _{t \rightarrow+\infty}\left\{\omega \in \Omega \mid \Psi_{t}=\Psi_{t}(\omega)>\mathcal{L}_{\rho}\left(\vw^{*}(\omega)\right)\right\}$. We assume $\mathbb{P}(\Pi)=1$ and
$\Psi_{t}>\mathcal{L}_{\rho}\left(\vw^{*}\right)$ 
for all $t$ over the set $\Pi$.
Since $dist\left(\vw_{t}, S\left(\vw^{0}\right)\right) \xrightarrow{a.s.} 0$, it follows that for all $\epsilon>0$, there exists a ﬁnite positive discrete random variable $k_{1}>0$, such that for any $t>k_{1}, dist\left(\vw_{t}, S\left(\vw^{0}\right)\right)<\epsilon$ a.s. Again since $\Psi_{t} \xrightarrow{a.s.} \mathcal{L}_{\rho}\left(\vw^{*}\right)$, it follows for all $\eta>0$, there exists a ﬁnite positive discrete random variable $k_{2}>0$, such that for any $t>k_{2}, \Psi_{t} <\mathcal{L}_{\rho}\left(\vw^{*}\right)+\eta$ a.s. Consequently, for all $\epsilon, \eta>0$, when $t>\tilde{k}=\max \left\{k_{1},  k_{2}\right\}$, we almost surely have
$dist\left(\vw_{t}, S\left(\vw^{0}\right)\right)<\epsilon$, $\mathcal{L}_{\rho}\left(\vw^{*}\right)<\Psi_{t}<\mathcal{L}_{\rho}\left(\vw^{*}\right)+\eta. $
Since $S\left(\vw^{0}\right)$ is a.s. a nonempty compact set and $\mathcal{L}_{\rho}(\cdot)$ is a.s. constant on $S\left(\vw^{0}\right)$, applying the definition of KL property with $\Omega=S\left(\vw^{0}\right)$, we deduce that for any $t>\tilde{k}$
$$
\nabla \varphi\left(\Psi_{t}-\mathcal{L}_{\rho}\left(\vw^{*}\right)\right) dist\left(0, \partial \Psi_{t}\right) \geq 1 \quad \text{a.s.}
$$

We denote the $\Psi^{*} = \mathcal{L}_{\rho}\left(\vw^{*}\right)$, then the above inequality becomes
{	
\begin{align}\label{psi and d}
\nabla \varphi \left(\Psi_{t}-\Psi^{*}\right) dist\left(0, \partial \Psi_{t}\right) \geq 1 \quad \text{a.s.} \tag{S.49}
\end{align}}

From the concavity of $\varphi$, we get that
{	
\begin{align}\label{equ 56}
\varphi\left(\Psi_{t}-\Psi^{*}\right)-\varphi\left(\mathbb{E}_{t}\Psi_{t+1}-\Psi^{*}\right)
\geqslant & \nabla\varphi\left(\Psi_{t}-\Psi^{*}\right)\left(\Psi_{t}-\mathbb{E}_{t}\Psi_{t+1} \right) \quad\text{a.s.},  \tag{S.50} 
\end{align}}
and it amount to
{	
\begin{equation}
\begin{aligned}
\Psi_{t}-\mathbb{E}_{t}\Psi_{t+1} & \leqslant \frac{\varphi\left(\Psi_{t}-\Psi^{*}\right)-\varphi\left(\mathbb{E}_{t}\Psi_{t+1}-\Psi^{*}\right)}{\nabla \varphi\left(\Psi_{t}-\Psi^{*}\right)} \\
& \stackrel{(i)}{\leq}\left[\varphi\left(\Psi_{t}-\Psi^{*}\right)-\varphi\left(\mathbb{E}_{t}\Psi_{t+1}-\Psi^{*}\right)\right] \times dist\left(0, \partial \Psi_{t}\right) \quad \text{a.s.},
\end{aligned}\nonumber
\end{equation}}
where the inequality (i) holds by the inequality (\ref{psi and d}), and we set
{	 \begin{align}\label{def of delta}
\Pi_{p, q}:=\varphi\left(\Psi_{p}-\Psi^{*}\right)-\varphi\left(\Psi_{q}-\Psi^{*}\right). \tag{S.51}
\end{align}}
Moreover, recalling the equations (\ref{equ 44}) and (\ref{equ 455}), we have
{	\setlength{\abovedisplayskip}{10pt}
\setlength{\belowdisplayskip}{10pt}
\begin{align}\label{equ 57}
\Psi_{t}-\mathbb{E}_{t}\Psi_{t+1} \geqslant \tau \left(\mathbb{E}_{t}\left\|\vx_{t+1}-\vx_{t}\right\|^{2}+\left\|\vx_{t-1}-\widetilde{\vx}\right\|^{2}\right),
\tag{S.52}
\end{align}}
where $\tau = \min(\gamma,\omega)$ with $\gamma$ and $\omega$ being given by Theorem 1.

From the definition of the sequence $\{(\Psi^{s}_{t})_{t=1}^m\}_{s=1}^S$, we obtain
{	\setlength{\abovedisplayskip}{10pt}
\setlength{\belowdisplayskip}{10pt}
\begin{align}\label{equ 58}
dist\left(0, \partial \Psi_{t}\right) &=dist\big(0,( \partial\mathcal{L}_{\rho}\left(\vw_{t}\right)+2 h_{t}\left(\vx_{t}-\widetilde{\vx}\right)+2 \beta_5\left(\vx_{t}-\vx_{t-1}\right)) \big)\nonumber\\
& \leq dist\left(0, \partial\mathcal{L}_{\rho}\left(\vw_{t}\right)\right)+2 h_{t}\left\|\vx_{t}-\widetilde{\vx}\right\|+2 \beta_5 \| \vx_{t}-\vx_{t-1} \|.
\tag{S.53}
\end{align}}
Combine the (\ref{equ 56})-(\ref{equ 58}) we have
{	\setlength{\abovedisplayskip}{10pt}
\setlength{\belowdisplayskip}{10pt}
\begin{align}
&\tau \left(\mathbb{E}_{t}\left\|\vx_{t+1}-\vx_{t}\right\|^{2}+\left\|\vx_{t-1}-\widetilde{\vx}\right\|^{2}\right) \nonumber\\
\leq &\big[
\varphi\left(\Psi_{t}-\Psi^{*}\right)-\varphi\left(\mathbb{E}_{t}\Psi_{t+1}-\Psi^{*}\right)\big]
\bigg[dist\left(0, \partial\mathcal{L}_{\rho}\left(\vw_{t}\right)\right)+2 h_{t}\left\|\vx_{t}-\widetilde{\vx}\right\|+2 \beta_5\| \vx_{t}-\vx_{t-1} \|\bigg]
\quad\text{a.s.}\tag{S.54}
\label{e12}
\end{align}}
Taking the full expectation and inserting Lemma \ref{partial of Lagra} into the above inequality, we see that
\begin{align}\label{equ 701}
&\mathbb{E}[dist\left(0, \partial \Psi_{t}\right)] +2 h_{t}\mathbb{E}\left\|\vx_{t}-\widetilde{\vx}\right\|
+2 \beta_5 \mathbb{E}\left\| \vx_{t}-\vx_{t-1}\right\| \nonumber\\
\leq & \xi_{max} \bigg[\mathbb{E}\left\|\vx_{t+1}-\widetilde{\vx}\right\|+\mathbb{E}\left\|\vx_{t-1}-\widetilde{\vx}\right\| +\mathbb{E}\left\|\vx_{t+1}-\vx_{t}\right\| +\mathbb{E}\left\|\vx_{t}-\widetilde{\vx}\right\|+\mathbb{E}\left\|\vx_{t}-\vx_{t-1}\right\| \bigg],
\tag{S.55}
\end{align}
where $\xi_{max} = \xi_1 + 2h_t +2\beta_5$ and the inequality (\ref{equ 701}) becomes
\begin{align}
\mathbb{E}[dist\left(0, \partial \Psi_{t}\right)]\leq  &\xi_{max} \bigg[\mathbb{E}\left\|\vx_{t+1}-\widetilde{\vx}\right\|+\mathbb{E}\left\|\vx_{t-1}-\widetilde{\vx}\right\| +\mathbb{E}\left\|\vx_{t+1}-\vx_{t}\right\|\nonumber\\ &+\mathbb{E}\left\|\vx_{t}-\widetilde{\vx}\right\|+\mathbb{E}\left\|\vx_{t}-\vx_{t-1}\right\| \bigg].\tag{S.56}
\label{e14}
\end{align}

Now, combine (\ref{e12}) and (\ref{e14}) and take the full expectation on (\ref{e12}) to obtain
\begin{align}
&\tau   \left(\mathbb{E}\left\|\vx_{t+1}-\vx_{t}\right\|^{2}+\mathbb{E}\left\|\vx_{t-1}-\widetilde{\vx}\right\|^{2}\right) \nonumber\\
\leq & \xi_{max} 
\mathbb{E}\big[
\varphi\left(\Psi_{t}-\Psi^{*}\right)-\varphi\left(\mathbb{E}_{t}\Psi_{t+1}-\Psi^{*}\right)\big]
\mathbb{E} \bigg[\left\|\vx_{t+1}-\widetilde{\vx}\right\|+\left\|\vx_{t-1}-\widetilde{\vx}\right\|  + \left\|\vx_{t+1}-\vx_{t}\right\|\nonumber\\ &+\left\|\vx_{t}-\tilde {\vx}\right\|+\left\|\vx_{t}-\vx_{t-1}\right\| \bigg]
\quad \text{a.s.}\nonumber
\end{align}

It follows from the inequality $\sqrt{a^{2}+b^{2}} \geqslant \frac{\sqrt{2}}{2}(a+b) $ that
{	
\begin{align}
&4\left(\mathbb{E}\left\|\vx_{t+1}-\vx_{t}\right\|+\mathbb{E}\left\|\vx_{t-1}-\widetilde{\vx}\right\|\right)\nonumber\\
\leq &4 \sqrt{\xi_{max}} \sqrt{\frac{2}{\tau}} \bigg[\mathbb{E}\left\|\vx_{t+1}-\widetilde{\vx}\right\| +\mathbb{E}\left\|\vx_{t-1}-\widetilde{\vx}\right\|+\mathbb{E}\left\|\vx_{t+1}-\vx_{t}\right\| +\mathbb{E}\left\|\vx_{t}-\widetilde{\vx}\right\|+\mathbb{E}\left\|\vx_{t}-\vx_{t-1}\right\| \bigg]^{\frac{1}{2}}\nonumber\\
&\times \left\{\mathbb{E}\big[
\varphi\left(\Psi_{t}-\Psi^{*}\right)-\varphi\left(\mathbb{E}_{t}\Psi_{t+1}-\Psi^{*}\right)\big]\right\} ^{\frac{1}{2}} \quad \text{a.s.},
\tag{S.57}
\end{align}}
which by the Cauchy-Schwartz inequality $2 \sqrt{ab} \leq a+b$  further gives
\begin{align}\label{equ 61}
&4\left(\mathbb{E}\left\|\vx_{t+1}-\vx_{t}\right\|+\mathbb{E}\left\|\vx_{t-1}-\widetilde{\vx}\right\|\right)\nonumber\\ \leq & \Big(2 \sqrt{\xi_{max}} \sqrt{\frac{2}{\tau}} \Big)^2 \mathbb{E}\big[
\varphi\left(\Psi_{t}-\Psi^{*}\right)-\varphi\left(\mathbb{E}_{t}\Psi_{t+1}-\Psi^{*}\right)\big] +\bigg[\mathbb{E}\left\|\vx_{t+1}-\widetilde{\vx}\right\|+\mathbb{E}\left\|\vx_{t-1}-\widetilde{\vx}\right\|   \nonumber\\
&+\mathbb{E}\left\|\vx_{t+1}-\vx_{t}\right\|+\mathbb{E}\left\|\vx_{t}-\widetilde{\vx}\right\|+\mathbb{E}\left\|\vx_{t}-\vx_{t-1}\right\| \bigg],\nonumber\\
\stackrel{(i)}{\leq} &\Big(2 \sqrt{\xi_{max}} \sqrt{\frac{2}{\tau}} \Big)^2  \mathbb{E}\big[
\varphi\left(\Psi_{t}-\Psi^{*}\right)-\varphi\left(\Psi_{t+1}-\Psi^{*}\right)\big] +\bigg[\mathbb{E}\left\|\vx_{t+1}-\widetilde{\vx}\right\|+\mathbb{E}\left\|\vx_{t-1}-\widetilde{\vx}\right\|  \nonumber\\
&+\mathbb{E}\left\|\vx_{t+1}-\vx_{t}\right\| +\mathbb{E}\left\|\vx_{t}-\widetilde{\vx}\right\|+\mathbb{E}\left\|\vx_{t}-\vx_{t-1}\right\| \bigg],\nonumber\\
=& \Big(2 \sqrt{\xi_{max}} \sqrt{\frac{2}{\tau}} \Big)^2  \mathbb{E}(\Pi_{t, t+1}) +\bigg[\mathbb{E}\left\|\vx_{t+1}-\widetilde{\vx}\right\|+\mathbb{E}\left\|\vx_{t-1}-\widetilde{\vx}\right\| +\mathbb{E}\left\|\vx_{t+1}-\vx_{t}\right\|  +\mathbb{E}\left\|\vx_{t}-\widetilde{\vx}\right\|\nonumber\\
&+\mathbb{E}\left\|\vx_{t}-\vx_{t-1}\right\| \bigg] \quad \text{a.s.}
\tag{S.58}
\end{align}
In the above analysis, we use the abbreviation of $\mathbb{E}[\cdot \mid \mathcal{F}_{t}]$ as $\mathbb{E}_{t}[\cdot ]$, and  apply the conditional Jensen’s inequality to concave function $\varphi$ as
\begin{align}
&\mathbb{E}\big[
\varphi\left(\mathbb{E}_{t}\Psi_{t+1}-\Psi^{*}\right)\big] = \mathbb{E}\big[
\varphi\left(\mathbb{E}\Psi_{t+1}-\Psi^{*}\mid \mathcal{F}_{t} \right)\big]\nonumber\\ \geq 
&\mathbb{E}\big[\mathbb{E}\big[
\varphi\left(\Psi_{t+1}-\Psi^{*}\right)\mid \mathcal{F}_{t}\big]\big]=
\mathbb{E}\big[
\varphi\left(\Psi_{t+1}-\Psi^{*}\right)\big].\nonumber
\end{align}

Summing up  (\ref{equ 61}) from
$t =\tilde{t}+ 1,\cdots, m$  yields
{ 
\begin{align}\label{equ 62}
&\quad 2\mathbb{E}\left\|\vx_{\tilde{t}}-\widetilde{\vx}\right\|+\mathbb{E}\left\|\vx_{\tilde{t}+1}-\widetilde{\vx}\right\|+  2\sum_{t=\tilde{t}+1}^{m}\mathbb{E}\left\|\vx_{t+1}-\vx_{t}\right\|+\sum_{t=\tilde{t}+1}^{m}\mathbb{E}\left\|\vx_{t-1}-\widetilde{\vx}\right\|  +\mathbb{E}\left\|\vx_{m+1}-\vx_{m}\right\| \nonumber\\
& \leqslant  2\mathbb{E}\left\|\vx_{m}-\widetilde{\vx}\right\|+\mathbb{E}\left\|\vx_{m+1}-\widetilde{\vx}\right\| + (2 \sqrt{\xi_{max}} \sqrt{\frac{2}{\tau}} )^2\times \mathbb{E}[\varphi\left(\psi_{\tilde{t}+1}-\psi^{*} \right)-\varphi\left(\psi_{m+1}-\psi^{*}\right)] \nonumber\\
&\quad +\mathbb{E}\left\|\vx_{\tilde{t}+1}-\vx_{\tilde{t}}\right\|\nonumber\\
&< + \infty \quad \text{a.s.}
\tag{S.59}
\end{align}}
Let $m$  be $+\infty$, by the equation (\ref{xt x}), it yields
{ 
\[
\sum_{t=\tilde{t}+1}^{+\infty}\mathbb{E}\left\|\vx_{t+1}-\vx_{t}\right\|
<+\infty \quad \text{a.s.} \quad\textrm{and}\quad
\sum_{t=\tilde{t}+1}^{+\infty}\mathbb{E}\left\|\vx_{t+1}-
\widetilde{\vx}\right\|<+\infty \quad \text{a.s.}
\]}
Similarly we can obtain that
{	
\[
\sum_{t=\tilde{t}+1}^{+\infty}\mathbb{E}\left\|\vy_{t+1}-\vy_{t}
\right\|<+\infty \quad \text{a.s.} \quad\textrm{and}\quad
\sum_{t=\tilde{t}+1}^{+\infty}\mathbb{E}\left\|
\vlambda_{t+1}-\vlambda_{t}\right\|
<+\infty \quad \text{a.s.}
\]}
Hence, $\sum_{k=1}^{+\infty}\mathbb{E}\left\|\vx_{t+1}-\vx_{t}\right\|<+\infty \quad \text{a.s.}$ Besides, we note that
{	
$$
\begin{aligned}
&\mathbb{E}\left\|\vw_{t+1}-\vw_{t}\right\| \\ =&\mathbb{E}\left(\left\|\vx_{t+1}-\vx_{t}\right\|^{2}+\left\|\vy_{t+1}-\vy_{t}\right\|^{2}+\left\|\vlambda_{t+1}-\vlambda_{t}\right\|^{2}\right)^{1 / 2} \\
\leq & \mathbb{E}\left\|\vx_{t+1}-\vx_{t}\right\|+\mathbb{E}\left\|\vy_{t+1}-\vy_{t}\right\|+\mathbb{E}\left\|\vlambda_{t+1}-\vlambda_{t}\right\| \quad \text{a.s.}
\end{aligned}
$$}
Consequently, 
\begin{align}
&\sum_{t=0}^{+\infty}\mathbb{E}\left\|\vw_{t+1}-\vw_{t}\right\|<+\infty \quad \text{a.s.},\nonumber \\ &\sum_{t=0}^{+\infty}\left\|\vw_{t+1}-\vw_{t}\right\|<+\infty  \quad \text{a.s.}
\tag{S.60}
\end{align}
This completes the whole proof.
\hfill $\blacksquare$

\begin{remark}
In Lemma \ref{thm 1}, we have presented the framework of convergence analysis and obtained
$$
\sum_{t=0}^{+\infty}\left\|\vw_{t+1}-\vw_{t}\right\|^{2} <+\infty \quad \text{a.s.}
$$
However, it is unclear whether their results can be extended to
$$
\sum_{t=0}^{+\infty}\left\|\vw_{t+1}-\vw_{t}\right\|<+\infty \quad \text{a.s.}
$$ 

Lemma \ref{thm 2} gives some sufficient conditions to guarantee the finite length property. Combining Lemma \ref{lemma 4} and Lemma \ref{thm 2}, we can draw the gradient can be bounded by the iteration points.
\end{remark}

\section{Proof of Theorem \ref{thm 3}}\label{app:thm 3}
Now, under the   {KL} property defined in Definition \ref{KŁ1} (see \cite{attouch2010proximal}), we make full use of the decreasing property of the potential energy function $\Psi_t^s$ and the boundedness of $dist\left(0, \partial \Psi_{t}\right)$ (see Lemma \ref{partial of Lagra}) to prove Theorem \ref{thm 3}.

\emph{Proof:} \quad 	 First, we prove a newly defined sequence $N_t$ in (\ref{Nt}) converges a.s. to zero Q-linearly.
By the KL property at $(\vx^{*}, \vy^{*}, \vlambda^{*})$ we have
\begin{align} \label{grad psi}
\nabla \varphi \left(\Psi_{\tilde{t}+1}-\Psi^{*}\right) dist\left(0, \partial \Psi_{\tilde{t}+1}\right) \geqslant 1  \quad \text{a.s.}
\tag{S.61}
\end{align}

Using the definition of $\varphi(s)=\tilde{c} s^{1-\Tilde{\mu}}, \Tilde{\mu} \in[0,1), \tilde{c}>0$  in Theorem \ref{thm 3}, we can insert $\nabla \varphi(s)= \tilde{c}(1-\Tilde{\mu})s^{-\Tilde{\mu}} $ into (\ref{grad psi}) to deduce
\begin{align}\label{nabla psi}
\tilde{c}(1-\Tilde{\mu})(\Psi_{\tilde{t}+1}-\Psi^{*})^{-\Tilde{\mu}}dist\left(0, \partial \Psi_{\tilde{t}+1}\right) \geqslant 1  \quad \text{a.s.}, \nonumber\\
\left({\Psi}_{\tilde{t}+1}-\Psi^{*}\right)^{\Tilde{\mu}} \leq \tilde{c} (1-\Tilde{\mu}) dist\left(0, \partial \psi_{\tilde{t}}+1\right)  \quad \text{a.s.} \tag{S.62}
\end{align}
Using the expression for $\varphi(s)=\tilde{c} s^{1-\Tilde{\mu}}$, and the equation (\ref{equ 701}) again to obtain
\begin{align}\label{equ 68}
&\mathbb{E} \varphi\left(\Psi_{\tilde{t}+1}-\Psi^{*}\right)\nonumber\\
\leq &\gamma \mathbb{E} \bigg[\left\|\vx_{\tilde{t}+1}-\widetilde{\vx}\right\|+\left\|\vx_{\tilde{t}-1}-\widetilde{\vx}\right\| +\left\|\vx_{\tilde{t}+1}-\vx_{t}\right\|  +\left\|\vx_{\tilde{t}}-\tilde{\vx}\right\|+\left\|\vx_{\tilde{t}}-\vx_{\tilde{t}-1}\right\| \bigg]^{\frac{1-\Tilde{\mu}}{\Tilde{\mu}}}  \quad \text{a.s.}, \tag{S.63}
\end{align}
where  $\gamma = \tilde{c}\left[\tilde{c} (1-\Tilde{\mu}) \xi_{\max }\right]^{\frac{1-\Tilde{\mu}}{\Tilde{\mu}}}$.

Next,
we focus on the case where $\Tilde{\mu}\in\left(0, \frac{1}{2}\right]$ and $\frac{1-\Tilde{\mu}}{\Tilde{\mu}} \geq 1$. When $\tilde{t} \rightarrow +\infty$, the number under the $\frac{1-\Tilde{\mu}}{\Tilde{\mu}}$ root is infinitely small. And other cases can be proved similarly, which has been studied in \cite{bolte2014proximal}. In this case, it follows from
Equation (\ref{equ 68}) that 
\begin{align}\label{equ 69}
\mathbb{E}\varphi\left(\Psi_{\tilde{t}+1}-\Psi^{*}\right) \leq &\gamma \bigg[\mathbb{E}\left\|\vx_{\tilde{t}+1}-\widetilde{\vx}\right\|+\mathbb{E}\left\|\vx_{\tilde{t}-1}-\widetilde{\vx}\right\| +\mathbb{E}\left\|\vx_{\tilde{t}+1}-\vx_{\tilde{t}}\right\| +\mathbb{E}\left\|\vx_{\tilde{t}}-\tilde{\vx}\right\|\nonumber\\
&+ \mathbb{E}\|\vx_{\tilde{t}} -\vx_{\tilde{t}-1}\| \bigg] \quad \text{a.s.} \tag{S.64}
\end{align}

Setting $m$ in the equation (\ref{equ 62}) to be $+\infty$ with		
$\lim_{m \rightarrow+\infty}\mathbb{E}\left\|\vx_{m}-\widetilde{\vx}\right\| = 0$,  $\lim_{m \rightarrow+\infty}\mathbb{E}\left\|\vx_{m+1}-\vx_{m}\right\|= 0  \quad \text{a.s.}$,  then we can see that
{
\begin{align}\label{equ 70}
& 2\sum_{t=\tilde{t}+1}^{+\infty}\mathbb{E}\left\|\vx_{t+1}-\vx_{t}\right\|+ \sum_{t=\tilde{t}+1}^{+\infty}\mathbb{E}\left\|\vx_{t-1}-\widetilde{\vx}\right\| + \mathbb{E}\left\|\vx_{\tilde{t}+1}-\widetilde{\vx} \right\| +2\mathbb{E}\left\|\vx_{\tilde{t}}-\widetilde{\vx} \right\|  \nonumber\\
\leqslant & \mathbb{E}\left\|\vx_{\tilde{t}+1}-\vx_{\tilde{t}}\right\| + (2 \sqrt{\xi_{max}} \sqrt{\frac{2}{\tau}} )^2\mathbb{E}\varphi\left(\Psi_{\tilde{t}+1}-\Psi^{*}\right)< + \infty  \quad \text{a.s.}
\tag{S.65}
\end{align}}

Now set
{\setlength{\abovedisplayskip}{2pt}
\setlength{\belowdisplayskip}{2pt}
\begin{align}
\Delta_{t}^{1}&:=\sum_{i=t}^{+\infty}\mathbb{E}\left\|\vx_{i+1}-\vx_{i}\right\|,\nonumber\\
\Delta_{t}^{2}&:=\sum_{i=t}^{+\infty}\mathbb{E}\left\|\vx_{i}-\widetilde{\vx}\right\|, 
\tag{S.66}
\label{delta 12}
\end{align}}
and $L_{\varphi} = (2 \sqrt{\xi_{max}} \sqrt{\frac{2}{\tau}} )^2$, it follows from
Equations (\ref{equ 69}) and (\ref{equ 70}) that
{	\setlength{\abovedisplayskip}{2pt}
\setlength{\belowdisplayskip}{2pt}
\begin{align}
&2\Delta_{\tilde{t}+1}^{1} + \Delta_{\tilde{t}}^{2} +2 \Delta_{\tilde{t}}^{2}- 2 \Delta_{\tilde{t}+1}^{2}+\Delta_{\tilde{t}+1}^{2}-\Delta_{\tilde{t}+2}^{2} \nonumber\\
\leq & \Delta_{\tilde{t}}^{1}-\Delta_{\tilde{t}+1}^{1}+ L_{\varphi} \gamma \big[\Delta_{\tilde{t}+1}^{2}-\Delta_{\tilde{t}+2}^{2}
+\Delta_{\tilde{t}}^{1}-\Delta_{\tilde{t}+1}^{1}+\Delta_{\tilde{\tilde{t}}}^{2}-\Delta_{\tilde{t}+1}^{2}
+\Delta_{\tilde{t}-1}^{1}-\Delta_{\tilde{t}}^{1}+ \Delta_{\tilde{t}-1}^{2}-\Delta_{\tilde{t}}^{2}\big]  \quad \text{a.s.} \nonumber
\end{align}}

To simplify this expression, let's replace the label $\tilde{t}$ with $t$ and  set
\begin{align}
a_{t} := \Delta_{\tilde{t}}^{1},~	b_{t} := \Delta_{\tilde{t}}^{2},
\tag{S.67} \label{a bt}
\end{align}
the above inequality can rewritten as the following form
{	
\begin{align}\label{at and bt}
0 & \leq -(L_{\varphi}\gamma+3)a_{t+1} + a_{t} + L_{\varphi }\gamma a_{t-1} + L_{\varphi} \gamma  b_{t-1} -3b_{t} +b_{t+1} + (1-L_{\varphi }\gamma )b_{t+2}  \quad \text{a.s.}
\tag{S.68}
\end{align}}

Let us introduce some constants $A, B, C, D, E, F,$ and $ G$ satisfied  the following conditions, which is the key trick  to driving the linear convergence rate
\setlength{\abovedisplayskip}{10pt}
\setlength{\belowdisplayskip}{10pt}
\begin{equation} \label{equ abc}
\left\{\begin{array}{l}
A=L_{\varphi} \gamma, \\
B-A C=1, \\
-B C=-(L_{\varphi}\gamma+3), \\
B=1+ L_{\varphi}\gamma \times C, \\
D=L_{\varphi}\gamma, \\
E-G D=-3, \\
F-G E=1,  \\
-G F = 1- L_{\varphi}\gamma. \\
\end{array}\right.
\tag{S.69}
\end{equation}
{Then} (\ref{at and bt}) can be rewritten as
{	
\begin{align}\label{equ 755}
0 \leq & -BC a_{t+1} + (B-AC)a_{t} +Aa_{t-1} + D b_{t-1} + (E-GD)b_t +(F-GE)b_{t+1} - GF b_{t+2}  \quad \text{a.s.} \tag{S.70}
\end{align}}
We can rearrange the terms in the above inequality (\ref{equ 755}) to obtain
{	
\begin{align}
C\left(A a_{t}+B a_{t+1}\right)+G\left(D b_{t}+E b_{t+1}+F b_{t+2}\right)
\leqslant & (A a_{t-1}+B a_{t})+\left(D b_{t-1}+E b_{t}+F b_{t+1}\right)  \quad \text{a.s.}
\tag{S.71}
\label{equ 733}
\end{align}}
Let $I=\min \{C, G\} >1$, from (\ref{equ 733}), we have
{	
\begin{align}
I\left[\left(A a_{t}+B a_{t+1}\right)+\left(D b_{t}+E b_{t+1}+F b_{t+2}\right)\right]
\leqslant & A a_{t-1}+B a_{t}+\left(D b_{t-1}+E b_{t}+F b_{t+1}\right)  \quad \text{a.s.} 
\tag{S.72}
\label{equ 744}
\end{align}}
Denote
{	\setlength{\abovedisplayskip}{2pt}
\setlength{\belowdisplayskip}{2pt}
\begin{equation}
N_{t}:=A  a_{t-1}+B a_{t}+\left(D b_{t-1}+E b_{t}+F b_{t+1}\right),
\tag{S.73} \label{Nt}
\end{equation}}
and insert the symbol $N_{t}$ and $N_{t-1}$ into the inequality (\ref{equ 744}) we get the desired result
{	
\begin{equation}
\frac{N_{t+1}}{N_{t}} \leq \frac{1}{I}  \quad \text{a.s.}
\tag{S.74} 
\label{Q linear}
\end{equation}}

Then we prove the existence of the parameter. First, we obtain the part of the solution in the equation (\ref{equ abc}):
{	\setlength{\abovedisplayskip}{10pt}
\setlength{\belowdisplayskip}{10pt}
$$
\left\{\begin{array}{l}
L_{\varphi}\gamma C^{2 } +C -(L_{\varphi} \gamma+3)=0, \\
B=L_{\varphi} \gamma C +1, \\
C=\frac{-1 + \sqrt{1+4 L_{\varphi} \gamma (3+L_{\varphi}\gamma)}}{2 L_{\varphi} \gamma} >1,0<\frac{1}{C}<1. \end{array}\right.
$$}

Also, we have the rest part of the solution in the equation (\ref{equ abc}):
{	\setlength{\abovedisplayskip}{10pt}
\setlength{\belowdisplayskip}{10pt}
\begin{equation} \label{equ defgg}
\left\{\begin{array}{l}
D=L_{\varphi}\gamma, \\
E-G D=-3, \\
F-G E=1,  \\
1 - L_{\varphi}\gamma)=-G F. \\
\end{array}\right.
\tag{S.75} 
\end{equation}}

Using the equation (\ref{equ defgg}), we have{	\setlength{\abovedisplayskip}{10pt}
\setlength{\belowdisplayskip}{10pt}
\begin{equation} \label{equ rewrite defg}
\left\{\begin{array}{l}
E = L_{\varphi}\gamma G -3, \\
F = GE +1, \\
L_{\varphi}\gamma  G^{3}- 3G^{2}+G+(1- L_{\varphi}\gamma)=0. \\
\end{array}\right.\nonumber
\end{equation}}

Set $H(G) = L_{\varphi}\gamma  G^{3}- 3G^{2}+G+(1- L_{\varphi}\gamma)$, $H(1)<0, H(+\infty)>0$. Thus we can choose the root $G>1$ of $H(G) $. Since the parameters $G$ and $C$ defined in (\ref{equ abc}) are both greater than $1$, we can obtain that $I$  satisfies this condition $I=\min \{C, G\} >1$. The existence of these parameters defined in (\ref{equ abc}) and $I$ are proved.

Second,
the above shows that the sequence $\left\{ N_{t} \right\}$ converges a.s. to zero Q-linearly\footnote{For the sequence {$\left\{x_n\right\}_{n \in \mathbb{N}}$ with  		
$
\lim\limits_{n \rightarrow \infty} x_n=x^{*},
$
if}
$$
\lim _{n \rightarrow \infty} \frac{\left\|x_{n+1}-x^{*}\right\|}{\left\|x_n-x^{*}\right\|}\leq \gamma,
$$
where $0<\gamma<1$,  {then} the sequence $\left\{x_n\right\}_{n \in \mathbb{N}}$is said to converge to $x^{*}$ Q-linearly, and the constant $\gamma$ is called the rate of (linear) convergence.}. As a result, we can use the triangle inequality with the notations in (\ref{delta 12}) and (\ref{a bt}) to yield:
\begin{align}\label{triangle}
A\mathbb{E}\left\|\vx^*-\vx_{t}\right\|= &A \mathbb{E}\left\|\sum_{i=t}^{+\infty}(\vx_{i+1}-\vx_{i})\right\|, \nonumber\\
\leq & A\sum_{i=t}^{+\infty}\mathbb{E}\left\|\vx_{i+1}-\vx_{i}\right\|, \nonumber\\
\leq &N_t  \quad \text{a.s.},
\tag{S.76} 
\end{align}
and it is sufficient to say the sequence $\left\{ \vx_t\right\}$ converges a.s. to $\vx^*$ R-linearly from the definition of R-linear convergence in Section \ref{sec:intro}. Furthermore, from the formulas (\ref{y-bound}) and (\ref{upp1}) we can see the sequences $\left\{ \vy_t\right\}$ and $\left\{ \vlambda_t\right\}$ can be controlled by the sequence $\left\{ \vx_t\right\}$ and converge a.s. to $\vy^*$ and $\vlambda^*$ R-linearly, respectively. Combing the R-linear convergences of the sequences $\left\{ \vx_t\right\}$, $\left\{ \vy_t\right\}$, and $\left\{ \vlambda_{ t} \right\}$ and the definition of R-linear convergence in Section \ref{sec:intro}, we can derive the desired result: the sequence
$\left\{ \left(\vx_{t}, \vy_{t}, \vlambda_{t}\right) \right\}$ converges a.s. to $\left(\vx^{*}, \vy^{*}, \vlambda^{*}\right)$ R-linearly with a existing sequence $\left\{ \hat{c}  \xi_{3}^{t} \right\} $ as follows:

$$
\mathbb{E}\left\|\left(\vx_{t}, \vy_{t}, \vlambda_{t}\right)-\left(\vx^{*}, \vy^{*}, \vlambda^{*}\right)\right\| \leq \hat{c}  \xi_{3}^{t}  \quad \text{a.s.}
$$
and
$$\left\|\left(\vx_{t}, \vy_{t}, \vlambda_{t}\right)-\left(\vx^{*}, \vy^{*}, \vlambda^{*}\right)\right\| \leq \hat{c}  \xi_{3}^{t}  \quad \text{a.s.}
$$
{This} completes the whole proof.

\hfill $\blacksquare$

\end{document}